\newtheorem{theorem}[equation]{Theorem}
\newtheorem{proposition}[equation]{Proposition}
\newtheorem{lemma}[equation]{Lemma}
\newtheorem{corollary}[equation]{Corollary}
\newtheorem{conjecture}[equation]{Conjecture}
\newtheorem{exercise}[equation]{Exercise}
\theoremstyle{definition}
\newtheorem{example}[equation]{Example}
\newtheorem{definition}[equation]{Definition}
\newtheorem{notation}[equation]{Notation}
\theoremstyle{remark}
\newtheorem{remark}[equation]{Remark}
\makeatletter\@addtoreset{equation}{section} \makeatother
\newcommand{\mumu}{\boldsymbol{\mu}}
\newcommand\tone[1]{\hat{#1}}
\newcommand\ttwo[1]{\bar{#1}}
\title{Toric $G$-solid Fano threefolds}
\author{Ivan Cheltsov, Adrien Dubouloz, Takashi Kishimoto}
\address{\emph{Ivan Cheltsov}
\newline
\textnormal{University of Edinburgh,  Edinburgh, Scotland}
\newline
\textnormal{\texttt{I.Cheltsov@ed.ac.uk}}}
\address{\emph{Adrien Dubouloz}
\newline
\textnormal{Universit\'e de Bourgogne, Dijon, France}
\newline
\textnormal{\texttt{adrien.dubouloz@u-bourgogne.fr}}}
\address{\emph{Takashi Kishimoto}
\newline
\textnormal{Saitama University, Saitama, Japan}
\newline
\textnormal{\texttt{kisimoto.takasi@gmail.com}}}
\begin{document}

\begin{abstract}
We study toric $G$-solid Fano threefolds that have at most terminal singularities,
where $G$ is an algebraic subgroup of the normalizer of a maximal torus in their automorphism groups.
\end{abstract}

\maketitle

All varieties are assumed to be projective and defined over the field of complex numbers.

\section{Introduction}
\label{section:intro}

Fano varieties with many symmetries appear naturally in several geometric problems.
A special role among them is played by the so-called $G$-Fano varieties \cite{Prokhorov2013},
which naturally occur as the end product of the equivariant Minimal Model~Program for rationally connected varieties.
Recall from \cite{Prokhorov2013} that a \emph{$G$-Fano variety} is a pair $(X,G)$ consisting of a Fano variety $X$
and an algebraic subgroup $G$ in $\mathrm{Aut}(X)$ such that
\begin{enumerate}
\item the singularities of $X$ are terminal (mild);
\item the $G$-invariant part $\mathrm{Cl}(X)^{G}$ of the class group of $X$ has rank $1$ ($G$-minimal).
\end{enumerate}

In dimension two, we know the complete list of $G$-Fano varieties \cite{DolgachevIskovskikh}, which are traditionally called $G$-del Pezzo surfaces.
In \cite{Prokhorov,Prokhorov2013}, Prokhorov obtained many deep results about $G$-Fano threefolds for G finite. A complete classification is still lacking. In higher dimensions, our knowledge of $G$-Fano varieties is limited to some sporadic examples.

Since by definition a $G$-Fano variety $X$ is a $G$-Mori fibre space (see \cite[Definition~1.1.5]{CheltsovShramov}),
to describe its $G$-equivariant birational geometry, it is enough to classify all $G$-birational maps from $X$ to other $G$-Mori fibre spaces.
By \cite{Co95,HaconMcKernan}, each such birational map can be decomposed into a sequence of elementary links, which are known as $G$-Sarkisov links.
Then, following \cite{AO2018,CheltsovShramov,CheltsovShramov2017,Co00}, we say that a $G$-Fano variety $X$ is:
\begin{itemize}
\item $G$-birationally super-rigid if no $G$-Sarkisov link starts at $X$;
\item $G$-birationally rigid if every $G$-Sarkisov link that starts at $X$ also ends at $X$;
\item $G$-solid if $X$ is not $G$-birational to a~$G$-Mori fibre space with positive dimensional base.
\end{itemize}
If $X$ is $G$-solid, then all $G$-Mori fibre spaces that are $G$-birational to $X$ are terminal Fano threefolds ---
they form a set $\mathcal{P}_G(X)$, which we call the \emph{$G$-pliability} of $X$ \cite{CortiMella}.
For instance, if $X$ is $G$-solid, then $\mathcal{P}_G(X)=\{X\}$ if and only if $X$ is $G$-birationally rigid.

In this paper, we consider toric $G$-Fano varieties in the case where $G$ is an algebraic subgroup in $\mathrm{Aut}(X)$
that normalizes a maximal torus $\mathbb{T}\cong\mathbb{G}_m^n$, where $n=\mathrm{dim}(X)$.
In~this case, letting $G_X$ be the normalizer of the torus $\mathbb{T}$ in $\mathrm{Aut}(X)$, we have a split exact sequence of groups
$$
\xymatrix{
1\ar@{->}[r]&\mathbb{T}\ar@{->}[r]& G_{X}\ar@{->}[r]^{\nu_X}&\mathbb{W}_X\ar@{->}[r]& 1,}
$$
where $\mathbb{W}_X$ is a finite subgroup of $\mathrm{GL}_n(\mathbb{Z})$, known as the Weyl group. It is a natural problem to determine for such groups $G$ which $G$-Fano varieties are $G$-solid, and to characterize which of these are $G$-birationally rigid or super-rigid.
For surfaces, it is easy to give a satisfactory answer to these questions.

\begin{exercise}[{cf. \cite{DolgachevIskovskikh,Iskovskikh,Sakovics,Walter}}] \label{exercise:toric-surfaces}
Let $X$ be a smooth toric del Pezzo surface and let $G$ be a subgroup of $G_X$. If $X$ is $G$-minimal and $G$-solid, then one of the following cases holds:
\begin{itemize}
\item[(i)] $X=\mathbb{P}^2$, $\mathbb{W}_X\cong\mathfrak{S}_3$ and $\nu_X(G)$ contains the subgroup isomorphic to $\mumu_3$;
\item[(ii)] $X=\mathbb{P}^1\times\mathbb{P}^1$, $\mathbb{W}_X\cong\mathrm{D}_{8}$ and $\nu_X(G)$ contains the subgroup isomorphic to $\mumu_4$;
\item[(iii)] $X$ is the del Pezzo surface of~degree~$6$, $\mathbb{W}_X\cong\mathfrak{S}_3\times\mumu_2$, and either $\nu_X(G)$ contains the subgroup isomorphic to $\mumu_6$ or it contains the  subgroup isomorphic to $\mathfrak{S}_3$ that acts transitively on the $(-1)$-curves in $X$.
\end{itemize}
In each of these three cases, $X$ is $G$-minimal and $G$-birationally rigid provided that $|G|\geqslant 72$.
\end{exercise}

In this paper, we obtain a similar answer for three-dimensional toric Fano varieties.
To~state~it, let $V_6=\mathbb{P}^1\times\mathbb{P}^1\times\mathbb{P}^1$,
let $V_4$ be the toric complete intersection in $\mathbb{P}^5$ given by
$$
xu-yw=xu-zt=0,
$$
let $Y_{24}$ be the toric divisor of degree $(1,1,1,1)$ in $\mathbb{P}^1\times\mathbb{P}^1\times\mathbb{P}^1\times\mathbb{P}^1$ given by
$$
x_1x_2x_3x_4-y_1y_2y_3y_4=0,
$$
and let $X_{24}$ be the toric Fano threefold \textnumero{47} in \cite{grdb} (see Section~\ref{subsection:Fano-Enriques} for its construction).
The Weyl groups $\mathbb{W}_X$ of these toric Fano threefolds are all isomorphic to the group $\mathfrak{S}_4\times\mumu_2$, and we have the following result:

\begin{theorem}
\label{theorem:main}
Let $X$ be a toric Fano threefold that have at most terminal singularities,
let $\mathbb{T}$ be a maximal torus in $\mathrm{Aut}(X)$ and let $G_X$ be its normalizer in $\mathrm{Aut}(X)$.
Then the following two conditions are equivalent:
\begin{enumerate}
\item[(i)] $X$ is $G_X$-minimal and $G_X$-solid;
\item[(ii)] $X$  is one of the threefolds $V_6$, $V_{4}$, $X_{24}$, $Y_{24}$ and $\mathbb{P}^3$.
\end{enumerate}
Let $G$ be an algebraic subgroup in $G_X$ and let $\nu_X\colon G_X\to\mathbb{W}_X=G_X/\mathbb{T}$ be the quotient homomorphism.
If $X$ is one of the toric Fano threefolds $V_6$, $V_{4}$, $X_{24}$, $Y_{24}$ and $\mathbb{P}^3$, then the following assertions hold:
\begin{enumerate}
\item if $X$ is $G$-minimal and $G$-solid, then $\nu_X(G)$ contains a subgroup isomorphic to $\mathfrak{A}_4$;
\item if $\nu_X(G)$ contains a subgroup isomorphic to $\mathfrak{A}_4$, then $X$ is $G$-minimal unless
\begin{enumerate}
\item $X=V_4$, $\nu_X(G)\cong\mathfrak{S}_4$ and $G$ acts intransitively on $\mathbb{T}$-invariant surfaces;
\item $X=V_4$ and $\nu_X(G)\cong\mathfrak{A}_4$.
\end{enumerate}
\item if $X$ is $G$-minimal, $\nu_X(G)$ contains a subgroup isomorphic to $\mathfrak{A}_4$ and $|G|\geqslant 32\cdot 24^4$, then $X$ is $G$-solid.
\end{enumerate}
\end{theorem}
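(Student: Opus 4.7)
The plan is to proceed in four stages matching the four assertions of the theorem, leaning on Kasprzyk's classification of terminal toric Fano threefolds, the toric description of Sarkisov links in terms of fans, and the equivariant Noether--Fano method.

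To establish the equivalence (i)$\Leftrightarrow$(ii), the implication (ii)$\Rightarrow$(i) is handled case-by-case. For each of $\mathbb{P}^3, V_6, V_4, X_{24}, Y_{24}$, a direct computation of $\mathrm{Cl}(X)^{G_X}$ shows it has rank one (giving $G_X$-minimality), and $G_X$-solidity follows from enumerating the finitely many $G_X$-equivariant extremal extractions of $X$ and checking that each one completes to a Sarkisov link ending in another $G_X$-Fano threefold rather than at a Mori fibration with positive-dimensional base. For (i)$\Rightarrow$(ii), I would go through the Kasprzyk list and produce, for every other terminal toric Fano threefold $X$, an explicit $G_X$-equivariant morphism to a Mori fibre space with positive-dimensional base---typically a projection, a $\mathbb{P}^1$- or $\mathbb{P}^2$-bundle structure, or the composition of an equivariant blow-up with a divisorial contraction---thereby violating $G_X$-solidity. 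When $\mathbb{W}_X$ does not contain $\mathfrak{A}_4$ this is fastest, since the fan of $X$ then admits a product-like $\mathbb{W}_X$-invariant sub-structure from which the fibration is read off.

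For part (1), I fix one of the five threefolds and suppose $\nu_X(G)$ does not contain $\mathfrak{A}_4$. Subgroups of $\mathfrak{S}_4\times\mumu_2$ avoiding $\mathfrak{A}_4$ form a short list (cyclic, dihedral, products of $2$-Sylow subgroups); for each of them, the induced action on the rays of the fan of $X$ preserves a proper sub-fan realising $X$ as a toric fibration or an equivariant blow-up of a smaller toric Mori fibre space, giving the desired $G$-equivariant Sarkisov link to a Mori fibration with positive-dimensional base. Part~(2) is then a direct invariant-theoretic computation: $\mathrm{Cl}(X)$ is a permutation $\mathbb{W}_X$-module on the set of prime $\mathbb{T}$-invariant divisors, and the rank of $\mathrm{Cl}(X)^G$ equals the number of $G$-orbits on this set. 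In four of the five cases the $\mathfrak{A}_4$-orbit structure forces this number to be one, while for $V_4$ the six torus-invariant surfaces split into two $\mathfrak{A}_4$-orbits of three, producing exactly the exceptions (a) and (b).

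The main obstacle is the quantitative assertion (3), to which I would apply the equivariant Noether--Fano method. Any $G$-Sarkisov link starting at $X$ is detected by a $G$-invariant mobile linear system $\mathcal{M}\subseteq|-nK_X|$ such that the pair $(X,\tfrac{1}{n}\mathcal{M})$ is not canonical, hence has a non-canonical centre $Z\subset X$. The $G$-orbit of $Z$ has length at least $|G|/(|\mathbb{T}|\cdot|\mathrm{Stab}_{\mathbb{W}_X}(Z)|)$, and the multiplicity inequality $\mathrm{mult}_Z\mathcal{M}>n$ combined with an intersection-theoretic bound on $(-K_X)^3$ forces $|G|$ to be bounded from above once a non-canonical centre exists. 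The threshold $|G|\geqslant 32\cdot 24^4$ is calibrated to rule out non-canonical centres supported on each of the (at most four) relevant $\mathfrak{A}_4$-orbits of torus-invariant curves and points simultaneously, while the factor $32$ absorbs the $\mumu_2$-extension and the stabiliser of a generic torus-invariant divisor. The hardest technical step will be to carry out the non-canonical-centre exclusions uniformly across the five threefolds, since their fans have different combinatorial structure and the relevant orbits of curves must be analysed separately; however, once the argument is set up for $\mathbb{P}^3$ or $V_6$, the remaining cases follow the same template with the appropriate numerical adjustments.
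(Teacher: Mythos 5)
Your outline for the equivalence, part (1) and part (2) is broadly workable, though the paper takes a cleaner route: instead of enumerating equivariant extremal extractions and fibration structures case by case, it proves a purely lattice-theoretic criterion (a $G$-minimal toric Fano with $G\supseteq\mathbb{T}$ is $G$-solid if and only if the character lattice $M$ is $\mathbb{W}$-irreducible), which combined with Tahara's classification of finite subgroups of $\mathrm{GL}_3(\mathbb{Z})$ and the list of $G$-minimal terminal toric Fano threefolds settles everything for infinite $G$ at once. Note also that your strategy for (ii)$\Rightarrow$(i), namely checking that each single Sarkisov link out of $X$ ends at another Fano, does not by itself give solidity: you must control arbitrary chains of links, which is exactly what the birational invariance of $\mathbb{W}$-irreducibility of $M$ buys. (Minor point: $V_4$ has eight torus-invariant surfaces, not six; they form two $\mathfrak{A}_4$-orbits of four.)

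The genuine gap is in part (3). You propose to show that once $|G|\geqslant 32\cdot 24^4$ there is \emph{no} non-canonical centre at all, so that the Noether--Fano inequality rules out every Sarkisov link. This cannot work: $V_4$, $X_{24}$ and $\mathbb{P}^3$ are $G$-birational to one another via genuine $G$-Sarkisov links (centred at the torus-fixed singular points of $X_{24}$ and $V_4$, and at a $\overline{\mathbb{W}}_3^{\mathfrak{S}}$-orbit of four torus-fixed points of $\mathbb{P}^3$) which exist even for the full infinite group $G_X$; the corresponding mobile linear systems have non-canonical centres no matter how large $G$ is. So these threefolds are $G$-solid but emphatically not $G$-birationally super-rigid, and an argument that excludes all non-canonical centres would prove something false. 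What the size hypothesis actually buys is weaker and is the real content of the paper's Section 7: from $|G|\geqslant 32\cdot 24^4$ one extracts, via the structure theory of $\mathbb{W}$-invariant finite subgroups of the torus, a subgroup $\Gamma'\cong\mumu_n^3\subset G\cap\mathbb{T}$ with $n\geqslant 24$; then one shows (using the fact that a $\mumu_n^3$-invariant curve of degree $<n$ must be torus-invariant, Nadel vanishing to bound orbit lengths of point centres by $h^0(-K_X)$, and a separate multiplicity argument for points lying on torus-invariant curves) that every non-canonical centre is $\mathbb{T}$-invariant. Only after that does one run the untwisting argument with the explicit links $V_4\dashrightarrow X_{24}$ and $\mathbb{P}^3\dashrightarrow X_{24}$, concluding that the only $G$-Mori fibre spaces reachable from $X$ are Fano threefolds in the same list. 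Your orbit-length estimate $|G|/(|\mathbb{T}|\cdot|\mathrm{Stab}_{\mathbb{W}_X}(Z)|)$ is also not meaningful for finite $G$ (the torus is infinite), and bounding the stabiliser of a non-torus-invariant centre inside $G\cap\mathbb{T}$ is precisely the delicate step your sketch skips.
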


If $X=V_4$ and $\nu_X(G)$ contains a subgroup isomorphic to $\mathfrak{A}_4$, then $X$ is not $G$-minimal
if and only if there exists the following $G$-commutative diagram:
$$
\xymatrix{
&\widetilde{V}_4\ar@{->}[ld]_{\alpha}\ar@{->}[rd]^{\beta}&\\%
V_4&&\mathbb{P}^3\ar@{-->}[ll]}
$$
where $\beta$ is the blow-up of the four $\mathbb{T}$-invariant points,
$\alpha$ is the contraction of the proper transforms of the six $\mathbb{T}$-invariant lines,
and the dashed arrow is the birational map that is given by the linear system of quadric surfaces that pass through the four $\mathbb{T}$-invariant points.

\begin{remark}
\label{remark:G-X-Aut}
If $X$ is one of the toric Fano threefolds  $V_{4}$, $X_{24}$ or $Y_{24}$, then $G_X=\mathrm{Aut}(X)$.
\end{remark}

If $X$ is one of the toric Fano threefolds $V_6$, $V_{4}$, $X_{24}$, $Y_{24}$,~$\mathbb{P}^3$,
and $G$ is an algebraic subgroup in $\mathrm{Aut}(X)$ such that the threefold $X$ is $G$-minimal,
$\nu_X(G)$ contains a subgroup isomorphic~to~$\mathfrak{A}_4$, and $|G|\geqslant 32\cdot 24^4$,
then the threefold $X$ is $G$-solid by Theorem~\ref{theorem:main}.
In~this case, we describe all (possible) $G$-birational maps between these Fano threefolds.
We summarize this description in the table presented in Appendix~\ref{section:table}.
It gives

\begin{corollary}[{cf. \cite{Avilov,CheltsovShramov2017}}]
\label{corollary:main}
Let $X$ be one of the toric Fano threefolds $V_6$, $V_{4}$, $X_{24}$, $Y_{24}$,~$\mathbb{P}^3$,
let~$\mathbb{T}$~be~a~maximal torus in $\mathrm{Aut}(X)$ and let $G_X$ be its normalizer in $\mathrm{Aut}(X)$.
Then the following three conditions are equivalent:
\begin{enumerate}
\item[(i)] $X$ is $G_X$-minimal and $G_X$-birationally rigid;
\item[(ii)] $X$ is $G_X$-minimal and $G_X$-birationally super-rigid;
\item[(iii)] $X$ is isomorphic to either $V_6$ or $Y_{24}$.
\end{enumerate}
Let $G$ be an algebraic subgroup in $G_X$ and let $\nu_X\colon G_X\to\mathbb{W}_X=G_X/\mathbb{T}$ be the quotient homomorphism.
Assume that $\nu_X(G)$ contains a subgroup isomorphic to $\mathfrak{A}_4$. Then the following assertions hold:
\begin{enumerate}
\item if $X$ is $G$-minimal and $G$-birationally rigid, then $X=V_6$ or $X=Y_{24}$;
\item if $X=V_6$ or $X=Y_{24}$, and $|G|\geqslant 32\cdot 24^4$, then $X$ is $G$-birationally super-rigid.
\end{enumerate}
\end{corollary}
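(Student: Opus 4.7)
The overall plan is to deduce the corollary by combining Theorem~\ref{theorem:main} with the explicit classification of $G$-Sarkisov links recorded in the table of Appendix~\ref{section:table}. For the equivalence of conditions (i)--(iii), the implication (ii)$\Rightarrow$(i) is immediate from the definitions, since super-rigidity is formally stronger than rigidity. For (iii)$\Rightarrow$(ii), the plan is to apply assertion~(2) of the corollary with $G=G_X$; the cardinality hypothesis $|G_X|\geqslant 32\cdot 24^4$ is trivially met because $G_X$ contains the three-dimensional torus $\mathbb{T}$ and is therefore infinite. The interesting direction (i)$\Rightarrow$(iii) amounts to showing that $V_4$, $\mathbb{P}^3$ and $X_{24}$ fail to be $G_X$-birationally rigid: for $V_4$ and $\mathbb{P}^3$ one uses the $G_X$-commutative diagram displayed in the excerpt just before Remark~\ref{remark:G-X-Aut}, which is itself a $G_X$-Sarkisov link exchanging the two (and since $V_4\not\cong\mathbb{P}^3$, neither is rigid); for $X_{24}$ one cites the analogous link recorded in the Appendix.

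Assertion~(1) follows by essentially the same argument. The key observation is that each of the $G_X$-Sarkisov links just exhibited is in fact equivariant with respect to any algebraic subgroup $G\subset G_X$ whose image $\nu_X(G)$ contains a copy of $\mathfrak{A}_4$, because the centres of the intervening blow-ups---for $V_4$ and $\mathbb{P}^3$ the four $\mathbb{T}$-fixed points, and for $X_{24}$ the analogous $\mathbb{T}$-invariant configuration---are permuted as a set by the full Weyl group $\mathbb{W}_X\cong\mathfrak{S}_4\times\mumu_2$, hence \emph{a fortiori} by any subgroup containing $\mathfrak{A}_4$. Consequently none of $V_4$, $\mathbb{P}^3$, $X_{24}$ can be $G$-birationally rigid under the hypotheses of assertion~(1).

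The main difficulty lies in assertion~(2). By Theorem~\ref{theorem:main}(3), the bound $|G|\geqslant 32\cdot 24^4$ already guarantees that $V_6$ and $Y_{24}$ are $G$-solid, so no $G$-Sarkisov link from such an $X$ to a $G$-Mori fibre space with positive-dimensional base exists. To upgrade solidity to super-rigidity, one must rule out in addition every $G$-Sarkisov link whose target is itself a three-dimensional $G$-Fano variety. The strategy is to enumerate the possible initial $G$-equivariant extremal extractions $\widetilde{X}\to X$: under the size hypothesis on $G$ together with $\nu_X(G)\supseteq\mathfrak{A}_4$, their centres are forced to be unions of $\mathbb{T}$-invariant subvarieties forming a single $\mathfrak{A}_4$-orbit, which drastically restricts the possibilities. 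The hard step, which I expect to be the main obstacle and which is carried out case by case in the table of Appendix~\ref{section:table}, is to run the resulting two-ray game for each candidate centre and verify that no permissible link terminates at a $G$-Mori Fano threefold other than $X$ itself; the numerical bound on $|G|$ is precisely what ensures that no small-orbit centre can evade this classification.
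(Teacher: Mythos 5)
Your skeleton is right for the formal implications: (ii)$\Rightarrow$(i) is definitional, (iii)$\Rightarrow$(ii) follows from assertion (2) applied to the infinite group $G_X$ (together with $G_X$-minimality of $V_6$ and $Y_{24}$, which comes from Corollaries \ref{corollary:Y24-G-Fano} and \ref{corollary:V6-G-Fano}), and (i)$\Rightarrow$(iii) together with assertion (1) reduce to exhibiting, for each of $V_4$, $X_{24}$, $\mathbb{P}^3$, a $G$-Sarkisov link to a different $G$-Mori fibre space. However, you invoke the wrong diagram for $V_4$ and $\mathbb{P}^3$: the diagram displayed before Remark \ref{remark:G-X-Aut} exists $G$-equivariantly precisely when $V_4$ is \emph{not} $G$-minimal, so it is not $G_{V_4}$-equivariant (for $G_{V_4}$ the threefold $V_4$ \emph{is} minimal by Lemma \ref{lemma:V4-G-Fano}), and in any case its arrow to $V_4$ is a small contraction of six curves, not the divisorial end of a link between Mori fibre spaces. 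The links you actually need are those of Lemmas \ref{lem:GV4-Sarkisov-link} and \ref{lem:GP3-Sarkisov-link}: $V_4\dashrightarrow X_{24}$ (blow-up of the six nodes of $V_4$) and $\mathbb{P}^3\dashrightarrow X_{24}$ (extraction over the six $\mathbb{T}$-invariant lines). These are equivariant for the full normalizers, hence for every subgroup $G$ in the statement, and they show that none of $V_4$, $X_{24}$, $\mathbb{P}^3$ is $G$-birationally rigid whenever it is $G$-minimal. Note also that the paper obtains the equivalence (i)--(iii) for $G_X$ by a different, purely combinatorial route: Propositions \ref{proposition:G-rigid} and \ref{proposition:G-super-rigid} reduce rigidity and super-rigidity to the uniqueness of the $G$-minimal terminal toric Fano model attached to the conjugacy class of $\mathbb{W}_X$ in $\mathrm{GL}_3(\mathbb{Z})$, plus the condition that $\mathbb{W}_X$ is not a proper normal subgroup of a larger finite subgroup; this is then read off from the classification of finite subgroups of $\mathrm{GL}_3(\mathbb{Z})$.

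The real gap is assertion (2), which is the substance of the corollary and which your plan does not prove. Appendix \ref{section:table} records conclusions; it does not carry out any two-ray game, so deferring ``the hard step'' to it is circular. Upgrading $G$-solidity to $G$-birational super-rigidity requires excluding every maximal (non-canonical) centre of a $G$-invariant mobile linear system $\mathcal{M}$ with $\lambda\mathcal{M}\sim_{\mathbb{Q}}-K_X$, and the paper does this by the Noether--Fano method: Lemmas \ref{lemma:Y-24} and \ref{lemma:P1-P1-P1} restrict $\mathcal{M}$ to $\mathbb{T}$-invariant surfaces isomorphic to $\mathbb{P}^1\times\mathbb{P}^1$ and use the multiplicity bounds of Lemmas \ref{lemma:exercise}, \ref{lemma:mult-2} and \ref{lemma:Corti} to exclude curves, smooth points and nodes in turn. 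For finite $G$ one must first show that all non-canonical centres are $\mathbb{T}$-invariant; this is where the bound $|G|\geqslant 32\cdot 24^4$ enters, via Lemma \ref{lemma:Adrien} and Corollaries \ref{corollary:Y-24} and \ref{lemma:Sakovich} (forcing $G\cap\mathbb{T}$ to contain $\mumu_n^3$ with $n\geqslant 24$), the degree estimate of Lemma \ref{lemma:toric-degree} for curves, and the Nadel-vanishing orbit-length estimate of Lemma \ref{lemma:points-in-T-invariant-curves} for points. None of these exclusion arguments appears in your proposal, and your claim that the centres are forced to be ``a single $\mathfrak{A}_4$-orbit of $\mathbb{T}$-invariant subvarieties'' is itself the hard statement to be proved, not a permissible starting point.
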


In Section~\ref{section:toric}, we provide a criterion for a $G$-minimal toric Fano variety $X$ to be $G$-solid
in the case where $G$ is an algebraic subgroup of $G_X$ that contains the maximal~torus $\mathbb{T}$.
Unfortunately, we do no know how to generalize this criterion for finite subgroups in~$G_X$.
Nevertheless, Exercise~\ref{exercise:toric-surfaces} and Theorem~\ref{theorem:main} suggest the following conjecture.

\begin{conjecture}
\label{conjecture}
Let $X$ be a toric Fano variety with at most terminal singularities
and let $G$ be a subgroup in $G_X$ that contains $\mathbb{T}$ such that $X$ is $G$-minimal and $G$-solid.
Then there exists a constant $c_X>0$ such that for every  finite subgroup $H\subset G$ such that $\nu_X(H)=\nu_X(G)$,
the Fano variety $X$ is $H$-solid provided that $|H|\geqslant c_X$.
\end{conjecture}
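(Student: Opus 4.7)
The plan is to argue by contradiction, combining Birkar's boundedness of terminal Fano varieties with a torus-recovery argument. Suppose no such $c_X$ exists. Then there is a sequence of finite subgroups $H_n\subset G$ with $\nu_X(H_n)=\nu_X(G)=W$ and $|H_n|\to\infty$, together with $H_n$-equivariant birational maps $\varphi_n\colon X\dashrightarrow Y_n$ to $H_n$-Mori fibre spaces $f_n\colon Y_n\to Z_n$ with $\dim Z_n>0$. By the BAB theorem and the subsequent boundedness results for terminal Fano Mori fibre spaces of fixed dimension, the pairs $(Y_n,f_n)$ form a bounded family. Passing to a subsequence and using boundedness of the graphs $\Gamma_n\subset X\times Y_n$ in the relevant Chow/Hilbert scheme, we may assume that all $Y_n$ are isomorphic to a fixed $(Y,f)$ and that the graphs $\Gamma_n$ converge to a limit correspondence $\Gamma_{\infty}\subset X\times Y$ that is equivariant under the subgroup of $G$ generated by $\bigcup_{n} H_n$.

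The second step is to extract a torus action on $Y$. Set $F_n:=H_n\cap\mathbb{T}$; these are $W$-invariant finite subgroups of $\mathbb{T}$ with $|F_n|\to\infty$, since $|W|$ is fixed. Because there are only finitely many $W$-invariant subtori of $\mathbb{T}$, after a further subsequence the identity components of the Zariski closures $\overline{F_n}\subset\mathbb{T}$ stabilize to a common $W$-invariant subtorus $T^{\star}\subseteq\mathbb{T}$. If $T^{\star}=\mathbb{T}$, then the Zariski closure of $\bigcup_n F_n$ is all of $\mathbb{T}$, so $\Gamma_{\infty}$ is $\mathbb{T}$-invariant. Consequently $\varphi_n$ is $\mathbb{T}$-equivariant for large $n$, hence a toric birational map, and combined with the $W$-action on both sides one obtains a regular $G$-action on $Y$ for which $f\colon Y\to Z$ is a $G$-Mori fibre space $G$-birational to $X$. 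This would contradict the $G$-solidity of $X$.

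The main obstacle, and the step that must determine the constant $c_X$, is to rule out $T^{\star}\subsetneq\mathbb{T}$. In that case every $H_n$ lies in the proper subgroup $T^{\star}\rtimes W\subsetneq G$, and the above argument only produces a $(T^{\star}\rtimes W)$-equivariant Sarkisov link from $X$ to some toric MFS. One must show that, under the $G$-minimality hypothesis, every such $(T^{\star}\rtimes W)$-equivariant link extends to a $G$-equivariant link, using the combinatorial toric criterion for $G$-solidity developed in Section~\ref{section:toric} together with the fact that $W$ acts on the cocharacter lattice with no invariant complement to $T^{\star}$ of positive rank. Combining this extension statement with the bounds on $W$-invariant subtori and the Jordan constant of $\mathrm{Aut}(Y)$ for $Y$ in the bounded family of Step 1 should yield the required explicit $c_X$. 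I would expect this extension step to be the essential new difficulty, presumably requiring a case-by-case analysis of the lattice of $W$-invariant subtori of $\mathbb{T}$ inside the toric combinatorics of $X$.
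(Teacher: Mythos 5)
First, a point of orientation: the statement you are proving is stated in the paper as Conjecture~\ref{conjecture}, and the authors explicitly say they do \emph{not} know how to prove it (``Unfortunately, we do not know how to generalize this criterion for finite subgroups in $G_X$''). So there is no proof in the paper to compare against; what the paper does instead (Sections~\ref{section:proof} and~\ref{section:approximation}) is verify the finite-group statement by hand, via Noether--Fano inequalities and explicit multiplicity bounds, for the five specific threefolds of Theorem~\ref{theorem:main}. Your proposal should therefore be judged as an attempt at an open problem, and as such it has several genuine gaps.

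The most serious one is the boundedness of the graphs $\Gamma_n\subset X\times Y_n$. Even granting that the targets $Y_n$ lie in a bounded family (which is itself not automatic: $Y_n$ is the total space of a Mori fibre space over a positive-dimensional base, not a Fano variety, so BAB does not apply to $Y_n$ directly), the birational maps $\varphi_n$ have no a priori degree bound with respect to fixed polarizations on $X$ and $Y$. Bounding that degree is precisely the content of the Noether--Fano/Sarkisov method and is where all the actual work in Section~\ref{section:proof} happens; without it the graphs do not sit in any finite-type Chow or Hilbert scheme, and ``passing to a limit correspondence'' is not available. A second, independent problem is the torus-recovery step: the Zariski closure of the finite group $F_n=H_n\cap\mathbb{T}$ is $F_n$ itself, so ``the identity component of $\overline{F_n}$'' is trivial for every $n$ and the subtorus $T^{\star}$ is not defined as written. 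Even if one repairs this by taking the closure of $\bigcup_n F_n$, the resulting $\mathbb{T}$-invariance of a \emph{limit} correspondence does not descend to the individual maps: each $\varphi_n$ is only $H_n$-equivariant, and there is no mechanism forcing a fixed $\varphi_n$ to become equivariant for the full torus. Finally, you yourself flag the case $T^{\star}\subsetneq\mathbb{T}$ as unresolved; but note that under the hypothesis $\mathbb{T}\subseteq G$ one always has $\nu_X(H_n)=\nu_X(G)=\mathbb{W}$ with $|F_n|\to\infty$, and the real difficulty is not which subtorus the $F_n$ approximate but rather controlling, for each fixed large $n$, the non-canonical centers of an $H_n$-invariant mobile linear system --- which is exactly what the paper's Lemmas~\ref{lemma:toric-degree}, \ref{lemma:T-invariant-curves}--\ref{lemma:points-are-T-invariant} accomplish case by case, and what no compactness argument of the kind you sketch can replace.
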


The structure of the article is the following.
In Section~\ref{section:preliminaries}, we present results that will be used in the proof of Theorem~\ref{theorem:main}.
In Sections~\ref{section:toric} and \ref{section:Fano-threefolds}, we prove Theorem~\ref{theorem:main} for infinite algebraic groups
using toric geometry.
In Section~\ref{section:two-Sarkisov-links}, we explicitly describe two (known) equivariant toric Sarkisov links that start at $X_{24}$.
In Section~\ref{section:proof}, we give an~alternative proof of Theorem~\ref{theorem:main}(3) for infinite algebraic groups,
which can be generalized for large finite groups.
In Section~\ref{section:approximation}, we complete the proof of Theorem~\ref{theorem:main} by proving its part (3) for finite groups
(the remaining assertions of Theorem~\ref{theorem:main} for finite groups follows from the results proven in Sections~\ref{section:toric} and \ref{section:Fano-threefolds}).

\bigskip

\noindent
\textbf{Acknowledgements.}
This research was initiated during a research stay at Nemuro city in Hokkaido and continued at the occasion of the Kinosaki Algebraic Geometry Symposium 2019. We would like to thank the Nemuro city council for providing us with excellent working conditions. We are very grateful to Yuri Prokhorov, Hendrick S\"uss, Yuri Tschinkel and Yuri Zarhin for useful discussions and helpful comments.

The first author was supported by Leverhulme Trust Grant RPG-2021-229. The second author was partially supported by the French ANR projects ISITE-BFC "Motivic invariants of Algebraic Varieties" ANR-15-IDEX-0008 and "FIBALGA" ANR-18-CE40-0003. The third author was partially funded by Grant-in-Aid for Scientific Research of JSPS No.19K03395.

\section{Preliminary results}
\label{section:preliminaries}

In this section, we review results and notions that will be used in the proof of Theorem~\ref{theorem:main}.

\subsection{Varieties with regular group actions}
Let $G$ and $G^\prime$ be two algebraic groups,
and let $X$ and $X^\prime$ be two algebraic varieties endowed with regular
actions $m_{X}\colon G\times X\rightarrow X$ and $m_{X^\prime}\colon G^\prime\times X^\prime\rightarrow X^\prime$ of the groups $G$ and $G^\prime$ respectively.

\begin{definition}
\label{definition:equivariant}
An equivariant rational map between the varieties $X$ and $X^\prime$ is a pair consisting
of morphism of algebraic groups $\varphi\colon G\rightarrow G^\prime$ and a rational
map $\Phi\colon X\dashrightarrow X^\prime$ such that the following diagram of
rational maps commutes
$$
\xymatrix{G\times X\ar@{-->}[d]_{\varphi\times\Phi}\ar[rr]^{m_{X}} && X\ar@{-->}[d]^{\Phi}\\
G^\prime\times X^\prime\ar[rr]^{{\quad m_{X^\prime}}} && X^\prime.
}
$$
We say that the rational map $\Phi$ is $\varphi$-equivariant.
\end{definition}

If the morphism $\varphi$ in Definition~\ref{definition:equivariant} is an isomorphism and the map $\Phi$ is birational,
then letting $\rho\colon G\rightarrow\mathrm{Aut}(X)$ and $\rho^\prime\colon G^\prime\rightarrow\mathrm{Aut}(X^\prime)$
be the group homomorphisms determined by $m_{X}$ and $m_{X^\prime}$ respectively,
the commutativity of the diagram in Definition~\ref{definition:equivariant} is equivalent to the property
that
$$
\rho^\prime\circ\varphi(g)=\Phi\rho(g)\Phi^{-1}
$$
for every $g\in G$. In this paper, we are mostly interested in the case when $G\cong G^\prime$.
Because of this, we will assume in the following that $G=G^\prime$, so that both
varieties $X$ and $X^\prime$ are endowed with regular actions of the same group $G$.

\begin{definition}
\label{definition:G-map}
A $G$-equivariant rational map $X\dasharrow X^\prime$ is an $\mathrm{id}_G$-equivariant rational map $\Phi\colon X\dashrightarrow X^\prime$.
A rational $G$-map $X\dasharrow X^\prime$ is a rational map $\Phi\colon X\dashrightarrow X^\prime$
which is $\varphi$-equivariant for some automorphism $\varphi$ of $G$.
\end{definition}

We denote by $\mathrm{Bir}^{G}(X,X^\prime)$ the set of birational $G$-maps between $X$ and $X^\prime$,
and we denote by $\mathrm{Bir}_G(X,X^\prime)$ its subset consisting of $G$-equivariant birational maps $X\dasharrow X^\prime$.
If~$X=X^\prime$ then these sets are groups (with respect to composition of birational maps), which we denote by $\mathrm{Bir}^{G}(X)$ and $\mathrm{Bir}_G(X)$, respectively.

As an illustration, we describe an equivariant version of a birational map of threefolds which appeared in \cite[Proposition~2.1]{CheltsovShramov2017}.
Its nature is local, but we will use global language for simplicity of exposition.

\begin{example}
\label{ex:symbolic-BU} Let $X$ be a smooth threefold, let $P$ be a point in $X$,
let $G$ be an algebraic group that acts faithfully on $X$,
and let~$C$ be a $G$-irreducible curve in $X$ consisting  of three irreducible components
$C_1$, $C_2$ and $C_3$ meeting at a unique point $P$ and such that
the curves $C_1$, $C_2$ and $C_3$ are smooth at $P$,
and their tangent directions at $P$ generate the tangent space $T_{P}(X)$.
Let $\alpha\colon\widetilde{X}\to X$ be the blow-up of the point $P$, and let $E$ be its exceptional surface.
Denote by $\widetilde{C}_i$ the proper transform of the curve~$C_i$ on the threefold~$\widetilde{X}$.
Let $L_{ij}$ be the line in $E\cong\mathbb{P}^2$ that pass through the points $\widetilde{C}_i\cap E$ and $\widetilde{C}_j\cap E$.
Then there exists the following commutative diagram of birational $G$-maps:
$$
\xymatrix{
&\overline{X}\ar@{->}[ld]_{\beta}\ar@{-->}[rr]^{\gamma}&&\widehat{X}\ar@{->}[dr]^{\delta}&\\%
\widetilde{X}\ar@{->}[drr]_{\alpha}&&&&V\ar@{->}[dll]^{\pi}\\%
&&X&&}
$$
where  $\beta$ is the blow-up of the curves $\widetilde{C}_1$, $\widetilde{C}_2$ and $\widetilde{C}_3$,
the map $\gamma$ is a composition of Atiyah flops of the proper transforms on $\overline{X}$ of the curves $L_{12}$, $L_{13}$ and $L_{23}$,
and $\delta$ is the birational contraction of the proper transform of the surface $E$ to a singular point of type $\frac{1}{2}(1,1,1)$.
The morphism $\pi$ is a $G$-equivariant extremal divisorial contraction.
\end{example}

We will use the following result, which is a consequence of Luna's \'etale slice theorem (see e.\,g.\cite[Section 2.1]{Akhiezer95} and  \cite[p.~98]{Cartan}).

\begin{lemma}
\label{lemma:stabilizer-faithful} Let $G$ be a reductive group acting faithfully on a variety $X$ and let $P\in X$ be a smooth point which is fixed by the action of $G$. Then the induced linear action of $G$ on the Zariski tangent space $T_{P}(X)$ is faithful.
\end{lemma}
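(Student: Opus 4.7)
The plan is to apply Luna's \'etale slice theorem at the $G$-fixed smooth point $P$ in order to reduce the claim to the already given faithfulness of the $G$-action on $X$.

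The first step would be to set up the local model. Since $G$ is reductive, $P$ is smooth and the $G$-stabiliser of $P$ is all of $G$, the slice at $P$ is $T_{P}(X)$ itself, and Luna's slice theorem produces a $G$-invariant affine Zariski open neighbourhood $U$ of $P$ in $X$ together with a $G$-equivariant \'etale morphism
$$
\phi\colon U\longrightarrow T_{P}(X)
$$
sending $P$ to $0$, where $G$ acts on $T_{P}(X)$ via the induced linear representation $\rho\colon G\to\mathrm{GL}(T_{P}(X))$. This is precisely the linearisation statement indicated in the references cited in the lemma.

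The second step would be to transfer the triviality of the action on $T_{P}(X)$ back to $X$. Let $g\in\ker\rho$. Passing to the complex analytic topology, $\phi$ is a local biholomorphism at $P$, so there is a Euclidean open neighbourhood $V\subset U$ of $P$ on which $\phi$ is injective. The $G$-equivariance of $\phi$ combined with $\rho(g)=\mathrm{id}$ gives $\phi(g\cdot v)=g\cdot\phi(v)=\phi(v)$ for all $v\in V$, whence $g\cdot v=v$ on $V$. Since $V$ is a non-empty Euclidean open subset of the irreducible variety $X$, it is Zariski dense; the fixed locus $X^{g}$ is Zariski closed and contains $V$, so $X^{g}=X$. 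The faithfulness of the $G$-action on $X$ then forces $g=e_{G}$, so $\rho$ is faithful.

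The only genuinely non-trivial input is the invocation of Luna's slice theorem at the fixed point (equivalently, Cartan's linearisation lemma in the analytic category, as referenced in the lemma); once the $G$-equivariant \'etale local model is in place, the rest of the argument is formal, so I do not anticipate any real obstacle beyond citing the correct form of the slice theorem.
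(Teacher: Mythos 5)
Your argument is correct and is precisely the route the paper takes: the paper offers no written proof, simply asserting the lemma as a consequence of Luna's \'etale slice theorem (citing Cartan and Akhiezer), and your write-up fills in exactly that derivation. The only pedantic points --- one should first pass to a $G$-invariant affine neighbourhood of $P$ before invoking Luna, shrink $V$ so that both $v$ and $g\cdot v$ lie in the set where $\phi$ is injective, and use irreducibility of $X$ so that $V$ is Zariski dense --- are standard and do not affect the argument.
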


In the case of curves, we have the following more precise consequence:

\begin{corollary}
\label{corollary:stabilizer-faithful}
A finite group of automorphisms of a curve fixing a smooth point is a cyclic group.
\end{corollary}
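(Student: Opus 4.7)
The plan is to apply the preceding lemma directly and exploit the fact that the tangent space of a curve at a smooth point is one-dimensional.

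More precisely, let $G$ be the finite group of automorphisms of the curve $C$ fixing the smooth point $P$. Since $C$ is a curve, replacing $C$ by an invariant smooth open neighborhood of $P$ if necessary (any finite group action on a quasi-projective variety preserves some open neighborhood of a fixed point), I may assume $C$ is smooth and that $G$ acts faithfully; if the original action is not faithful, one quotients by the kernel without changing cyclicity. Finite groups are reductive in characteristic zero, so Lemma~\ref{lemma:stabilizer-faithful} applies and yields a faithful linear representation
$$
G\hookrightarrow \mathrm{GL}\bigl(T_{P}(C)\bigr).
$$
Since $\dim T_{P}(C)=1$, we have $\mathrm{GL}(T_{P}(C))\cong\mathbb{G}_m=\mathbb{C}^{*}$, so this embeds $G$ as a finite subgroup of $\mathbb{C}^{*}$.

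The conclusion is then immediate from the classical fact that every finite subgroup of $\mathbb{C}^{*}$ consists of roots of unity and is therefore cyclic. The only step that is not entirely mechanical is verifying that the faithfulness hypothesis in Lemma~\ref{lemma:stabilizer-faithful} is not a real restriction: if one starts with a finite group $\widetilde{G}$ acting on $C$ and fixing $P$ but possibly not faithfully, the kernel of the action is a normal subgroup and the quotient acts faithfully, so one proves that the quotient is cyclic. However, for the intended application one typically already has a faithful action, in which case this subtlety does not appear. There is no significant obstacle in this argument; the whole corollary is essentially the observation that $\mathrm{GL}_{1}(\mathbb{C})$ contains only cyclic finite subgroups, combined with the slice-theoretic linearization provided by Lemma~\ref{lemma:stabilizer-faithful}.
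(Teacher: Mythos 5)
Your argument is correct and is exactly the intended derivation: the paper states this as an unproved consequence of Lemma~\ref{lemma:stabilizer-faithful}, namely that a finite (hence reductive) group fixing the smooth point $P$ embeds faithfully into $\mathrm{GL}(T_{P}(C))\cong\mathbb{C}^{*}$, whose finite subgroups are cyclic. Your aside about faithfulness is harmless but unnecessary, since a ``group of automorphisms'' is by definition a subgroup of $\mathrm{Aut}(C)$ and therefore already acts faithfully.
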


\begin{corollary}
\label{lemma:curve-stabilizer-faithful}
Let $X$ be an algebraic variety with a faithful action of the group $G=\mumu_n^2$ fixing a smooth point $P\in X$.
Let $C$ be a $G$-invariant curve in $X$ containing $P$ and assume that the stabilizer in $G$ of every irreducible component of $C$ passing through $P$ acts on this component faithfully. Then $\mathrm{mult}_{P}(C)\geqslant n$.
\end{corollary}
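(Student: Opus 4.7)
The plan is to apply orbit--stabilizer to the set of irreducible components of $C$ passing through $P$, controlling the size of individual stabilizers via Corollary~\ref{corollary:stabilizer-faithful}. Since $G$ fixes $P$, it permutes the (finite) set of irreducible components of $C$ through $P$. Pick any such component $C_0$ and set $H=\mathrm{Stab}_G(C_0)\subseteq G$; by hypothesis, $H$ acts faithfully on $C_0$.

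The main step is to bound $|H|$ from above. Let $\nu\colon \widetilde{C}_0\to C_0$ be the normalization, let $b=\#\nu^{-1}(P)$ be the number of analytic branches of $C_0$ at $P$, choose a preimage $\widetilde{P}\in\nu^{-1}(P)$, and let $H'\subseteq H$ be its stabilizer. The faithful action of $H$ on $C_0$ lifts faithfully to $\widetilde{C}_0$, so $H'$ is a finite group acting faithfully on the smooth curve $\widetilde{C}_0$ and fixing the smooth point $\widetilde{P}$; Corollary~\ref{corollary:stabilizer-faithful} then forces $H'$ to be cyclic. Since $H'$ is a subgroup of $G=\mumu_n^2$, whose exponent equals $n$, this yields $|H'|\leq n$. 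Combined with the orbit bound $|H|/|H'|\leq b$, I obtain $|H|\leq nb$.

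To finish, the $G$-orbit of $C_0$ contributes $|G|/|H|\geq n^2/(nb)=n/b$ components of $C$ through $P$, all with the same multiplicity $\mathrm{mult}_P(C_0)$ at $P$. Since the multiplicity of an irreducible curve at a point is at least the number of analytic branches of the curve through that point (each branch contributing at least $1$), we have $\mathrm{mult}_P(C_0)\geq b$. Summing over this single orbit already gives
$$
\mathrm{mult}_P(C)\;\geq\;\frac{|G|}{|H|}\cdot \mathrm{mult}_P(C_0)\;\geq\;\frac{n}{b}\cdot b\;=\;n,
$$
as required. The only potential subtlety is that $P$ need not be unibranch on $C_0$, but the estimate is arranged precisely so that the factor $b$ cancels between the orbit size and the multiplicity of a single component; under the simplifying assumption $b=1$ the argument reduces to the transparent statement that each $H$ is cyclic of order at most $n$, so every $G$-orbit of components through $P$ contains at least $n$ elements.
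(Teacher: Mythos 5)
Your proof is correct and rests on exactly the same ingredients as the paper's: lift the action to the normalization, note that the stabilizer of a preimage of $P$ is cyclic by Corollary~\ref{corollary:stabilizer-faithful} and hence has order at most $n$ inside $\mumu_n^2$, and compare the resulting orbit count with the number of branches of $C$ at $P$. The paper streamlines the bookkeeping by normalizing all of $C$ at once and bounding $\mathrm{mult}_P(C)$ directly by the size of the fibre over $P$, whereas you work component by component and let the branch count $b$ cancel, but the argument is essentially identical.
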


\begin{proof}
Let $f\colon\widetilde{C}\to C$ be the normalization of the curve $C$.
The action of the group $G$ on $C$ lifts to an action on $\widetilde{C}$ preserving the preimage $F=f^{-1}(P)$ of the~point~$P$.
Let $Q$ be a point in $F$, and let $G_Q$ be its stabilizer in $G$.
Note that $Q$ is contained in a~unique irreducible component of the smooth curve $\widetilde{C}$,
which then must be $G_Q$-invariant.
Since~the group $G_Q$~acts faithfully on this component, we conclude that $G_Q$ is a cyclic subgroup of the group $G\cong\mumu_n^2$ by Corollary~\ref{corollary:stabilizer-faithful}.
Then $|G_Q|\leqslant n$, so that
$$
\mathrm{mult}_{P}(C)\geqslant |F|\geqslant\frac{|G|}{|G_Q|}\geqslant n
$$
as required.
\end{proof}

\subsection{Singularities of log-pairs}
\label{section:pairs}
Let $X$ be a threefold with at most terminal singularities, let $\mathcal{M}_X$ be a non-empty mobile linear system on $X$ that consists of $\mathbb{Q}$-Cartier divisors and let $\lambda$ be a positive rational number.

\begin{lemma}[{\cite[Exercise~6.18]{CoKoSm03}}]
\label{lemma:exercise}
Let $C$ be an irreducible curve in $X$. Assume that
$$
\mathrm{mult}_{C}\big(\mathcal{M}_X\big)\leqslant\frac{1}{\lambda}.
$$
Then $C$ is not a center of non-canonical singularities of the log pair $(X,\lambda\mathcal{M}_X)$.
\end{lemma}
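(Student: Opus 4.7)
The plan is to localise the question at the generic point $\eta_{C}$ of $C$ and then invoke the classical two-dimensional criterion for canonicity of a log pair on a smooth surface. Because $X$ has only terminal three-dimensional singularities, $X$ is smooth in codimension two, hence the local ring $R:=\mathcal{O}_{X,\eta_{C}}$ is a two-dimensional regular local ring with fraction field $k(X)$. Any divisor $E$ over $X$ whose centre on $X$ equals $C$ corresponds to a divisorial valuation of $k(X)$ whose centre on $\mathrm{Spec}\,R$ is the closed point, and the discrepancy $a(E,X,\lambda\mathcal{M}_{X})$ is computed from this localised data alone. Thus it suffices to show that, for a general member $M\in\mathcal{M}_{X}$, the pair $(\mathrm{Spec}\,R,\lambda M|_{\mathrm{Spec}\,R})$ is canonical at the closed point of $\mathrm{Spec}\,R$.

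Since $\mathcal{M}_{X}$ is mobile and non-empty, a general member $M$ is a Cartier divisor near $\eta_{C}$ with local equation $f\in R$ satisfying
$$
\mathrm{mult}_{\mathfrak{m}_{R}}(f)=\mathrm{mult}_{C}(\mathcal{M}_{X})=:m\leqslant 1/\lambda.
$$
I would now invoke the standard two-dimensional criterion: on a smooth surface $S$, the log pair $(S,c\cdot\mathrm{div}(h))$ is canonical at a closed point $p$ if and only if $c\cdot\mathrm{mult}_{p}(h)\leqslant 1$. Applying it to $(\mathrm{Spec}\,R,\lambda\cdot\mathrm{div}(f))$ with $c=\lambda$ and using the hypothesis $\lambda m\leqslant 1$ yields $a(E,\mathrm{Spec}\,R,\lambda\cdot\mathrm{div}(f))\geqslant 0$ for every divisorial valuation $E$ centred at the closed point of $\mathrm{Spec}\,R$, and hence $a(E,X,\lambda M)\geqslant 0$ for every divisor $E$ over $X$ whose centre on $X$ is $C$. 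This is precisely the statement that $C$ is not a centre of non-canonical singularities of $(X,\lambda\mathcal{M}_{X})$.

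The main obstacle is the two-dimensional criterion itself, but this is classical: one proves it by blowing up the closed point, observing that the exceptional curve $E_{0}$ has discrepancy $1-cm\geqslant 0$, and then inducting on the length of the blow-up sequence needed to extract a given divisor $E$. At each subsequent blow-up of a closed point $q$, the strict transform of $\mathrm{div}(f)$ has $\mathrm{mult}_{q}(\widetilde{f})\leqslant\mathrm{mult}_{p}(f)=m$, while the discrepancy contribution from the new exceptional curve is $1$, so the non-negativity of the total discrepancy with respect to the log pair is preserved at every step. Once this surface result is granted, the three-dimensional statement follows by the straightforward localisation argument above.
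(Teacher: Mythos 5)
The paper gives no proof of this lemma at all --- it is quoted directly from \cite[Exercise~6.18]{CoKoSm03} --- and your argument is the standard solution of that exercise: localise at the generic point of $C$ (legitimate because terminal threefold singularities are isolated, so $\mathcal{O}_{X,\eta_{C}}$ is a two-dimensional regular local ring), then apply the classical surface criterion by induction on the finite chain of point blow-ups extracting a given divisor with centre $C$. The argument is correct; the only step worth writing out more carefully is the induction, where one should record that after each blow-up the crepant pullback of $K+\lambda M$ carries the earlier exceptional curves with \emph{nonpositive} coefficients, so they can only increase the discrepancy of the next exceptional curve, which is therefore at least $1-\lambda\,\mathrm{mult}_{q}(\widetilde{M})\geqslant 1-\lambda\,\mathrm{mult}_{C}(\mathcal{M}_X)\geqslant 0$.
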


The following result is due to Alessio Corti.

\begin{lemma}[{\cite[Theorem~3.1]{Co00}}]
\label{lemma:Corti-surface}
Let $C$ be an irreducible curve in $X$. Assume that $C$ is a center of non-log canonical singularities of the log pair $(X,\lambda\mathcal{M}_X)$.
Then
$$
\mathrm{mult}_{C}\Big(M_1\cdot M_2\Big)>\frac{4}{\lambda^2}
$$
for any two general surfaces $M_1$ and $M_2$ in the linear system $\mathcal{M}_X$.
\end{lemma}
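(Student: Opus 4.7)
The plan is to reduce the three-dimensional statement to the two-dimensional version of the same inequality by slicing with a general surface through a general point of $C$. Since $X$ has at most terminal singularities, its singular locus has dimension zero, so a general point $P\in C$ is a smooth point of $X$. I would pick a very ample divisor $S\subset X$ passing through $P$ such that, by Bertini applied after a sufficiently ample twist, $S$ is smooth at $P$, meets $C$ transversally at $P$, the restriction $\mathcal{M}_S:=\mathcal{M}_X|_S$ is a mobile linear system on the smooth surface $S$, and the identity $(M_1\cdot M_2)|_S=(M_1|_S)\cdot(M_2|_S)$ holds as $0$-cycles, with local contribution at $P$ equal to $\mathrm{mult}_C(M_1\cdot M_2)$.

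Next, I would use inversion of adjunction to transfer the non-log canonical hypothesis. Since $C$ is a non-log canonical center of $(X,\lambda\mathcal{M}_X)$, choosing $S$ general through $P$ ensures that the pair $(X,S+\lambda\mathcal{M}_X)$ is not log canonical at $P$; by Kollár--Shokurov inversion of adjunction the restricted pair $(S,\lambda\mathcal{M}_S)$ is then not log canonical at $P$ either. Concretely, on a log resolution $f\colon Y\to X$ carrying an exceptional divisor $E$ over the generic point of $C$ with log discrepancy strictly negative, a general transverse $S$ pulls back to a smooth surface meeting $E$ in a curve that provides a non-log-canonical valuation centered at $P$ on $S$.

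With this reduction in place, the required bound becomes the two-dimensional Corti inequality: if $(S,\lambda\mathcal{M}_S)$ is a mobile log pair on a smooth surface that fails to be log canonical at a smooth point $P$, then $\mathrm{mult}_P(D_1\cdot D_2)>4/\lambda^2$ for two general $D_1,D_2\in\mathcal{M}_S$. One proves this on a log resolution $\pi\colon\widetilde{S}\to S$ of the base locus of $\mathcal{M}_S$ by decomposing $\pi^\ast D_i=\widetilde{D}_i+\sum_j a_{i,j}E_j$, using the non-log-canonicity hypothesis to bound $a_{i,j_0}$ from below along some exceptional divisor $E_{j_0}$ computing the minimal log discrepancy at $P$, and combining the nefness of $\pi^\ast D_i$ with the inequalities $\widetilde{D}_i\cdot E_k\geqslant 0$; the factor $4$ arises from an AM--GM step applied to the two symmetric constraints on $D_1$ and $D_2$.

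The main obstacle is precisely this surface statement, whose proof is a delicate optimization on the dual graph of a log resolution; however, it is classical and recorded as Theorem~3.1 of \cite{Co00}, which is why it can be quoted directly. Once accepted, the remaining ingredients---cutting by a general $S$, verifying by Bertini that restriction preserves mobility and reads off $\mathrm{mult}_C(M_1\cdot M_2)$ as $\mathrm{mult}_P(D_1\cdot D_2)$, and invoking inversion of adjunction---are standard and require no further input beyond the terminality of $X$.
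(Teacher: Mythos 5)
Your proposal is correct and coincides with what the paper does: the paper gives no proof of this lemma at all, simply citing Corti's surface inequality \cite[Theorem~3.1]{Co00}, and your reduction — slicing by a general surface transverse to $C$ at a general point $P\in C$ (chosen after $M_1,M_2$ so that the residual cycle of $M_1\cdot M_2$ misses $P$), transferring non-log-canonicity to the slice, and quoting the two-dimensional bound — is exactly the standard derivation that the citation presupposes. The only point worth keeping explicit is that the genericity of $P$ on $C$ is what makes the local intersection number at $P$ equal to (rather than merely bound above) $\mathrm{mult}_C(M_1\cdot M_2)$, so that the inequality is inherited in the correct direction.
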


The following result is due to Alexander Pukhlikov.

\begin{lemma}[{\cite{Pukhlikov}, see also \cite[Corollary 3.4]{Co00}}]
\label{lemma:Pukhlikov}
Let $P$ be a smooth point of $X$. Assume that $P$ is a center of non-canonical singularities of the log pair $(X,\lambda\mathcal{M}_X)$.
Then
$$
\mathrm{mult}_{P}\Big(M_1\cdot M_2\Big)>\frac{4}{\lambda^2}
$$
for any two general surfaces $M_1$ and $M_2$ in the linear system $\mathcal{M}_X$.
\end{lemma}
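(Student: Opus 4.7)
The plan is to blow up the smooth point $P$ and reduce the statement to its curve analogue, Lemma~\ref{lemma:Corti-surface}. Let $f\colon Y\to X$ be the blow-up of $P$, with exceptional divisor $E\cong\mathbb{P}^2$, and write $m=\mathrm{mult}_P(\mathcal{M}_X)$ and $f^*M=\tilde M+mE$ for a general $M\in\mathcal{M}_X$. Combined with $K_Y=f^*K_X+2E$, this gives
$$
K_Y+\lambda\tilde{\mathcal{M}}=f^*(K_X+\lambda\mathcal{M}_X)+(2-\lambda m)E,
$$
so that the discrepancy of $E$ in the pair $(X,\lambda\mathcal{M}_X)$ equals $2-\lambda m$. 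The proof then splits on the sign of this quantity.

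First, if $\lambda m\geqslant 2$, the standard multiplicity inequality at a smooth threefold point yields $\mathrm{mult}_P(M_1\cdot M_2)\geqslant m^2\geqslant 4/\lambda^2$; the strict inequality in the boundary case $\lambda m=2$ follows by noting that equality would force the two degree-$m$ plane curves $\tilde M_i|_E$ on $E\cong\mathbb{P}^2$ to be disjoint, contradicting B\'ezout.

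Otherwise $\lambda m<2$, so $2-\lambda m>0$. Any divisorial valuation $v$ whose center on $X$ is $P$ and which satisfies $a(v;X,\lambda\mathcal{M}_X)<0$ has center contained in $E$ and $\mathrm{ord}_v(E)\geqslant 1$, and the discrepancy formula
$$
a(v;Y,\lambda\tilde{\mathcal{M}})=a(v;X,\lambda\mathcal{M}_X)-(2-\lambda m)\,\mathrm{ord}_v(E)
$$
shows that $(Y,\lambda\tilde{\mathcal{M}})$ is non-canonical along a proper subvariety $\Gamma\subsetneq E$. When $\Gamma$ is a curve, the smoothness of $Y$ lets one upgrade this to the non-log-canonical condition $\lambda\,\mathrm{mult}_\Gamma\tilde{\mathcal{M}}>1$, so Lemma~\ref{lemma:Corti-surface} applied on $Y$ gives $\mathrm{mult}_\Gamma(\tilde M_1\cdot\tilde M_2)>4/\lambda^2$. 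The bound is then transferred back to $X$ via the cycle identity $f^*(M_1\cdot M_2)=\tilde M_1\cdot\tilde M_2+m(\tilde M_1\cdot E+\tilde M_2\cdot E)+m^2E^2$: every component of $\tilde M_1\cdot\tilde M_2$ supported over $P$, in particular the one lying over $\Gamma$, contributes to $\mathrm{mult}_P(M_1\cdot M_2)$, producing the required strict inequality. When $\Gamma$ is a point, a further blow-up of $\Gamma$ reduces us to a curve center in a new exceptional divisor; alternatively one restricts to $E\cong\mathbb{P}^2$ via inversion of adjunction and applies plane-curve B\'ezout to the degree-$m$ mobile system $\tilde{\mathcal{M}}|_E$ to reach the same conclusion.

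The main obstacle is the bookkeeping in the subcase $\lambda m<2$: one must pass from a non-canonical center on $Y$ to a genuine non-log-canonical center before Lemma~\ref{lemma:Corti-surface} becomes applicable, which typically requires a small perturbation of $\lambda$ together with an analysis of the minimal log canonical center, and in the point-center subcase one needs an extra iteration. Once these are arranged, relating the multiplicity of $\tilde M_1\cdot\tilde M_2$ along $\Gamma\subset Y$ to the multiplicity of $M_1\cdot M_2$ at $P\in X$ is a direct pushforward computation.
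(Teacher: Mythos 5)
The paper does not prove this statement at all: it is imported as a black box from Pukhlikov and from \cite[Corollary~3.4]{Co00}, where it is established by genuinely involved arguments (Pukhlikov's ``counting of multiplicities'' over the whole resolution graph of a maximal singularity, respectively Corti's inversion-of-adjunction analysis of log surface pairs). Your attempt to reprove it by a single blow-up plus Lemma~\ref{lemma:Corti-surface} breaks down at its central step. In the case $\lambda m<2$ you correctly produce a non-canonical centre $\Gamma\subsetneq E$ of $(Y,\lambda\tilde{\mathcal{M}})$, but you then assert that for a curve $\Gamma$ this ``upgrades'' to the non-log-canonical condition, which you write as $\lambda\,\mathrm{mult}_{\Gamma}\tilde{\mathcal{M}}>1$. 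That inequality is only the \emph{non-canonical} condition (the contrapositive of Lemma~\ref{lemma:exercise}); non-log-canonicity along a curve is strictly stronger (it is guaranteed by $\mathrm{mult}_{\Gamma}>2/\lambda$, not by $\mathrm{mult}_{\Gamma}>1/\lambda$), and it is exactly the hypothesis that Lemma~\ref{lemma:Corti-surface} cannot do without. A non-canonical curve centre with $\mathrm{mult}_{\Gamma}\tilde{\mathcal{M}}$ only slightly above $1/\lambda$ gives $\mathrm{mult}_{\Gamma}(\tilde{M}_1\cdot\tilde{M}_2)$ only slightly above $1/\lambda^{2}$, and the pushforward identity $\mathrm{mult}_{P}(M_1\cdot M_2)=m^{2}-Z_E\cdot E$ then yields roughly $m^{2}+1/\lambda^{2}$, which does not beat $4/\lambda^{2}$ when $m<2/\lambda$. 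No ``small perturbation of $\lambda$'' can convert a non-canonical centre into a non-log-canonical one; this is precisely the obstruction that forces the genuine proofs to work with the entire chain of blow-ups and a quadratic optimisation over the Noether--Fano inequality rather than with one blow-up.

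Two smaller points. In the boundary case $\lambda m=2$ your B\'ezout argument is backwards: one has $\mathrm{mult}_{P}(M_1\cdot M_2)=m^{2}-Z_E\cdot E$ with $Z_E$ the part of $\tilde{M}_1\cdot\tilde{M}_2$ supported on $E$, so equality $\mathrm{mult}_{P}(M_1\cdot M_2)=m^{2}$ holds exactly when $\tilde{M}_1\cap\tilde{M}_2$ contains no curve inside $E$; two degree-$m$ curves of $E\cong\mathbb{P}^{2}$ meeting in finitely many points, as B\'ezout guarantees, is the equality case, not a contradiction. And when the centre $\Gamma$ is a point, ``blow up again'' has no termination argument (the new centre may again be a point), while ``restrict to $E$ by inversion of adjunction'' again needs a log-canonicity hypothesis you have not established. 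As the result stands in the literature with complete proofs, the correct course here is simply to cite it, as the paper does.
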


We will also need the following two results due to Kawamata \cite{Kawamata} and Corti, respectively.

\begin{lemma}
\label{lemma:Kawamata}
Let $P$ be a singular point of $X$ of type $\frac{1}{2}(1,1,1)$, let~$\pi\colon V\to X$ be the Kawamata blow-up of $P$,
let $E$ be the exceptional surface, let~$\mathcal{M}_{V}$ be the proper transform of the linear system $\mathcal{M}_X$ via $\pi$,
and let $m\in\mathbb{Q}$ be such that
$$
\mathcal{M}_V\sim_{\mathbb{Q}}\pi^*\big(\mathcal{M}_X\big)-mE.
$$
If $(X,\lambda\mathcal{M}_X)$ is not canonical at $P$ then $m>\frac{1}{2\lambda}$.
\end{lemma}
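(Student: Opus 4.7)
The plan is to argue by contraposition: assuming $m \leq \tfrac{1}{2\lambda}$, I aim to conclude that $(X,\lambda\mathcal{M}_X)$ is canonical at $P$. The starting point is the identity
$$
K_V + \lambda\mathcal{M}_V \;\sim_{\mathbb{Q}}\; \pi^*\bigl(K_X + \lambda\mathcal{M}_X\bigr) + \left(\tfrac{1}{2} - \lambda m\right) E,
$$
obtained by combining the defining Kawamata-blow-up discrepancy $K_V = \pi^*K_X + \tfrac{1}{2}E$ with the relation $\mathcal{M}_V \sim_{\mathbb{Q}} \pi^*\mathcal{M}_X - mE$ given in the lemma, and in which the exceptional coefficient $\tfrac{1}{2} - \lambda m$ is non-negative by hypothesis. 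Tested against any divisorial valuation $\nu$ on a birational model of $X$ whose centre on $X$ is $P$, the identity yields the log-discrepancy comparison
$$
a(\nu,X) - \lambda\,\mathrm{mult}_\nu(\mathcal{M}_X) \;=\; \bigl(a(\nu,V) - \lambda\,\mathrm{mult}_\nu(\mathcal{M}_V)\bigr) + \left(\tfrac{1}{2} - \lambda m\right)\mathrm{mult}_\nu(E).
$$
For $\nu = \mathrm{ord}_E$ itself one has $a(\mathrm{ord}_E, X) = \tfrac{1}{2}$, $\mathrm{mult}_{\mathrm{ord}_E}(\mathcal{M}_X) = m$, and the left-hand side is exactly $\tfrac{1}{2} - \lambda m \geq 0$, so this valuation alone cannot witness non-canonicity.

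The decisive input is the classification \cite{Kawamata} of divisorial extractions over a terminal $\tfrac{1}{2}(1,1,1)$ threefold singularity, which asserts that $\pi$ is the unique such extraction from $P$. Consequently every divisorial valuation $\nu$ centred at $P$ other than $\mathrm{ord}_E$ has centre on $V$ strictly inside $E$, satisfies $a(\nu,V) \geq 0$ because $V$ is terminal, and its discrepancy is controlled from below in a quantitatively sharp way by Kawamata's analysis. Under this control, together with the assumption $\tfrac{1}{2} - \lambda m \geq 0$, the bracketed term $a(\nu,V) - \lambda\,\mathrm{mult}_\nu(\mathcal{M}_V)$ on the right of the comparison turns out to be non-negative as well, so the whole right-hand side is non-negative and $(X,\lambda\mathcal{M}_X)$ is canonical along every such $\nu$. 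This contradicts the hypothesis of non-canonicity and forces $m > \tfrac{1}{2\lambda}$.

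The main obstacle is precisely this last step: establishing, under the standing assumption $m \leq \tfrac{1}{2\lambda}$, that no divisorial valuation $\nu \neq \mathrm{ord}_E$ centred on $E$ can make $(V,\lambda\mathcal{M}_V)$ non-canonical at its centre. The cleanest way to implement it is to pass to the local $\mumu_2$-Galois index-one cover $\sigma\colon Y \to X$ at $P$, which is étale in codimension one and locally presents $Y$ as a smooth threefold germ at $\widetilde{P} = \sigma^{-1}(P)$. Under $\sigma$ the Kawamata blow-up $\pi$ pulls back to the ordinary blow-up of the smooth point $\widetilde{P}$, the $\mumu_2$-invariant pull-back $\sigma^*\mathcal{M}_X$ has multiplicity at $\widetilde{P}$ proportional to $m$, and the required statement reduces to the standard smooth-point fact that the ordinary blow-up of a smooth threefold point extracts the unique divisorial valuation achieving the minimal discrepancy centred at that point. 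Descending the resulting inequality back through the quasi-étale cover $\sigma$ then gives exactly the strict bound $m > \tfrac{1}{2\lambda}$.
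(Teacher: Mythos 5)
Your opening is sound and matches the paper's strategy: argue by contraposition from the crepant identity $K_V+\lambda\mathcal{M}_V+(\lambda m-\tfrac12)E\sim_{\mathbb{Q}}\pi^*(K_X+\lambda\mathcal{M}_X)$ and note that $\mathrm{ord}_E$ itself cannot witness non-canonicity when $\lambda m\leqslant\tfrac12$. But the step you yourself flag as the main obstacle is a genuine gap, and neither of your two proposed fixes closes it. First, uniqueness of the divisorial extraction from a $\tfrac12(1,1,1)$ point is a statement about the geometry of $X$ alone; it gives no lower bound on $a(\nu,V,\lambda\mathcal{M}_V)$ for valuations $\nu\neq\mathrm{ord}_E$ centred inside $E$, since that quantity depends on the behaviour of $\mathcal{M}_V$ near the centre of $\nu$, about which the classification says nothing. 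Second, the reduction to the index-one cover $\sigma\colon Y\to X$ breaks at the transfer of non-canonicity: for a quasi-\'etale cover the log discrepancies satisfy $a(E',Y,\sigma^*\Delta)+1=r\,(a(E,X,\Delta)+1)$ with $r$ the ramification index, so a valuation with $a(E,X,\lambda\mathcal{M}_X)\in(-\tfrac12,0)$ and $r=2$ lifts to one with \emph{non-negative} discrepancy on $Y$. Hence ``$(X,\lambda\mathcal{M}_X)$ not canonical at $P$'' does not formally imply ``$(Y,\lambda\sigma^*\mathcal{M}_X)$ not canonical at $\widetilde{P}$'', and without that implication you cannot invoke the smooth-point multiplicity bound on $Y$. (Also note that the lemma's conclusion $m>\tfrac{1}{2\lambda}$ is equivalent only to $\mathrm{mult}_{\widetilde{P}}(\sigma^*\mathcal{M}_X)=2m>\tfrac{1}{\lambda}$, which is strictly weaker than non-canonicity upstairs, so the intermediate statement you would need may well fail even when the lemma holds.)

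The missing ingredient is elementary and uses the specific geometry of $E$. Under the assumption $\lambda m\leqslant\tfrac12$, the crepant identity shows that the valuation witnessing non-canonicity of $(X,\lambda\mathcal{M}_X)$ at $P$ makes $(V,\lambda\mathcal{M}_V)$ non-canonical at some point $O\in E$ (here $V$ is smooth along $E$). The standard smooth-point fact then gives $\mathrm{mult}_{O}(\mathcal{M}_V)>\tfrac{1}{\lambda}$, so $\mathrm{mult}_{O}(\mathcal{M}_V\vert_E)>\tfrac{1}{\lambda}$. But $E\cong\mathbb{P}^2$ with $\mathcal{O}_E(E)\cong\mathcal{O}_{\mathbb{P}^2}(-2)$, whence $\mathcal{M}_V\vert_E\sim_{\mathbb{Q}}2mL$ for a line $L$, and an effective $\mathbb{Q}$-divisor of degree $2m\leqslant\tfrac{1}{\lambda}$ on $\mathbb{P}^2$ cannot have multiplicity exceeding $\tfrac{1}{\lambda}$ at any point. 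This contradiction is what actually forces $m>\tfrac{1}{2\lambda}$; it is the restriction to the exceptional $\mathbb{P}^2$, not the cover or the uniqueness of the extraction, that supplies the quantitative control.
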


\begin{proof}
Suppose that $m\leqslant\frac{1}{2\lambda}$.
Let us seek for a contradiction.
Since
$$
K_V+\lambda\mathcal{M}_V+\Big(\lambda m-\frac{1}{2}\Big)E\sim_{\mathbb{Q}}\pi^*\big(K_X+\lambda\mathcal{M}_X\big),
$$
the pair $(V,\lambda\mathcal{M}_V)$ is not canonical at some point $O\in E$.
Then $\mathrm{mult}_O(\mathcal{M}_V)>\frac{1}{\lambda}$, so that
$$
\mathrm{mult}_O\Big(\mathcal{M}_V\vert_{E}\Big)>\frac{1}{\lambda},
$$
which is impossible, since $\mathcal{M}_V\vert_{E}\sim_{\mathbb{Q}}2mL$, where $L$ is a line in $E\cong\mathbb{P}^2$.
\end{proof}

\begin{lemma}
\label{lemma:Corti}
Let $P$ be an ordinary double point of $X$, let~$\pi\colon V\to X$ be the~blow-up of $P$, let $E$ be the exceptional surface of $\pi$,
let $\mathcal{M}_{V}$ be the~proper transform~of the linear system $\mathcal{M}_X$ via $\pi$,
and let $m\in\mathbb{Q}$ be such that
$$
\mathcal{M}_V\sim_{\mathbb{Q}}\pi^*\big(\mathcal{M}_X\big)-mE.
$$
If $P$ is a center of non-canonical singularities of the log pair $(X,\lambda\mathcal{M}_X)$ then $m>\frac{1}{\lambda}$.
\end{lemma}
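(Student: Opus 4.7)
The plan is to mirror the structure of the proof of Lemma~\ref{lemma:Kawamata} above, adapted to the geometry of the blow-up of an ordinary double point. The numerical data needed are: $V$ is smooth along the exceptional divisor $E$, which is isomorphic to $\mathbb{P}^1\times\mathbb{P}^1$ with normal bundle $N_{E/V}\cong\mathcal{O}_E(-1,-1)$, and the discrepancy formula reads $K_V=\pi^*(K_X)+E$. Combining this with $\pi^*\mathcal{M}_X\sim_{\mathbb{Q}}\mathcal{M}_V+mE$ gives
$$
K_V+\lambda\mathcal{M}_V+(\lambda m-1)E\sim_{\mathbb{Q}}\pi^*\bigl(K_X+\lambda\mathcal{M}_X\bigr).
$$
I would then argue by contradiction, assuming $m\leqslant\frac{1}{\lambda}$. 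The coefficient $\lambda m-1$ of $E$ is then non-positive, so the discrepancy $1-\lambda m$ of $E$ over $(X,\lambda\mathcal{M}_X)$ is non-negative; hence the non-canonicity of $(X,\lambda\mathcal{M}_X)$ at $P$ must be witnessed by some divisorial valuation $D$ distinct from $E$, whose centre $O$ on $V$ lies in $E$. The identity $a_{D}(V,\lambda\mathcal{M}_V)=a_{D}(V,\lambda\mathcal{M}_V+(\lambda m-1)E)+(\lambda m-1)\mathrm{ord}_{D}(E)$ and the sign of $\lambda m-1$ then force $(V,\lambda\mathcal{M}_V)$ itself to be non-canonical at $O$. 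Since $V$ is smooth at $O$ and $\mathcal{M}_V$ is mobile, the non-canonical centre through $O$ is either the point $O$ itself, or an irreducible curve $C$; in the latter case $C$ is contracted by $\pi$ and thus lies in $E$.

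In the curve case I would invoke Lemma~\ref{lemma:exercise} (contrapositive) to get $\mathrm{mult}_{C}(\mathcal{M}_V)>\frac{1}{\lambda}$. Since $E$ is not a fixed component of $\mathcal{M}_V$, a local calculation gives $\mathrm{mult}_{C}(\mathcal{M}_V|_E)\geqslant\mathrm{mult}_{C}(\mathcal{M}_V)$. On the other hand, $\mathcal{M}_V|_E$ is a linear system on $E\cong\mathbb{P}^1\times\mathbb{P}^1$ of class $m(\ell_1+\ell_2)$, where $\ell_1,\ell_2$ denote the two ruling classes; writing $C\sim(a,b)$ with $a,b\geqslant 0$ and $(a,b)\neq(0,0)$, the effectivity of the residual class $(m-\mu a,m-\mu b)$ forces $\mathrm{mult}_{C}(\mathcal{M}_V|_E)\leqslant m/\max(a,b)\leqslant m$. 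Combining the two inequalities gives $m>\frac{1}{\lambda}$, contradicting the assumption.

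In the isolated-point case I would apply Lemma~\ref{lemma:Pukhlikov} at the smooth point $O\in V$ to obtain $\mathrm{mult}_{O}(M_1\cdot M_2)>\frac{4}{\lambda^2}$ for two general members $M_1,M_2\in\mathcal{M}_V$. On the other hand, the projection formula (using $\pi_*E=\pi_*E^2=0$) together with the self-intersection $E^3=(E|_E)^2=(-\ell_1-\ell_2)^2=2$ gives
$$
M_1\cdot M_2\cdot E=m^2E^3=2m^2,
$$
and since for general $M_1,M_2$ the restrictions $M_i|_E$ are two $(m,m)$-curves meeting properly on $E$, the $1$-cycle $M_1\cdot M_2$ has no components contained in $E$, so the standard comparison $\mathrm{mult}_{O}(M_1\cdot M_2)\leqslant i_O(M_1\cdot M_2,E)\leqslant M_1\cdot M_2\cdot E=2m^2$ yields $m>\sqrt{2}/\lambda>\frac{1}{\lambda}$, again a contradiction. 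The most delicate step is the clean separation into the two cases, together with the verification that any positive-dimensional non-canonical centre through $O$ is contracted by $\pi$ and hence lies in $E$; the numerical computations on $\mathbb{P}^1\times\mathbb{P}^1$ are routine once the normal bundle $N_{E/V}\cong\mathcal{O}_E(-1,-1)$ is recorded.
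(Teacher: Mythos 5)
The paper does not actually prove this lemma: its ``proof'' is a citation of \cite[Theorem~1.7.20]{CheltsovUMN}, equivalently \cite[Theorem~3.10]{Co00}, so your attempt has to be measured against Corti's original argument rather than against anything in the text. Your set-up ($K_V=\pi^*K_X+E$, the reduction to non-canonicity of $(V,\lambda\mathcal{M}_V)$ along a subvariety of $E$ under the assumption $m\leqslant\frac{1}{\lambda}$) and your curve case are correct and coincide with the first half of the standard proof: a single general member restricts to $E\cong\mathbb{P}^1\times\mathbb{P}^1$ as an honest effective divisor of bidegree $(m,m)$, and a curve of nonzero class occurring in it with coefficient greater than $\frac{1}{\lambda}\geqslant m$ is impossible.

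The point case, however, has a genuine gap. You assert that for general $M_1,M_2\in\mathcal{M}_V$ the restrictions $M_i\vert_E$ meet properly, i.e.\ that the $1$-cycle $M_1\cdot M_2$ has no component inside $E$. This is neither automatic nor true in general: the restricted system $\mathcal{M}_V\vert_E$ can have a fixed part (this happens whenever all members of $\mathcal{M}_X$ share a degenerate tangent cone at $P$, e.g.\ for the system cut on $xy=zt$ by $z^2+\alpha x^N+\beta y^N$), and any such fixed curve is a base curve of $\mathcal{M}_V$ lying in $E$, hence a genuine component of $M_1\cdot M_2$. Since $E\vert_E$ is anti-ample, such components contribute \emph{negatively} to $M_1\cdot M_2\cdot E$, so they cannot be discarded, and the chain $\mathrm{mult}_{O}(M_1\cdot M_2)\leqslant i_O(M_1\cdot M_2,E)\leqslant M_1\cdot M_2\cdot E=2m^2$ breaks down; moreover the coefficient of a base curve in $M_1\cdot M_2$ is not bounded by $(\mathrm{mult}_{F}\mathcal{M}_V)^2$ and is not controlled by the data you use. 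Note also that the cheap substitute modelled on the proof of Lemma~\ref{lemma:Kawamata}, namely $\mathrm{mult}_O(\mathcal{M}_V\vert_E)>\frac{1}{\lambda}$ against the maximal multiplicity $2m$ of an effective $(m,m)$-divisor on $\mathbb{P}^1\times\mathbb{P}^1$, only yields $m>\frac{1}{2\lambda}$; this loss of a factor $2$ is exactly why the ordinary double point case is genuinely harder than the $\frac{1}{2}(1,1,1)$ case and why the proof of \cite[Theorem~3.10]{Co00} must combine the restriction to $E$ with a refined form of the $\frac{4}{\lambda^2}$-inequality that keeps track of the fixed part of the restricted system. Your argument needs that additional input to close.
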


\begin{proof}
This is \cite[Theorem~1.7.20]{CheltsovUMN}, which is equivalent to \cite[Theorem~3.10]{Co00}.
\end{proof}

Finally, we will need one local result proved in \cite{ACPS}.
To state it, we suppose that the threefold $X$ is endowed with an action of an algebraic group $G$,
and $\mathcal{M}_X$ is~$G$-invariant.

\begin{lemma}[{\cite[Lemma~2.4]{ACPS}}]
\label{lemma:mult-2}
Suppose that the group $G$ fixes a smooth point $P\in X$ and that its induced linear action on the Zariski tangent space $T_{P}X$ is an irreducible representation.
If $P$ is a non-canonical center of the log pair $(X,\lambda\mathcal{M}_X)$ then $\mathrm{mult}_{P}(\mathcal{M}_X)>\frac{2}{\lambda}$.
\end{lemma}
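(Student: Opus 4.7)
The plan is to argue by contradiction: suppose that $\mathrm{mult}_P(\mathcal{M}_X)\leqslant \frac{2}{\lambda}$. Non-canonicity of $(X,\lambda\mathcal{M}_X)$ at $P$ produces an exceptional prime divisor $E$ over $P$ with $a(E,X,\lambda\mathcal{M}_X)<0$. The key input is Kawakita's classification \cite{Kawakita} of divisorial extractions of a smooth point on a terminal threefold: the extremal extraction $\pi\colon V\to X$ of any such $E$ is the weighted blow-up at $P$ with weights $(1,a,b)$ for some coprime positive integers $a,b$ in some local coordinates $(x,y,z)$ on $X$ centred at $P$. Its discrepancy is $a(E,X)=a+b$, and the associated monomial valuation $\nu_E$ satisfies $\nu_E(x)=1$, $\nu_E(y)=a$, $\nu_E(z)=b$.

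The central step is to force $a=b=1$ using equivariance. By running a $G$-equivariant MMP over $X$ (or, concretely, by choosing $E$ so that the associated valuation $\nu_E$ is $G$-invariant), the filtration $\mathcal{F}^{\geqslant k}=\{f\in\mathcal{O}_{X,P}:\nu_E(f)\geqslant k\}$ of the local ring becomes $G$-stable, and hence induces a $G$-stable subspace in the cotangent space $T_P^*X=\mathfrak{m}_P/\mathfrak{m}_P^2$. In our coordinates this induced subspace is spanned by the classes of $y$ and $z$ whose weight is at least $2$, and it is a proper non-zero subspace of $T_P^*X$ whenever $(a,b)\neq(1,1)$. Since the $G$-representation $T_P^*X$ (dual to $T_PX$) is irreducible by assumption, this forces $a=b=1$. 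Then $\pi$ is the ordinary blow-up of $P$, so $a(E,X)=2$ and $\nu_E(f)=\mathrm{mult}_P(f)$ for every $f$, and the non-canonicity $a(E,X,\lambda\mathcal{M}_X)<0$ translates into $\mathrm{mult}_P(\mathcal{M}_X)>\frac{2}{\lambda}$, contradicting our assumption.

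The main obstacle I expect lies in the equivariance step: one must guarantee the existence of a $G$-invariant divisorial valuation over $P$ with negative log discrepancy whose associated Kawakita weighted-blow-up structure is $G$-compatible. This is not automatic, because the $G$-orbit of a minimal-discrepancy Kawakita valuation could be continuous, and a $G$-invariant exceptional divisor could a priori be reducible with $G$ permuting its components. Handling this is precisely the technical content of \cite[Lemma~2.4]{ACPS}: one either passes to the stabiliser subgroup of a single component (verifying that the relevant cotangent representation remains irreducible) or combines the filtrations attached to the $G$-translates of a fixed component to produce a single $G$-stable proper subspace of $T_P^*X$ in any non-standard-blow-up case.
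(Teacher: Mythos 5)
The paper itself does not prove this statement: it is imported verbatim from \cite[Lemma~2.4]{ACPS}, so there is no in-paper argument to compare with, and your proposal has to stand on its own. Its skeleton is right: once one knows that the non-canonicity of $(X,\lambda\mathcal{M}_X)$ at $P$ is detected by the \emph{ordinary} blow-up, the conclusion follows at once from $a(E,X)=2$ and $\nu_E=\mathrm{mult}_P$, and you correctly isolate the whole difficulty in forcing the Kawakita weights to be $(1,1,1)$ equivariantly.

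That difficulty, however, is a genuine gap, and neither of your suggested repairs closes it. Passing to the stabiliser $H\subset G$ of one component (or of one Kawakita valuation in its $G$-orbit) loses the hypothesis: $T_PX$ need not remain irreducible as an $H$-representation, and then the filtration argument proves nothing. Combining the filtrations of the $G$-translates fails for a different reason: if each translate contributes a $2$-dimensional subspace $W_i\subset T_P^*X$, the $G$-invariant subspaces $\bigcap_i W_i$ and $\sum_i W_i$ can be $0$ and all of $T_P^*X$ respectively, so no contradiction with irreducibility is produced. The standard way to make the argument equivariant is to blow up $P$ itself, which \emph{is} $G$-equivariant: writing $m=\mathrm{mult}_P(\mathcal{M}_X)\leqslant\frac{2}{\lambda}$ and $K_{\widetilde X}+\lambda\widetilde{\mathcal{M}}\sim_{\mathbb{Q}}\pi^*(K_X+\lambda\mathcal{M}_X)+(2-\lambda m)E$ with $2-\lambda m\geqslant 0$, one obtains a non-canonical centre of $(\widetilde X,\lambda\widetilde{\mathcal{M}})$ contained in $E\cong\mathbb{P}(T_PX)$ whose $G$-orbit is honestly $G$-invariant. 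Irreducibility of $T_PX$ excludes $G$-fixed points and $G$-invariant lines in $E$; a one-dimensional centre is excluded because $\widetilde{\mathcal{M}}\vert_E\sim mL$ with $m\leqslant\frac{2}{\lambda}$ while each curve in the centre has multiplicity greater than $\frac{1}{\lambda}$, so the orbit would have to be a single $G$-invariant line; and the remaining case of an orbit of points of length at least two is exactly where the Kawakita/Pukhlikov multiplicity estimates must be applied on $\widetilde X$ rather than on $X$. As written, your proposal identifies the correct endgame but leaves this equivariance step --- which is the actual content of the cited lemma --- unproved.
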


\subsection{Finite groups acting on toric varieties}
\label{subsection:finite-groups-toric}
Let $\mathbb{T}$ be a torus of dimension $d\geqslant 2$,
and let $\Gamma_n$ be a subgroup of $\mathbb{T}$ isomorphic to $\mumu_n^d$, where $n$ is a positive integer (note that $\mathbb{T}$ contains such a subgroup for every $n$).
Let $X$ be a projective toric $\mathbb{T}$-variety of dimension~$d$.

\begin{lemma}
\label{lemma:toric-degree}\label{corollary:toric-degree}
Let $C$ be a $\Gamma_n$-invariant $\Gamma_n$-irreducible curve in $X$, and let $H$ be a very ample divisor on $X$.
If $n>H\cdot C$, then $C$ is $\mathbb{T}$-invariant.
\end{lemma}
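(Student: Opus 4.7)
The plan is to induct on the dimension $d=\dim X\geq 2$. If $C$ is contained in some proper $\mathbb{T}$-invariant subvariety $Y\subsetneq X$, then $Y$ is itself a projective toric variety, its torus $\mathbb{T}_Y$ receives $\Gamma_n$ as a subgroup isomorphic to $\mumu_n^{\dim Y}$, the restriction $H|_Y$ is very ample with $H|_Y\cdot C=H\cdot C<n$, and the inductive hypothesis (together with the base case $\dim Y=1$, where $Y=C$ is automatically $\mathbb{T}$-invariant) shows that $C$ is $\mathbb{T}_Y$-invariant and hence $\mathbb{T}$-invariant. The heart of the proof is therefore to rule out the scenario where $C$ meets the open $\mathbb{T}$-orbit of $X$; in that case I will derive a contradiction from the hypothesis $H\cdot C<n$.

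First I claim that for every prime $\mathbb{T}$-invariant divisor $D_i$ of $X$ the intersection $C\cap D_i$ consists only of $\mathbb{T}$-fixed points. Since $H$ is very ample, each lattice point $\chi$ of its full-dimensional polytope $P_H$ yields an effective representative $\mathrm{div}(s_\chi)=\sum_j b_j(\chi) D_j\sim H$, and $\chi$ can be chosen so that $b_i(\chi)\geq 1$; then $b_i(\chi)(C\cdot D_i)\leq H\cdot C<n$ forces $C\cdot D_i<n$. Since $\Gamma_n$ is the full $n$-torsion subgroup of $\mathbb{T}$, the $\Gamma_n$-orbit of any point lying in a $\mathbb{T}$-orbit of dimension $e$ has size exactly $n^e$, so the $\Gamma_n$-invariant set $C\cap D_i$, having fewer than $n$ elements, must consist only of $\mathbb{T}$-fixed points.

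Passing to the normalization $\nu\colon\widetilde{C}\to C$, a disjoint union of smooth projective curves carrying a lifted $\Gamma_n$-action, I observe that each irreducible component meets the preimage of the open $\mathbb{T}$-orbit, so the stabilizer in $\Gamma_n$ of any such component acts faithfully on it (the stabilizer at a generic preimage is trivial because $\mathbb{T}$ acts freely on the open orbit). For $q\in\widetilde{C}$ lying over a $\mathbb{T}$-fixed point of $X$, Corollary~\ref{corollary:stabilizer-faithful} then gives that $\mathrm{Stab}_{\Gamma_n}(q)$ is cyclic, and since its order divides the exponent $n$ of $\mumu_n^d$, the orbit $\Gamma_n\cdot q$ has size at least $n^{d-1}\geq n$, using $d\geq 2$.

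To conclude, pick any $\chi\in P_H$; since $C$ is not contained in the $\mathbb{T}$-invariant divisor $\mathrm{div}(s_\chi)$, the restriction $s_\chi|_C$ is nonzero, so $\mathrm{div}(s_\chi|_C)$ has positive degree $H\cdot C$ and, by the first step, is supported on the $\mathbb{T}$-fixed points of $X$. Pulling back to $\widetilde{C}$, the vanishing orders are zero away from preimages of those fixed points, and within any $\Gamma_n$-orbit of such preimages the orders agree because $s_\chi$ is a $\mathbb{T}$-weight vector and is only rescaled by $\Gamma_n$-translation. Writing $H\cdot C=\sum_O |O|\,o_O$ over the $\Gamma_n$-orbits $O$ with common orders $o_O\geq 0$, positivity of $H\cdot C$ forces some $o_O\geq 1$, whence $H\cdot C\geq |O|\geq n^{d-1}\geq n$, contradicting the hypothesis. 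The main obstacle I anticipate is the orbit-size lower bound at $\mathbb{T}$-fixed points, which crucially combines the faithful action of $\Gamma_n$ on each component of $\widetilde{C}$ with Corollary~\ref{corollary:stabilizer-faithful}.
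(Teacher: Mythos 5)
Your proof is correct and follows essentially the same route as the paper's: normalize $C$, cut with a torus-invariant effective member of $|H|$, and use Corollary~\ref{corollary:stabilizer-faithful} to see that point stabilizers in $\Gamma_n$ on the normalization are cyclic of order at most $n$, so every $\Gamma_n$-orbit met by that divisor has at least $n^{d-1}\geqslant n$ points, contradicting $H\cdot C<n$. The only organizational differences are that you phrase the reduction to proper $\mathbb{T}$-invariant subvarieties as an induction on dimension and count $\Gamma_n$-orbits on the whole $\Gamma_n$-irreducible curve (which lets you bypass the paper's explicit computation of the stabilizer of a single irreducible component), together with a logically harmless but redundant preliminary step showing that $C$ meets the torus-invariant divisors only in $\mathbb{T}$-fixed points.
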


\begin{proof}
Suppose that $C$ is not $\mathbb{T}$-invariant. By replacing $X$ by a $\mathbb{T}$-invariant toric closed subvariety if necessary,
we can assume that the curve $C$ is not contained in any proper $\mathbb{T}$-invariant subvariety of $X$ so that $\Gamma_n$ acts faithfully on $C$.
The curve $C$ is $\mathbb{T}$-invariant if and only if each of its irreducible components is $\mathbb{T}$-invariant.

Let $k$ be the number of irreducible components of the curve $C$, let $Z$ be an irreducible component of $C$ and let $\Gamma_Z$ be the stabilizer of the curve $Z$ in the~group~$\Gamma_n$. Then $\Gamma_Z$ is an index $k$ subgroup of $\mumu_n^d$, equal to the product of $d$ cyclic subgroups $\mumu_{m_i}$ for some positive integers $m_i$ which divide $n$, say $n=m_ik_i$, $i=1,\ldots, d$. Let $m=\gcd\{m_i\}_{i=1,\ldots ,d}$ and write $m_i=mr_i$ where $r_i\geqslant 1$. Then $\Gamma_Z$ contains a subgroup isomorphic to $\mumu_m^d$. By construction, we have $mn^{d-1}\geqslant \prod_{i=1}^{d} m_i$ so that
\[ k=\prod_{i=1}^{d} k_i=\prod_{i=1}^{d} \frac{n}{m_i}=\frac{n^d}{\prod_{i=1}^d m_i}\geqslant \frac{n^d}{mn^{d-1}}=\frac{n}{m}.\]
Thus $m\geqslant n/k$ and since by hypothesis $n>H\cdot C$, it follows that $m>H\cdot Z$.

Replacing $C$ by $Z$ and $n$ by $m$, we assume from now on that $C$ is irreducible. Let us show that $n\leqslant H\cdot C$. Let $f\colon\widetilde{C}\to C$ be the normalization of $C$. Then the action of the group $\Gamma_n$ lifts to a faithful action on $\widetilde{C}$. Let $D$ be a $\mathbb{T}$-invariant effective divisor such that $D\sim H$.
Then $C\not\subset\mathrm{Supp}(D)$ by assumption on $C$ not being $\mathbb{T}$-invariant. Let $\Sigma=C\cap\mathrm{Supp}(D)$,
and let $\widetilde{\Sigma}$ be its preimage in $\widetilde{C}$.
Then
$$
\big|\widetilde{\Sigma}\big|\leqslant\mathrm{deg}\Big(f^*(D\big\vert_{C})\Big)=\mathrm{deg}\Big(f^*(H\big\vert_{C})\Big)=H\cdot C.
$$
Let $P$ be a point in $\widetilde{\Sigma}$, and let $G_P$ be its stabilizer in $\Gamma_n$.
Then $G_P$ is cyclic by Lemma~\ref{corollary:stabilizer-faithful}.
On the other hand, we have
$$
|G_P|\geqslant\frac{|\Gamma_n|}{|\widetilde{\Sigma}|}\geqslant\frac{|\Gamma_n|}{H\cdot C}=\frac{n^d}{H\cdot C}\geqslant \frac{n^2}{H\cdot C}.
$$
Therefore, if $n>H\cdot C$, then the order of the cyclic group $G_P$ is strictly larger than $n$,
which is impossible, since $G_P$ is a subgroup of the group $\Gamma_n\cong\mumu_n^d$.
\end{proof}

\section{Lattices and toric geometry}
\label{section:toric}

Let $\mathbb{T}\cong\mathbb{G}_{m}^{n}$ be an algebraic torus of dimension $n$. We identify $\mathbb{T}$
with the spectrum of the group algebra $\mathbb{C}[M]$ of its character lattice $M=\mathrm{Hom}(\mathbb{T},\mathbb{G}_m)\cong \mathbb{Z}^n$.
The action of the torus $\mathbb{T}$ on itself by translations determines an injective group homomorphism $\mathbb{T}\rightarrow\mathrm{Aut}(\mathbb{T})$
and we have split exact sequence
$$
\xymatrix{1\ar@{->}[r] & \mathbb{T}\ar@{->}[r] & \mathrm{Aut}(\mathbb{T})\ar@{->}[r] & \mathrm{GL}(M)\ar@{->}[r] & 1.}
$$
The splitting is given by mapping every $A\in\mathrm{GL}(M)\cong\mathrm{GL}_{n}(\mathbb{Z})$
to the algebraic group automorphism of $\mathbb{T}$ associated to the group algebra automorphism
$\mathbb{C}[M]\to\mathbb{C}[M]$ given~by
$$
\chi^{u}\mapsto\chi^{A(u)}.
$$
We henceforth identify $\mathrm{Aut}\big(\mathbb{T}\big)=\mathbb{T}\rtimes\mathrm{GL}(M)$
and we denote its subgroup $\mathbb{T}\times\{\mathrm{id}_{M}\}$ simply by $\mathbb{T}$.
Every algebraic subgroup $G\subset\mathrm{Aut(\mathbb{T})}$
containing $\mathbb{T}$ is then of the form $G=\mathbb{T}\rtimes\mathbb{W}$
for some finite subgroup $\mathbb{W}$ of $\mathrm{GL}(M)$.

Let $\mathbb{W}_{1}$ and $\mathbb{W}_{2}$ be finite subgroups of $\mathrm{GL}(M)$,
let $G_{1}=\mathbb{T}\rtimes\mathbb{W}_{1}$ and $G_{2}=\mathbb{T}\rtimes\mathbb{W}_{2}$ be the corresponding
algebraic subgroups of $\mathrm{Aut}(\mathbb{T})$ that contain the torus $\mathbb{T}$,
let~$m_{1}\colon G_{1}\times\mathbb{T}\rightarrow\mathbb{T}$ and
$m_{2}\colon G_{2}\times\mathbb{T}\rightarrow\mathbb{T}$ be the algebraic actions they determine.

\begin{lemma}
\label{lem:conjugacy-subgroups-T}
The following conditions are equivalent:
\begin{itemize}
\item[(a)] There exist an isomorphism $\varphi\colon G_{1}\rightarrow G_{2}$ and a $\varphi$-equivariant biregular map $\Phi\colon\mathbb{T}\to\mathbb{T}$;

\item[(b)] The groups $G_{1}$ and $G_{2}$ are conjugate in $\mathrm{Aut}(\mathbb{T})$;

\item[(c)] The groups $\mathbb{W}_{1}$ and $\mathbb{W}_{2}$ are conjugate in $\mathrm{GL}(M)$.
\end{itemize}
\end{lemma}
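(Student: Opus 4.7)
The plan is to reduce everything to the observation that $\mathrm{Aut}(\mathbb{T})=\mathbb{T}\rtimes\mathrm{GL}(M)$ is actually the full biregular automorphism group of $\mathbb{T}=\mathrm{Spec}\,\mathbb{C}[M]$ as an algebraic variety. Granting this, the implications (b)$\Leftrightarrow$(c) are formal consequences of the semidirect product structure, while (a)$\Leftrightarrow$(b) follows because any equivariant biregular self-map of $\mathbb{T}$ is forced to implement the given isomorphism of the acting groups by conjugation inside $\mathrm{Aut}(\mathbb{T})$.

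More explicitly, for (c)$\Rightarrow$(b) I would use that $\mathbb{T}$ is normal in $\mathrm{Aut}(\mathbb{T})$, so conjugation by $A\in\mathrm{GL}(M)$ preserves $\mathbb{T}$ and sends $G_1=\mathbb{T}\rtimes\mathbb{W}_1$ onto $\mathbb{T}\rtimes(A\mathbb{W}_1 A^{-1})$, which equals $G_2$ as soon as $\mathbb{W}_2=A\mathbb{W}_1A^{-1}$. Conversely, for (b)$\Rightarrow$(c), if $G_2=gG_1g^{-1}$ for some $g\in\mathrm{Aut}(\mathbb{T})$, I would apply the quotient homomorphism $\mathrm{Aut}(\mathbb{T})\to\mathrm{Aut}(\mathbb{T})/\mathbb{T}\cong\mathrm{GL}(M)$, which sends each $G_i$ onto $\mathbb{W}_i$ and descends conjugation by $g$ to conjugation by its image $A\in\mathrm{GL}(M)$, whence $\mathbb{W}_2=A\mathbb{W}_1A^{-1}$. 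The implication (b)$\Rightarrow$(a) is immediate: if $G_2=gG_1g^{-1}$, the assignments $\varphi(h)=ghg^{-1}$ and $\Phi=g$ (viewed as a biregular self-map of $\mathbb{T}$) yield an isomorphism together with a $\varphi$-equivariant biregular map, as required.

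The only step whose content is not purely formal is (a)$\Rightarrow$(b). The key observation here is that a biregular automorphism $\Phi$ of $\mathbb{T}$ corresponds to a $\mathbb{C}$-algebra automorphism of $\mathbb{C}[M]$ which must send units to units. Because the units of the Laurent polynomial ring $\mathbb{C}[M]$ are exactly the elements $c\cdot\chi^m$ with $c\in\mathbb{C}^*$ and $m\in M$, any such ring automorphism is of the form $\chi^m\mapsto c(m)\chi^{A(m)}$ for a character $c\in\mathbb{T}$ and some $A\in\mathrm{GL}(M)$. Hence $\Phi$ a priori lies in $\mathbb{T}\rtimes\mathrm{GL}(M)=\mathrm{Aut}(\mathbb{T})$. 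Once this is known, the $\varphi$-equivariance condition $\Phi\circ g=\varphi(g)\circ\Phi$ for all $g\in G_1$ rearranges inside $\mathrm{Aut}(\mathbb{T})$ to $\varphi(g)=\Phi g\Phi^{-1}$, so $G_2=\varphi(G_1)=\Phi G_1\Phi^{-1}$, which is precisely (b). The only potential obstacle, which is in fact mild, is thus the identification of the variety-theoretic biregular automorphism group of $\mathbb{T}$ with $\mathbb{T}\rtimes\mathrm{GL}(M)$; everything else is bookkeeping inside a semidirect product.
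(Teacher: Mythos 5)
Your proof is correct and follows essentially the same route as the paper: each implication is handled by conjugation bookkeeping inside $\mathrm{Aut}(\mathbb{T})=\mathbb{T}\rtimes\mathrm{GL}(M)$, with (b)$\Leftrightarrow$(c) read off from the semidirect product structure and (a)$\Leftrightarrow$(b) from rearranging the equivariance identity $\varphi(g)=\Phi g\Phi^{-1}$. The only difference is that you make explicit, via the description of the units of the Laurent polynomial ring $\mathbb{C}[M]$, why an arbitrary biregular self-map of $\mathbb{T}$ automatically lies in $\mathbb{T}\rtimes\mathrm{GL}(M)$ --- a fact the paper asserts without proof at the start of Section~3 and then uses tacitly in the step (a)$\Rightarrow$(b).
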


\begin{proof}
Assume (a). Then we have a group automorphism $c_{\Phi}\colon\mathrm{Aut}(\mathbb{T})\to\mathrm{Aut}(\mathbb{T})$ given by
$\alpha\mapsto\Phi\circ\alpha\circ\Phi^{-1}$ and the hypothesis that $\Phi$ is $\varphi$-equivariant implies that the diagram
$$
\xymatrix{
G_{1}\ar@{^{(}->}[rr]\ar@{->}[d]_{\varphi} && \mathrm{Aut}\big(\mathbb{T}\big)\ar@{->}[d]^{c_{\Phi}}\\
G_{2}\ar@{^{(}->}[rr] && \mathrm{Aut}\big(\mathbb{T}\big)}
$$
must be commutative, so that the algebraic groups $G_{1}$ and $G_{2}$ are conjugate in $\mathrm{Aut}(\mathbb{T})$.
This shows that (a) implies~(b).

Now we assume (b). Then $G_{2}=\Phi G_{1}\Phi^{-1}$
for some $\Phi=(\lambda,A)$ in $\mathrm{Aut}(\mathbb{T})$.
Then $\mathbb{W}_{2}=A\mathbb{W}_{1}A^{-1}$ in $\mathrm{GL}(M)$.This shows that (b) implies (c).

Assume (c). Then $\mathbb{W}_{2}=A\mathbb{W}_{1}A^{-1}$ for some $A\in\mathrm{GL}(M)$.
Let $\Phi=(1,A)\in\mathrm{Aut}(\mathbb{T})$, and let $\varphi\colon G_{1}\to G_{2}$ be the homomorphism defined by
$g_{1}\mapsto\Phi g_{1}\Phi^{-1}$. Then $\varphi$ is an isomorphism for which we have the same commutative diagram as above.
It follows in turn that the pair $(\varphi\colon G_{1}\rightarrow G_{2},\Phi\colon\mathbb{T}\rightarrow\mathbb{T})$
is an equivariant isomorphism, which proves that (c) implies (a).
\end{proof}

Now we fix a finite subgroup $\mathbb{W}$ in $\mathrm{GL}(M)$ and we let $G=\mathbb{T}\rtimes\mathbb{W}$ be the corresponding algebraic subgroup in $\mathrm{Aut}(\mathbb{T})$ that contains $\mathbb{T}$. The group $\mathbb{W}$ acts naturally on the vector space
$$
N_{\mathbb{Q}}=\mathrm{Hom}\big(M,\mathbb{Z}\big)\otimes\mathbb{Q}.
$$
By \cite[Chapter~2]{Cox}, the choice of a $\mathbb{W}$-invariant convex lattice polytope in
$N_{\mathbb{Q}}$ determines a projective toric variety $X$ with an open $\mathbb{T}$-orbit $\mathbb{T}_{X}\cong\mathbb{T}$
such that the $G$-action on the torus $\mathbb{T}_X$ extends to a faithful regular $G$-action $m_{X}\colon G\times X\rightarrow X$.
Thus, we can identify $G$ with its image in the group $\mathrm{Aut}(X)$ by the injective
group homomorphism $\rho_{X}\colon G\rightarrow\mathrm{Aut}(X)$ given by $g\mapsto m_{X}(g,\cdot)$.
Then $\mathrm{Aut}(X)$ is an affine algebraic group having $\mathbb{T}$ as a maximal torus \cite{Demazure}.

Let $G_X$ be the normalizer of the torus $\mathbb{T}$ in the group $\mathrm{Aut}(X)$.
Then $G_X$ is an algebraic group that contains $G$. Moreover, the torus $\mathbb{T}_{X}$ is $G_{X}$-invariant,
and the induced effective action of the group $G_{X}$ on the torus $\mathbb{T}_{X}$ corresponds
to an injective group homomorphism
$$
G_{X}\rightarrow\mathrm{Aut}\big(\mathbb{T}_{X}\big)\cong\mathrm{Aut}\big(\mathbb{T}\big),
$$
whose image is equal to $\mathbb{T}\rtimes\mathbb{W}_{X}$
for a finite subgroup \mbox{$\mathbb{W}_{X}\subset\mathrm{GL}(M)$} that contains $\mathbb{W}$.
Thus, we have the following commutative diagram of exact sequences:
$$
\xymatrix{1\ar@{->}[r] & \mathbb{T}\ar@{->}[r]\ar@{=}[d] & G=\mathbb{T}\rtimes\mathbb{W}\ar@{->}[r]\ar@{_{(}->}[d] & \mathbb{W}\ar@{->}[r]\ar@{_{(}->}[d] & 1\\
1\ar@{->}[r] & \mathbb{T}\ar@{->}[r] & G_{X}\ar@{->}[r]^{\nu_X} & \mathbb{W}_{X}\ar@{->}[r] & 1.}
$$
The group $\mathbb{W}_{X}$ is usually called the \emph{Weyl group} of the toric variety $X$.

\begin{corollary}
\label{corollary:maximal}
If $\mathbb{W}$ is a maximal finite subgroup of $\mathrm{GL}(M)$ then \mbox{$G_{X}\cong\mathbb{T}\rtimes\mathbb{W}$.}
\end{corollary}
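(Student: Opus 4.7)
The plan is to prove the corollary as a direct consequence of the commutative diagram of exact sequences displayed immediately before the statement. That diagram exhibits $\mathbb{W}$ as a subgroup of $\mathbb{W}_X$ inside $\mathrm{GL}(M)$, and shows that $G=\mathbb{T}\rtimes \mathbb{W}$ sits inside $G_X$ compatibly with the two quotient maps to $\mathbb{W}$ and $\mathbb{W}_X$.

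First, I would observe that $\mathbb{W}_X$ is a \emph{finite} subgroup of $\mathrm{GL}(M)$ (this is built into the construction of $\mathbb{W}_X$ as the image of $G_X$ in $\mathrm{Aut}(\mathbb{T})/\mathbb{T}=\mathrm{GL}(M)$, with the quotient homomorphism $\nu_X$ having kernel the maximal torus $\mathbb{T}$, so that $\mathbb{W}_X\cong G_X/\mathbb{T}$ is finite since $G_X$ is an affine algebraic group whose identity component equals $\mathbb{T}$). Hence the inclusions $\mathbb{W}\subset \mathbb{W}_X\subset \mathrm{GL}(M)$ present $\mathbb{W}_X$ as a finite overgroup of $\mathbb{W}$ in $\mathrm{GL}(M)$. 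Since $\mathbb{W}$ is maximal among finite subgroups of $\mathrm{GL}(M)$ by hypothesis, we conclude $\mathbb{W}=\mathbb{W}_X$.

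Next, I would deduce $G=G_X$ from $\mathbb{W}=\mathbb{W}_X$ by a direct diagram chase (equivalently, by the five lemma applied to the commutative diagram of exact sequences). Concretely, given any $g\in G_X$, the element $\nu_X(g)$ lies in $\mathbb{W}_X=\mathbb{W}=\nu_X(G)$, so there exists $g'\in G$ with $\nu_X(g')=\nu_X(g)$; then $gg'^{-1}\in\ker\nu_X=\mathbb{T}\subset G$, whence $g\in G$. This gives $G_X\subset G$, and combined with the inclusion $G\hookrightarrow G_X$ from the diagram we obtain $G_X=G=\mathbb{T}\rtimes\mathbb{W}$.

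There is no real obstacle here; the content of the corollary is simply that the vertical inclusions in the diagram preceding the statement are forced to be equalities by the maximality assumption. The only thing worth verifying carefully is the finiteness of $\mathbb{W}_X$, which was already part of the setup via Demazure's theorem (the maximal torus $\mathbb{T}$ in the affine algebraic group $\mathrm{Aut}(X)$ has finite Weyl quotient $N_{\mathrm{Aut}(X)}(\mathbb{T})/\mathbb{T}=G_X/\mathbb{T}$), so no further input is required.
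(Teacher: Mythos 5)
Your proof is correct and follows exactly the route the paper intends: the paper states this corollary without proof precisely because, as you observe, the finiteness of $\mathbb{W}_X\subset\mathrm{GL}(M)$ together with the inclusion $\mathbb{W}\subset\mathbb{W}_X$ forces $\mathbb{W}=\mathbb{W}_X$ by maximality, and then $G_X\cong\mathbb{T}\rtimes\mathbb{W}_X=\mathbb{T}\rtimes\mathbb{W}$ via the injective homomorphism $G_X\to\mathrm{Aut}(\mathbb{T})$ from the preceding diagram. No issues.
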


As a consequence of Lemma \ref{lem:conjugacy-subgroups-T}, we obtain the following two assertions:

\begin{corollary}
\label{corollary:correspondence}
There exists a functorial one-to-one correspondence between finite subgroups $\mathbb{W}\subset\mathrm{GL}(M)$ up to conjugacy
and projective toric $\mathbb{T}$-varieties $X$ whose Weyl groups contain a subgroup isomorphic to $\mathbb{W}$
up to $\mathbb{T}\rtimes\mathbb{W}$-equivariant birational equivalence.
\end{corollary}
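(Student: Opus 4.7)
The plan is to construct the correspondence explicitly in both directions and verify well-definedness and bijectivity by reducing to Lemma~\ref{lem:conjugacy-subgroups-T}. Starting from a finite subgroup $\mathbb{W}\subset\mathrm{GL}(M)$, I would pick any $\mathbb{W}$-invariant convex lattice polytope $P\subset N_{\mathbb{Q}}$ (for instance the convex hull of the $\mathbb{W}$-orbit of a sufficiently general lattice point) and associate to $\mathbb{W}$ the resulting projective toric $\mathbb{T}$-variety $X_P$. By the construction recalled just before Lemma~\ref{lem:conjugacy-subgroups-T}, the group $G=\mathbb{T}\rtimes\mathbb{W}$ acts faithfully on $X_P$ and $\mathbb{W}_{X_P}\supseteq\mathbb{W}$. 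In the other direction, given a projective toric $\mathbb{T}$-variety $X$ whose Weyl group $\mathbb{W}_X$ contains a subgroup $\mathbb{W}'\cong\mathbb{W}$, I would restrict the $\mathbb{T}\rtimes\mathbb{W}'$-action to the open orbit $\mathbb{T}_X$ and transport it to $\mathbb{T}$ via a $\mathbb{T}$-equivariant identification $\mathbb{T}_X\cong\mathbb{T}$; the resulting subgroup $\mathbb{W}''\subset\mathrm{GL}(M)$ has a $\mathrm{GL}(M)$-conjugacy class independent of the identification by Lemma~\ref{lem:conjugacy-subgroups-T}.

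Next, I would verify that each assignment descends to the appropriate quotient. Two $\mathbb{W}$-invariant polytopes $P_{1},P_{2}$ yield toric varieties $X_{P_{1}},X_{P_{2}}$ both containing $\mathbb{T}$ as their unique open $\mathbb{T}$-orbit on which $G$ acts in the same way, so the identity on $\mathbb{T}$ extends to a $G$-equivariant birational equivalence $X_{P_{1}}\dashrightarrow X_{P_{2}}$. In the other direction, any $G$-equivariant birational map $f\colon X\dashrightarrow X'$ of toric $\mathbb{T}$-varieties is defined on an open dense $\mathbb{T}$-stable subset whose intersection with the open $\mathbb{T}$-orbit $\mathbb{T}_{X}$ is non-empty and hence, by $\mathbb{T}$-invariance of a single orbit, equal to $\mathbb{T}_{X}$; dominance of $f$ together with $\mathbb{T}$-equivariance force $f(\mathbb{T}_{X})$ to be a dense $\mathbb{T}$-orbit in $X'$, thus $f(\mathbb{T}_{X})=\mathbb{T}_{X'}$. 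Applying the same reasoning to $f^{-1}$ gives a $\mathbb{T}\rtimes\mathbb{W}$-equivariant biregular isomorphism between the open tori, and Lemma~\ref{lem:conjugacy-subgroups-T} then shows that the associated Weyl subgroups are conjugate in $\mathrm{GL}(M)$.

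Finally, I would verify that the two assignments are mutually inverse. The composition $\mathbb{W}\mapsto X_{P}\mapsto\mathbb{W}$ is tautological, since the induced $G$-action on the open orbit of $X_{P}$ is by construction the given action of $G$ on $\mathbb{T}$. Conversely, starting with $X$, extracting the conjugacy class of $\mathbb{W}\subset\mathrm{GL}(M)$, and rebuilding some $X_{P}$ from a $\mathbb{W}$-invariant polytope, both $X$ and $X_{P}$ carry $\mathbb{T}$ as a common open $\mathbb{T}$-orbit with the same $G$-action, yielding the required $G$-equivariant birational equivalence. Functoriality of the correspondence with respect to inclusions of groups and equivariant morphisms is then inherited from the functoriality present in Lemma~\ref{lem:conjugacy-subgroups-T}. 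The one delicate point that I expect to require slightly careful justification is precisely the claim that a $G$-equivariant birational map of complete toric $\mathbb{T}$-varieties restricts to a biregular map on the open tori; this relies on the uniqueness of the open $\mathbb{T}$-orbit together with $\mathbb{T}$-equivariance, as sketched in the previous paragraph.
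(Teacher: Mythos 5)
Your proposal is correct and follows the same route as the paper, which derives the corollary directly from Lemma~\ref{lem:conjugacy-subgroups-T}; you merely make explicit the key verification (that a $\mathbb{T}\rtimes\mathbb{W}$-equivariant birational map of projective toric $\mathbb{T}$-varieties restricts to a biregular map of the open orbits), which the paper leaves implicit. One tiny quibble: the convex hull of the $\mathbb{W}$-orbit of a single general lattice point need not be full-dimensional (e.g.\ when $\mathbb{W}$ is trivial), so take instead the convex hull of the $\mathbb{W}$-orbit of a full-dimensional lattice polytope.
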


\begin{corollary}
\label{corollary:Bir}
Let $\mathbb{W}$ be a finite subgroup in $\mathrm{GL}(M)$ and let $X$ be a projective toric variety whose Weyl group $\mathbb{W}_{X}$
contains $\mathbb{W}$. Then
$$
\mathrm{Bir}^{\mathbb{T}\rtimes\mathbb{W}}(X)\cong\mathbb{T}\rtimes\widehat{\mathbb{W}}
$$
where $\widehat{\mathbb{W}}$ is the normalizer of the group $\mathbb{W}$ in $\mathrm{GL}(M)$.
\end{corollary}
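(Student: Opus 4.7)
The plan is to reduce the computation to the open torus orbit $\mathbb{T}_{X}\cong\mathbb{T}$, where $\mathrm{Bir}^{G}(\mathbb{T})$ can be identified with the normalizer of $G=\mathbb{T}\rtimes\mathbb{W}$ inside $\mathrm{Aut}(\mathbb{T})$, and then to compute this normalizer explicitly using the decomposition $\mathrm{Aut}(\mathbb{T})=\mathbb{T}\rtimes\mathrm{GL}(M)$ established at the beginning of the section.

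First I would observe that restriction to the open orbit induces a group isomorphism $\mathrm{Bir}^{G}(X)\cong\mathrm{Bir}^{G}(\mathbb{T})$. Since $\mathbb{T}_{X}$ is a $G$-invariant open dense subset of $X$, any $\varphi$-equivariant birational self-map $\Phi\colon X\dashrightarrow X$ restricts to a $\varphi$-equivariant birational self-map of $\mathbb{T}_{X}\cong\mathbb{T}$; conversely, every $\varphi$-equivariant birational self-map of $\mathbb{T}$ extends uniquely to a birational self-map of $X$, and the $\varphi$-equivariance condition propagates from a dense open subset.

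Next I would identify $\mathrm{Bir}^{G}(\mathbb{T})$ with the normalizer $N_{\mathrm{Aut}(\mathbb{T})}(G)$. Since the maximal torus $\mathbb{T}$ is the identity component of $G$, it is characteristic in $G$, so every $\varphi\in\mathrm{Aut}(G)$ restricts to $\varphi\vert_{\mathbb{T}}\in\mathrm{Aut}(\mathbb{T})$. Because $\mathbb{T}$ acts simply transitively on itself, any $\varphi$-equivariant rational self-map $\Phi\colon\mathbb{T}\dashrightarrow\mathbb{T}$ is automatically regular: picking a point $p$ in the domain of $\Phi$ and using the formula $\Phi(t\cdot p)=\varphi\vert_{\mathbb{T}}(t)\cdot\Phi(p)$ extends $\Phi$ regularly to all of $\mathbb{T}$. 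Hence $\Phi\in\mathrm{Aut}(\mathbb{T})$, and the equivalence of conditions~(a) and~(b) in Lemma~\ref{lem:conjugacy-subgroups-T}, applied with $G_{1}=G_{2}=G$, states precisely that $\Phi$ is $\varphi$-equivariant for some $\varphi\in\mathrm{Aut}(G)$ if and only if $\Phi G\Phi^{-1}=G$ in $\mathrm{Aut}(\mathbb{T})$.

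Finally, I would compute this normalizer inside $\mathrm{Aut}(\mathbb{T})=\mathbb{T}\rtimes\mathrm{GL}(M)$. A direct computation of $(\lambda,A)(\mu,B)(\lambda,A)^{-1}$ with $(\lambda,A)\in\mathbb{T}\rtimes\mathrm{GL}(M)$ and $(\mu,B)\in\mathbb{T}\rtimes\mathbb{W}$ shows that its $\mathrm{GL}(M)$-component equals $ABA^{-1}$, while its $\mathbb{T}$-component automatically lies in $\mathbb{T}$. Consequently $(\lambda,A)$ normalizes $G$ if and only if $A\mathbb{W}A^{-1}=\mathbb{W}$, i.e.\ $A\in\widehat{\mathbb{W}}$, with $\lambda\in\mathbb{T}$ unconstrained. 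Hence $N_{\mathrm{Aut}(\mathbb{T})}(G)=\mathbb{T}\rtimes\widehat{\mathbb{W}}$, which combined with the previous two steps yields the claimed isomorphism. The argument is essentially bookkeeping; the only mildly subtle step is the automatic regularity of equivariant birational self-maps of $\mathbb{T}$, which rests on the simple transitivity of the $\mathbb{T}$-action on itself.
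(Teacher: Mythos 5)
Your proof is correct and follows essentially the route the paper intends: the paper states Corollary~\ref{corollary:Bir} as an immediate consequence of Lemma~\ref{lem:conjugacy-subgroups-T} without further argument, and your three steps (restriction to the open orbit, identification of $\mathrm{Bir}^{G}(\mathbb{T})$ with the normalizer of $G$ in $\mathrm{Aut}(\mathbb{T})$ via that lemma, and the explicit normalizer computation in $\mathbb{T}\rtimes\mathrm{GL}(M)$) are exactly the details being left implicit. Your observation that a $\varphi$-equivariant birational self-map of $\mathbb{T}$ is automatically biregular is the one genuinely needed ingredient the paper does not spell out, and your translation argument for it is valid.
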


Given a subgroup $\mathbb{W}\subset\mathrm{GL}(M)$, we say that the lattice $M\cong\mathbb{Z}^{n}$ is $\mathbb{W}$-irreducible (or an irreducible $\mathbb{W}$-module) if $M$ does not contain any proper $\mathbb{W}$-invariant sublattice $M^\prime$ such that $M/M^\prime$ is torsion free.

\begin{corollary}
\label{corollary:Bir-maximal}
Let $\mathbb{W}$ be a maximal finite subgroup in $\mathrm{GL}(M)$ and let $X$ be a projective toric variety whose Weyl group is $\mathbb{W}$.
Suppose that $M$ is \mbox{$\mathbb{W}$-irreducible}. Then
$$
\mathrm{Bir}^{\mathbb{T}\rtimes\mathbb{W}}(X)\cong\mathbb{T}\rtimes\mathbb{W}.
$$
\end{corollary}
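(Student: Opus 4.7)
The plan is to deduce the corollary from Corollary~\ref{corollary:Bir}, which identifies $\mathrm{Bir}^{\mathbb{T}\rtimes\mathbb{W}}(X)$ with $\mathbb{T}\rtimes\widehat{\mathbb{W}}$, where $\widehat{\mathbb{W}}$ denotes the normalizer of $\mathbb{W}$ in $\mathrm{GL}(M)$; the assertion is therefore equivalent to the equality $\widehat{\mathbb{W}}=\mathbb{W}$. By maximality of $\mathbb{W}$, any finite subgroup of $\mathrm{GL}(M)$ containing $\mathbb{W}$ coincides with $\mathbb{W}$, so it suffices to show that $\widehat{\mathbb{W}}$ is finite. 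Conjugation yields a homomorphism $\widehat{\mathbb{W}}\to\mathrm{Aut}(\mathbb{W})$ with finite image (as $\mathbb{W}$ is finite) and kernel the centralizer $C=C_{\mathrm{GL}(M)}(\mathbb{W})$, so the problem reduces to showing that $C$ is finite.

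Schur's lemma applied to the irreducible $\mathbb{Q}[\mathbb{W}]$-module $M_{\mathbb{Q}}:=M\otimes\mathbb{Q}$ identifies $D:=\mathrm{End}_{\mathbb{Q}[\mathbb{W}]}(M_{\mathbb{Q}})$ with a finite-dimensional division $\mathbb{Q}$-algebra, and $C$ with the unit group $\mathcal{O}_D^{\times}$ of the $\mathbb{Z}$-order $\mathcal{O}_D:=D\cap\mathrm{End}(M)$. For any torsion element $c\in C$, the subgroup $\mathbb{W}\cdot\langle c\rangle\subset\mathrm{GL}(M)$ is finite, so maximality of $\mathbb{W}$ yields $\mathbb{W}\cdot\langle c\rangle=\mathbb{W}$ and hence $c\in\mathbb{W}\cap C=Z(\mathbb{W})$. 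Consequently the torsion subgroup of $C$ equals $Z(\mathbb{W})$, and the remaining task is to rule out elements of infinite order in $C$.

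For this final step I would use a rigidity argument based on invariant quadratic forms. Averaging over $\mathbb{W}$ produces a $\mathbb{W}$-invariant positive definite quadratic form $q$ on $M\otimes\mathbb{R}$, whose orthogonal group $\mathrm{Aut}(M,q)\subset\mathrm{GL}(M)$ is finite, contains $\mathbb{W}$, and therefore equals $\mathbb{W}$ by maximality. Given $u\in C$, the assignment $q'\mapsto u^{*}q'$ acts linearly on the finite-dimensional real vector space of $\mathbb{W}$-invariant symmetric bilinear forms on $M\otimes\mathbb{R}$, preserving its closed cone of positive semidefinite forms; by Perron--Frobenius one obtains an eigenform $q_0$ in that cone satisfying $u^{*}q_0=\lambda q_0$ with $\lambda>0$. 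Once $q_0$ is known to be positive definite, the determinant identity $(\det u)^{2}\det(q_0)=\lambda^{n}\det(q_0)$ forces $\lambda=1$, so $u\in\mathrm{Aut}(M,q_0)=\mathbb{W}$, as desired.

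The hardest part will be to ensure that $q_0$ lies in the interior of the cone rather than only on its boundary: the kernel $\ker q_0$ is automatically a $\mathbb{W}$-invariant $\mathbb{R}$-subspace of $M\otimes\mathbb{R}$, but the hypothesis of $\mathbb{Q}$-irreducibility only excludes proper nontrivial $\mathbb{W}$-invariant $\mathbb{Q}$-subspaces of $M_{\mathbb{Q}}$. Overcoming this subtlety requires combining the $\mathbb{Q}$-structure of $M$ with the decomposition of $M\otimes\mathbb{R}$ as an $\mathbb{R}[\mathbb{W}]$-module governed by $D$, for example through a Galois-averaging refinement of the Perron--Frobenius argument or through a direct analysis of non-torsion units in $\mathcal{O}_D$ via Kronecker's theorem on integer matrices whose eigenvalues lie on the unit circle. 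Once settled, $C=Z(\mathbb{W})$ is finite, hence $\widehat{\mathbb{W}}$ is finite and equal to $\mathbb{W}$, and Corollary~\ref{corollary:Bir} delivers the isomorphism $\mathrm{Bir}^{\mathbb{T}\rtimes\mathbb{W}}(X)\cong\mathbb{T}\rtimes\mathbb{W}$.
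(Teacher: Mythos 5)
Your reduction is exactly the paper's: by Corollary~\ref{corollary:Bir} the statement amounts to $\widehat{\mathbb{W}}=\mathbb{W}$, which by maximality follows once the normalizer, and hence the centralizer $C=C_{\mathrm{GL}(M)}(\mathbb{W})$, is shown to be finite. Your observation that torsion elements of $C$ land in $Z(\mathbb{W})$ by maximality is correct, but the step you flag as unresolved --- ruling out infinite-order elements of $C=\mathcal{O}_D^{\times}$ --- is a genuine gap, and your Perron--Frobenius sketch does not close it: when $M\otimes\mathbb{R}$ is $\mathbb{Q}$-irreducible but not $\mathbb{R}$-irreducible, the eigenform $q_0$ can be degenerate with irrational kernel, and no amount of determinant bookkeeping recovers positivity. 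To be clear about why this cannot be waved away, $\mathbb{Q}$-irreducibility alone really is insufficient: $\mumu_{10}$ acting on $\mathbb{Z}[\zeta_5]\cong\mathbb{Z}^4$ is $\mathbb{Q}$-irreducible, yet its centralizer in $\mathrm{GL}_4(\mathbb{Z})$ contains the infinite group $\mathbb{Z}[\zeta_5]^{\times}$. (That group is of course not maximal finite, which is precisely why maximality must enter the finiteness of $C$ and not merely the last step.) For what it is worth, the paper's own proof is even terser here --- it deduces finiteness of the centralizer from Maschke's theorem and irreducibility alone --- so you have correctly located the one nontrivial point of the argument, even though you did not resolve it.

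Two ways to close the gap. In the rank-three case that the paper actually uses, note that $\dim_{\mathbb{Q}}D$ divides $3$, so $D$ is either $\mathbb{Q}$ or a cubic field; in the latter case the double centralizer theorem forces the image of $\mathbb{Q}[\mathbb{W}]$ to equal $D$ itself, so $\mathbb{W}$ embeds into the torsion of a cubic field, hence $\mathbb{W}\subseteq\{\pm 1\}$, contradicting irreducibility. Thus $D=\mathbb{Q}$ and $C=\{\pm\mathrm{id}\}$, which is all the paper needs. For arbitrary rank one has to invoke (or reprove) the fact, going back to Plesken's work on irreducible maximal finite subgroups of $\mathrm{GL}_n(\mathbb{Z})$, that maximality forces the endomorphism algebra $D$ to be $\mathbb{Q}$, an imaginary quadratic field, or a totally definite quaternion algebra over $\mathbb{Q}$ --- exactly the division algebras all of whose orders have finite unit groups. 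If you cite or prove that input, your argument becomes complete; as written, the final paragraph is a plan rather than a proof.
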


\begin{proof}
Since $M$ is $\mathbb{W}$-irreducible, $M\otimes\mathbb{Q}$ is an irreducible $\mathbb{Q}$-representation of the group~$\mathbb{W}$.
Applying  Maschke's theorem, we conclude that the centralizer of $\mathbb{W}$ in $\mathrm{GL}(M)$ is finite.
Since  $\mathbb{W}$ is finite, the normalizer $\widehat{\mathbb{W}}$ of the group $\mathbb{W}$ in $\mathrm{GL}(M)$ is also finite, and hence, $\widehat{\mathbb{W}}=\mathbb{W}$ because $\mathbb{W}$ is a maximal finite subgroup. The assertion then follows from Corollary~\ref{corollary:Bir}.
\end{proof}

The choice of the $n$-dimensional toric variety $X$ whose Weyl group contains $\mathbb{W}$ is not unique.
In particular, taking a $G$-equivariant toric resolution of singularities and then applying the $G$-equivariant toric Minimal Model Program,
we can assume that:
\begin{itemize}
\item The toric variety $X$ has terminal singularities,
\item Every $G$-invariant Weil divisor in $X$ is a $\mathbb{Q}$-Cartier divisor,
\item There exists a $G$-Mori fibre space structure $\pi\colon X\to Z$ (see \cite[Definition~1.1.5]{CheltsovShramov}).
\end{itemize}
In particular, if $Z$ is a point, then $X$ is a toric Fano variety with terminal singularities,
and $X$ is $G$-minimal, i.e. the group of $G$-invariant Weil divisors is of rank $1$.

Since $\pi\colon X\to Z$ is a surjective morphism of toric varieties, it induces a surjective $G$-equivariant morphism
$\mathbb{T}_X\rightarrow\mathbb{T}_Z$ between the corresponding open orbits in $X$ and $Z$, which is a group homomorphisms when we identify these orbits with the corresponding maximal tori $\mathbb{T}$ of $\mathrm{Aut}(X)$ and $\mathbb{T}'$ of $\mathrm{Aut}(Z)$ respectively.
The kernel of this homomorphism is a $\mathbb{W}$-invariant subtorus in $\mathbb{T}$, whose character lattice is a $\mathbb{W}$-invariant sublattice of the lattice $M$. This gives

\begin{corollary}
\label{corollary:not-G-solid}
If $\mathrm{dim}(Z)\geqslant 1$ then the lattice $M$ is not $\mathbb{W}$-irreducible.
\end{corollary}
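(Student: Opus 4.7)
The plan is to extract from the Mori fibration $\pi\colon X\to Z$ a proper nonzero $\mathbb{W}$-invariant sublattice $M'\subset M$ with $M/M'$ torsion-free, since such a sublattice directly contradicts $\mathbb{W}$-irreducibility. The raw material is already assembled in the discussion preceding the corollary: the restriction of $\pi$ to the open orbits is a surjective $\mathbb{W}$-equivariant group homomorphism of tori $f\colon\mathbb{T}\to\mathbb{T}_Z$, whose pullback of characters is an injective $\mathbb{W}$-equivariant homomorphism $f^{*}\colon M_Z\hookrightarrow M$.

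Concretely, I would first take $M_{0}:=f^{*}(M_Z)\subset M$. It is $\mathbb{W}$-invariant, its rank equals $\dim Z\geqslant 1$, so $M_{0}\neq 0$; and since $\pi$ is a Mori fibre space its fibres are positive-dimensional, forcing $\dim Z<\dim X=\mathrm{rank}\,M$, so $M_{0}$ is proper. The one issue not automatic is torsion-freeness of $M/M_{0}$, which fails precisely when $\ker(f)$ is a disconnected diagonalizable group. To remedy this, I would pass to the saturation
$$
M':=\bigl\{\,m\in M\,:\,km\in M_{0}\text{ for some }k\in\mathbb{Z}_{>0}\,\bigr\},
$$
for which $M/M'$ is torsion-free by construction. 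Since $M'$ has the same rank as $M_{0}$ it stays both nonzero and proper, and it remains $\mathbb{W}$-invariant because $\mathbb{W}$ acts on $M$ by lattice automorphisms preserving $M_{0}$: if $km\in M_{0}$ and $w\in\mathbb{W}$, then $k(w\cdot m)=w\cdot(km)\in M_{0}$, so $w\cdot m\in M'$. I do not foresee any delicate step; the only point worth flagging is the small saturation bookkeeping needed to match precisely the formulation of $\mathbb{W}$-irreducibility adopted in the excerpt, everything else being formal from the dictionary between toric morphisms and maps of character lattices.
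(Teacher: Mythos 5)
Your proof is correct and follows essentially the same route as the paper, which also dualizes the $\mathbb{W}$-equivariant surjection $\mathbb{T}\to\mathbb{T}_Z$ induced on open orbits to produce a proper nonzero $\mathbb{W}$-invariant sublattice of $M$. The paper phrases this via the ($\mathbb{W}$-invariant) kernel subtorus, which implicitly takes care of the torsion-freeness of the quotient that you handle explicitly by saturating $f^{*}(M_Z)$.
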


In fact, we can say more:

\begin{proposition}
\label{proposition:G-solid}
Assume that $Z$ is a point. Then the following are equivalent:
\begin{enumerate}
\item[(a)] The toric Fano variety $X$ is $G$-solid;
\item[(b)] The character lattice $M$ is $\mathbb{W}$-irreducible.
\end{enumerate}
\end{proposition}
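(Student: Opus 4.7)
The plan is to prove both implications by a single \emph{toric reduction} argument: any $G$-Mori fibre space that is $G$-birational to $X$ is itself a toric variety with maximal torus $\mathbb{T}$, which reduces the question of $G$-solidity to the purely lattice-theoretic question of existence of a proper $\mathbb{W}$-invariant saturated sublattice of $M$.

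To carry out this reduction, I would exploit the fact that $\mathbb{T}$ is the connected component of the identity in $G=\mathbb{T}\rtimes\mathbb{W}$, hence a characteristic subgroup. Given any birational $G$-map $\Phi\colon X\dashrightarrow X''$ (in the sense of Definition~\ref{definition:G-map}, possibly twisted by $\varphi\in\mathrm{Aut}(G)$) to a $G$-Mori fibre space $\pi''\colon X''\to Z''$, the restriction $\varphi|_\mathbb{T}$ is an algebraic group automorphism of $\mathbb{T}$, so $\Phi$ is $\mathbb{T}$-equivariant up to an element of $\mathrm{GL}(M)$. Choosing a $\mathbb{T}$-invariant open subset $U\subset X$ containing the open orbit $\mathbb{T}_X$ on which $\Phi$ restricts to an isomorphism, the image $\Phi(\mathbb{T}_X)$ becomes an open $\mathbb{T}$-orbit in $X''$, so $X''$ is toric with maximal torus $\mathbb{T}$. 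Since $\pi''$ is then $\mathbb{T}$-equivariant with $\mathbb{T}$ acting on $Z''$ through a quotient, the variety $Z''$ is also toric and $\pi''$ is a toric morphism; the corresponding inclusion $M_{Z''}\hookrightarrow M$ is a saturated sublattice, invariant under the image of $\mathbb{W}$ in $\mathrm{GL}(M)$, which is conjugate to $\mathbb{W}$ via Lemma~\ref{lem:conjugacy-subgroups-T}.

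Given this, the implication (a)$\Rightarrow$(b) follows in contrapositive form: if $X$ is not $G$-solid, the reduction produces a toric $G$-Mori fibre space $X''\to Z''$ with $\dim Z''\geqslant 1$, and Corollary~\ref{corollary:not-G-solid} applied to $X''$ yields a proper $\mathbb{W}$-invariant saturated sublattice of $M$ (up to the conjugacy above, which preserves the irreducibility property). For (b)$\Rightarrow$(a), also in contrapositive form, I would start with a proper $\mathbb{W}$-invariant saturated sublattice $M'\subset M$, use it to build a $\mathbb{W}$-equivariant quotient $\mathbb{T}\to\mathbb{T}'$ of positive and strictly smaller dimension, and then compactify $\mathbb{T}'$ as a projective toric $\mathbb{W}$-variety $Z'$. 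This produces a $G$-equivariant rational map $X\dashrightarrow Z'$; resolving its indeterminacy $G$-equivariantly by toric blow-ups and running the $G$-equivariant relative MMP over $Z'$ then yields a $G$-Mori fibre space whose base dominates $Z'$ and is therefore positive-dimensional, showing that $X$ is not $G$-solid.

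The main obstacle is the toric reduction step, and specifically the verification that $\Phi$ sends the open $\mathbb{T}$-orbit of $X$ to an open $\mathbb{T}$-orbit in $X''$: a priori $\Phi$ is only rational, so one must argue that, after restricting to a suitable $\mathbb{T}$-invariant domain of definition, its indeterminacy locus does not swallow the open $\mathbb{T}$-orbit. The remaining ingredients---equivariant resolution of indeterminacy, equivariant relative MMP, and the correspondence between toric fibrations and quotient lattices---are by now standard in the equivariant toric setting, but they must be combined carefully with Lemma~\ref{lem:conjugacy-subgroups-T} to handle any possible $\varphi$-twist in the definition of a birational $G$-map.
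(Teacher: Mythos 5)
Your proposal is correct and follows essentially the same route as the paper: one direction is the construction of a $G$-equivariant fibration from a proper $\mathbb{W}$-invariant saturated sublattice, followed by equivariant resolution and relative MMP, and the other is the toric reduction of an arbitrary $G$-Mori fibre space $G$-birational to $X$ combined with Corollary~\ref{corollary:not-G-solid}; the paper merely leaves that reduction implicit, deferring to Corollary~\ref{corollary:correspondence} and Lemma~\ref{lem:conjugacy-subgroups-T}, whereas you spell it out. Note only that you have swapped the labels of the two implications: ``$X$ not $G$-solid $\Rightarrow M$ not $\mathbb{W}$-irreducible'' is the contrapositive of (b)$\Rightarrow$(a), not of (a)$\Rightarrow$(b), but both directions are in fact established.
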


\begin{proof}
The implication (b)$\Rightarrow$(a) follows from Corollary~\ref{corollary:not-G-solid}.
Let us prove that (a)$\Rightarrow$(b). Assume that $X$ is $G$-solid and suppose that $M$ is not $\mathbb{W}$-irreducible.
Then $M$ contains a  proper $\mathbb{W}$-invariant sublattice $M^\prime$ such that $M/M^\prime$ is a torsion free $\mathbb{W}$-module.
This implies that the torus $\mathbb{T}$ contains a proper $G$-invariant subtorus $\mathbb{T}^\prime$,
which gives an exact $G$-equivariant sequence of tori
$$
1\longrightarrow\mathbb{T}^\prime\longrightarrow\mathbb{T}\longrightarrow \mathbb{T}^{\prime\prime}\longrightarrow 1,
$$
where $\mathbb{T}^{\prime\prime}\cong\mathbb{T}\slash\mathbb{T}^{\prime}$.
This gives us a $G$-equivariant dominant rational map  $\psi\colon X\dasharrow X^{\prime\prime}$,
where $X^{\prime\prime}$ is a $G$-equivariant projective completion of the torus $\mathbb{T}^{\prime\prime}$.

Then there exists a $G$-equivariant commutative diagram
$$
\xymatrix{ &\widetilde{X}\ar@{->}[dl]_{\alpha}\ar@{->}[dr]^{\beta}&\\
X\ar@{-->}[rr]_{\psi}&&X^{\prime\prime}}
$$
such that $\alpha$ is a $G$-equivariant birational morphism, $\widetilde{X}$ is a smooth projective toric variety,
and $\beta$ is a surjective $G$-equivariant morphism.
Note that
$$
\mathrm{dim}\big(X\big)>\mathrm{dim}\big(\mathbb{T}^{\prime\prime}\big)\geqslant 1.
$$
Now, we can apply a $G$-equivariant Minimal Model Program to $\widetilde{X}$ over the variety~$X^{\prime\prime}$.
This gives a $G$-equivariant birational transformation of the variety $X$
into a $G$-Mori fibre space over a positive dimensional base, which is impossible, since $X$ is $G$-solid.
\end{proof}

Thus, if $X$ is a $G$-minimal toric Fano variety, we have a purely group theoretical criterion for its $G$-solidity.
Similarly, we can obtain a criterion for $G$-birational rigidity.

\begin{proposition}
\label{proposition:G-rigid}
Let $X$ be a $G$-minimal toric Fano variety with Weyl group $\mathbb{W}_X$. Assume that the character lattice $M$ is $\mathbb{W}_X$-irreducible.
Then the following two conditions are equivalent:
\begin{enumerate}
\item[(a)] $X$ is $G$-birationally rigid;
\item[(b)] $X$ is the only toric Fano variety with terminal singularities that is $G$-minimal.
\end{enumerate}
\end{proposition}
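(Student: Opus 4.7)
The plan is to combine Corollary~\ref{corollary:correspondence} with the $G$-equivariant Sarkisov program, exploiting the fact that the assumption $\mathbb{T}\subset G$ forces every $G$-Sarkisov link in this setting to be automatically toric. The hypothesis ``with Weyl group $\mathbb{W}$'' will be read as $\mathbb{W}_X=\mathbb{W}$, so that $\mathbb{W}_X$-irreducibility of $M$ coincides with $\mathbb{W}$-irreducibility and, by Proposition~\ref{proposition:G-solid}, already guarantees that $X$ is $G$-solid.

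For the implication (b)$\Rightarrow$(a), the approach is to take an arbitrary $G$-Sarkisov link starting at $X$ and ending at a $G$-Mori fibre space $\pi'\colon X'\to Z'$ and to show that $X'\cong X$. Because $\mathbb{T}\subset G$, each Mori-theoretic step of the link (extremal blow-up of a $G$-invariant subvariety, $G$-equivariant flip, divisorial or fibre-type contraction of an extremal ray) is automatically $\mathbb{T}$-equivariant, so $X'$ inherits the structure of a projective toric $\mathbb{T}$-variety and $\pi'$ is a morphism of toric varieties. Since $X$ is $G$-solid, the base $Z'$ must be a point, so $X'$ is a $G$-minimal toric Fano variety with terminal singularities; hypothesis (b) then forces $X'\cong X$ as $G$-varieties. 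Hence every $G$-Sarkisov link starting at $X$ ends at $X$, establishing $G$-birational rigidity.

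For the converse (a)$\Rightarrow$(b), let $X'$ be any toric Fano variety with terminal singularities that is $G$-minimal. Then $\mathbb{W}\subset \mathbb{W}_{X'}$, so by Corollary~\ref{corollary:correspondence} the varieties $X$ and $X'$ lie in the same $G$-birational equivalence class. Picking any $G$-equivariant birational map $X\dashrightarrow X'$ and applying the $G$-equivariant Sarkisov program of \cite{HaconMcKernan}, one factors it as a chain of $G$-Sarkisov links
$$
X=X_0\dashrightarrow X_1\dashrightarrow\cdots\dashrightarrow X_n=X'.
$$
By (a), the first link ends at $X_1\cong X$, and inductively, since each $X_i\cong X$ remains $G$-birationally rigid, each successive link ends at some $X_{i+1}\cong X$, so finally $X'\cong X$.

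The main obstacle is to verify rigorously that every $G$-equivariant Sarkisov link appearing in either implication is genuinely toric and that its intermediate Mori fibre spaces all have $0$-dimensional base. The first point uses $\mathbb{T}\subset G$ in an essential way: $G$-equivariance forces $\mathbb{T}$-equivariance of every Mori-theoretic operation, which therefore preserves the dense torus action, so the intermediate Mori fibre spaces are toric $G$-MFS; the second point follows because their bases must be points by $\mathbb{W}$-irreducibility of $M$ via Proposition~\ref{proposition:G-solid}. Consequently all such intermediate varieties belong to the class of $G$-minimal toric Fano varieties with terminal singularities featured in~(b), which is precisely what closes the inductive arguments in both directions.
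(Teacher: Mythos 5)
Your argument is correct and is essentially the paper's own proof, which is stated in one line as ``follows from Proposition~\ref{proposition:G-solid} and the definition of $G$-birational rigidity'': your write-up simply unpacks that, using $\mathbb{T}\subset G$ to make every $G$-Sarkisov link toric, $\mathbb{W}$-irreducibility of $M$ (via Proposition~\ref{proposition:G-solid}) to force all intermediate bases to be points, and Corollary~\ref{corollary:correspondence} plus the Sarkisov decomposition for the converse. No discrepancies with the intended argument.
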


\begin{proof}
This follows from Proposition~\ref{proposition:G-solid} and definition of $G$-birational rigidity.
\end{proof}

Finally, using Corollary~\ref{corollary:Bir}, we can obtain a criterion for $G$-birational super-rigidity.

\begin{proposition}
\label{proposition:G-super-rigid}
Let $X$ be a $G$-minimal toric Fano variety with Weyl group $\mathbb{W}$. Assume that the character lattice $M$ is $\mathbb{W}$-irreducible.
Then $X$ is $G$-birationally super-rigid if and only if the following two conditions are satisfied:
\begin{enumerate}
\item[(a)] $X$ is the only toric Fano variety with terminal singularities that is $G$-minimal;
\item[(b)] $\mathbb{W}$ is not a proper normal subgroup of any finite subgroup in $\mathrm{GL}(M)$.
\end{enumerate}
\end{proposition}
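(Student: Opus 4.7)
The plan is to split super-rigidity into two cleanly separated conditions. Specifically, $X$ is $G$-birationally super-rigid if and only if (i) $X$ is $G$-birationally rigid, and (ii) every $G$-birational self-map $X\dashrightarrow X$ is biregular, i.e.\ $\mathrm{Bir}^{G}(X)$ coincides with the normalizer of $G$ in $\mathrm{Aut}(X)$. This equivalence rests on the $G$-equivariant Sarkisov program of \cite{Co95,HaconMcKernan}: any $G$-birational map between $G$-Mori fibre spaces decomposes into a sequence of $G$-Sarkisov links, so the non-existence of such links starting at $X$ means exactly that every $G$-birational map from $X$ to a $G$-MFS is a $G$-isomorphism of $G$-MFSs, which factors into (i) and (ii). By Proposition~\ref{proposition:G-rigid}, condition (i) is equivalent to (a), so the proof reduces to establishing (ii)$\Leftrightarrow$(b).

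To handle (ii)$\Leftrightarrow$(b), I would identify both relevant groups with subgroups of $\mathrm{Aut}(\mathbb{T})=\mathbb{T}\rtimes\mathrm{GL}(M)$. Corollary~\ref{corollary:Bir} gives $\mathrm{Bir}^{G}(X)\cong\mathbb{T}\rtimes\widehat{\mathbb{W}}$, where $\widehat{\mathbb{W}}$ is the normalizer of $\mathbb{W}$ in $\mathrm{GL}(M)$. On the other hand, since by hypothesis the Weyl group of $X$ equals $\mathbb{W}$, we have $G_{X}=\mathbb{T}\rtimes\mathbb{W}=G$; and any automorphism of $X$ normalizing $G$ must preserve the connected component $\mathbb{T}$ of $G$, so it lies in $G_{X}=G$. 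Conversely $G$ itself acts through biregular $G$-maps, and thus the normalizer of $G$ in $\mathrm{Aut}(X)$ equals $G$. Consequently, (ii) becomes the purely lattice-theoretic statement $\widehat{\mathbb{W}}=\mathbb{W}$.

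It then suffices to show that, under the hypothesis that $M$ is $\mathbb{W}$-irreducible, $\widehat{\mathbb{W}}=\mathbb{W}$ is equivalent to condition (b). By $\mathbb{W}$-irreducibility of $M$ and Maschke's theorem, the centralizer of $\mathbb{W}$ in $\mathrm{GL}(M)$ is finite; hence $\widehat{\mathbb{W}}$ is a finite subgroup of $\mathrm{GL}(M)$ in which $\mathbb{W}$ is normal (this is the same argument as in the proof of Corollary~\ref{corollary:Bir-maximal}). Thus if (b) holds, $\mathbb{W}$ cannot be a proper subgroup of the finite group $\widehat{\mathbb{W}}$, forcing $\widehat{\mathbb{W}}=\mathbb{W}$. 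Conversely, if $\widehat{\mathbb{W}}=\mathbb{W}$ and $\mathbb{W}'\subset\mathrm{GL}(M)$ is any finite subgroup containing $\mathbb{W}$ as a normal subgroup, then $\mathbb{W}'\subseteq\widehat{\mathbb{W}}=\mathbb{W}$, so $\mathbb{W}'=\mathbb{W}$, which is precisely (b).

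The subtlest step, and the one requiring the most care, is the opening reduction of super-rigidity to (i)+(ii): it uses the full strength of the $G$-equivariant Sarkisov program for projective varieties with terminal singularities in order to factor an arbitrary $G$-birational map into Sarkisov links. Once this decomposition is granted, the rest of the argument is a straightforward combination of Proposition~\ref{proposition:G-rigid}, Corollary~\ref{corollary:Bir}, and the Maschke-style finiteness of $\widehat{\mathbb{W}}$ already exploited in Corollary~\ref{corollary:Bir-maximal}.
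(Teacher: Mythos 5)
Your proposal is correct and takes essentially the same route as the paper: the paper's entire proof is the one-line remark that the assertion ``follows from Proposition~\ref{proposition:G-rigid} and the proof of Corollary~\ref{corollary:Bir-maximal}'', and your argument simply spells this out, using Proposition~\ref{proposition:G-rigid} for condition (a) and the Sarkisov decomposition together with Corollary~\ref{corollary:Bir} and the Maschke-type finiteness of $\widehat{\mathbb{W}}$ to reduce the self-map condition to (b). The only step you make explicit that the paper leaves implicit is the identification of the biregular part of $\mathrm{Bir}^{G}(X)$ with $G=\mathbb{T}\rtimes\mathbb{W}$, and that step is carried out correctly.
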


\begin{proof}
The assertion follows from Proposition~\ref{proposition:G-rigid} and the proof of Corollary~\ref{corollary:Bir-maximal}.
\end{proof}

The condition (b) in Proposition~\ref{proposition:G-rigid} is combinatorial.
A priori, it can be checked using computer, since there are finitely many toric Fano varieties with terminal singularities \cite{borisovs}.
For example, there are $634$ toric Fano threefolds with terminal singularities~\cite{Kasprzyk2006}.

\subsection{Toric terminal Fano threefolds.}
\label{subsection:toric-Fano-threefolds}
Now let us assume that $\mathbb{T}$ is three-dimensional and that $X$ is a $G$-minimal toric Fano threefold with terminal singularities.
All such threefolds are described in \cite{Sarikyan}. They are listed in the following table:

\begin{center}
\renewcommand{\arraystretch}{1.5}
\begin{tabular}{|c|c|c|}
  \hline
 Toric Fano threefold & Weyl group & Number in \cite{grdb} \\
  \hline
  \hline
Divisor $Y_{24}$ of type $(1,1,1,1)$ in $\mathbb{P}^1\times\mathbb{P}^1\times\mathbb{P}^1\times\mathbb{P}^1$ & $\mathfrak{S}_4\times\mumu_2$ & \textnumero{625}\\
  \hline
$V_6=\mathbb{P}^1\times\mathbb{P}^1\times\mathbb{P}^1$  & $\mathfrak{S}_3\ltimes\mumu_2^3\cong\mathfrak{S}_4\times\mumu_2$ & \textnumero{62} \\
  \hline
Toric Fano--Enriques threefold $X_{24}$   & $\mathfrak{S}_4\times\mumu_2$ &  \textnumero{47}\\
  \hline
Toric complete intersection $V_4\subset\mathbb{P}^5$ of two quadrics  & $\mathfrak{S}_4\times\mumu_2$ & \textnumero{297}\\
  \hline
Three-dimensional projective space  $\mathbb{P}^3$  & $\mathfrak{S}_4$ & \textnumero{4} \\
  \hline
Quadric cone in $\mathbb{P}^4$ with one singular point & $\mathrm{D}_{8}$ & \textnumero{32} \\
  \hline
Terminal toric Fano threefold $X$ with $-K_X^3=\frac{81}{2}$  & $\mathfrak{S}_3$ & \textnumero{92} \\
  \hline
Weighted projective space $\mathbb{P}(1,1,1,2)$  & $\mathfrak{S}_3$  & \textnumero{7}\\
  \hline
Quotient of the space $\mathbb{P}^3$ by $\mumu_5$-action fixing $5$ points& $\mumu_2^2$ & \textnumero{1} \\
  \hline
Weighted projective space $\mathbb{P}(1,1,2,3)$  & $\mumu_2$ & \textnumero{8} \\
  \hline
\end{tabular}
\end{center}

\medskip

\begin{proposition} Let $X$ be one of the toric Fano threefolds in the above table, let $G$ be a subgroup of $G_X$ containing $\mathbb{T}$ and let $\mathbb{W}$ be the image of $G$ by the quotient morphism $G_X\rightarrow \mathbb{W}_X=G_X/\mathbb{T}$. Then the following hold:
\begin{enumerate}
\item None of the last five threefolds in the table above is $G$-solid.
\item If $X$ is $G$-minimal then $X$ is $G$-solid if and only if it is one of the threefolds $Y_{24}$, $V_6$,  $X_{24}$, $V_{4}$ and $\mathbb{P}^3$ and $\mathbb{W}$ contains a subgroup isomorphic to $\mathfrak{A}_4$.
\end{enumerate}
\end{proposition}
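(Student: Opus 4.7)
The strategy is to convert both assertions, via Proposition~\ref{proposition:G-solid}, into pure statements about finite subgroups of $\mathrm{GL}_3(\mathbb{Z})$: a $G$-minimal toric Fano threefold $X$ is $G$-solid if and only if the rational character lattice $M_{\mathbb{Q}}=M\otimes_{\mathbb{Z}}\mathbb{Q}\cong\mathbb{Q}^3$ is an irreducible $\mathbb{W}$-module, where $\mathbb{W}=\nu_X(G)\subseteq\mathbb{W}_X$. The whole proposition therefore reduces to identifying which subgroups $\mathbb{W}\subseteq\mathbb{W}_X$ act irreducibly on $\mathbb{Q}^3$, for each of the ten Weyl groups $\mathbb{W}_X$ in the table.

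For part (1), the Weyl groups of the last five threefolds in the table are $\mathrm{D}_{8}$, $\mathfrak{S}_3$, $\mathfrak{S}_3$, $\mumu_2^2$ and $\mumu_2$, and none of these groups --- nor any of their subgroups --- admits a $3$-dimensional irreducible rational representation: the abelian groups $\mumu_2^2$ and $\mumu_2$ have only $1$-dimensional $\mathbb{Q}$-irreducibles, while the character tables of $\mathfrak{S}_3$ and $\mathrm{D}_{8}$ show that their $\mathbb{Q}$-irreducibles have dimension at most $2$. Hence $M_{\mathbb{Q}}$ is reducible as a $\mathbb{W}$-module for every $\mathbb{W}\subseteq\mathbb{W}_X$, and Proposition~\ref{proposition:G-solid} yields the claim.

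For part (2), each of the five threefolds $Y_{24}$, $V_6$, $X_{24}$, $V_4$, $\mathbb{P}^3$ has Weyl group either $\mathfrak{S}_4$ or $\mathfrak{S}_4\times\mumu_2$. For the \emph{if} direction, the natural subgroup $\mathfrak{A}_4\subseteq\mathfrak{S}_4\subseteq\mathbb{W}_X\subseteq\mathrm{GL}(M)$ acts faithfully on $M_{\mathbb{Q}}$, and since every non-standard $\mathbb{Q}$-irreducible representation of $\mathfrak{A}_4$ factors through the abelianisation $\mathfrak{A}_4^{\mathrm{ab}}\cong\mumu_3$ (and thus contains the normal Klein four-subgroup $V_4$ in its kernel), the only faithful $3$-dimensional $\mathbb{Q}$-representation of $\mathfrak{A}_4$ is the standard irreducible one. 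Thus $M_{\mathbb{Q}}$ is $\mathfrak{A}_4$-irreducible, hence $\mathbb{W}$-irreducible for every $\mathbb{W}\supseteq\mathfrak{A}_4$; moreover, Schur's lemma applied over $\mathbb{C}$ forces the central $\mumu_2\subseteq\mathbb{W}_X\cong\mathfrak{S}_4\times\mumu_2$ to act on $M_{\mathbb{Q}}$ by the scalar $-I$. For the \emph{only if} direction, let $\mathbb{W}\subseteq\mathfrak{S}_4\times\mumu_2$ have no subgroup isomorphic to $\mathfrak{A}_4$, and let $p\colon\mathfrak{S}_4\times\mumu_2\to\mathfrak{S}_4$ denote the first projection. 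Since $\mathrm{Hom}(\mathfrak{A}_4,\mumu_2)=0$, one checks that $p(\mathbb{W})\subseteq\mathfrak{S}_4$ does not contain $\mathfrak{A}_4$ either, and is therefore contained in one of the two maximal proper subgroups $\mathrm{D}_{8}$ or $\mathfrak{S}_3$; by the same dimension bound as in part (1), the restriction of the standard representation of $\mathfrak{S}_4$ to any such subgroup is reducible. As the central $\mumu_2$ acts by the scalar $-I$, it preserves every linear subspace, so reducibility under $p(\mathbb{W})$ lifts at once to reducibility under $\mathbb{W}$, and a final application of Proposition~\ref{proposition:G-solid} completes the proof.

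The main obstacle is verifying, for each of $V_6$, $V_4$, $X_{24}$ and $Y_{24}$, that the subgroup $\mathfrak{A}_4\subseteq\mathbb{W}_X$ really does act faithfully on $M_{\mathbb{Q}}$. This is a hands-on inspection of the fan of each variety: for $V_6$, the identification $W(B_3)=\mathfrak{S}_3\ltimes\mumu_2^3$ as the symmetry group of the cube $[-1,1]^3$ exhibits $\mathfrak{S}_4$ as the rotation subgroup and $-I$ as the antipodal map; for the remaining three threefolds one uses the explicit toric presentations in the introduction together with the fan descriptions to be given in Section~\ref{section:Fano-threefolds}. Once this faithfulness is granted, the remainder of the argument is elementary representation theory.
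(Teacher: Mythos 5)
Your proposal is correct and follows essentially the same route as the paper: both reduce the statement, via Proposition~\ref{proposition:G-solid}, to deciding which subgroups $\mathbb{W}\subseteq\mathbb{W}_X$ act irreducibly on $M\otimes\mathbb{Q}$, the paper settling this by citing Tahara's classification of finite subgroups of $\mathrm{GL}_3(\mathbb{Z})$ while you verify it directly with elementary rational representation theory. The only cosmetic remark is that the faithfulness of $\mathfrak{A}_4$ on $M_{\mathbb{Q}}$, which you single out as the main point to check by inspecting fans, is automatic because $\mathbb{W}_X$ is by definition a subgroup of $\mathrm{GL}(M)$.
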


\begin{proof} This follows from  Proposition~\ref{proposition:G-solid} and the classification of finite subgroups in $\mathrm{GL}_3(\mathbb{Z})$~\cite{Tahara}.
\end{proof}

Of course, it is also possible to verify these properties explicitly for each case in the above proposition. For instance:

\begin{example}
\label{example:92}
Let $X$ be the terminal toric Fano threefold~\textnumero{92}. Then $\mathbb{W}_X\cong\mathfrak{S}_3$ and there exists a $G_X$-Sarkisov link
$$
\xymatrix{
&X_{40}\ar@{->}[dl]_{\alpha}\ar@{->}[dr]^{\beta}&\\
\mathbb{P}^3 && X}
$$
where $\alpha$ is the blow-up of three coplanar $\mathbb{T}$-invariant lines,
$X_{40}$ is a Fano threefold with three ordinary double points such that \mbox{$-K_{X_{40}}^3=40$},
and $\beta$ is the contraction of the proper transform of the~unique $\mathbb{T}$-invariant plane containing the lines blown-up
to the~unique singular point of type $\frac{1}{2}(1,1,1)$ of the threefold $X$.
Since $\mathbb{P}^3$ is not $\mathbb{T}\rtimes \mathfrak{S}_3$-solid, we conclude that $X$ is not $G_X$-solid.
\end{example}

\begin{example}
\label{example:V-6-dP-6}
Let $X=V_6$ and let $\mathbb{W}$ be the unique subgroup of $\mathbb{W}_X$ isomorphic $\mumu_3$.
Then $X$ is $G$-minimal. Moreover, it contains two $G$-fixed points such that there exists the following $G$-Sarkisov link:
$$
\xymatrix{
\overline{V}_6\ar@{->}[d]_{\alpha}\ar@{-->}[rr]^{\iota}&&\widehat{V}_6\ar@{->}[d]^{\beta}\\%
V_6&&S_6}
$$
where $\alpha$ is the blow-up of these two points,
$\iota$ is a composition of Atiyah flops of the proper transforms of all $\mathbb{T}$-invariant curves that pass through one of the points blown-up by $\alpha$,
and~$\beta$ is a $\mathbb{P}^1$-bundle over a del Pezzo surface of degree $6$.
\end{example}

\begin{example}
\label{example:X24-non-G-solid}
Let $X=X_{24}$ and let $\mathbb{W}$ be the unique subgroup of $\mathbb{W}_X$ isomorphic to $\mumu_3$.
Then $X$ is $G$-minimal. Moreover, it contains two $G$-fixed singular points such that there exists a $G$-Sarkisov link
$$
\xymatrix{
\overline{X}_{24}\ar@{->}[d]_{\alpha}\ar@{-->}[rr]^{\iota}&&\widehat{X}_{24}\ar@{->}[d]^{\beta}\\%
X_{24}&&S_6}
$$
where $\alpha$ is Kawamata blow-up of these two singular points,
 $\iota$ is a composition of Francia antiflips of the proper transforms of all $\mathbb{T}$-invariant curves
that contain one of these points, and $\beta$ is a $\mathbb{P}^1$-bundle over a del Pezzo surface of degree $6$.
The threefolds $X_{24}$, $\overline{X}_{24}$, $\widehat{X}_{24}$ are
quotients by involutions of the threefolds $V_{6}$, $\overline{V}_{6}$, $\widehat{V}_{6}$ from Example~\ref{example:V-6-dP-6}.
\end{example}

Moreover, if $X$ is one of the threefolds $Y_{24}$, $V_6$,  $X_{24}$, $V_{4}$, $\mathbb{P}^3$
and $\mathbb{W}$ contains a subgroup isomorphic to $\mathfrak{A}_4$, then $X$ is $G$-minimal except in the following two cases:
\begin{enumerate}
\item $X=V_4$, $\mathbb{W}\cong\mathfrak{S}_4$ and $G$ acts intransitively on the set of $\mathbb{T}$-invariant surfaces,
\item $X=V_4$ and $\mathbb{W}\cong\mathfrak{A}_4$.
\end{enumerate}
We show this in Corollaries~\ref{corollary:Y24-G-Fano} and \ref{corollary:X24-G-Fano} and Lemma~\ref{lemma:V4-G-Fano} below.
Summing up, we get

\begin{corollary}
\label{corollary:main-infinite}
The assertion of Theorem~\ref{theorem:main} holds in the case when $G$ is infinite.
\end{corollary}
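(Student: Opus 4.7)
The plan is to reduce every assertion of Theorem~\ref{theorem:main} for infinite $G$ to the case $G\supset \mathbb{T}$, which is handled by the proposition above and by the forthcoming Corollaries~\ref{corollary:Y24-G-Fano},~\ref{corollary:X24-G-Fano} and Lemma~\ref{lemma:V4-G-Fano}. First, note that the bound $|G|\geqslant 32\cdot 24^{4}$ in part~(3) is vacuous for infinite $G$, so only the qualitative implications need verification. Next, since $G_{X}=\mathbb{T}\rtimes \mathbb{W}_{X}$ has identity component $\mathbb{T}$, the identity component $G^\circ$ of any algebraic subgroup $G\subset G_{X}$ is a closed connected subgroup of $\mathbb{T}$, hence a subtorus, and it is normalised by $G$ and therefore by $\nu_{X}(G)$.

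The key step is to show that under each hypothesis of Theorem~\ref{theorem:main} an infinite $G$ is actually forced to contain $\mathbb{T}$. Under the hypothesis $\nu_{X}(G)\supset \mathfrak{A}_{4}$ of parts~(2) and~(3), this follows from the irreducibility of the standard three-dimensional rational representation of $\mathfrak{A}_{4}$: for each of the five threefolds $V_{6}$, $V_{4}$, $X_{24}$, $Y_{24}$, $\mathbb{P}^{3}$ the induced $\nu_{X}(G)$-action on $N\otimes\mathbb{Q}\cong\mathbb{Q}^{3}$ restricts on $\mathfrak{A}_{4}$ to this representation, so the only $\nu_{X}(G)$-invariant subtori of $\mathbb{T}$ are $\{1\}$ and $\mathbb{T}$; being infinite, $G^\circ=\mathbb{T}$. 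Under the hypothesis \emph{$X$ is $G$-solid} of part~(1) and of (i)$\Leftrightarrow$(ii) (for which $G=G_{X}$ is automatically infinite), one applies instead the projection trick from the proof of Proposition~\ref{proposition:G-solid}: if $G^\circ\subsetneq \mathbb{T}$, the positive-dimensional quotient torus $\mathbb{T}/G^\circ$ inherits a $G/G^\circ$-action, and the corresponding $G$-equivariant dominant rational map from $X$ to a $(G/G^\circ)$-equivariant toric completion of $\mathbb{T}/G^\circ$, followed by a $G$-equivariant toric resolution of indeterminacies and the $G$-equivariant MMP over the target, produces a $G$-Mori fibre space over a positive-dimensional base, contradicting $G$-solidity.

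Once $G=\mathbb{T}\rtimes \nu_{X}(G)$ is secured, all four assertions become direct consequences of the preceding material: (i)$\Leftrightarrow$(ii) and the conclusion $\nu_{X}(G)\supset \mathfrak{A}_{4}$ in~(1) follow from the proposition above combined with Proposition~\ref{proposition:G-solid} and the Tahara classification~\cite{Tahara} of finite subgroups of $\mathrm{GL}_{3}(\mathbb{Z})$; assertion~(2) is the content of the forthcoming Corollaries~\ref{corollary:Y24-G-Fano},~\ref{corollary:X24-G-Fano} and Lemma~\ref{lemma:V4-G-Fano}; and assertion~(3) follows from Proposition~\ref{proposition:G-solid}, since $\nu_{X}(G)\supset \mathfrak{A}_{4}$ already guarantees $\mathbb{W}$-irreducibility of $M$. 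I expect the most delicate step to be the projection argument invoked in part~(1), where one must verify that the $G$-equivariant toric MMP over the positive-dimensional target terminates with a genuine Fano-type contraction onto a base that remains of positive dimension; the remaining steps are essentially bookkeeping over already established results.
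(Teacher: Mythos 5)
Your proof is correct and follows essentially the same route as the paper: the corollary is obtained by combining the lattice-irreducibility criterion of Proposition~\ref{proposition:G-solid} (together with the Tahara classification of finite subgroups of $\mathrm{GL}_3(\mathbb{Z})$) with the minimality statements of Corollaries~\ref{corollary:Y24-G-Fano}, \ref{corollary:X24-G-Fano} and Lemma~\ref{lemma:V4-G-Fano}. Your additional step --- using the identity component $G^{\circ}$ to show that an infinite $G$ satisfying the relevant hypotheses must contain all of $\mathbb{T}$, either because $\mathfrak{A}_4$ acts irreducibly on $N\otimes\mathbb{Q}$ so the only invariant subtori are trivial or full, or via the projection trick under the $G$-solidity hypothesis --- is a point the paper leaves implicit (Section~\ref{section:toric} works throughout with $G\supseteq\mathbb{T}$), and it is a welcome clarification rather than a genuine deviation.
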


If $\mathbb{W}$ contains a subgroup isomorphic to $\mathfrak{A}_4$, then $\mathbb{W}$ is conjugate to one of $15$ finite subgroups in $\mathrm{GL}(M)$ that are described in \cite{Tahara}. Using \cite{Kunyavskii} and notation from \cite{Tahara}, we can present these
$15$ subgroups and the corresponding $G$-minimal toric Fano threefolds with terminal singularities
in the~following table.

\begin{center}
\renewcommand{\arraystretch}{1.5}
\hspace*{-1cm}
\begin{tabular}{|c||c|c|c|c|}
  \hline
  & $\mathfrak{S}_4\times\mumu_2$ & $\quad$ $\mathfrak{S}_4$ $\quad$ & $\mathfrak{A}_4\times\mumu_2$ & $\quad$ $\mathfrak{A}_4$ $\quad$\\
  \hline
  \hline
$\mathbb{P}^1\times\mathbb{P}^1\times\mathbb{P}^1$ & $W_1$ & $W_6$ or $W_7$ & $W_1$& $W_9$ \\
  \hline
$V_4$ & $W_3$& $W_{10}$  & $W_3$& \\
  \hline
$X_{24}$ & $W_3$ & $W_{10}$ or $W_{11}$ & $W_3$ &$W_{11}$ \\
  \hline
$Y_{24}$ & $W_{2}$&  $W_8$ or $W_9$ & $W_2$& $W_{10}$ \\
  \hline
$\mathbb{P}^3$ & & $W_{11}$ & &$W_{11}$ \\
  \hline
\end{tabular}
\end{center}

\medskip

Now using Propositions~\ref{proposition:G-rigid} and \ref{proposition:G-super-rigid}, we obtain

\begin{corollary}
\label{corollary:super-rigid-infinite}
The assertion of Corollary~\ref{corollary:main} holds in the case when $G$ is infinite.
\end{corollary}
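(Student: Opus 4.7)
The plan is to reduce to the case $\mathbb{T}\subset G$ and then to apply the lattice-theoretic criteria of Propositions~\ref{proposition:G-rigid} and \ref{proposition:G-super-rigid} to the explicit $15$-entry table of conjugacy classes listed just above the statement.

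For the reduction, suppose $G\subset G_X$ is infinite with $\nu_X(G)\supset\mathfrak{A}_4$. The identity component $G^{\circ}$ is a non-trivial connected algebraic subgroup of $G_X$, hence a subtorus of $\mathbb{T}=(G_X)^{\circ}$. Since $G^{\circ}\triangleleft G$, its character lattice is a quotient of $M$ by a $\nu_X(G)$-invariant sublattice; but $\mathfrak{A}_4$ acts irreducibly on $M\otimes\mathbb{Q}$ in each of the five cases $V_6,V_4,X_{24},Y_{24},\mathbb{P}^3$, so this sublattice is either $M$ or $0$, forcing $G^{\circ}=\mathbb{T}$. Hence $G=\mathbb{T}\rtimes\mathbb{W}$ with $\mathbb{W}=\nu_X(G)\subset\mathbb{W}_X$, placing us in the setting of the two propositions.

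I would then translate each of the three statements of Corollary~\ref{corollary:main} into a condition on the pair $(X,\mathbb{W})$ and read it off the table. For the equivalence (i)$\Leftrightarrow$(ii)$\Leftrightarrow$(iii) in the case $G=G_X$, we have $\mathbb{W}=\mathbb{W}_X$, and the table shows that $V_6$ and $Y_{24}$ are the only members of the five-element list whose full Weyl group is not shared with a second toric terminal Fano threefold. Moreover $\mathbb{W}_{V_6}$ and $\mathbb{W}_{Y_{24}}$ sit in distinct conjugacy classes of maximal finite subgroups of $\mathrm{GL}_3(\mathbb{Z})$, both of order $48$; hence both conditions of Proposition~\ref{proposition:G-super-rigid} are satisfied for $X\in\{V_6,Y_{24}\}$, while condition~(a) already fails for $X\in\{V_4,X_{24},\mathbb{P}^3\}$ by Proposition~\ref{proposition:G-rigid}. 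For assertion (1), the same table lets me read off, for every $\mathbb{W}\supset\mathfrak{A}_4$, the complete set of $G$-minimal toric Fano terminal threefolds; the singletons occur precisely for $X\in\{V_6,Y_{24}\}$, so Proposition~\ref{proposition:G-rigid} yields (1). For assertion (2), the bound $|G|\geqslant 32\cdot 24^4$ is automatic in the infinite case, and super-rigidity of $V_6$ or $Y_{24}$ under each remaining pair $(X,\mathbb{W})$ in the corresponding row of the table reduces again to verifying conditions (a) and (b) of Proposition~\ref{proposition:G-super-rigid}.

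The main obstacle is this last case-by-case verification of condition~(b) --- that $\mathbb{W}$ is self-normalizing in $\mathrm{GL}_3(\mathbb{Z})$ --- for each of the proper subgroups $W_i\subset\mathbb{W}_X$ appearing in the $V_6$ and $Y_{24}$ rows. This is delicate when $\mathbb{W}$ is small, for example when $\mathbb{W}\cong\mathfrak{A}_4$: an abstract $\mathfrak{A}_4$ is in general a normal subgroup of larger finite subgroups of $\mathrm{GL}_3(\mathbb{Z})$, and what rules out such overgroups is that the particular Tahara conjugacy class at play is distinguished by additional lattice structure. I would carry this out as a finite look-up using the classifications in \cite{Tahara} and \cite{Kunyavskii}, which is the only non-formal ingredient of the argument.
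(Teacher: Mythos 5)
Your overall route is the same as the paper's: reduce to $G=\mathbb{T}\rtimes\mathbb{W}$ and then read off Corollary~\ref{corollary:main} from the table of conjugacy classes via Propositions~\ref{proposition:G-rigid} and \ref{proposition:G-super-rigid}. Your explicit reduction step (showing $G^{\circ}=\mathbb{T}$ from the $\mathfrak{A}_4$-irreducibility of $M\otimes\mathbb{Q}$) is correct and is a worthwhile addition, since the paper leaves it implicit. The treatment of the equivalence (i)$\Leftrightarrow$(ii)$\Leftrightarrow$(iii) and of assertion (1) via condition (a) of Proposition~\ref{proposition:G-rigid} is also fine.

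However, there is a genuine gap in your plan for assertion (2) when $\mathbb{W}=\nu_X(G)$ is a \emph{proper} subgroup of $\mathbb{W}_X$. You hope to verify condition (b) of Proposition~\ref{proposition:G-super-rigid} --- that $\mathbb{W}$ is not a proper normal subgroup of a larger finite subgroup of $\mathrm{GL}_3(\mathbb{Z})$ --- by a look-up in \cite{Tahara}, suggesting that the relevant Tahara classes are ``distinguished by additional lattice structure''. This cannot work: every subgroup of $\mathbb{W}_X\cong\mathfrak{S}_4\times\mumu_2$ containing $\mathfrak{A}_4$ is normal in $\mathbb{W}_X$ (the copies of $\mathfrak{A}_4$, $\mathfrak{A}_4\times\mumu_2$ and both copies of $\mathfrak{S}_4$ all have index $2$ or $4$, and $\mathbb{W}_i^{\mathfrak{A}}$ is by Notation~\ref{notation:subgroups} the \emph{unique} $\mathfrak{A}_4$ in $\mathbb{W}_i$, hence normal). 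So condition (b), taken literally, \emph{fails} for every such $\mathbb{W}\subsetneq\mathbb{W}_X$, and no finite look-up will establish it. What actually saves super-rigidity of $V_6$ and $Y_{24}$ in these cases is that the normalizer $\widehat{\mathbb{W}}$ of $\mathbb{W}$ in $\mathrm{GL}_3(\mathbb{Z})$ is finite (by irreducibility, as in the proof of Corollary~\ref{corollary:Bir-maximal}), contains the maximal group $\mathbb{W}_X$, and hence equals $\mathbb{W}_X$; by Corollary~\ref{corollary:Bir} every element of $\mathrm{Bir}^{\mathbb{T}\rtimes\mathbb{W}}(X)=\mathbb{T}\rtimes\widehat{\mathbb{W}}=\mathbb{T}\rtimes\mathbb{W}_X=G_X$ is then \emph{biregular} on $X$, so condition (a) alone rules out all $G$-Sarkisov links, including self-links. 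You should either insert this normalizer computation in place of the doomed verification of (b), or invoke the direct arguments of Lemmas~\ref{lemma:Y-24} and \ref{lemma:P1-P1-P1}, which is what the paper ultimately provides in Section~\ref{section:proof}.
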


In the rest of this paper, we will give another proof of Theorem~\ref{theorem:main}(3) in the~case when the group $G$
is infinite that is independent on the classification of toric Fano threefolds with terminal singularities,
and which also applies to the case of finite groups~as~well. We also believe that this approach can be used in higher-dimensions.

\section{Toric Fano threefolds and lattices of rank three}
\label{section:Fano-threefolds}

Among the $73$ conjugacy classes of finite subgroups $\mathbb{W}$ of $\mathrm{GL}_{3}(\mathbb{Z})$ classified in \cite{Tahara},
there are $4$ maximal ones, and only three of them give rise to an irreducible action on~$\mathbb{Z}^{3}$.
In~each of these three cases, one has $\mathbb{W}\cong\mathfrak{S}_{4}\times\mumu_{2}$.
Let us describe these three conjugacy classes in terms of the actions of the group $\mathfrak{S}_{4}\times\mumu_{2}$ on certain lattices.

Let $L=\mathbb{Z}^{4}$ endowed with the faithful transitive $\mathfrak{S}_{4}$-action
given by permutations of the basis vectors $h_{1}$, $h_{2}$, $h_{3}$ and $h_{4}$.
Let $\sigma$ be the involution of the lattice $L$ such that $h_{i}\mapsto-h_{i}$
for each $i\in\{1,2,3,4\}$. Then $\sigma$ commutes with the $\mathfrak{S}_{4}$-action.
This defines a faithful action of the group $\mathfrak{S}_{4}\times\mumu_{2}$ on the lattice $L$,
which leaves invariant the sublattice spanned by the~element $h_{1}+h_{2}+h_{3}+h_{4}$.
Let
$$
M_{1}=L/\langle h_{1}+h_{2}+h_{3}+h_{4}\rangle.
$$
Then the $\mathfrak{S}_{4}\times\mumu_{2}$-action on $L$ induces an action of $\mathfrak{S}_{4}\times\mumu_{2}$ on the quotient lattice~$M_1$.
Let $e_{1}$, $e_{2}$ and $e_{3}$ be the basis of $M_1$ given by the classes of $h_{1}$, $h_{2}$ and $h_3$, respectively.
In~this basis, we have $\sigma(e_{i})=-e_{i}$ for every $i\in\{1,2,3\}$,
and for every $g\in\mathfrak{S}_{4}$, we have
$$
g\big(e_{i}\big)=\begin{cases}
e_{g(i)} & \textrm{if }g(i)\neq4\\
-e_{1}-e_{2}-e_{3} & \textrm{otherwise. }
\end{cases}
$$
We denote by $\mathbb{W}_{1}\cong\mathfrak{S}_{4}\times\mumu_{2}$
the corresponding subgroup of $\mathrm{GL}_{3}(\mathbb{Z})$.

Let $M_3$ be the dual lattice to $M_1$,
and let $e_{1}^{\vee}$, $e_{2}^{\vee}$ and $e_{3}^{\vee}$ be the basis of $M_3$ that is dual to the previously fixed basis of $M_1$.
Then $\sigma(e_{i}^{\vee})=-e_{i}^{\vee}$ for each $i\in\{1,2,3\}$.
For every $g\in\mathfrak{S}_{4}$, we have
$$
g\big(e_{i}^{\vee}\big)=\sum_{j=1}^{3}e_{i}^{\vee}\big(g^{-1}(e_{j})\big)e_{j}^{\vee}.
$$
We denote by $\mathbb{W}_{3}\cong\mathfrak{S}_{4}\times\mumu_{2}$ the corresponding subgroup of $\mathrm{GL}_{3}(\mathbb{Z})$.

Finally, let $M_2$ be the lattice $\mathbb{Z}^{3}$,
let $\mathfrak{S}_{3}$ be the subgroup in $\mathrm{GL}_3(\mathbb{Z})$ consisting of six permutation matrices, let
$$
\tau_1=\begin{pmatrix}
-1 & 0 & 0\\
0 & 1 & 0\\
0 & 0 & 1
\end{pmatrix},
\tau_2=\begin{pmatrix}
1 & 0 & 0\\
0 & -1 & 0\\
0 & 0 & 1
\end{pmatrix},
\tau_3=\begin{pmatrix}
1 & 0 & 0\\
0 & 1 & 0\\
0 & 0 & -1
\end{pmatrix},
$$
and let $\mathbb{W}_{2}\cong \mathfrak{S}_{3}\ltimes\mumu_{2}^{3}$ be the subgroup in $\mathrm{GL}_{3}(\mathbb{Z})$ that is generated by $\mathfrak{S}_3$ and involutions $\tau_1$, $\tau_2$ and $\tau_3$. Note that the subgroup generated by $\mathfrak{S}_{3}$, $\tau_{1}\tau_{2}$ and $\tau_{1}\tau_{3}$ is isomorphic to~$\mathfrak{S}_{4}$, and $\tau_1\tau_2\tau_3$ generates the center of the subgroup $\mathbb{W}_{2}$.Thus, $\mathbb{W}_{2}$ is isomorphic to the group $\mathfrak{S}_{4}\times\mumu_{2}$.

\begin{proposition}[{\cite{Tahara}}]
\label{proposition:maximal-irreducible-groups-GL-3-Z}
Let $\mathbb{W}$ be a maximal finite subgroup of the group $\mathrm{GL}_{3}(\mathbb{Z})$ such that $\mathbb{Z}^{3}$ is $\mathbb{W}$-irreducible.
Then $\mathbb{W}$ is conjugate to one of the subgroups $\mathbb{W}_{1}$, $\mathbb{W}_{2}$ or $\mathbb{W}_{3}$.
Moreover, the subgroups $\mathbb{W}_{1}$, $\mathbb{W}_{2}$, $\mathbb{W}_{3}$ are pairwise non-conjugate~in~$\mathrm{GL}_{3}(\mathbb{Z})$.
\end{proposition}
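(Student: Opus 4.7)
The plan is to combine three ingredients: first, the classification of maximal finite irreducible subgroups of $\mathrm{GL}_{3}(\mathbb{Q})$; second, the enumeration of $\mathbb{W}$-invariant lattices inside a fixed rational representation; and third, a conjugation invariant distinguishing the three resulting $\mathbb{Z}$-classes.

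First, since $\mathbb{Z}^{3}$ is $\mathbb{W}$-irreducible, the rational vector space $\mathbb{Q}^{3}$ is an irreducible $\mathbb{Q}[\mathbb{W}]$-module. A classical analysis (via the classification of finite subgroups of $O(3)$, or equivalently via the theory of three-dimensional crystallographic point groups) shows that, up to $\mathrm{GL}_{3}(\mathbb{Q})$-conjugation, there is a unique maximal finite irreducible subgroup of $\mathrm{GL}_{3}(\mathbb{Q})$, namely the Weyl group $W(B_{3})\cong \mathfrak{S}_{3}\ltimes\mumu_{2}^{3}\cong \mathfrak{S}_{4}\times\mumu_{2}$ of order $48$ realized as the symmetry group of the standard cube. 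Consequently, any maximal $\mathbb{W}$ as in the statement is $\mathrm{GL}_{3}(\mathbb{Q})$-conjugate to $\mathbb{W}_{2}$, and in particular $\mathbb{W}\cong\mathfrak{S}_{4}\times\mumu_{2}$.

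Second, I would fix the standard embedding $\mathbb{W}_{2}\subset\mathrm{GL}_{3}(\mathbb{Q})$ as cube symmetries and classify all $\mathbb{W}_{2}$-invariant lattices in $\mathbb{Q}^{3}$ up to rescaling by $\mathbb{Q}^{\times}$. Since the $\mathbb{W}_{2}$-orbits in $\mathbb{Q}^{3}$ are parametrized by the multiset of absolute values of coordinates, starting from a primitive vector $v$ of a putative invariant lattice $M$ and taking the $\mathbb{Z}$-span of its $\mathbb{W}_{2}$-orbit, a direct enumeration produces exactly three distinct lattices: the simple cubic $\mathbb{Z} e_{1}\oplus\mathbb{Z} e_{2}\oplus\mathbb{Z} e_{3}$, the face-centered cubic sublattice $\{(a_{1},a_{2},a_{3})\in\mathbb{Z}^{3}\,:\,a_{1}+a_{2}+a_{3}\equiv 0\pmod{2}\}$, and the body-centered cubic superlattice $\mathbb{Z}^{3}+\mathbb{Z}\cdot\tfrac{1}{2}(1,1,1)$. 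Using the natural identification $M_{1}\cong L/\mathbb{Z}(h_{1}+h_{2}+h_{3}+h_{4})$ and the fact that $M_{3}$ is by definition the $\mathbb{Z}$-dual of $M_{1}$, one then writes explicit change-of-basis matrices showing that the restriction of $W(B_{3})$ to the face-centered (resp.\ body-centered) cubic lattice yields a subgroup of $\mathrm{GL}_{3}(\mathbb{Z})$ conjugate over $\mathbb{Q}$ to $\mathbb{W}_{1}$ (resp.\ $\mathbb{W}_{3}$). This establishes that every such $\mathbb{W}$ is $\mathrm{GL}_{3}(\mathbb{Z})$-conjugate to one of $\mathbb{W}_{1}$, $\mathbb{W}_{2}$, $\mathbb{W}_{3}$.

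Third, to prove pairwise non-conjugacy in $\mathrm{GL}_{3}(\mathbb{Z})$, I would use an intrinsic invariant of the pair $(\mathbb{Z}^{3},\mathbb{W})$. A clean choice is the root sublattice $R\subset \mathbb{Z}^{3}$ spanned by all vectors of the form $v-\rho(v)$ with $v\in\mathbb{Z}^{3}$ and $\rho\in\mathbb{W}$ a reflection, together with the elementary divisors of the inclusion $R\hookrightarrow\mathbb{Z}^{3}$. A direct computation in the bases of $M_{1}$, $M_{2}$ and $M_{3}$ described above yields three different sequences of elementary divisors, reflecting the different relative positions of the simple cubic, face-centered cubic and body-centered cubic lattices; this shows that the three realizations cannot be $\mathrm{GL}_{3}(\mathbb{Z})$-conjugate.

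The main obstacle is the third step if one insists on a completely self-contained argument rather than invoking the table in \cite{Tahara} directly: comparing root sublattices as above is the most robust approach, but one could alternatively distinguish the three classes by comparing the $\mathbb{W}$-module structure of the reduction $\mathbb{Z}^{3}\otimes\mathbb{F}_{2}$, or by counting $\mathbb{W}$-orbits of primitive vectors of given lengths with respect to the essentially unique $\mathbb{W}$-invariant quadratic form.
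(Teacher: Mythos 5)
The paper gives no proof of this statement: it is quoted from Tahara's exhaustive classification of the $73$ conjugacy classes of finite subgroups of $\mathrm{GL}_3(\mathbb{Z})$, from whose tables the maximal irreducible classes are simply read off. Your argument is therefore a genuinely different and more conceptual route --- it is the classical classification of the three cubic Bravais types. The reduction to the unique maximal irreducible class $W(B_3)\cong\mathfrak{S}_4\times\mumu_2$ over $\mathbb{Q}$ is sound (the icosahedral groups are excluded by rationality of the character, and absolute irreducibility with trivial Schur index gives uniqueness of the $\mathbb{Q}$-form), and classifying the invariant lattices up to homothety into the simple cubic, face-centred and body-centred classes is exactly how one proves this without an exhaustive enumeration; it also scales to higher dimensions. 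Two points in step two need attention: the single-orbit spans must be supplemented by the remark that the set of invariant lattices is closed under sums, so that a lattice generated by several orbits is still one of the three up to homothety; and your labels are swapped relative to the paper's conventions, since $M_3$ is the $A_3$ root lattice, hence the face-centred cubic lattice $D_3$, while $M_1$ is the weight lattice, hence its dual, the body-centred one --- so FCC corresponds to $\mathbb{W}_3$ and BCC to $\mathbb{W}_1$, not the other way around.

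The one step that fails as stated is your first choice of conjugation invariant. The sublattice $R$ generated by the vectors $v-\rho(v)$ over all reflections $\rho$ has index $1$ in the FCC case, but index $2$ with elementary divisors $(1,1,2)$ in \emph{both} the simple cubic case (where $R$ is the FCC sublattice of $\mathbb{Z}^3$) and the BCC case (where $R=\mathbb{Z}^3\subset\mathbb{Z}^3+\mathbb{Z}\cdot\tfrac{1}{2}(1,1,1)$), so you do not get three different elementary divisor sequences and this invariant cannot separate $\mathbb{W}_2$ from $\mathbb{W}_1$. Either of your fallback invariants repairs this: since the representation is absolutely irreducible the invariant quadratic form is unique up to scaling, and the numbers of minimal vectors ($6$, $12$ and $8$ respectively) already distinguish the three lattices up to similarity, hence the three groups up to conjugacy; alternatively, iterating the reflection-sublattice construction works, since $R(R)=R$ for the simple cubic lattice while $R(R)\subsetneq R$ for the body-centred one.
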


\begin{notation}
\label{notation:subgroups}
The center of each of the three finite subgroups $\mathbb{W}_i$ in $\mathrm{GL}_3(\mathbb{Z})$, $i=1,2,3$ is isomorphic to $\mumu_2$, generated by the involution
$$
\sigma=\begin{pmatrix}
-1 & 0 & 0\\
0 & -1 & 0\\
0 & 0 & -1
\end{pmatrix}.
$$
For every $i=1,2,3$, the image of the subgroup $\mathfrak{A}_4\times \{1\}$ of $\mathfrak{S}_4\times \mumu_2$ by the isomorphism $\mathrm{GL}(M_i) \cong \mathrm{GL}_3(\mathbb{Z})$ given by our choice of bases is the unique subgroup of $\mathbb{W}_i$ isomorphic to $\mathfrak{A}_4$. We denote this subgroup by $\mathbb{W}_i^{\mathfrak{A}}$. In the notation of \cite[Proposition 7]{Tahara}, these groups correspond respectively to the subgroups
$$
\begin{array}{rcl}
\mathbb{W}_1^{\mathfrak{A}}=W_{10} & = & \left\{ \begin{pmatrix} 0 & 1 & 0\\ 0 & 0 & 1\\ 1 & 0 & 0 \end{pmatrix},\; \begin{pmatrix} 0 & -1 & 1\\ 0 & -1 & 0\\ 1 & -1 & 0 \end{pmatrix}\right\} \\
\mathbb{W}_2^{\mathfrak{A}}=W_{9} & = &\left\{ \begin{pmatrix} 0 & 1 & 0\\ 0 & 0 & 1\\ 1 & 0 & 0 \end{pmatrix},\; \begin{pmatrix} -1 & 0 & 0\\ 0 & 1 & 0\\ 0 & 0 & -1 \end{pmatrix}\right\} \\
\mathbb{W}_3^{\mathfrak{A}}=W_{11} &=& \left\{ \begin{pmatrix} 0 & 1 & 0\\ 0 & 0 & 1\\ 1 & 0 & 0 \end{pmatrix},\; \begin{pmatrix} -1 & -1 & -1\\ 0 & 0 & 1\\ 0 & 1 & 0 \end{pmatrix}\right\}\\
\end{array}
$$
of $\mathrm{SL}_3(\mathbb{Z})\subset \mathrm{GL}_3(\mathbb{Z})$.

On the other hand, each of the subgroups $\mathbb{W}_i$ contains two different subgroups isomorphic to $\mathfrak{S}_4$ (see \cite[Proposition 9]{Tahara}):

1) One is the image of the subgroup $\mathfrak{S}_4 \times \{1\}$ of  $\mathfrak{S}_4 \times \mumu_2$ by the isomorphism $\mathrm{GL}(M_i) \cong \mathrm{GL}_3(\mathbb{Z})$ given by our choice of bases. We denote it by $\overline{\mathbb{W}}_i^{\mathfrak{S}}$. It is easily seen that this subgroup is not contained in $\mathrm{SL}_3(\mathbb{Z})$.

2) The second one is the intersection  $\mathbb{W}_i\cap\mathrm{SL}_3(\mathbb{Z})$. It is generated by the images under the isomorphism  $\mathrm{GL}(M_i) \cong \mathrm{GL}_3(\mathbb{Z})$ of the transpositions in the subgroup $\mathfrak{S}_4\times \{1\}$ multiplied by the element $\sigma \in \mathrm{GL}_3(\mathbb{Z})$. We denote it by $\mathbb{W}_i^{\mathfrak{S}}$.
\end{notation}

The lattice $M_3$ can be seen as the root lattice of the root system $\mathrm{A}_{3}$ endowed with the natural action of the group $\mathrm{Aut}(A_{3})\cong\mathfrak{S}_{4}\times\mumu_{2}$. Similarly, one can show that $M_1$ is the weight lattice of this root lattice, so that there is an inclusion $M_3\hookrightarrow M_1$ as a~sublattice of index~$4$. With our choice of bases, it is given by the matrix
$$
\left(\begin{array}{ccc}
2 & 1 & 1\\
1 & 2 & 1\\
1 & 1 & 2
\end{array}\right).
$$
Likewise, the lattice $M_2$ is the root lattice of the root system $B_{3}$ endowed
with the natural action of the group $\mathrm{Aut}(B_{3})\cong\mathfrak{S}_{3}\ltimes\mumu_{2}^{3}\cong\mathfrak{S}_{4}\times\mumu_{2}$.
The inclusion $M_3\hookrightarrow M_1$ factors
as the composition of two inclusions $M_3\hookrightarrow M_2$
and $M_2\hookrightarrow M_1$ as sublattices of index two.
With our choice of bases, they are given by the matrices
$$
\begin{pmatrix}
1 & 1 & 0\\
1 & 0 & 1\\
0 & 1 & 1
\end{pmatrix}\quad\textrm{and}\quad
\begin{pmatrix}
1 & 1 & 0\\
1 & 0 & 1\\
0 & 1 & 1
\end{pmatrix},
$$
respectively. All the inclusions $M_3\hookrightarrow M_2\hookrightarrow M_1$ are $\mathfrak{S}_{4}\times\mumu_{2}$-equivariant.

Let $\mathbb{T}_{1}=\mathrm{Spec}(\mathbb{C}[M_{1}])$, $\mathbb{T}_{2}=\mathrm{Spec}(\mathbb{C}[M_{2}])$
and $\mathbb{T}_{3}=\mathrm{Spec}(\mathbb{C}[M_{3}])$ be the three-dimensional tori that correspond to the lattices $M_{1}$, $M_{2}$ and $M_{3}$, respectively.
We write
$$
\mathbb{C}[M_1]=\mathbb{C}\big[	\tone{t}_{1}^{\pm1},\tone{t}_{2}^{\pm1},\tone{t}_{3}^{\pm1}\big]
$$
and identify (using the inclusions $M_3\hookrightarrow M_2\hookrightarrow M_1$)
the~algebras $\mathbb{C}[M_2]$ and $\mathbb{C}[M_3]$ with the subalgebras of the algebra $\mathbb{C}[M_1]$ as follows:
$$
\mathbb{C}[M_2]=\mathbb{C}[\ttwo{t}_{1}^{\pm1},\ttwo{t}_{2}^{\pm1},\ttwo{t}_{3}^{\pm1}]=\mathbb{C}\big[(\tone{t}_{1}\tone{t}_{2})^{\pm1},(\tone{t}_{1}\tone{t}_{3})^{\pm1},(\tone{t}_{2}\tone{t}_{3})^{\pm1}\big]
$$
and
$$
\begin{array}{rcl}
\mathbb{C}[M_3]=\mathbb{C}[t_{1}^{\pm1},t_{2}^{\pm1},t_{3}^{\pm1}] & = &\mathbb{C}\big[(\ttwo{t}_{1}\ttwo{t}_{2})^{\pm1},(\ttwo{t}_{1}\ttwo{t}_{3})^{\pm1},(\ttwo{t}_{2}\ttwo{t}_{3})^{\pm1}\big] \\
 &= &\mathbb{C}\big[(\tone{t}_{1}^{2}\tone{t}_{2}\tone{t}_{3})^{\pm1},(\tone{t}_{1}\tone{t}_{2}^{2}\tone{t}_{3})^{\pm1},(\tone{t}_{1}\tone{t}_{2}\tone{t}_{3}^{2})^{\pm1}\big].
\end{array}
$$
This gives us morphisms $q_{12}\colon\mathbb{T}_{1}\rightarrow\mathbb{T}_{2}$ and $q_{23}\colon\mathbb{T}_{2}\rightarrow\mathbb{T}_{3}$,
which are quotients by
the~involutions $(\tone{t}_{1},\tone{t}_{2},\tone{t}_{3})\mapsto(-\tone{t}_{1},-\tone{t}_{2},-\tone{t}_{3})$
and $(\ttwo{t}_{1},\ttwo{t}_{2},\ttwo{t}_{3})\mapsto(-\ttwo{t}_{1},-\ttwo{t}_{2},-\ttwo{t}_{3})$, respectively.

In the next three subsections, we present three toric Fano varieties with terminal singularities
that are natural equivariant compactifications of the tori $\mathbb{T}_1$, $\mathbb{T}_2$, $\mathbb{T}_3$ following the scheme described in Section~\ref{section:toric}.
Before doing this, let us first fix some notation.

\begin{notation}
\label{notations:P1-P1-P1}
Let $([u_{1}:v_{1}],\cdots,[u_{n}:v_{n}])$ be homogeneous coordinates on $(\mathbb{P}^{1})^{n}$.
We equip $(\mathbb{P}^{1})^{n}$ with its standard structure of a toric variety with
open orbit $\mathbb{T}_{(\mathbb{P}^{1})^{n}}$ given by
$$
\prod_{i=1}^{n}u_{i}v_{i}\neq 0.
$$
We view the collection of ratios $\big(\frac{u_{1}}{v_{1}},\ldots,\frac{u_{n}}{v_{n}}\big)$
as natural ``toric coordinates'' on $(\mathbb{P}^{1})^{n}$. We
use these to identify each torus-invariant irreducible closed
subvariety of $(\mathbb{P}^{1})^{n}$ with the toric coordinates of its general point. For example, for $n=4$, this yields:
\begin{itemize}
\item $(0,1,1,1)$ is the torus-invariant divisor $u_{1}=0$;

\item $(0,1,\infty,1)$ is the torus-invariant surface
given by $u_{1}=v_{3}=0$;

\item $(0,0,0,0)$ is the torus-invariant point $u_{1}=u_{2}=u_{3}=u_{4}=0$.
\end{itemize}

Finally, we denote by $\upsilon$ be the involution of $\mathbb{P}^{1}$
given by $[u:v]\mapsto[v:u]$.
\end{notation}

\subsection{Toric Fano threefold with Weyl group $\mathbb{W}_1$}
\label{section:Y-24}
The convex hull of the points
$$
(0,\pm1,\pm1), (\pm1,0,\pm1), (\pm1,\pm1,0)
$$
in $\mathrm{Hom}(M_1,\mathbb{Z})\otimes\mathbb{Q}$ is a $\mathbb{W}_{1}$-invariant convex polytope.
One can show that the~associated toric Fano threefold is the hypersurface $Y_{24}$ in $(\mathbb{P}^{1})^{4}$ that is given by
$$
u_{1}u_{2}u_{3}u_{4}-v_{1}v_{2}v_{3}v_{4}=0.
$$
The open $\mathbb{T}_{1}$-orbit is the subset $\mathbb{T}_{Y_{24}}$ that is given by
$$
u_{1}u_{2}u_{3}u_{4}v_{1}v_{2}v_{3}v_{4}\neq0.
$$
We have $\mathbb{W}_{Y_{24}}\cong\mathbb{W}_{1}$, so that we identify
$\mathbb{W}_{Y_{24}}=\mathbb{W}_{1}$, $\mathbb{T}_{Y_{24}}=\mathbb{T}_{1}$ and $G_{Y_{24}}=\mathbb{T}_{1}\rtimes\mathbb{W}_{1}$.

The $\mathbb{W}_{1}$-action on $Y_{24}$ is given by the permutations of the factors in $(\mathbb{P}^{1})^{4}$
and the~involution $\upsilon\times\upsilon\times\upsilon\times\upsilon$,
which corresponds to the element $\sigma$ of $\mathbb{W}_1$.
We also denote this involution by $\sigma_{Y_{24}}$.

The threefold $Y_{24}$ has fourteen $\mathbb{T}_{1}$-fixed points:
the six points
$$
(0,0,\infty,\infty),(0,\infty,0,\infty),(0,\infty,\infty,0),(\infty,0,\infty,0),(\infty,0,0,\infty),(\infty,\infty,0,0),
$$
which are isolated ordinary double points forming the singular locus
of $Y_{24},$ and the eight smooth points
$$
\begin{array}{cccc}
(0,\infty,\infty,\infty), & (\infty,0,\infty,\infty), & (\infty,\infty,0,\infty), & (\infty,\infty,\infty,0),\\
(\infty,0,0,0), & (0,\infty,0,0), & (0,0,\infty,0), & (0,0,0,\infty).
\end{array}
$$
Similarly, it has twenty four irreducible $\mathbb{T}_{1}$-invariant
curves
$$
\begin{array}{cccccc}
(0,0,1,\infty), & (0,1,0,\infty), & (1,0,0,\infty), & (0,0,\infty,1), & (0,1,\infty,0), & (1,0,\infty,0),\\
(0,\infty,0,1), & (0,\infty,1,0), & (1,\infty,0,0), & (\infty,0,0,1), & (\infty,0,1,0), & (\infty,1,0,0),\\
(\infty,\infty,1,0), & (\infty,1,\infty,0), & (1,\infty,\infty,0), & (\infty,\infty,0,1), & (\infty,1,0,\infty), & (1,\infty,0,\infty),\\
(\infty,0,\infty,1), & (\infty,0,1,\infty), & (1,0,\infty,\infty), & (0,\infty,\infty,1), & (0,\infty,1,\infty), & (0,1,\infty,\infty),
\end{array}
$$
and twelve irreducible $\mathbb{T}_{1}$-invariant surfaces
$$
\begin{array}{cccccc}
(0,1,1,\infty), & (1,0,1,\infty), & (1,1,0,\infty), & (0,1,\infty,1), & (1,0,\infty,1), & (1,1,\infty,0),\\
(0,\infty,1,1), & (1,\infty,0,1), & (1,\infty,1,0), & (\infty,0,1,1), & (\infty,1,0,1), & (\infty,1,1,0).
\end{array}
$$

With this description, the following lemma is straightforward to check.

\begin{lemma}
\label{lemma:Y24-G-Fano}
Let $\mathbb{W}$ be a subgroup in $\mathbb{W}_{1}$ that contains $\mathbb{W}_{1}^{\mathfrak{A}}$.
Then the following~hold:
\begin{enumerate}
\item The group $\mathbb{W}_1^{\mathfrak{A}}$ acts transitively on the set of $\mathbb{T}_{1}$-invariant surfaces and on the set of singular points of $Y_{24}$.

\item The groups $\mathbb{W}_{1}^{\mathfrak{A}}$ and $\overline{\mathbb{W}}_{1}^{\mathfrak{S}}$ act on the set of smooth $\mathbb{T}_{1}$-fixed points and on the set of  $\mathbb{T}_{1}$-invariant curves with the same orbits. The action on the set of smooth $\mathbb{T}_{1}$-fixed points has two orbits:
one consisting of the points
$$
(0,\infty,\infty,\infty),(\infty,0,\infty,\infty),(\infty,\infty,0,\infty),(\infty,\infty,\infty,0),
$$
and another one consisting of the points
$$
(\infty,0,0,0),(0,\infty,0,0),(0,0,\infty,0),(0,0,0,\infty),
$$

Similarly, the action on the set of irreducible $\mathbb{T}_{1}$-invariant curves has two orbits: one consisting of the curves
$$
\begin{array}{cccccc}
(0,0,1,\infty), & (0,1,0,\infty), & (1,0,0,\infty), & (0,0,\infty,1), & (0,1,\infty,0), & (1,0,\infty,0),\\
(0,\infty,0,1), & (0,\infty,1,0), & (1,\infty,0,0), & (\infty,0,0,1), & (\infty,0,1,0), & (\infty,1,0,0),
\end{array}
$$
and the other one consisting of the curves
$$
\begin{array}{cccccc}
(\infty,\infty,1,0), & (\infty,1,\infty,0), & (1,\infty,\infty,0), & (\infty,\infty,0,1), & (\infty,1,0,\infty), & (1,\infty,0,\infty),\\
(\infty,0,\infty,1), & (\infty,0,1,\infty), & (1,0,\infty,\infty), & (0,\infty,\infty,1), & (0,\infty,1,\infty), & (0,1,\infty,\infty).
\end{array}
$$

\item The group $\mathbb{W}_{1}^{\mathfrak{S}}$ acts transitively on the set of smooth $\mathbb{T}_{1}$-fixed points and on the set of irreducible $\mathbb{T}_{1}$-invariant curves.

\item If $\sigma_{Y_{24}}\in\mathbb{W}$, then $\mathbb{W}$ acts transitively on the set of smooth $\mathbb{T}_{1}$-fixed points and on the set of $\mathbb{T}_{1}$-invariant curves of $Y_{24}$.

\end{enumerate}
\end{lemma}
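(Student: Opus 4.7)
The plan is to reduce each of the four assertions to an elementary orbit-stabilizer computation once each $\mathbb{T}_1$-invariant collection has been encoded combinatorially. First I would set up $\mathbb{W}_1$-equivariant bijections, with $\mathbb{W}_1\cong \mathfrak{S}_4\times\langle\sigma_{Y_{24}}\rangle$ acting by the obvious rule on the right-hand side: the $12$ invariant surfaces correspond to ordered pairs $(i,j)\in\{1,2,3,4\}^2$ with $i\neq j$ (positions of the $0$- and $\infty$-coordinates), with $\sigma_{Y_{24}}$ swapping the two entries; the six singular points correspond to $2$-element subsets $I\subset\{1,2,3,4\}$ (the zero coordinates), with $\sigma_{Y_{24}}$ acting by complementation; the eight smooth $\mathbb{T}_1$-fixed points split as $\{q_i^+\}_{i=1}^4\sqcup\{q_i^-\}_{i=1}^4$ according to whether one or three coordinates equal $0$, and $\sigma_{Y_{24}}$ exchanges the two signs; and the $24$ invariant curves split similarly into two classes of $12$, each parameterized by a $2$-subset $I\subset\{1,2,3,4\}$ together with an index $i\in\{1,2,3,4\}\setminus I$.

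For part~(1), I would invoke the $2$-transitivity of $\mathfrak{A}_4$ on $\{1,2,3,4\}$ to obtain transitivity on ordered pairs, hence on invariant surfaces, and observe that the $\mathfrak{A}_4$-stabilizer of the subset $\{1,2\}$ is $\{e,(1\,2)(3\,4)\}$, of index $6$, giving transitivity on the six singular points. For part~(2), I would note that both $\mathbb{W}_1^{\mathfrak{A}}$ and $\overline{\mathbb{W}}_1^{\mathfrak{S}}$ sit inside the copy $\mathfrak{S}_4\times\{1\}$ of $\mathbb{W}_1$ that does not involve $\sigma_{Y_{24}}$, so they preserve the $\pm$-decomposition of the smooth fixed points and of the invariant curves; a short stabilizer computation (yielding $\mathfrak{A}_3$ for a smooth fixed point and the trivial group for a curve representative inside $\mathfrak{A}_4$) then shows that each of the $\pm$-halves is already a single $\mathfrak{A}_4$-orbit, and coincides with the corresponding $\mathfrak{S}_4$-orbit.

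For parts~(3) and~(4), it suffices to adjoin to the groups of part~(2) a single element interchanging the two $\pm$-halves: once this is done, transitivity on each half immediately upgrades to transitivity on the whole. In part~(4), $\sigma_{Y_{24}}\in\mathbb{W}$ itself supplies such an element. In part~(3), I would use that an $\mathfrak{S}_4$-transposition in $\overline{\mathbb{W}}_1^{\mathfrak{S}}$ and $\sigma_{Y_{24}}$ both have determinant $-1$ in $\mathrm{GL}_3(\mathbb{Z})$, so any product $\tau\cdot\sigma_{Y_{24}}$ with $\tau$ a transposition belongs to $\mathbb{W}_1^{\mathfrak{S}}=\mathbb{W}_1\cap\mathrm{SL}_3(\mathbb{Z})$ and still swaps the two $\pm$-classes. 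I foresee no serious obstacle: the entire argument is combinatorial bookkeeping, whose essential content is that the natural labels attached to each invariant subvariety completely detect the orbit structure.
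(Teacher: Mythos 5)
Your argument is correct and is exactly the direct combinatorial verification the paper leaves to the reader (the paper merely lists the $\mathbb{T}_1$-invariant subvarieties and declares the lemma ``straightforward to check''). Your labelling of surfaces by ordered pairs, singular points by $2$-subsets, smooth fixed points and curves by their counts of $0$- versus $\infty$-coordinates, together with the stabilizer computations in $\mathfrak{A}_4$ and the observation that $\sigma_{Y_{24}}$ (or a transposition times $\sigma_{Y_{24}}$ in $\mathbb{W}_1^{\mathfrak{S}}$) swaps the two halves, fills in precisely the intended details.
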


\begin{corollary}
\label{corollary:Y24-G-Fano}
Let $G$ be a subgroup of $G_{Y_{24}}$ that contains $\mathbb{W}_{1}^{\mathfrak{A}}$. Then $\mathrm{rk}(\mathrm{Cl}(Y_{24})^{G})=1$.
\end{corollary}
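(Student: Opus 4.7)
The plan is to compute $\mathrm{Cl}(Y_{24})^{G}$ by reduction to its subgroup $\mathbb{W}_1^{\mathfrak{A}}$, using the standard toric presentation of the class group. Since $\mathbb{W}_1^{\mathfrak{A}}\subseteq G$, we have $\mathrm{Cl}(Y_{24})^{G}\subseteq\mathrm{Cl}(Y_{24})^{\mathbb{W}_1^{\mathfrak{A}}}$, so it suffices to prove that the larger group of invariants has rank $1$; the reverse inequality $\mathrm{rk}\,\mathrm{Cl}(Y_{24})^{G}\geqslant 1$ is supplied by the class of $-K_{Y_{24}}$, which is nonzero and fixed by the full automorphism group of $Y_{24}$.

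For the upper bound I would start from the standard short exact sequence of $\mathbb{W}_1$-modules attached to the toric variety $Y_{24}$,
$$
0\longrightarrow M_1\longrightarrow \bigoplus_{i=1}^{12}\mathbb{Z}D_i\longrightarrow\mathrm{Cl}(Y_{24})\longrightarrow 0,
$$
where the $D_i$ are the twelve $\mathbb{T}_1$-invariant prime divisors listed in Section~\ref{section:Y-24}. Taking $\mathbb{W}_1^{\mathfrak{A}}$-invariants yields the exact sequence
$$
0\longrightarrow M_1^{\mathbb{W}_1^{\mathfrak{A}}}\longrightarrow\bigg(\bigoplus_{i=1}^{12}\mathbb{Z}D_i\bigg)^{\mathbb{W}_1^{\mathfrak{A}}}\longrightarrow\mathrm{Cl}(Y_{24})^{\mathbb{W}_1^{\mathfrak{A}}}\longrightarrow H^1\bigl(\mathbb{W}_1^{\mathfrak{A}},M_1\bigr),
$$
whose last term is torsion because $\mathbb{W}_1^{\mathfrak{A}}$ is finite and $M_1$ is finitely generated. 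By Lemma~\ref{lemma:Y24-G-Fano}(1), the group $\mathbb{W}_1^{\mathfrak{A}}$ acts transitively on the twelve torus-invariant divisors, so the middle term is the rank-one lattice generated by $\sum_i D_i$. It remains to verify that $M_1^{\mathbb{W}_1^{\mathfrak{A}}}=0$, which is an immediate direct calculation using the two generators of $\mathbb{W}_1^{\mathfrak{A}}=W_{10}$ recorded in Notation~\ref{notation:subgroups}: the order-$3$ generator fixes only the line $\mathbb{Q}\langle e_1+e_2+e_3\rangle$, and the second generator sends $e_1+e_2+e_3$ to $-e_2$, so no nonzero vector is fixed.

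Putting these pieces together gives $\mathrm{rk}\,\mathrm{Cl}(Y_{24})^{\mathbb{W}_1^{\mathfrak{A}}}\leqslant 1-0=1$, and hence $\mathrm{rk}\,\mathrm{Cl}(Y_{24})^{G}\leqslant 1$; combined with the lower bound coming from $-K_{Y_{24}}$, this proves the equality. I expect no serious obstacle: the only step that is not completely formal is the verification that $M_1$ has no $\mathbb{W}_1^{\mathfrak{A}}$-fixed vector, but this is a two-matrix computation and could equally well be deduced from the fact that $M_1\otimes\mathbb{Q}$ is the unique irreducible $\mathbb{Q}$-representation of $\mathfrak{A}_4$ of dimension $3$.
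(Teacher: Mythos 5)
Your proof is correct and follows essentially the same route as the paper: the corollary is deduced from the transitivity statement in Lemma~\ref{lemma:Y24-G-Fano}(1), and your write-up simply makes explicit the standard toric exact sequence $0\to M_1\to\bigoplus_i\mathbb{Z}D_i\to\mathrm{Cl}(Y_{24})\to 0$ together with the observation that $M_1^{\mathbb{W}_1^{\mathfrak{A}}}=0$ (which indeed follows from the irreducibility of $M_1\otimes\mathbb{Q}$ as an $\mathfrak{A}_4$-representation). The details you supply, including the lower bound via $-K_{Y_{24}}$, are all sound.
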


\subsection{Toric Fano threefold with Weyl group $\mathbb{W}_2$}
\label{subsection:P1-P1-P1}

The convex hull of the lattice points $(0,0,\pm 1)$,$(0,\pm 1,0)$,$(\pm 1,0,0)$
in $\mathrm{Hom}(M_2,\mathbb{Z})\otimes\mathbb{Q}$ is a $\mathbb{W}_{2}$-invariant convex polytope.
One can check that the associated toric Fano threefold is $V_{6}=\mathbb{P}^{1}\times\mathbb{P}^{1}\times\mathbb{P}^{1}$.
Moreover, one has $\mathbb{W}_{V_{6}}\cong\mathbb{W}_{2}$.
Therefore, we identify
$\mathbb{W}_{V_{6}}=\mathbb{W}_{2}$, $\mathbb{T}_{V_{6}}=\mathbb{T}_{2}$ and $G_{V_{6}}=\mathbb{T}_{2}\rtimes\mathbb{W}_{2}$.

The action of $\mathbb{W}_{2}\cong\mathfrak{S}_{3}\ltimes\mumu_{3}^{2}$ on the threefold $V_{6}$ is given by
the permutations of three factors, the involutions $\upsilon\times\upsilon\times\mathrm{id}_{\mathbb{P}^{1}}$
and $\upsilon\times\mathrm{id}_{\mathbb{P}^{1}}\times\upsilon$, and
the involution $\upsilon\times\upsilon\times\upsilon$,
which corresponds to the element $\sigma$ of $\mathbb{W}_2$.
We also denote this involution by $\sigma_{V_{6}}$.

\begin{lemma}
\label{lemma:V6-G-Fano}
Let $\mathbb{W}$ be a subgroup of $\mathbb{W}_2$ that contains $\mathbb{W}_{2}^{\mathfrak{A}}$.
Then the following hold:
\begin{enumerate}
\item The group $\mathbb{W}_2^{\mathfrak{A}}$ acts transitively on the set of irreducible $\mathbb{T}_{2}$-invariant surfaces and on the set of irreducible $\mathbb{T}_{2}$-invariant curves.
\item The groups $\mathbb{W}_{2}^{\mathfrak{A}}$ and $\overline{\mathbb{W}}_{2}^{\mathfrak{S}}$ act on the set of $\mathbb{T}_{2}$-fixed points with the same two orbits: one consisting of the points
$$
(0,0,0), (\infty,\infty,0), (\infty,0,\infty), (0,\infty,\infty),
$$
and another one consisting of the points
$$
(\infty,\infty,\infty), (0,0,\infty), (0,\infty,0), (\infty,0,0).
$$
\item The group $\mathbb{W}_{2}^{\mathfrak{S}}$ acts transitively on the set of $\mathbb{T}_{2}$-fixed points.
\item If $\sigma_{V_{6}}\in\mathbb{W}$, then $\mathbb{W}$ acts transitively on the set of $\mathbb{T}_{2}$-fixed points.
\end{enumerate}
\end{lemma}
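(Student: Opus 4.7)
The plan is to translate the matrix generators of $\mathbb{W}_2^{\mathfrak{A}}$, $\overline{\mathbb{W}}_2^{\mathfrak{S}}$ and $\mathbb{W}_2^{\mathfrak{S}}$ from Notation~\ref{notation:subgroups} into explicit automorphisms of $V_6=(\mathbb{P}^1)^3$ and then verify each of the four assertions by a direct orbit calculation on the explicit finite lists of $\mathbb{T}_2$-invariant surfaces, curves and fixed points. First I would record that, in the $\mathfrak{S}_3\ltimes\mumu_2^3$-presentation of $\mathbb{W}_2$, the central element $\sigma$ is $\tau_1\tau_2\tau_3$ and corresponds to $\sigma_{V_6}=\upsilon\times\upsilon\times\upsilon$; the matrix $\mathrm{diag}(-1,1,-1)$ listed for $\mathbb{W}_2^{\mathfrak{A}}$ is the involution $\tau_1\tau_3$ (i.e.\ $\upsilon\times\mathrm{id}_{\mathbb{P}^1}\times\upsilon$); and the $3$-cycle matrix is the cyclic permutation of the three $\mathbb{P}^1$-factors. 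Conjugating $\tau_1\tau_3$ by this $3$-cycle yields $\tau_1\tau_2$ and $\tau_2\tau_3$, so $\mathbb{W}_2^{\mathfrak{A}}=\mathfrak{A}_3\ltimes V_4$ with $V_4=\{1,\tau_1\tau_2,\tau_1\tau_3,\tau_2\tau_3\}$. Analogously $\overline{\mathbb{W}}_2^{\mathfrak{S}}=\mathfrak{S}_3\ltimes V_4$, and $\mathbb{W}_2^{\mathfrak{S}}=\mathbb{W}_2\cap\mathrm{SL}_3(\mathbb{Z})$ consists of those $(g,\tau)\in\mathfrak{S}_3\ltimes\mumu_2^3$ with $\mathrm{sgn}(g)\cdot(-1)^{|\tau|}=1$; in particular $\mathbb{W}_2^{\mathfrak{A}}\subset\mathbb{W}_2^{\mathfrak{S}}$.

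For (1), the six $\mathbb{T}_2$-invariant surfaces are $\{t_i=0\}$ and $\{t_i=\infty\}$ for $i=1,2,3$; the subgroup $\mathfrak{A}_3$ cycles the index $i$, while $V_4$ exchanges the labels $0\leftrightarrow\infty$ in two chosen coordinates, and together these two moves yield transitivity. The twelve $\mathbb{T}_2$-invariant curves split into three families of four according to which $\mathbb{P}^1$-factor is not collapsed; $\mathfrak{A}_3$ permutes the three families cyclically, and $V_4$ acts transitively on the four curves in each family by swapping $0\leftrightarrow\infty$ in the two ``fixed'' coordinates.

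For (2), every element of $\overline{\mathbb{W}}_2^{\mathfrak{S}}=\mathfrak{S}_3\ltimes V_4$ is a composition of a factor permutation and an even number of coordinate-wise $\upsilon$'s, hence preserves the parity of the number of coordinates equal to $\infty$ in a $\mathbb{T}_2$-fixed point. The two parity classes are exactly the two sets of four points listed in the statement, and transitivity of $\mathbb{W}_2^{\mathfrak{A}}$ (and a fortiori of $\overline{\mathbb{W}}_2^{\mathfrak{S}}$) on each is proved by the same $\mathfrak{A}_3$ plus $V_4$ argument as in (1). For (3) and (4), using (2) and the inclusions $\mathbb{W}_2^{\mathfrak{A}}\subset\mathbb{W}_2^{\mathfrak{S}}$ and $\mathbb{W}_2^{\mathfrak{A}}\subset\mathbb{W}$, it suffices to exhibit a single group element that swaps the two parity classes. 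For (3) the element $(12)\sigma\in\mathbb{W}_2\cap\mathrm{SL}_3(\mathbb{Z})$ sends $(0,0,0)$ to $(\infty,\infty,\infty)$, and for (4) the involution $\sigma_{V_6}\in\mathbb{W}$ does the same.

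The only point that requires mild care is the correct translation of the three subgroups into the semidirect-product presentation $\mathfrak{S}_3\ltimes\mumu_2^3$ of $\mathbb{W}_2$ (and in particular the identification of the abstract $\mathfrak{S}_4$-transpositions inside $\overline{\mathbb{W}}_2^{\mathfrak{S}}$); once this bookkeeping is in place, the entire argument reduces to finite orbit counts on sets with at most twelve elements and there is no real geometric obstacle.
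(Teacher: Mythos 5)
Your proof is correct and is exactly the direct verification the paper has in mind: the authors state this lemma without proof (as with the analogous Lemma~\ref{lemma:Y24-G-Fano}, it is left as "straightforward to check" from the explicit lists of $\mathbb{T}_2$-invariant subvarieties), and your translation of the matrix generators into factor permutations and coordinate-wise $\upsilon$'s, together with the parity-of-$\infty$'s invariant for assertion (2) and a single class-swapping element for (3) and (4), supplies precisely that check. All the identifications (e.g.\ $\mathbb{W}_2^{\mathfrak{A}}=\mathfrak{A}_3\ltimes\{1,\tau_1\tau_2,\tau_1\tau_3,\tau_2\tau_3\}$, $\mathbb{W}_2^{\mathfrak{A}}\subset\mathbb{W}_2^{\mathfrak{S}}=\mathbb{W}_2\cap\mathrm{SL}_3(\mathbb{Z})$) are consistent with Notation~\ref{notation:subgroups} and the action described in Section~\ref{subsection:P1-P1-P1}.
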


\begin{corollary}
\label{corollary:V6-G-Fano}
Let $G$ be a subgroup of $G_{V_{6}}$ that contains $\mathbb{W}_{2}^{\mathfrak{A}}$. Then $\mathrm{rk}(\mathrm{Cl}(V_{6})^{G})=1$.
\end{corollary}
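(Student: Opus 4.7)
The plan is to reduce the statement to a representation-theoretic computation on the class group of $V_6$, which then follows immediately from Lemma~\ref{lemma:V6-G-Fano}(1) together with the toric divisor sequence.

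First, I would observe that the connected torus $\mathbb{T}_2\subset G_{V_6}$ acts trivially on $\mathrm{Cl}(V_6)$, so the invariant subgroup $\mathrm{Cl}(V_6)^G$ depends only on the image $\nu_X(G)\subset\mathbb{W}_2$. Since this image contains $\mathbb{W}_2^{\mathfrak{A}}$, there is an inclusion
$\mathrm{Cl}(V_6)^G\subseteq\mathrm{Cl}(V_6)^{\mathbb{W}_2^{\mathfrak{A}}}$.
On the other hand, the anticanonical class $-K_{V_6}$ is $G_{V_6}$-invariant and non-torsion, giving $\mathrm{rk}(\mathrm{Cl}(V_6)^G)\geqslant 1$. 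So it suffices to prove $\mathrm{rk}(\mathrm{Cl}(V_6)^{\mathbb{W}_2^{\mathfrak{A}}})\leqslant 1$.

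For this upper bound, I would invoke the standard $\mathbb{W}_2$-equivariant short exact sequence of toric divisors
$$
0\longrightarrow M_2\longrightarrow\bigoplus_{D}\mathbb{Z}\cdot D\longrightarrow\mathrm{Cl}(V_6)\longrightarrow 0,
$$
where $D$ runs over the six $\mathbb{T}_2$-invariant prime divisors of $V_6$. Tensoring with $\mathbb{Q}$ and taking $\mathbb{W}_2^{\mathfrak{A}}$-invariants (an exact functor for finite group actions in characteristic zero), the rank of $\mathrm{Cl}(V_6)^{\mathbb{W}_2^{\mathfrak{A}}}$ equals the difference between the dimensions of the invariants of the middle and left terms. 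By Lemma~\ref{lemma:V6-G-Fano}(1), $\mathbb{W}_2^{\mathfrak{A}}$ acts transitively on the six $\mathbb{T}_2$-invariant prime divisors, so the middle invariant subspace is one-dimensional (spanned by the sum of the six divisors).

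It remains to check that $(M_2)_{\mathbb{Q}}^{\mathbb{W}_2^{\mathfrak{A}}}=0$, which is the only real ``obstacle'' but is immediate: using the explicit generators of $\mathbb{W}_2^{\mathfrak{A}}=W_9$ listed in Notation~\ref{notation:subgroups}, the cyclic permutation of the three basis vectors already has $0$ as its only fixed vector in $\mathbb{Q}^3$. Alternatively, $\mathbb{W}_2$ acts irreducibly on $(M_2)_{\mathbb{Q}}$ by Proposition~\ref{proposition:maximal-irreducible-groups-GL-3-Z}, and since $\mathbb{W}_2^{\mathfrak{A}}$ is normal in $\mathbb{W}_2$, the invariant subspace $(M_2)_{\mathbb{Q}}^{\mathbb{W}_2^{\mathfrak{A}}}$ is $\mathbb{W}_2$-stable and hence equals $0$ or all of $(M_2)_{\mathbb{Q}}$, the latter being excluded since $\mathbb{W}_2^{\mathfrak{A}}$ acts non-trivially. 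This yields $\mathrm{rk}(\mathrm{Cl}(V_6)^{\mathbb{W}_2^{\mathfrak{A}}})=1-0=1$, and combined with the lower bound from $-K_{V_6}$ completes the proof. All other steps are mechanical consequences of toric exactness and the transitivity supplied by Lemma~\ref{lemma:V6-G-Fano}(1).
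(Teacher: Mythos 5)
Your proof is correct and follows the route the paper intends: Corollary \ref{corollary:V6-G-Fano} is stated without proof precisely because it is the combination of Lemma \ref{lemma:V6-G-Fano}(1) with the standard toric divisor sequence, exactly as you set it up (reduction to $\mathbb{W}_{2}^{\mathfrak{A}}$ via triviality of the $\mathbb{T}_2$-action on $\mathrm{Cl}(V_6)$, lower bound from $-K_{V_6}$, upper bound from transitivity on the six boundary divisors). One factual slip: the $3$-cycle in $\mathbb{W}_{2}^{\mathfrak{A}}=W_{9}$ does \emph{not} have $0$ as its only fixed vector in $\mathbb{Q}^{3}$ --- it fixes the line spanned by $(1,1,1)$. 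To see that $(M_{2})_{\mathbb{Q}}^{\mathbb{W}_{2}^{\mathfrak{A}}}=0$ you genuinely need the second generator $\mathrm{diag}(-1,1,-1)$, which moves $(1,1,1)$; alternatively, your argument via normality of $\mathbb{W}_{2}^{\mathfrak{A}}$ in $\mathbb{W}_{2}$ together with the $\mathbb{W}_{2}$-irreducibility of $(M_{2})_{\mathbb{Q}}$ is valid and closes this step, so the proof as a whole stands. A still shorter variant, probably closest to what the authors had in mind, bypasses the kernel computation entirely: $\mathrm{Cl}(V_{6})=\mathbb{Z}H_{1}\oplus\mathbb{Z}H_{2}\oplus\mathbb{Z}H_{3}$ with the three factors permuted cyclically by an element of $\mathbb{W}_{2}^{\mathfrak{A}}$, so any invariant class $aH_{1}+bH_{2}+cH_{3}$ has $a=b=c$.
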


\begin{remark}
Recall that we have the quotient morphism $q_{12}\colon\mathbb{T}_{1}\to\mathbb{T}_{2}$,
which is given by
$$
(\tone{t}_{1},\tone{t}_{2},\tone{t}_{3})\mapsto(\tone{t}_{1}\tone{t}_{2},\tone{t}_{1}\tone{t}_{3},\tone{t}_{2}\tone{t}_{2}).
$$
By construction, this morphism is equivariant for the actions of the group $\mathfrak{S}_{4}\times\mumu_{2}$
given by the subgroups $\mathbb{W}_{1}$ and $\mathbb{W}_{2}$, respectively.
Moreover, it induces a $q_{12}$-equivariant rational map $\varphi\colon Y_{24}\dashrightarrow V_{6}$ that has generic degree $2$.
This rational map is equivariant for the~actions of the group $\mathfrak{S}_{4}\times\mumu_{2}$ on the threefolds $Y_{24}$ and $V_{6}$.
With our choice of coordinates, this rational map can be explicitly written as follows.
For $i\in\{1,2,3,4\}$, let
$$
U_{i}=\frac{u_{i}}{v_i}u_{1}u_{2}u_{3}u_{4}v_{1}v_{2}v_{3}v_{4}\quad\textrm{and}\quad V_{i}=\frac{v_{i}}{u_i}u_{1}u_{2}u_{3}u_{4}v_{1}v_{2}v_{3}v_{4}.
$$
Then the image of the rational $\Phi\colon Y_{24}\dashrightarrow\mathbb{P}^{7}$ defined by
\begin{equation}
\label{equation:Y24-V6-P7}
\Big(\big[u_1:v_1\big],\big[u_2:v_2\big],\big[u_3:v_3\big],\big[u_4:v_4\big]\Big)\mapsto\big[U_1:V_1:U_2:V_2:U_3:V_3:U_4:V_4\big]
\end{equation}
is equal to the image of $V_{6}$ by the Segre embedding $j:V_{6}\hookrightarrow\mathbb{P}^{7}$ given by
\begin{multline*}
\Big([u_{1}:v_{1}],[u_{2}:v_{2}],[u_{3}:v_{3}]\Big)\mapsto \Big[u_{1}u_{2}v_{3}:v_{1}v_{2}u_{3}:u_{1}v_{2}u_{3}:v_{1}u_{2}v_{3}:\\
 v_{1}u_{2}v_{3}:v_{1}u_{2}u_{3}:u_{1}v_{2}v_{3}:v_{1}v_{2}v_{3}:u_{1}u_{2}u_{3}\Big]
\end{multline*}
and $j\circ\varphi=\Phi$. Moreover, there exists $\mathfrak{S}_{4}\times\mumu_{2}$-equivariant commutative diagram
\begin{equation}
\label{equation:bad-link-Y-24}
\xymatrix{
\widetilde{Y}_{24}\ar@{->}[d]_{\alpha}\ar@{-->}[rrrr]^{\beta} &&&& \widehat{Y}_{24}\ar@{->}[d]^{\gamma}\\
Y_{24}\ar@{-->}[rr]^{\varphi} && V_6 &&  Y_{12}\ar@{->}[ll]_{\delta}}
\end{equation}
where $\alpha$ is the blow-up of all the singular points of $Y_{24}$,
$\beta$  is the composition of Atiyah flops of the proper transforms
of all $\mathbb{T}_{1}$-invariant curves in $Y_{24}$, $\gamma$ is the contraction
of the proper transforms of all $\mathbb{T}_{1}$-invariant surfaces in $Y_{24}$,
and $\delta$ is the double cover branched over the~union of all $\mathbb{T}_{2}$-invariant surfaces.
The~threefold $Y_{12}$ is the canonical toric Fano threefold \textnumero{525553} in \cite{grdb}.
\end{remark}

\subsection{Toric Fano threefold with Weyl group $\mathbb{W}_3$}
\label{subsection:Fano-Enriques}
The convex hull of the eight points
$$
(\pm1,\pm1,\pm1)
$$
in $\mathrm{Hom}(M_3,\mathbb{Z})\otimes\mathbb{Q}$ is a $\mathbb{W}_3$-invariant convex polytope.
One can check that the associated toric Fano threefold is the toric Fano threefold \textnumero{47} in \cite{grdb}. Following
\cite[$\S$ 6.3.2]{Bayle}, we can also view $X_{24}$ as the quotient
of the threefold $V_{6}$ by the involution $\tau_{V_{6}}\colon V_{6}\to V_{6}$
defined~by
\begin{equation}
\Big(\big[u_{1}:v_{1}\big],\big[u_{2},v_{2}\big],\big[u_{3},v_{3}\big]\Big)\mapsto\Big(\big[u_{1}:-v_{1}\big],\big[u_{2}:-v_{2}\big],\big[u_{3}:-v_{3}\big]\Big).\label{eq:tau-V6}
\end{equation}
In this presentation, the threefold  $X_{24}$ comes with a closed embedding $X_{24}\hookrightarrow\mathbb{P}^{13}$
which is induced by the rational map $V_{6}\dashrightarrow\mathbb{P}^{13}$ defined by
\begin{equation}
\begin{array}{ccc}
\big([u_{1}:v_{1}],[u_{2}:v_{2}],[u_{3}:v_{3}]\big) & \mapsto & \Big[u_{1}^{2}u_{2}^{2}u_{3}^{2}:u_{1}^{2}u_{2}^{2}v_{3}^{2}:u_{1}^{2}u_{2}v_{2}u_{3}v_{3}:u_{1}^{2}v_{2}^{2}u_{3}^{2}:\\
 &  & u_{1}^{2}v_{2}^{2}v_{3}^{2}:u_{1}v_{1}u_{2}^{2}u_{3}v_{3}:u_{1}v_{1}u_{2}v_{2}u_{3}^{2}:\\
 &  & u_{1}v_{1}u_{2}v_{2}v_{3}^{2}:u_{1}v_{1}v_{2}^{2}u_{3}v_{3}:v_{1}^{2}u_{2}^{2}u_{3}^{2}:\\
 &  & v_{1}^{2}u_{2}^{2}v_{3}^{2}:v_{1}^{2}u_{2}v_{2}u_{3}v_{3}:v_{1}^{2}v_{2}^{2}u_{3}^{2}:v_{1}^{2}v_{2}^{2}v_{3}^{2}\Big].
\end{array}\label{eq:Embed-X24-P13}
\end{equation}

The action of the torus $\mathbb{T}_{3}$ on the threefold $X_{24}$
coincides with that induced from the~action of the torus $\mathbb{T}_{2}$ on
the threefold $V_{6}$ via the quotient morphism $\pi\colon V_{6}\rightarrow X_{24}$.
Namely, the torus $\mathbb{T}_{3}$ is the quotient of the torus $\mathbb{T}_{2}$
by the involution
$$
\big(\ttwo{t}_{1},\ttwo{t}_{2},\ttwo{t}_{3}\big)\mapsto\big(-\ttwo{t}_{1},-\ttwo{t}_{2},-\ttwo{t}_{3}\big),
$$
and the quotient morphism $\pi\colon V_{6}\rightarrow X_{24}=V_{6}/\tau_{V_{6}}$
becomes equivariant with respect to the quotient morphism $q_{23}\colon\mathbb{T}_{2}\rightarrow\mathbb{T}_{3}$
when we equip the threefold $X_{24}$ with the induced structure of toric variety.

The involution $\tau_{V_{6}}$ commutes with the action of the Weyl group $\mathbb{W}_{V_{6}}\cong\mathbb{W}_{2}$,
so that we~have $\mathbb{W}_{X_{24}}\cong\mathbb{W}_{3}$.
Hence, we identify
$\mathbb{W}_{X_{24}}=\mathbb{W}_{3}$, $\mathbb{T}_{X_{24}}=\mathbb{T}_{3}$ and $G_{X_{24}}=\mathbb{T}_{3}\rtimes\mathbb{W}_{3}$.

The action of the group $\mathbb{W}_{X_{24}}$ on the threefold $X_{24}$ coincide with the
action induced from the action of the group $\mathbb{W}_{V_{6}}$ on the threefold $V_{6}$
via the quotient morphism $\pi\colon V_{6}\rightarrow X_{24}$.
We~denote by $\sigma_{X_{24}}$ the involution in $\mathbb{W}_{X_{24}}$ induced by $\sigma_{V_{6}}\in\mathbb{W}_{V_{6}}$.

The morphism $\pi\colon V_{6}\rightarrow X_{24}$ maps $\mathbb{W}_{V_{6}}$-orbits of irreducible $\mathbb{T}_{2}$-invariants closed subvarieties in $V_{6}$
to the $\mathbb{W}_{X_{24}}$-orbits of irreducible $\mathbb{T}_{3}$-invariant closed subvarieties in $X_{24}$.
Because of this, we will denote irreducible $\mathbb{T}_{3}$-invariant
closed subvarieties in $X_{24}$ by the~same symbols as the corresponding
irreducible $\mathbb{T}_{2}$-invariant subvarieties in $V_{6}$.

Observe that the Fano threefold $X_{24}$ has exactly eight $\mathbb{T}_{3}$-fixed points,
which are singular points of type $\frac{1}{2}(1,1,1)$. They are the images of the fixed points of the involution $\tau_{V_{6}}$.
One can check that the divisor class group of the threefold $X_{24}$ is isomorphic to $\mathbb{Z}^{3}\oplus\mathbb{Z}_2$.
It is generated by the images of the toric divisors in the threefold $V_{6}$.
We have
$$
-2K_{X_{24}}\sim\mathcal{O}_{\mathbb{P}^{13}}(2)|_{X_{24}}
$$
and $-K_{X_{24}}$ is not a Cartier divisor. This together with the adjunction formula imply that every smooth
hyperplane section of the threefold $X_{24}\subset\mathbb{P}^{13}$ is an Enriques surface.

As a consequence of Corollary \ref{corollary:V6-G-Fano}, we obtain.

\begin{corollary}
\label{corollary:X24-G-Fano}
Let $G$ be a subgroup in $G_{X_{24}}$ that contains $\mathbb{W}_{3}^{\mathfrak{A}}$. Then $\mathrm{rk}(\mathrm{Cl}(X_{24})^{G})=1$.
\end{corollary}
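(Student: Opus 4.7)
The plan is to deduce Corollary~\ref{corollary:X24-G-Fano} from Corollary~\ref{corollary:V6-G-Fano} by pulling the group action from $X_{24}$ back to $V_6$ along the quotient morphism $\pi\colon V_6\rightarrow X_{24}$. The strategy has three stages: first lift $G$ to a subgroup of $G_{V_6}$, then apply Corollary~\ref{corollary:V6-G-Fano}, and finally compare the invariant rational class groups via $\pi^{*}$.

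For the lift, I would verify that the defining involution $\tau_{V_{6}}\in\mathbb{T}_2$ (corresponding to $(-1,-1,-1)$) is central in $G_{V_6}=\mathbb{T}_{2}\rtimes\mathbb{W}_{2}$. As an element of order two in the torus, it is associated to the homomorphism $M_2\rightarrow\{\pm 1\}$ given by $m\mapsto (-1)^{m_1+m_2+m_3}$, whose kernel is precisely $M_3$. The induced action of $\mathbb{W}_2$ on $M_2/2M_2$ factors through $\mathfrak{S}_3$ (the sign involutions $\tau_i$ become trivial modulo $2$) and preserves the sum-of-coordinates linear form, so $\tau_{V_6}$ is $\mathbb{W}_2$-fixed. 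Therefore $\langle \tau_{V_6}\rangle$ is normal in $G_{V_6}$, and $\pi$ extends to a surjective homomorphism $G_{V_6}\twoheadrightarrow G_{X_{24}}$ with kernel $\langle\tau_{V_6}\rangle$. I then let $\tilde G$ denote the preimage of $G$ in $G_{V_6}$.

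Next, I would observe that by Notation~\ref{notation:subgroups} the subgroups $\mathbb{W}_2^{\mathfrak{A}}$ and $\mathbb{W}_3^{\mathfrak{A}}$ are the unique subgroups isomorphic to $\mathfrak{A}_4$ in $\mathbb{W}_2$ and $\mathbb{W}_3$ respectively, so the Weyl-group isomorphism $\mathbb{W}_{V_6}\cong \mathbb{W}_{X_{24}}$ induced by $\pi$ must identify them. Hence the assumption $\nu_{X_{24}}(G)\supset \mathbb{W}_3^{\mathfrak{A}}$ translates to $\nu_{V_6}(\tilde G)\supset \mathbb{W}_2^{\mathfrak{A}}$, and Corollary~\ref{corollary:V6-G-Fano} yields $\mathrm{rk}(\mathrm{Cl}(V_6)^{\tilde G})=1$.

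It remains to transfer this rank back to $X_{24}$. Since the fixed locus of $\tau_{V_6}$ on $V_6$ is precisely the union of the six toric prime divisors, $\pi$ is a degree-two cover ramified with index two along each of them; equivalently, for every toric prime divisor $D\subset X_{24}$ one has $\pi^{*}D=2\widetilde{D}$ where $\widetilde{D}\subset V_6$ is the corresponding toric prime divisor. Since these divisors generate $\mathrm{Cl}(X_{24})_{\mathbb{Q}}$ and $\mathrm{Cl}(V_6)_{\mathbb{Q}}$, the pullback $\pi^{*}\colon \mathrm{Cl}(X_{24})_{\mathbb{Q}}\rightarrow \mathrm{Cl}(V_6)_{\mathbb{Q}}$ is an isomorphism of three-dimensional $\mathbb{Q}$-vector spaces. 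It is $\tilde G$-equivariant by construction, and since $\tau_{V_6}\in\mathbb{T}_2$ acts trivially on the class group of the toric variety $V_6$, the $\tilde G$-action descends to an action of $G$. Taking invariants on both sides then yields $\mathrm{rk}(\mathrm{Cl}(X_{24})^G)=\mathrm{rk}(\mathrm{Cl}(V_6)^{\tilde G})=1$. The most delicate step is the centrality of $\tau_{V_6}$, on which the whole descent argument rests; once that is in place, the remaining steps are routine bookkeeping with the toric exact sequence.
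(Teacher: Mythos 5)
Your overall strategy --- reducing to Corollary~\ref{corollary:V6-G-Fano} via the quotient $\pi\colon V_6\to X_{24}$ --- is exactly what the paper intends (its proof is the one-line remark that the statement is a consequence of Corollary~\ref{corollary:V6-G-Fano}), and the group-theoretic part of your argument (centrality of $\tau_{V_6}$ in $G_{V_6}$, the lift $\widetilde G$, the identification of the unique $\mathfrak{A}_4$-subgroups of $\mathbb{W}_2$ and $\mathbb{W}_3$) is correct. However, your description of the geometry of $\pi$ contains a genuine error: the fixed locus of $\tau_{V_6}$ on $V_6=(\mathbb{P}^1)^3$ is not the union of the six toric divisors but the set of eight $\mathbb{T}_2$-fixed points $\{0,\infty\}^3$ --- this is precisely why $X_{24}$ acquires eight singular points of type $\tfrac{1}{2}(1,1,1)$, as recorded in Section~\ref{subsection:Fano-Enriques}. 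Consequently $\pi$ is \'etale in codimension one; each toric divisor $\widetilde D\subset V_6$ is $\tau_{V_6}$-invariant and maps $2{:}1$ onto the corresponding toric divisor $D\subset X_{24}$, so $\pi^{*}D=\widetilde D$ rather than $2\widetilde D$. This mistake happens not to affect your conclusion: $\pi^{*}\colon\mathrm{Cl}(X_{24})_{\mathbb{Q}}\to\mathrm{Cl}(V_6)_{\mathbb{Q}}$ is still a $\widetilde G$-equivariant isomorphism of three-dimensional spaces (injective because $\pi_{*}\pi^{*}=2\cdot\mathrm{id}$, surjective because the six toric divisors generate both class groups rationally and are matched bijectively), and since $\tau_{V_6}$ lies in the connected torus it acts trivially on $\mathrm{Cl}(V_6)$, so the descent to a $G$-action and the comparison of invariants go through. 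A more direct route, closer to what the paper has in mind, is to observe that the six toric divisors of $X_{24}$ are the images of the six toric divisors of $V_6$, that they generate $\mathrm{Cl}(X_{24})_{\mathbb{Q}}$, and that $\mathbb{W}_3^{\mathfrak{A}}$ permutes them transitively because $\mathbb{W}_2^{\mathfrak{A}}$ does so on $V_6$ by Lemma~\ref{lemma:V6-G-Fano}(1).
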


\section{Two equivariant Sarkisov links}
\label{section:two-Sarkisov-links}

In this section, we present two known toric birational
maps between $X_{24}$ and two other terminal toric Fano threefolds (see Lemmas~\ref{lem:GV4-Sarkisov-link} and \ref{lem:GP3-Sarkisov-link} below), which will play a central role in the proof of Theorem \ref{theorem:main}.

Let $X_{8}$ be the complete intersection of three quadrics in $\mathbb{P}^{6}$
with homogeneous coordinates $[y_{1}:y_{2}:y_{3}:y_{4}:y_{5}:y_{6}:y_{7}]$ given by the equations
\begin{equation}
\begin{cases}
y_{7}^{2}-y_{1}y_{6}=0\\
y_{7}^{2}-y_{2}y_{5}=0\\
y_{7}^{2}-y_{3}y_{4}=0.
\end{cases}\label{equation:complete-intersection-3-quadrics}
\end{equation}
We view $X_{8}$ as a toric variety for the torus $\mathbb{T}_{2}$,
with open orbit $\mathbb{T}_{X_{8}}$ that is given by $$y_{1}y_{2}y_{3}y_{4}y_{5}y_{6}y_{7}\ne 0.$$
Then $X_{8}$ has six $\mathbb{T}_{2}$-fixed points, which
are its singular points, it has twelve \mbox{$\mathbb{T}_{2}$-invariants}
irreducible curves, which are lines in $\mathbb{P}^{6}$,
and it has eight $\mathbb{T}_{2}$-invariants irreducible surfaces,
which are planes in $\mathbb{P}^{6}$.
The rational map $\mathbb{P}^{6}\dashrightarrow V_{6}$ given by
\begin{equation}
\big[y_{1}:y_{2}:y_{3}:y_{4}:y_{5}:y_{6}:y_{7}\big]\mapsto\Big(\big[y_{1}:y_{7}\big],\big[y_{2}:y_{7}\big],\big[y_{3}:y_{7}\big]\Big)\label{equation:Bir-X8-V6}
\end{equation}
induces a $\mathbb{T}_{2}$-equivariant birational map $\Phi\colon X_{8}\dashrightarrow V_{6}$
whose inverse is given by
\begin{equation}
\begin{array}{ccc}
\big([u_{1}:v_{1}],[u_{2}:v_{2}],[u_{3}:v_{3}]\big) & \mapsto & \Big[u_{1}^{2}W_{2,3}:u_{2}^{2}W_{1,3}:u_{3}^{2}W_{1,2}:v_{3}^{2}W_{1,2}:\\
 &  & v_{2}^{2}W_{1,3}:v_{1}^{2}W_{2,3}:u_{1}u_{2}u_{3}v_{1}v_{2}v_{3}\Big],
\end{array}\label{equation:Map-X8}
\end{equation}
where $W_{i,j}=u_{i}v_{i}u_{j}v_{j}$ for every $i$ and $j$ in $\{1,2,3\}$.
Moreover, there exists a commutative diagram
\begin{equation}
\xymatrix{\widetilde{V}_{6}\ar@{->}[d]^{\beta} \ar@{-->}[rr] &  &\widetilde{X}_{8}\ar@{->}[d]_{\alpha} \\
V_{6} \ar@{-->}[rr]^{\Phi^{-1}} &  & X_{8}
}
\label{eq:bad-GV6-link}
\end{equation}
where $\beta$ is the blow-up of all eight $\mathbb{T}_{2}$-fixed points,
the top dashed arrow consists of flips in the proper transforms of
the twelve $\mathbb{T}_{2}$-invariant lines in $X_{8}$,
and $\alpha$ is the~crepant contraction of the~proper transforms of the six $\mathbb{T}_{2}$-invariant surfaces in $V_{6}$.

The action of $\mathbb{W}_{V_{6}}$ on $V_{6}$ given in Subsection \ref{subsection:P1-P1-P1},
and the formulas \eqref{equation:Bir-X8-V6} and \eqref{equation:Map-X8} imply that
$$
\Phi^{-1}\mathbb{W}_{V_{6}}\Phi=\mathbb{W}_{X_{8}}\cong\mathbb{W}_{2},
$$
so that $\Phi$ is a birational $\mathbb{T}_{2}\rtimes\mathbb{W}_{2}$-map.
The diagram \eqref{eq:bad-GV6-link} is a so-called \emph{bad} $\mathbb{T}_{2}\rtimes\mathbb{W}_{2}$-Sarkisov link.

The composition $\Phi^{-1}\circ\tau_{V_{6}}\circ\Phi$ (see (\ref{eq:tau-V6})
for the definition of $\tau_{V_{6}})$ is the biregular involution
$\tau_{X_{8}}$ of the threefold $X_{8}$ defined by
$$
[y_{1}:y_{2}:y_{3}:y_{4}:y_{5}:y_{6}:y_{7}]\mapsto[y_{1}:y_{2}:y_{3}:y_{4}:y_{5}:y_{6}:-y_{7}].
$$
The projection $\mathbb{P}^{6}\dashrightarrow\mathbb{P}^{5}$ from the point $[0:0:0:0:0:0:1]$ induces an isomorphism between the quotient
$X_{8}/\tau_{X_{8}}$ and the complete intersection $V_{4}\subset\mathbb{P}^{5}$ given by
\begin{equation}
\label{equation:complete-intersection}
\begin{cases}
y_{1}y_{6}-y_{2}y_{5}=0\\
y_{1}y_{6}-y_{3}y_{4}=0.
\end{cases}
\end{equation}
We view $V_{4}$ as a toric variety for the torus $\mathbb{T}_{3}$ (see Section~\ref{section:Fano-threefolds}),
with open orbit $\mathbb{T}_{V_{4}}$ given by $$y_{1}y_{2}y_{3}y_{4}y_{5}y_{6}\ne 0,$$
so that the quotient morphism $\pi\colon X_{8}\to V_{4}$ is equivariant with respect
to the quotient morphism $q_{23}\colon\mathbb{T}_{2}\to\mathbb{T}_{3}$.

The threefold $V_{4}$ has six $\mathbb{T}_{3}$-fixed points, twelve irreducible $\mathbb{T}_{3}$-invariant curves,
which are lines in $\mathbb{P}^{5}$ and eight irreducible $\mathbb{T}_{3}$-invariant surfaces which are planes in $\mathbb{P}^{5}$.
These $\mathbb{T}_{3}$-invariant irreducible subvarieties are the images of the $\mathbb{T}_2$-invariant irreducible subvarieties of $X_8$ by the quotient morphism
$\pi\colon X_{8}\to V_{4}$.

Since $\tau_{V_{8}}$ commutes with the action of $\mathbb{T}_{2}\rtimes\mathbb{W}_{2}$ on the threefold $X_{8}$,
we obtain an induced regular action of $\mathfrak{S}_{4}\times\mumu_{2}$ on the threefold $V_{4}$.
Moreover, one has $\mathbb{W}_{V_{4}}\cong\mathbb{W}_{3}$, and the threefold $V_{4}$ endowed with the action of $G_{V_{4}}=\mathbb{T}_{3}\rtimes\mathbb{W}_{V_{4}}$
is another projective terminal toric Fano model for the subgroup $\mathbb{W}_{3}$ of $\mathrm{GL}_{3}(\mathbb{Z})$.
As usual, we identify $\mathbb{W}_{V_4}=\mathbb{W}_{3}$, we let $\sigma_{V_4}$ to be the involution in $\mathbb{W}_{V_4}$ defined by
$$
[y_{1}:y_{2}:y_{3}:y_{4}:y_{5}:y_{6}]\mapsto[y_{6}:y_{5}:y_{4}:y_{3}:y_{2}:y_{1}],
$$
and we let $\nu\colon G_{V_{4}}\to\mathbb{W}_{3}$ be the natural homomorphism.

\begin{lemma}[{\cite[Theorem~10]{Avilov}}]
\label{lemma:V4-G-Fano}
Let $G$ be a subgroup of $G_{V_{4}}$ such that $\nu(G)$ contains $\mathbb{W}_{3}^{\mathfrak{A}}$.
Then $\mathrm{rk}(\mathrm{Cl}(V_{4})^{G})=1$ if and only if $\sigma_{V_{4}}\in\nu(G)$ or $\nu(G)=\mathbb{W}_3^{\mathfrak{S}}$.
\end{lemma}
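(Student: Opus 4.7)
The plan is to reduce the computation of $\mathrm{rk}(\mathrm{Cl}(V_4)^G)$ to counting $\mathbb{W}$-orbits on the $8$ torus-invariant prime divisors of $V_4$, where $\mathbb{W}:=\nu(G)$, and then to analyze these orbits for each of the finitely many subgroups of $\mathbb{W}_3$ containing $\mathbb{W}_3^{\mathfrak{A}}$.

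First I would apply the standard $\mathbb{W}_3$-equivariant exact sequence
$$0\to M_3\to \bigoplus_{\rho\in\Sigma(1)}\mathbb{Z}D_\rho\to \mathrm{Cl}(V_4)\to 0$$
for the toric variety $V_4$. Tensoring with $\mathbb{Q}$ and taking $\mathbb{W}$-invariants remains exact since $\mathbb{W}$ is finite. The representation $M_3\otimes\mathbb{Q}$ is irreducible under $\mathbb{W}_3^{\mathfrak{A}}\cong\mathfrak{A}_4$ (it is the standard $3$-dimensional irreducible $\mathbb{Q}$-representation of $\mathfrak{A}_4$), so $(M_3)^{\mathbb{W}}\otimes\mathbb{Q}=0$ whenever $\mathbb{W}\supseteq\mathbb{W}_3^{\mathfrak{A}}$. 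Therefore $\mathrm{rk}(\mathrm{Cl}(V_4)^G)=|\Sigma(1)/\mathbb{W}|$, and the lemma reduces to determining for which $\mathbb{W}$ the action on the $8$ rays of $V_4$ is transitive.

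Next I would identify the $8$ rays explicitly. Using the description $V_4=X_8/\tau_{X_8}$ and the index-two inclusion $N_2\hookrightarrow N_3$ dual to $M_3\hookrightarrow M_2$, the $8$ rays of $X_8$ at the cube vertices $(\pm 1,\pm 1,\pm 1)\in N_2$ transfer, after primitive rescaling in the coarser lattice $N_3$, to the eight vectors $\pm\tau_1,\pm\tau_2,\pm\tau_3,\pm(\tau_1+\tau_2+\tau_3)$, where $(\tau_1,\tau_2,\tau_3)$ is the basis of $N_3$ dual to $(t_1,t_2,t_3)$. Setting $v_1=\tau_1$, $v_2=\tau_2$, $v_3=\tau_3$, $v_4=-\tau_1-\tau_2-\tau_3$, the eight rays are $\{v_i\}_{i=1}^{4}\cup\{-v_i\}_{i=1}^{4}$ with $\sum v_i=0$. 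A direct calculation of the induced $\mathbb{W}_3$-action on $N_3$ shows that $\overline{\mathbb{W}}_3^{\mathfrak{S}}=\mathfrak{S}_4\times\{1\}$ permutes $\{v_1,\dots,v_4\}$ (and, separately, $\{-v_i\}$) via the standard permutation representation without mixing the two sets, while the central involution $\sigma$ acts as $-\mathrm{Id}$ on $N_3$ and hence sends $v_i\leftrightarrow -v_i$.

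Finally, since $\mathbb{W}_3/\mathbb{W}_3^{\mathfrak{A}}\cong(\mathbb{Z}/2)^2$, there are exactly five subgroups of $\mathbb{W}_3$ containing $\mathbb{W}_3^{\mathfrak{A}}$: the group $\mathbb{W}_3^{\mathfrak{A}}$ itself, the three intermediate subgroups $\mathfrak{A}_4\times\mumu_2$, $\overline{\mathbb{W}}_3^{\mathfrak{S}}$, $\mathbb{W}_3^{\mathfrak{S}}$, and $\mathbb{W}_3$. From the description above, $\mathbb{W}_3^{\mathfrak{A}}$ and $\overline{\mathbb{W}}_3^{\mathfrak{S}}$ both preserve the partition $\{v_i\}\sqcup\{-v_i\}$ and so have two orbits on rays; the subgroups $\mathfrak{A}_4\times\mumu_2$ and $\mathbb{W}_3$ contain $\sigma$ and therefore act transitively; and every element of the nontrivial coset $\mathbb{W}_3^{\mathfrak{S}}\setminus\mathbb{W}_3^{\mathfrak{A}}$ has the form (transposition)$\cdot\sigma$, sending some $v_i$ to some $-v_j$, so $\mathbb{W}_3^{\mathfrak{S}}$ acts transitively as well. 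Putting this together, $\mathrm{rk}(\mathrm{Cl}(V_4)^G)=1$ holds if and only if $\sigma_{V_4}\in\nu(G)$ or $\nu(G)=\mathbb{W}_3^{\mathfrak{S}}$. The main technical step is the explicit ray computation in $N_3$ together with verifying how $\overline{\mathbb{W}}_3^{\mathfrak{S}}$ sits inside $\mathrm{GL}(N_3)$; once these are in hand, the case analysis over the five subgroups is routine.
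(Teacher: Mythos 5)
Your argument is correct, and it reaches the statement by a genuinely different route than the paper. Both proofs ultimately rest on the same principle: since $M_3\otimes\mathbb{Q}$ is an irreducible nontrivial representation of $\mathbb{W}_3^{\mathfrak{A}}\cong\mathfrak{A}_4$, the toric exact sequence gives $\mathrm{rk}(\mathrm{Cl}(V_4)^G)=$ the number of $\nu(G)$-orbits on the eight toric boundary divisors. But the paper leaves this reduction implicit and determines the orbit structure geometrically, by identifying the eight $\mathbb{T}_3$-invariant planes of $V_4$ with the eight $\mathbb{T}_2$-fixed points of $V_6$ through the quotient $\pi\colon X_8\to V_4$ and the bad link \eqref{eq:bad-GV6-link}, and then quoting Lemma~\ref{lemma:V6-G-Fano}; you instead compute the primitive ray generators of the fan of $V_4$ in $N_3$ and read off the orbits of the five intermediate subgroups directly. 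Your computation checks out: with $\tau_1=\tfrac12(1,1,-1)$, $\tau_2=\tfrac12(1,-1,1)$, $\tau_3=\tfrac12(-1,1,1)$ the eight rays are indeed $\pm v_1,\dots,\pm v_4$ with $\sum v_i=0$, the group $\overline{\mathbb{W}}_3^{\mathfrak{S}}$ acts by permuting $\{v_1,\dots,v_4\}$ (this is just the statement that $N_3=M_3^{\vee}$ is the (co)weight lattice $\mathbb{Z}^4/\langle(1,1,1,1)\rangle$ of $A_3$, the structure the paper exhibits for $M_1$), and $\sigma$ acts as $-\mathrm{id}$, so the case analysis over the five subgroups yields exactly the stated dichotomy. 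Two small corrections: $N_3$ contains $N_2$ with index two, so it is the \emph{finer} lattice, not the coarser one --- which is precisely why the cube vertices $(\pm1,\pm1,\pm1)$ cease to be primitive and must be halved; and you should say explicitly that $\mathrm{Cl}(V_4)^G=\mathrm{Cl}(V_4)^{\nu(G)}$ because the connected group $\mathbb{T}_3\cap G$-closure acts trivially on the discrete group $\mathrm{Cl}(V_4)$. Your version has the advantage of being self-contained (it needs neither the constructions of Section~\ref{section:two-Sarkisov-links} nor Lemma~\ref{lemma:V6-G-Fano}) and of making explicit the invariant-class-group computation that the paper uses silently in Corollaries~\ref{corollary:Y24-G-Fano}, \ref{corollary:V6-G-Fano} and \ref{corollary:X24-G-Fano}; the paper's version is shorter given what it has already established.
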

\begin{proof}
By construction, the eight irreducible $\mathbb{T}_{3}$-invariant surfaces in $V_4$ are the images by $\pi\colon X_{8}\to V_{4}$ of the eight irreducible $\mathbb{T}_{2}$-invariant surfaces in $X_8$. By the $\mathbb{T}_2\rtimes \mathbb{W}_2$-equivariant commutative diagram \eqref{eq:bad-GV6-link}, the latter are the images by $\alpha:\tilde{X}_8\rightarrow X_8$ of the proper transforms of  exceptional divisors of the blow-up $\beta:\tilde{V}_8\rightarrow V_6$ of the eight $\mathbb{T}_{2}$-fixed points of $V_6$. Since the action of $\mathbb{W}_3$ on the character lattice of $\mathbb{T}_3$ is induced from that of $\mathbb{W}_2$ on the character lattice of $\mathbb{T}_2$, it follows that $\nu(G)$ acts transitively on the eight irreducible $\mathbb{T}_{3}$-invariant surfaces in $V_4$ if and only this group acts transitively on the eight $\mathbb{T}_{2}$-fixed points of $V_6$. The assertion then follows from Lemma \ref{lemma:V6-G-Fano}.
\end{proof}

\bigskip

The birational $\mathbb{T}_{2}\rtimes\mathbb{W}_{2}$-map $\Phi$ in \eqref{eq:bad-GV6-link} induces a birational $\mathbb{T}_{3}\rtimes\mathbb{W}_{3}$-map
$\varphi:V_{4}\dashrightarrow X_{24}$ defined by
\begin{equation}
\label{eq:Bir-V4-X24}
\begin{array}{ccc}
\big[y_{1}:y_{2}:y_{3}:y_{4}:y_{5}:y_{6}\big] & \mapsto & \big[y_{1}y_{2}y_{3}:y_{1}y_{2}y_{4}:y_{1}y_{3}y_{4}:y_{1}y_{3}y_{5}:y_{1}y_{4}y_{5}:\\
 &  & y_{1}y_{2}y_{6}:y_{1}y_{3}y_{6}:y_{1}y_{4}y_{6}:y_{1}y_{5}y_{6}:\\
 &  & y_{2}y_{3}y_{6}:y_{2}y_{4}y_{6}:y_{2}y_{5}y_{6}:y_{3}y_{5}y_{6}:y_{4}y_{5}y_{6}\big],
\end{array}
\end{equation}
and we eventually obtain the following:

\begin{lemma}
\label{lem:GV4-Sarkisov-link}
There exists a $\mathbb{T}_{3}\rtimes\mathbb{W}_{3}$-Sarkisov link
\begin{equation}
\xymatrix{ & \widetilde{X}_{20}\ar@{->}[dl]_{\gamma}\ar@{-->}[rr]^{\rho} &  & \overline{X}_{20}\ar@{->}[dr]^{\delta}\\
V_{4}\ar@{-->}[rrrr]^{\varphi} &&&& X_{24}}
\label{eq:V4-X24-link}
\end{equation}
where $\gamma$ is the blow-up of the six singular points of the threefold $V_{4}$,
the~map $\rho$ is a~composition of Atiyah flops in the proper transforms of the twelve $\mathbb{T}_{3}$-invariant lines in $V_{4}$,
and~$\delta$ is the composition of Kawamata blow-ups of the eight singular points of $X_{24}$.
\end{lemma}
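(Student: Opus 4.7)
The plan is to derive the link \eqref{eq:V4-X24-link} as the quotient of the bad $\mathbb{T}_{2}\rtimes\mathbb{W}_{2}$-Sarkisov link \eqref{eq:bad-GV6-link} by the commuting central involutions $\tau_{X_{8}}$ and $\tau_{V_{6}}$. Recall that $\tau_{X_{8}}$ and $\tau_{V_{6}}$ lie in $\mathbb{T}_{2}$, are intertwined by $\Phi$, and satisfy $V_{4}=X_{8}/\langle\tau_{X_{8}}\rangle$, $X_{24}=V_{6}/\langle\tau_{V_{6}}\rangle$; the map $\varphi$ is precisely the birational map induced by $\Phi$ on these quotients. Since \eqref{eq:bad-GV6-link} is $\mathbb{T}_{2}\rtimes\mathbb{W}_{2}$-equivariant and both involutions are central in $G_{X_{8}}$ and $G_{V_{6}}$ respectively, they lift canonically to commuting involutions $\widetilde\tau$ on $\widetilde{X}_{8}$ and $\widetilde{V}_{6}$, commuting with the twelve flips. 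Setting $\widetilde{X}_{20}:=\widetilde{X}_{8}/\widetilde\tau$ and $\overline{X}_{20}:=\widetilde{V}_{6}/\widetilde\tau$, the bad link descends to a $\mathbb{T}_{3}\rtimes\mathbb{W}_{3}$-equivariant diagram of the shape of \eqref{eq:V4-X24-link}.

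It then remains to identify the three descended maps. For $\delta\colon\overline{X}_{20}\to X_{24}$: the fixed locus of $\tau_{V_{6}}$ on $V_{6}$ consists exactly of the eight smooth $\mathbb{T}_{2}$-fixed points of $V_{6}$, each with local action $(-1,-1,-1)$ on the tangent space, so its lift to $\widetilde{V}_{6}$ acts trivially on each of the eight exceptional divisors $E\cong\mathbb{P}^{2}$ of $\beta$; the double cover $\widetilde{V}_{6}\to\overline{X}_{20}$ is therefore branched over the union of these divisors. A direct Hurwitz-type computation (in the local toric chart identifying the blow-up of the origin in $\mathbb{A}^{3}$ with the Kawamata blow-up of $\mathbb{A}^{3}/\mu_{2}$) then identifies $\delta$ locally with the Kawamata blow-up of a $\frac{1}{2}(1,1,1)$-singularity, yielding discrepancy $\tfrac{1}{2}$ along each exceptional divisor. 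For $\gamma\colon\widetilde{X}_{20}\to V_{4}$: the morphism $\alpha$ crepantly contracts six divisors to the six $\mathbb{T}_{2}$-fixed singular points of $X_{8}$, each of which is fixed by $\tau_{X_{8}}$. Taking the quotient, the six exceptional divisors descend to six divisors on $\widetilde{X}_{20}$ contracted to the six ordinary double points of $V_{4}$, and a local toric computation identifies $\gamma$ with the scheme-theoretic blow-up of these ODPs. For $\rho$: one verifies that $\widetilde\tau$ preserves each of the twelve flipped curves setwise, so twelve flips remain twelve after taking the quotient, and a toric analysis identifies each descended flop with the standard conifold flop.

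The principal difficulty is the last identification: showing that each of the twelve descended flops is a genuine Atiyah flop rather than a flop of some other normal-bundle type. This requires a careful study of the local toric geometry of $\widetilde{X}_{8}$ and $\widetilde{V}_{6}$ near one representative flipped curve, together with the induced action of $\widetilde\tau$, in order to recognise the local quotient model as $\{xy=zw\}\subset\mathbb{A}^{4}$ with its two small resolutions. Once this is established for a single curve, the statement for all twelve follows immediately from the transitive action of $\mathbb{W}_{3}^{\mathfrak{A}}\subset\mathbb{W}_{3}$ on the set of twelve flipped curves, giving the $\mathbb{T}_{3}\rtimes\mathbb{W}_{3}$-equivariance of the entire link.
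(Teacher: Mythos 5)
Your proposal is correct and follows essentially the same route as the paper, which presents this lemma as a known toric construction obtained by descending the bad $\mathbb{T}_2\rtimes\mathbb{W}_2$-link \eqref{eq:bad-GV6-link} through the quotients $X_8\to X_8/\tau_{X_8}=V_4$ and $V_6\to V_6/\tau_{V_6}=X_{24}$, and gives no further details beyond the explicit formula \eqref{eq:Bir-V4-X24}. You correctly isolate the only delicate points (the local identification of $\delta$ with Kawamata blow-ups of $\tfrac{1}{2}(1,1,1)$-points, of $\gamma$ with the blow-up of the six nodes, and of the descended flips as Atiyah flops), all of which reduce to routine local toric computations as you indicate.
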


Let $V_{2}$ be the hypersurface of degree $4$ in $\mathbb{P}(1,1,1,1,2)$ defined by the equation
\begin{equation}
\label{equation:toric-quartic-double-solid}
w^{2}-x_{1}x_{2}x_{3}x_{4}=0,
\end{equation}
where $x_{1}$, $x_{2}$, $x_{3}$ and $x_{4}$ are coordinates
of weight $1$, and $w$ is a coordinate of weight $2$.
We view $V_{2}$ as a toric variety for the torus $\mathbb{T}_{3}$
with open orbit $\mathbb{T}_{V_{2}}$ given by
$x_{1}x_{2}x_{3}x_{4}w\ne 0$.
The threefold $V_{2}$ has four $\mathbb{T}_{2}$-fixed points,
it has six $\mathbb{T}_{2}$-invariants irreducible curves, which are singular curves of the threefold $V_2$,
and it has four $\mathbb{T}_{2}$-invariants irreducible surfaces.

The rational map $\mathbb{P}(1,1,1,1,2)\dashrightarrow V_{6}$ given
by
$$
\big[x_{1}:x_{2}:x_{3}:x_{4}:w\big]\mapsto\Big(\big[x_{1}x_{2}:w\big],\big[x_{1}x_{3}:w\big],\big[x_{2}x_{3}:w\big]\Big)
$$
induces a $\mathbb{T}_{2}$-equivariant birational map $\Psi_{\infty}:V_{2}\dashrightarrow V_{6}$,
whose inverse is given by
\begin{equation}
\big(\big[u_{1}:v_{1}\big],\big[u_{2}:v_{2}\big],\big[u_{3}:v_{3}\big]\big)\mapsto\big[u_{1}u_{2}v_{3}:u_{1}u_{3}v_{2}:u_{2}u_{3}v_{1}:v_{1}v_{2}v_{3}:u_{1}u_{2}u_{3}v_{1}v_{2}v_{3}\big].\label{equation:toric-quartic-double-solid-inverse}
\end{equation}

With the Notation~\ref{notations:P1-P1-P1}, this birational map $\Psi_{\infty}$ fits in the following commutative diagram:
\begin{equation}
\label{equation:P1-P1-P1-S4-bad-link}
\xymatrix{&\widetilde{V}_{2}\ar@{->}[dl]_{\alpha_{\infty}}\ar@{->}[dr]^{\beta_{\infty}}\\
V_{2}\ar@{-->}[rr]^{\Psi_{\infty}} && V_{6}}
\end{equation}
where $\beta_{\infty}$ is the blow-up of the four points $(\infty,\infty,\infty)$, $(0,0,\infty)$, $(0,\infty,0)$ and $(\infty,0,0)$,
and $\alpha$ is the contraction of the proper transforms of the six $\mathbb{T}_{2}$-invariant
irreducible surfaces in $V_{6}$ onto the six singular curves of $V_{2}$.

Arguing as in the construction of $\Phi$ in Section~\ref{section:two-Sarkisov-links},
we see that
$$
\Psi_{\infty}^{-1}\overline{\mathbb{W}}_{2}^{\mathfrak{S}}\Psi_{\infty}=\mathbb{W}_{V_{2}},
$$
where we identified $\mathbb{W}_{V_6}=\mathbb{W}_{2}$.
Therefore, $\Phi_{\infty}$ is a birational $\mathbb{T}_{2}\rtimes\overline{\mathbb{W}}_{2}^{\mathfrak{S}}$-map.
The diagram \eqref{equation:P1-P1-P1-S4-bad-link} is a \emph{bad} $\mathbb{T}_{2}\rtimes\overline{\mathbb{W}}_{2}^{\mathfrak{S}}$-Sarkisov link.

The composition $\Psi_{\infty}^{-1}\circ\tau_{V_{6}}\circ\Psi_{\infty}$
is the biregular involution $\tau_{V_{2}}$ of $V_{2}$ defined by
$$
[x_{1}:x_{2}:x_{3}:x_{4}:w]\mapsto[x_{1}:x_{2}:x_{3}:x_{4}:-w].
$$
Viewing $\mathbb{P}^{3}$ as a toric variety for the torus $\mathbb{T}_{3}$
with open orbit $\mathbb{T}_{\mathbb{P}^{3}}$ given by $x_{1}x_{2}x_{3}x_{4}\ne0$,
the quotient morphism $V_{2}\to \mathbb{P}^{3}$ is equivariant
for the quotient morphism $q_{23}:\mathbb{T}_{2}\to\mathbb{T}_{3}$.
We can identify $\mathbb{W}_{\mathbb{P}^{3}}=\mathbb{W}_{V_{2}}=\overline{\mathbb{W}}_{3}^{\mathfrak{S}}$,
so that $G_{\mathbb{P}^{3}}=\mathbb{T}_{3}\rtimes\overline{\mathbb{W}}_{3}^{\mathfrak{S}}$.

It follows that the map $\Psi_{\infty}$ in \eqref{equation:P1-P1-P1-S4-bad-link}
induces a birational $\mathbb{T}_{3}\rtimes\overline{\mathbb{W}}_{3}^{\mathfrak{S}}$-map
$\psi_{\infty}\colon\mathbb{P}^{3}\dashrightarrow X_{24}$ given by
\begin{equation}
\label{eq:Bir-P3-X24}
\begin{array}{ccc}
\big[x_{1}:x_{2}:x_{3}:x_{4}\big] & \mapsto & \Big[x_{1}^{2}x_{2}^{2}x_{3}^{2}:x_{1}^{3}x_{2}x_{3}x_{4}:x_{1}^{2}x_{2}^{2}x_{3}x_{4}:x_{1}x_{2}^{3}x_{3}x_{4}:x_{1}^{2}x_{2}^{2}x_{4}^{2}:\\
 &  & x_{1}^{2}x_{2}x_{3}^{2}x_{4}:x_{1}x_{2}^{2}x_{3}^{2}x_{4}:x_{1}^{2}x_{2}x_{3}x_{4}^{2}:x_{1}x_{2}^{2}x_{3}x_{4}^{2}:\\
 &  & x_{1}x_{2}x_{3}^{3}x_{4}:x_{1}^{2}x_{3}^{2}x_{4}^{2}:x_{1}x_{2}x_{3}^{2}x_{4}^{2}:x_{2}^{2}x_{2}^{2}x_{4}^{2}:x_{1}x_{2}x_{3}x_{4}^{3}\Big],
\end{array}
\end{equation}
and we eventually obtain the following:

\begin{lemma}
\label{lem:GP3-Sarkisov-link}
There exists a $\mathbb{T}_{3}\rtimes\overline{\mathbb{W}}_{3}^{\mathfrak{S}}$-Sarkisov
link
\begin{equation}
\label{equation:X24-link-1}
\xymatrix{ & X_{22}\ar@{->}[dl]_{\gamma_{\infty}}\ar@{->}[rd]^{\delta_{\infty}}\\
\mathbb{P}^{3}\ar@{-->}[rr]^{\psi_{\infty}} &  & X_{24}}
\end{equation}
where $\delta_{\infty}$ is the composition of Kawamata blow-ups of four points in~$\mathrm{Sing}(X_{24})$
that form the $\overline{\mathbb{W}}_{3}^{\mathfrak{S}}$-orbit of the singular point $(\infty,\infty,\infty)$,
and $\gamma_{\infty}$ is the contraction of the~proper transforms of the six $\mathbb{T}_{3}$-invariant surfaces in $X_{24}$
to the six $\mathbb{T}_{3}$-invariant lines in $\mathbb{P}^{3}$.
\end{lemma}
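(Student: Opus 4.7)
The plan is to obtain the Sarkisov link \eqref{equation:X24-link-1} as the quotient of the bad $\mathbb{T}_{2}\rtimes\overline{\mathbb{W}}_{2}^{\mathfrak{S}}$-Sarkisov link \eqref{equation:P1-P1-P1-S4-bad-link} between $V_{2}$ and $V_{6}$ by the commuting involutions $\tau_{V_{2}}$ and $\tau_{V_{6}}$, in complete parallel with the derivation of the link \eqref{eq:V4-X24-link} from the bad link \eqref{eq:bad-GV6-link} in Lemma~\ref{lem:GV4-Sarkisov-link}. Since $\Psi_{\infty}^{-1}\circ\tau_{V_{6}}\circ\Psi_{\infty}=\tau_{V_{2}}$, both involutions lift to a single biregular involution $\widetilde{\tau}$ of $\widetilde{V}_{2}$ that makes the diagram \eqref{equation:P1-P1-P1-S4-bad-link} $\widetilde{\tau}$-equivariant. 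Setting $X_{22}:=\widetilde{V}_{2}/\widetilde{\tau}$ and using that $V_{6}/\tau_{V_{6}}=X_{24}$ together with $V_{2}/\tau_{V_{2}}=\mathbb{P}^{3}$ (the latter via the morphism forgetting the weight-two coordinate $w$, whose invariants on $V_{2}$ are generated by $x_{1},x_{2},x_{3},x_{4}$ because $w^{2}=x_{1}x_{2}x_{3}x_{4}$), the quotient of \eqref{equation:P1-P1-P1-S4-bad-link} produces morphisms $\gamma_{\infty}\colon X_{22}\to\mathbb{P}^{3}$ and $\delta_{\infty}\colon X_{22}\to X_{24}$ whose resulting birational map from $\mathbb{P}^{3}$ to $X_{24}$ coincides with $\psi_{\infty}$ from \eqref{eq:Bir-P3-X24}.

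To identify $\delta_{\infty}$, I will use that the four points blown up by $\beta_{\infty}$ form the $\overline{\mathbb{W}}_{2}^{\mathfrak{S}}$-orbit of $(\infty,\infty,\infty)$, and each is an isolated fixed point of $\tau_{V_{6}}$. In local affine coordinates of type $v_{i}/u_{i}$, the involution $\tau_{V_{6}}$ acts as $-\mathrm{id}$ on the Zariski tangent space; the images of these four points in $X_{24}$ are therefore four of the eight singular points of type $\frac{1}{2}(1,1,1)$. The standard local computation that the quotient of $\mathrm{Bl}_{0}\mathbb{C}^{3}$ by the lift of $-\mathrm{id}$ is precisely the Kawamata blow-up of $\mathbb{C}^{3}/\pm 1$ then identifies $\delta_{\infty}$ with the composition of the four Kawamata blow-ups at these singular points.

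For $\gamma_{\infty}$, I will observe that each of the six $\mathbb{T}_{2}$-invariant surfaces in $V_{6}$ is individually $\tau_{V_{6}}$-invariant, being cut out by a single monomial $u_{i}$ or $v_{i}$; these descend to the six $\mathbb{T}_{3}$-invariant surfaces of $X_{24}$. On the other side, the six singular curves of $V_{2}$, each of the form $\{x_{i}=x_{j}=w=0\}$, map isomorphically under $V_{2}\to\mathbb{P}^{3}$ onto the six $\mathbb{T}_{3}$-invariant lines $\{x_{i}=x_{j}=0\}$. Therefore $\alpha_{\infty}$ descends to the contraction $\gamma_{\infty}$ of the proper transforms on $X_{22}$ of the six $\mathbb{T}_{3}$-invariant surfaces of $X_{24}$ onto these six lines in $\mathbb{P}^{3}$.

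Equivariance of the entire diagram for $\mathbb{T}_{3}\rtimes\overline{\mathbb{W}}_{3}^{\mathfrak{S}}$ is automatic, since $\tau_{V_{2}}$ and $\tau_{V_{6}}$ are central in $\mathbb{T}_{2}\rtimes\overline{\mathbb{W}}_{2}^{\mathfrak{S}}$ and $\mathbb{T}_{3}=\mathbb{T}_{2}/\langle\tau\rangle$ via $q_{23}$. The main obstacle I anticipate is the careful local verification that $X_{22}$ has at worst terminal $\mathbb{Q}$-factorial singularities and that both $\gamma_{\infty}$ and $\delta_{\infty}$ are extremal divisorial contractions of relative Picard rank one, so that \eqref{equation:X24-link-1} is a genuine $G$-Sarkisov link; these points are most cleanly checked from the toric fan of $\widetilde{V}_{2}$ and its quotient by $\widetilde{\tau}$, using the index-two sublattice inclusion $M_{3}\hookrightarrow M_{2}$.
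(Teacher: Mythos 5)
Your proposal follows essentially the same route as the paper: the authors also obtain \eqref{equation:X24-link-1} by taking the quotient of the bad $\mathbb{T}_{2}\rtimes\overline{\mathbb{W}}_{2}^{\mathfrak{S}}$-Sarkisov link \eqref{equation:P1-P1-P1-S4-bad-link} by the commuting involutions $\tau_{V_{2}}=\Psi_{\infty}^{-1}\circ\tau_{V_{6}}\circ\Psi_{\infty}$ and $\tau_{V_{6}}$, using $V_{2}/\tau_{V_{2}}=\mathbb{P}^{3}$ and $V_{6}/\tau_{V_{6}}=X_{24}$, in complete analogy with Lemma~\ref{lem:GV4-Sarkisov-link}. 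Your identification of $\delta_{\infty}$ via the local computation that the quotient of the ordinary blow-up by the lifted $-\mathrm{id}$ is the Kawamata (weight $\frac{1}{2}(1,1,1)$) blow-up, and of $\gamma_{\infty}$ via the descent of $\alpha_{\infty}$, supplies correct details that the paper leaves implicit.
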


Note that the birational map \eqref{eq:Bir-P3-X24} is defined by the linear system consisting
of all sextic surfaces that are singular along
the six $\mathbb{T}_{3}$-invariant lines in $\mathbb{P}^{3}$.
This recovers the original construction of the $\mathbb{T}_{3}\rtimes\overline{\mathbb{W}}_{3}^{\mathfrak{S}}$-Sarkisov
link \eqref{equation:X24-link-1} that is given in~\cite{CheltsovShramov2017}.

\begin{remark} Considering $\mathbb{P}^3$ as a toric variety for the torus $\mathbb{T}_3$ and considering the action of the group $\mathbb{W}_{\mathbb{P}^3}\cong\mathfrak{S}_4$ on the character lattice $M_3$ of $\mathbb{T}_3$, we see that $\mathbb{W}_{\mathbb{P}^3}=\overline{\mathbb{W}}_3^{\mathfrak{S}}$. Then there exists  the following $\mathbb{T}_3\rtimes\overline{\mathbb{W}}_3^{\mathfrak{S}}$-equivariant commutative diagram:
$$
\xymatrix{
\widetilde{V}_4\ar@{->}[dr]_{\beta}\ar@{->}[rr]^{\alpha}&&V_4\ar@{->}[rr]^{\sigma_{V_{4}}}&&V_4&&\widetilde{V}_4\ar@{->}[dl]^{\beta}\ar@{->}[ll]_{\alpha}\\
&\mathbb{P}^3\ar@{-->}[rrrr]&&&&\mathbb{P}^3&}
$$
where $\beta$ is the blow-up of the four $\mathbb{T}_{3}$-invariant points,
$\alpha$ is the contraction of the proper transforms of the six $\mathbb{T}_{3}$-invariant lines,
and the dashed arrow is the standard Cremona involution.
\end{remark}

\section{Proof of Theorem \ref{theorem:main} (infinite groups)}
\label{section:proof}

In this section, we give an alternative proof of Theorem~\ref{theorem:main}(3) in the case when the group $G$~is~infinite.
We~will~treat each of the threefolds $Y_{24}$,~$V_6$,~$X_{24}$,~$V_4$ and $\mathbb{P}^3$ in a separate subsection.

\subsection{Singular Fano threefold $Y_{24}$}
\label{subsection:Y-24-rigidity}
We use the notation introduced in Section~\ref{section:Y-24}.
Let $G_{Y_{24}}=\mathbb{T}_1\rtimes \mathbb{W}_{1}$,
let $\mathbb{W}$ be a subgroup in $\mathbb{W}_1$ that contains $\mathbb{W}_{1}^{\mathfrak{A}}$ (see Notation~\ref{notation:subgroups}),
and let $G=\mathbb{T}_1\rtimes\mathbb{W}\subset G_{Y_{24}}$.

\begin{lemma}
\label{lemma:Y-24}
The threefold $Y_{24}$ is $G$-birationally super-rigid.
\end{lemma}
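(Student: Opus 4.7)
The plan is to establish super-rigidity via the equivariant Noether--Fano--Iskovskikh criterion, which, combined with Corollary~\ref{corollary:Y24-G-Fano}, reduces the statement to showing that for every $G$-invariant mobile linear system $\mathcal{M}\subset|{-}nK_{Y_{24}}|$ the log pair $(Y_{24},\tfrac{1}{n}\mathcal{M})$ is canonical. I argue by contradiction: if $Z$ denotes the union of non-canonical centers, then $Z$ is $G$-invariant, hence $\mathbb{T}_{1}$-invariant; since $\mathcal{M}$ is mobile it contains no divisor, so each irreducible component of $Z$ lies in the explicit list, given in Section~\ref{section:Y-24}, of $24$ toric curves, $6$ ODPs, and $8$ smooth $\mathbb{T}_{1}$-fixed points.

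The central tool is restriction to a $\mathbb{T}_{1}$-invariant surface $D\subset Y_{24}$. Each such $D$ is isomorphic to $\mathbb{P}^{1}\times\mathbb{P}^{1}$ with $-K_{Y_{24}}|_{D}$ of bidegree $(1,1)$, and its toric boundary contains exactly one curve of each ruling class from each of the two $\mathbb{W}_{1}^{\mathfrak{A}}$-orbits of toric curves (Lemma~\ref{lemma:Y24-G-Fano}). Writing $\mu_{i}=\mathrm{mult}_{C}\mathcal{M}$ for $C$ in the $i$-th orbit (well-defined by $G$-invariance), and observing that the base locus of a $G$-invariant mobile $\mathcal{M}$ is $\mathbb{T}_{1}$-invariant of codimension at least two and thus supported on toric curves, the fixed part of $\mathcal{M}|_{D}$ has bidegree $(\mu_{1}+\mu_{2},\mu_{1}+\mu_{2})$, and the mobile residue $\mathcal{M}'|_{D}$ has bidegree $(n-\mu_{1}-\mu_{2},n-\mu_{1}-\mu_{2})$. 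Effectiveness of the residue yields $\mu_{1}+\mu_{2}\leqslant n$, and the standard ruling estimate on $\mathbb{P}^{1}\times\mathbb{P}^{1}$ gives $\mathrm{mult}_{P}(\mathcal{M}'|_{D})\leqslant n-\mu_{1}-\mu_{2}$ for every $P\in D$. In particular $\mu_{i}\leqslant n$, so Lemma~\ref{lemma:exercise} excludes every $\mathbb{T}_{1}$-invariant curve from $Z$.

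The two remaining cases are handled by pitting this restriction bound against the equivariant lower bounds on multiplicity. At an ODP $P$, the two boundary curves of $D$ through $P$ belong to \emph{distinct} orbits, so for general $M\in\mathcal{M}$
\[
\mathrm{mult}_{P}M\leqslant\mathrm{mult}_{P}(M|_{D})\leqslant(\mu_{1}+\mu_{2})+(n-\mu_{1}-\mu_{2})=n,
\]
contradicting Lemma~\ref{lemma:Corti}. At a smooth toric fixed point $P$, the stabilizer $G_{P}$ contains $\mathbb{T}_{1}\rtimes\mumu_{3}$, where $\mumu_{3}$ cyclically permutes the three distinct $\mathbb{T}_{1}$-weight lines of $T_{P}Y_{24}$; this makes the stabilizer action irreducible, so Lemma~\ref{lemma:mult-2} forces $\mathrm{mult}_{P}\mathcal{M}>2n$. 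Now, however, the two boundary curves of $D$ through such a $P$ lie in the \emph{same} orbit $j\in\{1,2\}$, giving
\[
\mathrm{mult}_{P}(M|_{D})\leqslant 2\mu_{j}+(n-\mu_{1}-\mu_{2})\leqslant n+|\mu_{1}-\mu_{2}|\leqslant 2n,
\]
the required contradiction. The only delicate point in the argument is the identification of the base locus of $\mathcal{M}$ with a union of toric curves, which uses in an essential way that $G$ contains the maximal torus $\mathbb{T}_{1}$; this is precisely the feature whose finite-group analogue will be the main obstacle in Section~\ref{section:approximation}.
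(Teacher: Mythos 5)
Your overall strategy is the same as the paper's: reduce via the Noether--Fano inequalities (plus Corollary~\ref{corollary:Y24-G-Fano}) to showing that $(Y_{24},\tfrac1n\mathcal M)$ is canonical, note that every non-canonical centre is $\mathbb T_1$-invariant and hence appears in the explicit toric list, and kill each candidate by restricting to a toric surface $D\cong\mathbb P^1\times\mathbb P^1$ on which $\mathcal M$ cuts out bidegree $(n,n)$. Your treatment of the curves and of the smooth $\mathbb T_1$-fixed points is correct (for the smooth point the paper is more direct: a general $(1,1)$-curve $C\subset D$ through $P$ is not in the base locus, so $2n=M\cdot C\geqslant\mathrm{mult}_P(\mathcal M)>2n$; your restriction argument also closes, since even with the correct inequalities on the fixed part one still gets $\mathrm{mult}_P(M|_D)\leqslant 2n$).

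The node case, however, has a genuine gap. Your bound $\mathrm{mult}_P(M|_D)\leqslant(\mu_1+\mu_2)+(n-\mu_1-\mu_2)=n$ uses that the fixed part of $\mathcal M|_D$ has coefficient exactly $\mu_i=\mathrm{mult}_{C_i}(\mathcal M)$ along each boundary curve. Only $\mathrm{mult}_{C}(\mathcal M|_D)\geqslant\mathrm{mult}_{C}(\mathcal M)$ is automatic: torus-invariance does not prevent general members of $\mathcal M$ from being tangent to $D$ along a boundary curve (the leading form along $C$ is a single monomial $x^{i_0}y^{j_0}$ in the local equations of the two toric divisors through $C$, and if $i_0>0$ the restricted multiplicity jumps). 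Writing $a_1,a_2$ and $b_1,b_2$ for the true coefficients of the boundary curves in the two ruling classes, the honest estimate is $\mathrm{mult}_P(M|_D)\leqslant a_1+b_2+\min(n-a_1-a_2,\,n-b_1-b_2)=n+\min(b_2-a_2,\,a_1-b_1)$, which need not be $\leqslant n$; and the $\mathfrak A_4$-symmetry cannot force $a_i=b_i$ because the stabiliser of $D$ in $\mathbb W_1^{\mathfrak A}$ is trivial. In short, you used a lower bound ($\mu_1+\mu_2$) where an upper bound on the fixed part's contribution at $P$ is required. A secondary omission: Lemma~\ref{lemma:Corti} controls the coefficient $m$ of the exceptional divisor over the node, not $\mathrm{mult}_PM$ (which is not the relevant invariant at a singular point); to contradict it you must relate the two, e.g.\ by noting that the proper transform of $D$ is the blow-up of $D$ at $P$ meeting $E$ in a ruling, whence $\mathrm{mult}_P(M|_D)\geqslant m$. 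The paper avoids both issues by working on the blow-up of the two nodes lying on $S$, extracting $m_1,m_2>1$ (in $\lambda$-normalisation) from Lemma~\ref{lemma:Corti}, and intersecting a general member with the proper transform of a general $(1,1)$-curve through both nodes, which gives $0\leqslant 2-m_1-m_2<0$. I recommend replacing your node argument with that one.
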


\begin{proof}
Suppose that $Y_{24}$ is not $G$-birationally super-rigid.
Then, see for instance \cite[Theorem~3.3.1]{CheltsovShramov}, there exists a $G$-invariant mobile linear system $\mathcal{M}$ on the threefold $Y_{24}$ such that
the pair $(Y_{24},\lambda\mathcal{M})$ is not canonical,
where $\lambda$ is a positive rational number defined by
$$
\lambda\mathcal{M}\sim_{\mathbb{Q}}-K_{Y_{24}}.
$$

Let $Z$ be a $G$-irreducible center of non-canonical singularities of the log pair $(Y_{24},\lambda\mathcal{M})$.
Then, by Lemma~\ref{lemma:Y24-G-Fano}, we have one of the following possibilities:
\begin{enumerate}
\item $Z$ is the $G$-orbit of the singular point $(0,0,\infty,\infty)$,
\item $Z$ is the $G$-orbit of the smooth point $(0,\infty,\infty,\infty)$ or of the smooth point $(0,0,0,\infty)$,
\item $Z$ is the $G$-orbit of the curve $(0,0,1,\infty)$ or of the curve $(0,1,\infty,\infty)$.
\end{enumerate}
Let us show that none of these three cases is actually possible.

Let $S$ be the surface $(0,1,1,\infty)$.
Then $S\cong\mathbb{P}^1\times\mathbb{P}^1$, and the restriction $\lambda\mathcal{M}\vert_{S}$ is an effective $\mathbb{Q}$-linear system of bi-degree $(1,1)$.
Then $S$ contains the singular points $(0,0,\infty,\infty)$ and $(0,\infty,0,\infty)$,
it also contains the smooth points $(0,0,0,\infty)$ and $(0,\infty,\infty,\infty)$,
and it also contains the curves $(0,0,1,\infty)$, $(0,1,0,\infty)$, $(0,1,\infty,\infty)$ and  $(0,\infty,1,\infty)$.

If $Z$ is a curve, then
the multiplicity of the restriction $\lambda\mathcal{M}\vert_{S}$ at
the curve $Z$ is strictly larger than $1$ by Lemma~\ref{lemma:exercise}.
Clearly, this is impossible, since $\lambda\mathcal{M}\vert_{S}$ has bi-degree $(1,1)$.
Thus $Z$ must be zero dimensional.

Suppose that $Z$ is the $G$-orbit of the smooth point $(0,0,0,\infty)$ or of the smooth point $(0,\infty,\infty,\infty)$.
Denote this point by $P$. Then the tangent space $T_{P}(Y_{24})$ is an irreducible representation  of the stabilizer of the point $P$ in the group $G$. Thus, by~Lemma~\ref{lemma:mult-2}, we have
$$
\mathrm{mult}_{P}\big(\mathcal{M}\big)>\frac{2}{\lambda}.
$$
Let $C$ be a general curve in $S$ of bi-degree $(1,1)$ that passes through $P$.
Such curves span the whole surface $S$, so that $C$ is not contained in the base locus of the linear system $\mathcal{M}$.
Then, for a general surface $M\in\mathcal{M}$, we have
$$
\frac{2}{\lambda}=M\cdot C\geqslant \mathrm{mult}_{P}\big(\mathcal{M}\big)>\frac{2}{\lambda},
$$
which is absurd.

It thus remains to consider the case where $Z$ consists of singular points of the threefold $Y_{24}$. Let $\alpha\colon\widetilde{Y}_{24}\to Y_{24}$ be the blow-up of the points $(0,0,\infty,\infty)$ and $(0,\infty,0,\infty)$,
let $\widetilde{M}$ be the proper transform of a general surface in the linear system $\mathcal{M}$ on the threefold $\widetilde{Y}_{24}$,
let~$E_1$ and $E_2$ be the $\alpha$-exceptional surfaces.
Then
$$
\lambda\widetilde{M}\sim_{\mathbb{Q}}\alpha^*(-K_{Y_{24}})-m_1E_1-m_2E_2
$$
for some rational numbers $m_1$ and $m_2$. By~Lemma~\ref{lemma:Corti}, we have $m_1>1$ and $m_2>1$.
Now let $\widetilde{C}$ be the proper transform on $\widetilde{Y}_{24}$ of a general curve in $S$ of bi-degree $(1,1)$ that passes through
both points $(0,0,\infty,\infty)$ and $(0,\infty,0,\infty)$.
Then $\widetilde{C}\not\subset\widetilde{M}$, so that
$$
0\leqslant \lambda\widetilde{M}\cdot\widetilde{C}=\Big(\alpha^*(-K_{Y_{24}})-m_1E_1-m_2E_2\Big)\cdot \widetilde{C}=2-m_1-m_2<0,
$$
which is absurd. This completes the proof of the lemma.
\end{proof}

\begin{remark} Since the Fano threefold $Y_{24}$ is $G_{Y_{24}}$-birationally super-rigid, there is no $G_{Y_{24}}$-Sarkisov link starting at $Y_{24}$. But there are \emph{bad} $G_{Y_{24}}$-Sarkisov links that start at $Y_{24}$,
which implicitly appear in the proof of Lemma~\ref{lemma:Y-24}.
For example, blowing-up all singular points of the threefold $Y_{24}$,
we obtain the \emph{bad} $G_{Y_{24}}$-Sarkisov link \eqref{equation:bad-link-Y-24}.
\end{remark}

\subsection{Fano threefold $V_6=\mathbb{P}^1\times\mathbb{P}^1\times\mathbb{P}^1$}
\label{subsection:P1-P1-P1-rigidity}
We use the notation introduced in Section~\ref{subsection:P1-P1-P1}.
Let $G_{V_6}=\mathbb{T}_2\rtimes \mathbb{W}_{2}$,
let $\mathbb{W}$ be one of the subgroups in $\mathbb{W}_2$ that contains the group $\mathbb{W}_{2}^{\mathfrak{A}}$,
and let $G=\mathbb{T}_2\rtimes\mathbb{W}\subset G_{V_6}$.

\begin{lemma}
\label{lemma:P1-P1-P1}
The threefold $V_{6}$ is $G$-birationally super-rigid.
\end{lemma}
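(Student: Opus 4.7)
The plan is to mimic the argument used for Lemma~\ref{lemma:Y-24}. Suppose to the contrary that $V_6$ is not $G$-birationally super-rigid. Then by \cite[Theorem~3.3.1]{CheltsovShramov}, there exists a $G$-invariant mobile linear system $\mathcal{M}$ on $V_6$ with $\lambda\mathcal{M}\sim_{\mathbb{Q}}-K_{V_6}$, so that $\lambda\mathcal{M}$ is of tridegree $(2,2,2)$, and such that the pair $(V_6,\lambda\mathcal{M})$ is not canonical. Let $Z$ be a $G$-irreducible center of non-canonical singularities. Since $\mathbb{T}_2\subset G$ is a connected normal subgroup, every irreducible component of $Z$ is $\mathbb{T}_2$-invariant, and mobility of $\mathcal{M}$ excludes divisorial components. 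By Lemma~\ref{lemma:V6-G-Fano}, either (a) $Z$ is the unique $G$-orbit consisting of all twelve $\mathbb{T}_2$-invariant curves, or (b) $Z$ is a $G$-orbit of $\mathbb{T}_2$-fixed points.

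For case (a), I would pick the $\mathbb{T}_2$-invariant surface $S$ defined by $[u_1:v_1]=[0:1]$, which is isomorphic to $\mathbb{P}^1\times\mathbb{P}^1$ and contains four of the twelve $\mathbb{T}_2$-invariant curves of $V_6$, two in each ruling of $S$. By Lemma~\ref{lemma:exercise}, each of these four curves has multiplicity strictly greater than $1/\lambda$ in $\mathcal{M}$. For a general $M\in\mathcal{M}$, the effective divisor $M\vert_S$ has bidegree $(2/\lambda,2/\lambda)$ and contains each of the four curves with the corresponding multiplicity. Intersecting $M\vert_S$ with a general fiber of either ruling of $S$ gives a direct numerical contradiction, since the two curves of the opposite ruling already contribute strictly more than $2/\lambda$ to the intersection number.

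For case (b), fix $P\in Z$. The stabilizer of $P$ in $G$ contains $\mathbb{T}_2$, which acts on $T_PV_6\cong\mathbb{C}^3$ with three pairwise distinct characters, corresponding to the three $\mathbb{T}_2$-invariant curves through $P$. A direct check using the generators of $\mathbb{W}_2^{\mathfrak{A}}$ written down in Notation~\ref{notation:subgroups} and the coordinates of Section~\ref{subsection:P1-P1-P1} shows that, for every $\mathbb{T}_2$-fixed point of $V_6$, the group $\mathbb{W}_2^{\mathfrak{A}}$ contains an order-three element fixing the point and cyclically permuting the three tangent directions; hence the stabilizer of $P$ in $G$ acts irreducibly on $T_PV_6$. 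Lemma~\ref{lemma:mult-2} then gives $\mathrm{mult}_P(\mathcal{M})>2/\lambda$ for every $P\in Z$. Using Lemma~\ref{lemma:V6-G-Fano} again, I choose a $\mathbb{T}_2$-invariant surface $S\cong\mathbb{P}^1\times\mathbb{P}^1$ whose four $\mathbb{T}_2$-fixed corners contain two points $P_1,P_2$ of $Z$ lying at opposite corners of $S$ (each such $S$ meets each $\mathbb{W}_2^{\mathfrak{A}}$-orbit of fixed points in exactly two diagonal corners). For a general $(1,1)$-curve $C\subset S$ passing through $P_1$ and $P_2$, a standard mobility argument ensures $C\not\subset M\vert_S$ for general $M\in\mathcal{M}$; then $C\cdot M\vert_S=4/\lambda$, while $\mathrm{mult}_{P_1}(M\vert_S)+\mathrm{mult}_{P_2}(M\vert_S)>4/\lambda$, a contradiction.

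The main obstacle in this plan is case (b): to reach a contradiction from a single $\mathbb{T}_2$-invariant surface $S$ one really needs the sharp bound $\mathrm{mult}_P(\mathcal{M})>2/\lambda$ from Lemma~\ref{lemma:mult-2}, not merely the weaker $>1/\lambda$ that would follow from Lemma~\ref{lemma:exercise} alone. This in turn hinges on the explicit identification of an order-three element of $\mathbb{W}_2^{\mathfrak{A}}$ fixing each $\mathbb{T}_2$-fixed point and permuting its tangent directions cyclically, which is a small but essential combinatorial verification. Once this irreducibility of the tangent representation is in place, the remainder of the argument is a clean intersection-theoretic computation on the toric surface $S$, formally parallel to the one carried out for $Y_{24}$.
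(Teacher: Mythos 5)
Your proposal is correct and follows essentially the same route as the paper's proof: reduce the non-canonical centre to toric strata, kill the curve case by restricting to the $\mathbb{T}_2$-invariant surface $S\cong\mathbb{P}^1\times\mathbb{P}^1$ of bidegree $(2,2)$ via Lemma~\ref{lemma:exercise}, and kill the point case by combining the irreducibility of the tangent representation (the $\mumu_3$ permuting the three invariant directions) with Lemma~\ref{lemma:mult-2} and a general $(1,1)$-curve through two opposite corners of $S$. The only cosmetic difference is that you phrase the curve-case contradiction by intersecting with a ruling fibre rather than by comparing coefficients against the bidegree, which is the same computation.
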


\begin{proof}
We may assume that $\mathbb{W}=\mathbb{W}_{2}^{\mathfrak{A}}$.
Suppose that $V_{6}$ is not $G$-birationally super-rigid.
Then  there exists a $G$-invariant mobile linear system $\mathcal{M}$ on $V_{6}$ such that $(V_{6},\lambda\mathcal{M})$ is not canonical,
where $\lambda$ is the positive rational number defined by $\lambda\mathcal{M}\sim_{\mathbb{Q}}-K_{V_{6}}$.

Let $Z$ be a $G$-irreducible center of non-canonical singularities of the log pair $(V_{6},\lambda\mathcal{M})$.
If $Z$ is a curve, we can assume that $Z$ is the $\mathbb{W}_{2}^{\mathfrak{A}}$-orbit of the $\mathbb{T}_2$-invariant curve $(0,0,1)$.
Otherwise, we can assume that $Z$ is the $\mathbb{W}_{2}^{\mathfrak{A}}$-orbit of the point $(0,0,0)$.

Let $S$ be the surface $(0,1,1)\subset V_{6}$.
Then $S\cong\mathbb{P}^1\times\mathbb{P}^1$, and $\lambda\mathcal{M}\vert_{S}$ is an effective $\mathbb{Q}$-linear system of bi-degree $(2,2)$.
If $Z$ is a curve, then
$$
S\cap Z=(0,0,1)\cup (0,\infty,1) \cup (0,1,0) \cup (0,1,\infty).
$$
We let $C_1=(0,0,1)$ and $C_2=(0,\infty,1)$. Note that $C_1\cap C_2=\varnothing$.
If $Z$ is a point, then
$$
S\cap Z=(0,0,0)\cup (0,\infty,\infty).
$$
We let $P_1=(0,0,0)$ and $P_2=(0,\infty,\infty)$.

If $Z$ is a curve, then it follows from Lemma~\ref{lemma:exercise} that
$$
\mathrm{mult}_{C_1}\big(\mathcal{M}\big)=\mathrm{mult}_{C_2}\big(\mathcal{M}\big)>\frac{1}{\lambda}
$$
This implies that the coefficient of these curves in the restriction $\lambda\mathcal{M}\vert_{S}$ is larger than $1$,
contradicting the fact that $\lambda\mathcal{M}\vert_{S}$ is of bi-degree $(2,2)$.
Thus $Z$ must be zero dimensional.

Since the stabilizer of the point $P_1$ in the group $G$ contains a subgroup $\mumu_3$ that permutes transitively the $\mathbb{T}_2$-invariant curves that pass through $P_1$, the tangent space $T_{P_1}(X)$ is an irreducible three-dimensional representation of the stabilizer of $P_1$.
Therefore, it follows from Lemma~\ref{lemma:mult-2} that
$$
\mathrm{mult}_{P_1}\big(\mathcal{M}\big)=\mathrm{mult}_{P_2}\big(\mathcal{M}\big)>\frac{2}{\lambda}.
$$
Let $C$ be a general curve in the surface $S$ of bi-degree $(1,1)$ that passes through $P_1$ and~$P_2$.
Such curves span the whole surface $S$, so that the curve $C$ is not contained in the base locus of the linear system $\mathcal{M}$. Thus, for a general surface $M\in\mathcal{M}$, we have
$$
4=\lambda M\cdot C\geqslant\lambda\Big(\mathrm{mult}_{P_1}(\mathcal{M})+\mathrm{mult}_{P_2}(\mathcal{M})\Big)>4,
$$
which is absurd. This completes the proof.
\end{proof}

\begin{remark} Since  threefold $V_6$ is $G_{V_6}$-birationally super-rigid, there is no $G_{V_6}$-Sarkisov link that starts at $V_6$.
However, there exist \emph{bad} $G_{V_6}$-Sarkisov links that start at $V_6$. For instance, blowing-up all $\mathbb{T}_2$-invariant points leads to the \emph{bad} $G_{V_6}$-Sarkisov link~\eqref{equation:P1-P1-P1-S4-bad-link}.
Likewise, the $G_{V_6}$-equivariant symbolic blow-up $\alpha$ of the union of all $\mathbb{T}_2$-invariant curves (see Example~\ref{ex:symbolic-BU}) also leads to a \emph{bad} $G_{V_6}$-Sarkisov link:
$$
\xymatrix{
&\widetilde{X}_{12}\ar@{->}[dl]_{\alpha}\ar@{->}[dr]^{\beta}&\\%
V_6 \ar@{-->}[rr] && X_{12}}
$$
where $\beta$ is the contraction of the proper transforms of the $G_{V_6}$-invariant surfaces in $V_6$.
One~can show that $X_{12}$ is the canonical toric Fano threefold~\textnumero{9099} in \cite{grdb},
which can also be obtained as the~quotient of the singular Fano threefold $Y_{24}$, viewed as a toric variety for the action of the torus $\mathbb{T}_1$, by an involution that fixes only $\mathbb{T}_1$-invariant points.
\end{remark}

\subsection{Singular Fano threefolds $V_4$ and $X_{24}$}
\label{section:V4-X24}
We now treat the threefolds $V_4$ and~$X_{24}$.
We use the notation of Sections~\ref{subsection:Fano-Enriques} and \ref{section:two-Sarkisov-links},
and we identify $G_{V_4}=G_{X_{24}}=\mathbb{T}_3\rtimes\mathbb{W}_3$.
By Section~\ref{section:two-Sarkisov-links}, we have a $\mathbb{T}_3\rtimes\mathbb{W}_3$-Sarkisov link
\begin{align*}
\xymatrix{
&\widetilde{X}_{20}\ar@{->}[dl]_{\gamma}\ar@{-->}[rr]^{\rho}&&\overline{X}_{20}\ar@{->}[dr]^{\delta}\\%
V_{4} \ar@{-->}[rrrr]^{\varphi} &&&& X_{24}}
\end{align*}
where $\gamma$ is the blow-up of the eight singular points of $V_{4}$,
the~map $\rho$ is a~composition of Atiyah flops in the proper transforms of the twelve $\mathbb{T}_3$-invariant lines in $V_4$,
and $\delta$ is the composition of Kawamata blow-ups of the eight singular points of $X_{24}$.

Let $\mathbb{W}$ be a subgroup of $\mathbb{W}_3$ that contains $\mathbb{W}_3^{\mathfrak{A}}$
such that either
$\mathbb{W}=\mathbb{W}_3^{\mathfrak{A}}$, or $\mathbb{W}$ contains the involution $\sigma$
(see Notation~\ref{notation:subgroups}), and let $G=\mathbb{T}_3\rtimes\mathbb{W}$. The proof of the following lemma is straightforward.

\begin{lemma}
\label{lemma:X24-V4}
Let $\mathcal{M}_{X_{24}}$ be a $G$-invariant mobile linear system on $X_{24}$,
and let $\mathcal{M}_{V_4}$ be its proper transform on $V_4$ via $\varphi$.
Then the following assertions hold:
\begin{enumerate}
\item There are $k\in\mathbb{Z}_{\geqslant 0}$ and $m\in\frac{1}{2}\mathbb{Z}_{\geqslant 0}$
such that $\mathcal{M}_{X_{24}}\sim_{\mathbb{Q}} -kK_{X_{24}}$ and
$$
\delta^{-1}_*\big(\mathcal{M}_{X_{24}}\big)\sim_{\mathbb{Q}}\delta^*\big(-kK_{X_{24}}\big)-m\sum _{i=1}^8 F_i,
$$
where each $F_i$ is a $\delta$-exceptional surface.

\item There are $n\in\mathbb{Z}_{\geqslant 0}$ and $m^\prime\in\mathbb{Z}_{\geqslant 0}$ such that $\mathcal{M}_{V_4}\sim n\mathcal{H}_{V_4}$ and
$$
\gamma^{-1}_*\big(\mathcal{M}_{V_4}\big)\sim_{\mathbb{Q}}\gamma^*\big(n\mathcal{H}_{V_4}\big)-m^\prime\sum_{i=1}^6 E_i,
$$
where $\mathcal{H}_{V_4}$ is a hyperplane section of $V_4$,
and each $E_i$ is a $\gamma$-exceptional surface.
\item One has $n=3k-2m$ and $k=n-m^\prime$.
\end{enumerate}
\end{lemma}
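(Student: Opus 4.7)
The three parts are proved independently, with the first two essentially immediate and the third requiring a Picard-group computation on the common Sarkisov model.

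For (1), Corollary~\ref{corollary:X24-G-Fano} gives that $\mathrm{Cl}(X_{24})^{G}\otimes\mathbb{Q}$ has rank one, generated modulo torsion by $-K_{X_{24}}$, whence $\mathcal{M}_{X_{24}}\sim_{\mathbb{Q}}-kK_{X_{24}}$ for some $k\in\mathbb{Z}_{\geqslant 0}$. By $G$-invariance the multiplicity of a general member is constant on each $G$-orbit of the eight $\tfrac{1}{2}(1,1,1)$-points of $X_{24}$, and I will show (using the identification (3) below) that a single value $m$ arises on all of them; the fact that $m\in\tfrac{1}{2}\mathbb{Z}_{\geqslant 0}$ reflects the Kawamata discrepancy $\tfrac{1}{2}$, which forces any $\mathbb{Q}$-Cartier divisor whose double is Cartier to have multiplicity in $\tfrac{1}{2}\mathbb{Z}$ along $F_{i}$. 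Part (2) follows by the same argument, using Lemma~\ref{lemma:V4-G-Fano} for the rank-one property of $\mathrm{Cl}(V_{4})^{G}\otimes\mathbb{Q}$, the transitivity of $\mathbb{W}_{3}^{\mathfrak{A}}$ on the six ordinary double points of $V_{4}$ (evident from the octahedral toric structure of $V_{4}$), and the integer discrepancy $1$ for blow-ups of ODPs.

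For (3), I would work on the common Sarkisov model $Y:=\widetilde{X}_{20}\cong\overline{X}_{20}$, on which $\rho$ is an isomorphism in codimension one. The proper transform $\widetilde{\mathcal{M}}$ on $Y$ admits the two descriptions
$$
\widetilde{\mathcal{M}}\;\sim_{\mathbb{Q}}\;nH_{1}-m^\prime E\;=\;kH_{2}-mF,
$$
where $H_{1}=\gamma^{*}\mathcal{H}_{V_{4}}$, $E=\sum_{i=1}^{6}E_{i}$, $H_{2}=\delta^{*}(-K_{X_{24}})$ and $F=\sum_{j=1}^{8}F_{j}$. The relations of (3) follow by expressing $H_{2}$ and $F$ in terms of $H_{1}$ and $E$ in $\mathrm{Pic}(Y)\otimes\mathbb{Q}$ and comparing coefficients. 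A first identity comes from $\rho$ preserving the canonical class: from
$$
-K_{\widetilde{X}_{20}}=2H_{1}-E\quad\text{and}\quad-K_{\overline{X}_{20}}=H_{2}-\tfrac{1}{2}F
$$
(the first via $-K_{V_{4}}\sim 2\mathcal{H}_{V_{4}}$ and ODP discrepancy $1$, the second via Kawamata discrepancy $\tfrac{1}{2}$) one obtains $2H_{1}-E=H_{2}-\tfrac{1}{2}F$. A second identity uses the toric formula that $-K_{Y}$ is the sum of all $\mathbb{T}_{3}$-invariant prime divisors of $Y$: on the $V_{4}$-side these are the six $E_{i}$ together with the proper transforms $\widetilde{\Pi}_{j}$ of the eight $\mathbb{T}_{3}$-invariant planes of $V_{4}$, so $\sum_{j}\widetilde{\Pi}_{j}=-K_{Y}-E=2H_{1}-2E$. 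Under the codimension-one identification through $\rho$, the eight divisors $\widetilde{\Pi}_{j}$ correspond bijectively to the eight $F_{j}$, since the fourteen toric prime divisors of $Y$ split as $6+8$ between exceptional and non-exceptional on the $V_{4}$-side and conversely on the $X_{24}$-side. Hence $F=2H_{1}-2E$, and substituting into the first identity gives $H_{2}=3H_{1}-2E$. Expanding $kH_{2}-mF=(3k-2m)H_{1}-(2k-2m)E$ and equating with $nH_{1}-m^\prime E$ yields $n=3k-2m$ and $m^\prime=2k-2m$, and $k=n-m^\prime$ is immediate from these.

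The main technical obstacle is the bijection $\widetilde{\Pi}_{j}\leftrightarrow F_{j}$ between $\mathbb{T}_{3}$-invariant prime divisors on the two sides of the link; the counting argument above is clean, but a direct local chart computation at a singular point of either $V_{4}$ or $X_{24}$ also does the job.
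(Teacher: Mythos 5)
The paper does not actually prove this lemma --- it is introduced with ``The proof of the following lemma is straightforward'' --- so your write-up is supplying details the authors omit rather than competing with an argument of theirs. Your main computation is correct: on the common model one has $-K=2H_1-E=H_2-\tfrac12 F$ from the discrepancies and the fact that $\rho$ is crepant, the toric formula $-K=\sum(\text{invariant prime divisors})$ gives $\sum_j\widetilde{\Pi}_j=2H_1-2E$, the $6+8$ count correctly matches the eight $\widetilde{\Pi}_j$ with the eight $F_j$, and substituting yields $H_2=3H_1-2E$, hence $n=3k-2m$, $m'=2k-2m$ and $k=n-m'$. (One should say $\mathrm{Cl}(Y)\otimes\mathbb{Q}$ rather than $\mathrm{Pic}$, and note that the identity $\sum_j\widetilde{\Pi}_j=2H_1-2E$ is best obtained exactly as you do, via the toric anticanonical formula, since the individual planes are not $\mathbb{Q}$-Cartier at the nodes and a naive multiplicity count would give the wrong coefficient.)

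The one genuine soft spot is in part (1): you promise that a single value $m$ works at all eight singular points of $X_{24}$ ``using the identification (3) below,'' but your computation in (3) already writes $\widetilde{\mathcal{M}}=kH_2-mF$ with one $m$, so the argument is circular as written. When $\nu_X(G)$ does not contain $\sigma_{X_{24}}$ the eight points split into two $G$-orbits of four, and the actual multiplicities $m_1,m_2$ along the two orbits need not coincide a priori. This is harmless for the lemma as stated: each orbit-sum of four $\widetilde{\Pi}_j$ is a $G$-invariant class, hence $\mathbb{Q}$-proportional to $H_1$, and a degree count gives $F^{(1)}\sim_{\mathbb{Q}}F^{(2)}\sim_{\mathbb{Q}}H_1-E$, so the displayed $\mathbb{Q}$-linear equivalence holds with $m=\tfrac{m_1+m_2}{2}$ and your relations become $n=3k-(m_1+m_2)$, $m'=2k-(m_1+m_2)$, which is all that is used later. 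You should either restrict to the case $\sigma_{X_{24}}\in\nu_X(G)$ (where transitivity gives $m_1=m_2$ outright) or state the conclusion in this averaged form; as it stands the single-$m$ claim is asserted but not proved.
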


Now we are ready to prove

\begin{proposition}
\label{theorem:main-V4-solid}
One has $\mathcal{P}_G(V_4)=\{V_4,X_{24}\}$.
\end{proposition}

\begin{proof}
Let $\chi\colon X_{24}\dasharrow Y$ be a $G$-equivariant birational map  such that
$Y$~is~a~threefold with terminal singularities, and there exists a $G$-equivariant morphism $f\colon Y\to Z$ that is a $G$-Mori fiber space.
Fix a~sufficiently large positive integer $n\gg 0$. Let $D_Z$ be a sufficiently general very ample divisor on $Z$,
let $\mathcal{M}_{Y}=|-nK_Y+f^*(D_Z)|$, let
$$
\mathcal{M}_{X_{24}}=(\chi)_*^{-1}\big(\mathcal{M}_Y\big)
$$
and let
$$
\mathcal{M}_{V_{4}}=(\chi\circ\varphi)_*^{-1}\big(\mathcal{M}_Y\big).
$$
Then $\mathcal{M}_{X_{24}}$ and $\mathcal{M}_{V_4}$ are $G$-invariant mobile linear systems on $X_{24}$ and $V_{4}$, respectively.
Let $k$ and $n$ be the non-negative integers such that
$\mathcal{M}_{X_{24}}\sim_{\mathbb{Q}} -kK_{X_{24}}$ and $\mathcal{M}_{V_4}\sim n\mathcal{H}_{V_4}$.
If the log pair $(X_{24},\frac{1}{k}\mathcal{M}_{X_{24}})$ has canonical singularities,
then it~follows from \emph{the~Noether--Fano inequality} that $\chi$ is an isomorphism,
see \cite[Theorems~3.2.1~and~3.2.6]{CheltsovShramov} or \cite{Co95}.
Similarly, if the log pair $(V_{4},\frac{2}{n}\mathcal{M}_{V_{4}})$ has canonical singularities,
then the birational map $\chi\circ\varphi$ is an isomorphism.
Therefore, to prove the required assertion,
it is enough to show that either $(X_{24},\frac{1}{k}\mathcal{M}_{X_{24}})$
or $(V_{4},\frac{2}{n}\mathcal{M}_{V_{4}})$ has canonical singularities.

Suppose that the singularities of the log pair $(X_{24},\frac{1}{k}\mathcal{M}_{X_{24}})$ are worse than canonical.
Then, using Lemma~\ref{lemma:Kawamata} and Lemma~\ref{lemma:X24-V4}, we obtain the inequality $n<2k$.
Let us show then that the log pair $(V_{4},\frac{2}{n}\mathcal{M}_{V_{4}})$ has canonical singularities.

By construction of $\varphi$, if the singularities of the log pair $(V_{4},\frac{2}{n}\mathcal{M}_{V_{4}})$ are not canonical, then
the union of its centers of non-canonical singularities is either the union of all singular points of the threefold $V_4$, or
a $\mathbb{W}$-orbit of $\mathbb{T}_3$-invariant curves, which are lines in $\mathbb{P}^5$.

In the first case, using Lemma~\ref{lemma:Corti} and Lemma~\ref{lemma:X24-V4}, we get $n>2k$,
which is impossible, since we already proved that $n<2k$.

The twelve $\mathbb{T}_3$-invariant lines in $V_4$ form a unique $\mathbb{W}$-irreducible curve.
Suppose that all of them are centers of non-canonical singularities of the log pair $(V_{4},\frac{2}{n}\mathcal{M}_{V_{4}})$.
Then
\begin{equation}
\label{equation:V4-mult}
\mathrm{mult}_{L}\big(\mathcal{M}_{V_{4}}\big)>\frac{n}{2}
\end{equation}
for each such line $L$ by Lemma~\ref{lemma:exercise}.
On the other hand, each of the eight $\mathbb{T}_3$-invariant planes in $V_4$ contains three $\mathbb{T}_3$-invariant lines.
Thus, restricting $\mathcal{M}_{V_{4}}$ on one such plane, we obtain a contradiction to \eqref{equation:V4-mult}.
\end{proof}

\subsection{Fano threefolds $\mathbb{P}^3$ and $X_{24}$}
\label{section:P3}
Finally, we deal with the Fano threefolds $\mathbb{P}^3$ and~$X_{24}$. For $X_{24}$, we use the same notation as in Section~\ref{section:V4-X24}. As~in~Section~\ref{section:two-Sarkisov-links}, we view $\mathbb{P}^3$ as a toric variety for the torus $\mathbb{T}_3$,
and we identify $\mathbb{W}_{\mathbb{P}^{3}}=\overline{\mathbb{W}}_{3}^{\mathfrak{S}}$
and $G_{\mathbb{P}^{3}}=\mathbb{T}_{3}\rtimes\overline{\mathbb{W}}_{3}^{\mathfrak{S}}$.
In Section~\ref{section:two-Sarkisov-links}, we constructed the following $\mathbb{T}_{3}\rtimes\overline{\mathbb{W}}_{3}^{\mathfrak{S}}$-Sarkisov link:
\begin{equation*}
\xymatrix{
&X_{22}\ar@{->}[dl]_{\gamma_{\infty}}\ar@{->}[rd]^{\delta_\infty}&\\%
\mathbb{P}^3\ar@{-->}[rr]^{\psi_\infty} &&X_{24} }
\end{equation*}
where $\delta_\infty$ is a composition of Kawamata blow-ups of the four singular points of $X_{24}$
that form the $\overline{\mathbb{W}}_{3}^{\mathfrak{S}}$-orbit of the point $(\infty,\infty,\infty)$ in $X_{24}$,
and $\gamma_{\infty}$ is the contraction of the proper transforms of the six $\mathbb{T}_3$-invariant surfaces in $X_{24}$
to the six $\mathbb{T}_3$-invariant lines in $\mathbb{P}^3$.

Recall subsection \ref{subsection:Fano-Enriques} that the regular involution $\sigma_{X_{24}}$ of $X_{24}=V_6/\tau_{V_6}$ induced by the involution $\sigma_{V_{6}}=\upsilon\times \upsilon\times\upsilon$ (see Notation \ref{notations:P1-P1-P1}) commutes with the action of $\overline{\mathbb{W}}_{3}^{\mathfrak{S}}$. We thus obtain a second birational $\mathbb{T}_{3}\rtimes\overline{\mathbb{W}}_{3}^{\mathfrak{S}}$-map $\psi_0=\sigma_{X_{24}}\circ\psi_\infty\colon\mathbb{P}^3\dashrightarrow X_{24}$.
Note that $\sigma_{X_{24}}(\infty,\infty,\infty)=(0,0,0)$.
Consequently, we have a second $\mathbb{T}\rtimes\overline{\mathbb{W}}_3^{\mathfrak{S}}$-Sarkisov link
\begin{equation*}
\xymatrix{
&X_{22}\ar@{->}[dl]_{\gamma_0}\ar@{->}[rd]^{\delta_0}&\\%
\mathbb{P}^3\ar@{-->}[rr]^{\psi_0} &&X_{24} }
\end{equation*}
where $\delta_0$ is a composition of Kawamata blow-ups of the four singular points of $X_{24}$
which form the $\overline{\mathbb{W}}_{3}^{\mathfrak{S}}$-orbit of the point $(0,0,0)$,
and $\gamma_0$ is the contraction of the proper transforms of the six $\mathbb{T}_3$-invariant surfaces in $X_{24}$
to the six $\mathbb{T}_3$-invariant lines in $\mathbb{P}^3$.

\begin{remark}
\label{remak:Cremona-X-24}
Using \eqref{eq:Embed-X24-P13} and \eqref{eq:Bir-P3-X24},
one can show that $\psi^{-1}_\infty\circ\sigma_{X_{24}}\circ \psi_\infty$ is equal to the~standard Cremona involution
$\sigma_{\mathbb{P}^3}\colon\mathbb{P}^3\dasharrow\mathbb{P}^3$ defined by
\begin{equation}
\label{equation:Cremona}
[x_1:x_2:x_3:x_4]\mapsto[x_2x_3x_4:x_1x_3x_4:x_1x_2x_4:x_1x_2x_3].
\end{equation}
In other words, we have a commutative diagram of birational $\mathbb{T}\rtimes \overline{\mathbb{W}}_3^{\mathfrak{S}}$-maps
$$
\xymatrix{ \mathbb{P}^3 \ar@{-->}[rrrr]^{\psi_\infty} \ar@{-->}[drrrr]^{\psi_0} \ar@{-->}[d]_{\sigma_{\mathbb{P}^3}} &&&& X_{24} \ar[d]^{\sigma_{X_{24}}} \\
\mathbb{P}^3 \ar@{-->}[rrrr]^{\psi_\infty} &&&& X_{24}. }
$$
\end{remark}

Let $E$ be the sum of all $\gamma$-exceptional surfaces,
let $F_0$ be the sum of all $\delta_0$-exceptional surfaces,
let $F_\infty$ be the sum of all $\delta_\infty$-exceptional surfaces,
and let  $\mathbb{W}$ be either $\mathbb{W}_3^{\mathfrak{A}}$~or~$\overline{\mathbb{W}}_3^{\mathfrak{S}}$.
We also let $G=\mathbb{T}_3\rtimes \mathbb{W}$. The next lemma is straightforward.

\begin{lemma}
\label{lemma:X24-P3}
Let $\mathcal{M}_{X_{24}}$ be a $G$-invariant mobile linear system on the Fano threefold~$X_{24}$,
let $\mathcal{M}_{X_{22},\infty}$ and $\mathcal{M}_{X_{22},0}$ be its proper transforms on $X_{22}$ via $\delta_\infty$ and $\delta_0$, respectively,
and~let~$\mathcal{M}_{\mathbb{P}^3,\infty}$ and $\mathcal{M}_{\mathbb{P}^3,0}$ be its proper transforms on $\mathbb{P}^3$ via $\psi_\infty$ and $\psi_0$, respectively.
Furthermore,~let $k\in\frac{1}{2}\mathbb{Z}$ and let $n_\infty$ and $n_0$ be integers such that
$$
\left\{\aligned
&\mathcal{M}_{X_{24}}\sim_{\mathbb{Q}} -kK_{X_{24}},\\
&\mathcal{M}_{\mathbb{P}^3,\infty}\sim_{\mathbb{Q}} n_\infty H,\\
&\mathcal{M}_{\mathbb{P}^3,0}\sim_{\mathbb{Q}} n_0H.\\
\endaligned
\right.
$$
where $H$ is a hyperplane in $\mathbb{P}^3$. Then the following assertions hold:
\begin{enumerate}
\item There are $m_0$ and $m_\infty$ in $\frac{1}{2}\mathbb{Z}_{\geqslant 0}$   such that
$$
\left\{\aligned
&\mathcal{M}_{X_{22},0} \sim_{\mathbb{Q}} \delta_0^*\big(-kK_{X_{24}}\big)-m_0F_0,  \\
&\mathcal{M}_{X_{22},\infty} \sim_{\mathbb{Q}} \delta_\infty^*\big(-kK_{X_{24}}\big)-m_\infty F_\infty.
\endaligned
\right.
$$

\item There are $m^\prime_0$ and $m^\prime_\infty$ in $\mathbb{Z}_{\geqslant 0}$ such that
$$
\left\{\aligned
&\mathcal{M}_{X_{22},0} \sim_{\mathbb{Q}} \gamma_0^*\big(n_0\mathcal{H})-m^\prime_0 E, \\
&\mathcal{M}_{X_{22},\infty} \sim_{\mathbb{Q}} \gamma_\infty^*\big(n_\infty\mathcal{H})-m^\prime_\infty E.
\endaligned
\right.
$$

\item Furthermore, one has
$$
\left\{\aligned
&n_0=6k-4m_0,\\
&n_\infty=6k-4m_\infty,\\
&n_0=3n_\infty-4m^\prime_\infty,\\
&k=\frac{n_0}{2}-m^\prime_0,\\
&k=\frac{n_\infty}{2}-m^\prime_\infty.
\endaligned
\right.
$$
\end{enumerate}
\end{lemma}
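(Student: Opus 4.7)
My plan for parts (1) and (2) relies on the proper-transform formula together with $G$-invariance. Since $\delta_*$ is a composition of Kawamata blow-ups of four singular points of $X_{24}$ forming a single $G$-orbit (they constitute one of the two size-four orbits of $\mathbb{W}_3^{\mathfrak{A}} \subseteq G/\mathbb{T}_3$), and since $\mathcal{M}_{X_{24}}$ is $G$-invariant, the coefficient along each of the four $\delta_*$-exceptional divisors in the proper transform is a single common value $m_*$ lying in $\tfrac{1}{2}\mathbb{Z}_{\geqslant 0}$ (the half-integer coming from the Cartier index $2$ of $-K_{X_{24}}$). The same reasoning applies to $\gamma_*$, whose six exceptional divisors form a single $G$-orbit over the six $\mathbb{T}_3$-invariant lines of $\mathbb{P}^3$, and yields the integer coefficient $m^\prime_*$ of part (2).

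For part (3) my plan is to express every class in sight in the two-dimensional basis $\{\gamma_*^*(H), E\}$ of the $G$-invariant $\mathbb{Q}$-Picard group of $X_{22}$. I will first combine the two canonical bundle formulas
\[
K_{X_{22}} \;=\; \gamma_*^*(K_{\mathbb{P}^3}) + E \;=\; \delta_*^*(K_{X_{24}}) + \tfrac{1}{2}F_{*},
\]
using discrepancy $1$ for the blow-up of a smooth curve and $\tfrac{1}{2}$ for the Kawamata blow-up of a $\tfrac{1}{2}(1,1,1)$ singularity, to obtain one expression for $\delta_*^*(-K_{X_{24}})$. A second one comes from \eqref{eq:Bir-P3-X24}: the map $\psi_*$ is defined by sextics singular to order $2$ along each of the six $\mathbb{T}_3$-invariant lines, so $\delta_*^*(\mathcal{H}_{X_{24}}) \sim_{\mathbb{Q}} 6\gamma_*^*(H) - 2E$, which via $\mathcal{H}_{X_{24}}\sim_{\mathbb{Q}} -K_{X_{24}}$ provides the second description. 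Equating the two yields the pivotal relation $F_* \sim_{\mathbb{Q}} 4\gamma_*^*(H) - 2E$ in $\mathrm{Pic}(X_{22})^{G}\otimes\mathbb{Q}$.

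Substituting this into $\mathcal{M}_{X_{22},*} \sim_{\mathbb{Q}} \delta_*^*(-kK_{X_{24}}) - m_*F_*$ and equating the result with $\gamma_*^*(n_*H) - m^\prime_*E$, comparing the coefficients of $\gamma_*^*(H)$ and $E$ separately reads off at once the first two pairs of identities $n_* = 6k - 4m_*$ and $k = n_*/2 - m^\prime_*$ for $* \in \{0,\infty\}$. The remaining identity $n_0 = 3n_\infty - 4m^\prime_\infty$ is of a different flavor, and I would obtain it from Remark~\ref{remak:Cremona-X-24}: since the conjugate $\psi_\infty^{-1}\circ\sigma_{X_{24}}\circ\psi_\infty$ is the standard Cremona involution $\sigma_{\mathbb{P}^3}$, one has $\mathcal{M}_{\mathbb{P}^3,0} = \sigma_{\mathbb{P}^3}^{*}(\mathcal{M}_{\mathbb{P}^3,\infty})$, and I will push this through the action of $\sigma_{\mathbb{P}^3}^*$ on the two-dimensional $G$-invariant Picard group of a resolution of $\sigma_{\mathbb{P}^3}$ to extract the identity.

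The hardest step will be this last one. The base locus of $\sigma_{\mathbb{P}^3}$ consists of the six coordinate lines together with the four coordinate points of $\mathbb{P}^3$, so $X_{22}$ alone (the blow-up of $\mathbb{P}^3$ along only the six lines) is not a resolution of the Cremona involution; I will need to pass to a further blow-up, compute $\sigma_{\mathbb{P}^3}^{*}$ on the classes $\gamma^*(H)$ and $E$ there, and push back to $X_{22}$, being careful about the contribution of the four additional blown-up points.
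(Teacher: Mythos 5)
Your derivation of parts (1)--(2) and of four of the five identities in part (3) is correct and is surely the intended ``straightforward'' argument. The two canonical class formulas $K_{X_{22}}=\gamma_*^*K_{\mathbb{P}^3}+E=\delta_*^*K_{X_{24}}+\tfrac12F_*$, the identity $\delta_*^*(-K_{X_{24}})\sim_{\mathbb{Q}}6\gamma_*^*H-2E$ coming from the sextic description of \eqref{eq:Bir-P3-X24}, and the resulting relation $F_*\sim_{\mathbb{Q}}4\gamma_*^*H-2E$ all check out (one can confirm them torically: the four $\delta_*$-exceptional rays $w_i$ satisfy $\sum w_i=0$ and the six $\gamma_*$-exceptional rays are the $w_i+w_j$, whence $4\gamma_*^*H=F_*+2E$ and $\delta_*^*(-K_{X_{24}})=E+\tfrac32F_*$). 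Comparing coefficients in the rank-two invariant class group then gives $n_*=6k-4m_*$ and $m_*'=2k-2m_*$, hence $k=\tfrac{n_*}{2}-m_*'$, exactly as you say.

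The gap is in the remaining identity $n_0=3n_\infty-4m_\infty'$, and it is not one you can close, because the identity is false with $m_\infty'$ as defined in part (2). Carrying out your plan --- resolving $\sigma_{\mathbb{P}^3}$ by additionally blowing up the four coordinate points --- you will find that a general line of the target pulls back to a twisted cubic through the four fundamental points and disjoint from the six lines, so the Cremona law reads $n_0=3n_\infty-\sum_{i=1}^{4}\mathrm{mult}_{P_i}(\mathcal{M}_{\mathbb{P}^3,\infty})$: the correction term is the multiplicity at the four \emph{points}, not the coefficient $m_\infty'$ of $E$, and the two quantities do not determine one another. A concrete counterexample to the stated identity is $\mathcal{M}_{X_{24}}=|-2K_{X_{24}}|$: here $k=2$, $m_0=m_\infty=0$, $n_0=n_\infty=12$ and $m_\infty'=4$, while $3n_\infty-4m_\infty'=20$ (the multiplicity of a general degree-$12$ surface with multiplicity $4$ along the six lines at a coordinate point is $6$, and $36-4\cdot 6=12$ as it should be). So you should replace $m_\infty'$ in that one line by a separately defined point-multiplicity; this is in fact how the identity is used in the proof of Proposition~\ref{proposition;P3-X24-solid}, where the bound $m_\infty'>\tfrac{n_\infty}{2}$ is obtained from Lemma~\ref{lemma:mult-2} applied at the points of the length-four orbit, i.e.\ it is a bound on the point-multiplicity. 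With that correction your argument, and the application of the lemma, go through.
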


Now we are ready to prove:

\begin{proposition}
\label{proposition;P3-X24-solid}
One has $\mathcal{P}_G(\mathbb{P}^3)=\{\mathbb{P}^3,X_{24}\}$.
\end{proposition}

\begin{proof}
Let $\mathcal{M}_{X_{24}}$ be a $G$-invariant mobile linear system on the threefold $X_{24}$. With the notation of Lemma~\ref{lemma:X24-P3},
if the log pair $(X_{24},\frac{1}{k}\mathcal{M}_{X_{24}})$ does not have canonical singularities, then, combining Lemmas~\ref{lemma:Kawamata} and \ref{lemma:X24-P3}, we obtain that
\begin{itemize}
\item either $(X_{24},\frac{1}{k}\mathcal{M}_{X_{24}})$ is not canonical at the point $(0,0,0)$ and $n_0<4k$,
\item or $(X_{24},\frac{1}{k}\mathcal{M}_{X_{24}})$ is not canonical at the point $(\infty,\infty,\infty)$ and $n_\infty<4k$.
\end{itemize}

If $(\mathbb{P}^3,\frac{4}{n_\infty}\mathcal{M}_{\mathbb{P}^3_{\infty}})$ does not have canonical singularities,
we let $Z$ be its $G$-irreducible center of non-canonical singularities.
In this case, one of the following cases holds:
\begin{itemize}
\item $Z$ is the $G_{\mathbb{P}^3}$-irreducible curve and $k<\frac{n_\infty}{4}$;
\item $Z$ is the $G_{\mathbb{P}^3}$-orbit of length $4$ and $n_0<n_\infty$.
\end{itemize}
Indeed, if $Z$ is the $G_{\mathbb{P}^3}$-irreducible curve, then
$m_\infty^\prime>\tfrac{n_\infty}{4}$ by Lemma~\ref{lemma:exercise}, so that
$$
k=\frac{n_\infty}{2}-m^\prime_\infty<\frac{n_\infty}{4},
$$
by Lemma~\ref{lemma:X24-P3}.
Similarly, if $Z$ is the $G_{\mathbb{P}^3}$-orbit of length $4$, then
$$
m^\prime_\infty>\tfrac{n_\infty}{2}
$$
by Lemma \ref{lemma:mult-2}, because the tangent space $T_{P}(\mathbb{P}^3)$ at a point $P\in Z$  is an irreducible representation of the stabilizer of the point $P$ in the group $G$.
Thus, in this case, we have
$$
n_0=3n_\infty-4m^\prime_\infty<n_\infty
$$
by Lemma~\ref{lemma:X24-P3}.

Now, we let $q$ be the smallest number among $\frac{n_\infty}{4}$, $\frac{n_0}{4}$ and~$k$.
Without loss of generality, we may assume that
$$
q=\mathrm{min}\Big\{\frac{n_\infty}{4},k\Big\}.
$$
In view of the above alternatives, we obtain the following:
\begin{itemize}
\item if $q=\frac{n_\infty}{4}$, then $(\mathbb{P}^3,\frac{4}{n_\infty}\mathcal{M}_{\mathbb{P}^3,\infty})$ has canonical singularities;
\item if $q=k$, then $(X_{24},\frac{1}{k}\mathcal{M}_{X_{24}})$ has canonical singularities.
\end{itemize}
Now, arguing as in the proof of Proposition~\ref{theorem:main-V4-solid},
we deduce that $\mathbb{P}^3$ and $X_{24}$ are the only $G$-Mori fibre spaces $G$-birational to $\mathbb{P}^3$.
\end{proof}

\section{Proof of Theorem~\ref{theorem:main} (finite groups)}
\label{section:approximation}

All assertions of Theorem~\ref{theorem:main} follow from the results of Sections~\ref{section:toric} and \ref{section:Fano-threefolds} except
for the~part (3), which has been already proved in Sections~\ref{section:toric} and \ref{section:proof} for infinite groups.
The aim of this section is to prove Theorem~\ref{theorem:main}(3) for finite groups.
To do this, we need some results on finite subgroups of the groups $\mathbb{T}_1\rtimes\mathbb{W}_1$,
$\mathbb{T}_2\rtimes\mathbb{W}_2$ and $\mathbb{T}_3\rtimes \mathbb{W}_3$.

\subsection{Finite subgroups}
We use the notation of Section~\ref{section:Fano-threefolds}.

\begin{lemma}
\label{lemma:Adrien}
Let $\mathbb{W}$ be a subgroup of the finite group $\mathbb{W}_2$ that contains the group $\mathbb{W}_2^{\mathfrak{A}}$,
and let $G$ be a $\mathbb{W}$-invariant finite subgroup of $\mathbb{T}_2$. Then there exists $n\in\mathbb{N}$ such that one of the following three possibilities holds:
\begin{enumerate}
\item $G\cong\mumu_n^3$;
\item $n$ is even and $G\cong\mumu_{n}^2\times\mumu_{\frac{n}{2}}$;
\item $n$ is even and $G\cong\mumu_{n}\times\mumu_{\frac{n}{2}}^2$.
\end{enumerate}
\end{lemma}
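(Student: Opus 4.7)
The plan is to work on the torus side and reduce the classification to that of subgroups of a $2$-elementary quotient, exploiting the explicit form of the action of $\mathbb{W}_2^{\mathfrak{A}}\cong\mathfrak{A}_4$ on $\mathbb{T}_2\cong\mathbb{G}_{m}^3$ described in Subsection~\ref{subsection:P1-P1-P1}: a generator of its $\mathbb{Z}/3$-quotient permutes the three factors cyclically, and the normal Klein four-subgroup acts by the even sign changes $\tau_i\tau_j$ with $1\leqslant i<j\leqslant 3$. Since any $\mathbb{W}$-invariant subgroup of $\mathbb{T}_2$ is in particular $\mathbb{W}_2^{\mathfrak{A}}$-invariant, I may assume $\mathbb{W}=\mathbb{W}_2^{\mathfrak{A}}$.

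The first step extracts the axial part of $G$. Let $H=\{x\in\mathbb{G}_{m}:(x,1,1)\in G\}$, a finite subgroup of $\mathbb{G}_{m}$, hence $H=\mumu_n$ for some $n\geqslant 1$. Cyclic permutation invariance gives the same intersection for the other two axes and yields $\mumu_n^3\subset G$. For any element $(a,b,c)\in G$, the product with its $\tau_1\tau_2$-image equals $(a,b,c)(a^{-1},b^{-1},c)=(1,1,c^2)$, which forces $c^2\in H$; the analogous computations with $\tau_1\tau_3$ and $\tau_2\tau_3$ give $b^2\in H$ and $a^2\in H$. Hence
\[
\mumu_n^3\subset G\subset\mumu_{2n}^3.
\]

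The quotient $\overline{G}:=G/\mumu_n^3$ is then a $\mathbb{W}_2^{\mathfrak{A}}$-invariant subgroup of $\mumu_{2n}^3/\mumu_n^3\cong(\mathbb{F}_2)^3$. On this exponent-$2$ group every $\tau_i\tau_j$ acts trivially, since inversion is the identity, so the action factors through the $\mathbb{Z}/3$-action by cyclic permutation of coordinates. A direct enumeration of $\mathbb{Z}/3$-stable $\mathbb{F}_2$-subspaces of $(\mathbb{F}_2)^3$ yields precisely four possibilities:
\[
\{0\},\qquad \Delta=\mathbb{F}_2(1,1,1),\qquad \ker\big((x,y,z)\mapsto x+y+z\big),\qquad (\mathbb{F}_2)^3.
\]

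Finally, $G$ is recovered as the preimage of $\overline{G}$ under $\mumu_{2n}^3\twoheadrightarrow(\mathbb{F}_2)^3$, and its structure follows from a short Smith normal form computation for the inclusion of the corresponding sublattices of $\mathbb{Z}^3$ modulo $2n\mathbb{Z}^3$. The trivial and total cases give $G\cong\mumu_n^3$ and $G\cong\mumu_{2n}^3$, both of type~(1). The case $\overline{G}=\Delta$ produces elementary divisors $(n,n,2n)$, so $G\cong\mumu_n^2\times\mumu_{2n}$, which matches~(3) with the lemma's parameter equal to $2n$. The case $\overline{G}=\ker(x+y+z)$ produces elementary divisors $(n,2n,2n)$, so $G\cong\mumu_n\times\mumu_{2n}^2$, which matches~(2) with parameter $2n$. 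The only step that is not purely routine is the passage to the upper sandwich $G\subset\mumu_{2n}^3$, which genuinely uses the presence of the even sign changes $\tau_i\tau_j$ in $\mathbb{W}_2^{\mathfrak{A}}$; the remaining enumeration and structure computation are elementary.
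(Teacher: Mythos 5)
Your proof is correct, and it reaches the classification by a route that is organized differently from the paper's, even though both arguments ultimately rest on the same two computations: cyclic permutations spread any axial element to all three axes, and an even sign change $\tau_i\tau_j$ applied to $(a,b,c)$ and multiplied back gives $(1,1,c^2)$, forcing squares of coordinates into the axial part. The paper anchors its argument on an element $h$ of maximal order $n$, which gives the containment $G\subseteq\mumu_n^3$ for free, and then chases explicit generators (normalizing the first exponent to $1$ and reducing the other two modulo $2$) to pin $G$ down from below, ending with an ``either $G=G'$ or $G\cong\mumu_n^3$'' dichotomy. You instead anchor on the axial subgroup $H=\mumu_n$, obtain the two-sided sandwich $\mumu_n^3\subseteq G\subseteq\mumu_{2n}^3$, and finish by classifying the $\mumu_3$-stable subspaces of $(\mathbb{F}_2)^3$, where the decomposition $(\mathbb{F}_2)^3=\Delta\oplus\ker(\mathrm{sum})$ and the irreducibility of $x^2+x+1$ over $\mathbb{F}_2$ do the work; a Smith normal form computation then identifies each preimage. (Your fourth case $\overline{G}=(\mathbb{F}_2)^3$ is in fact vacuous with your normalization of $n$, since it would force $H=\mumu_{2n}$, but including it is harmless as it still lands in type~(1).) What your packaging buys is an exhaustive, structural enumeration in place of the paper's generator-chasing, which makes the completeness of the case analysis self-evident; what the paper's buys is explicit generators for $G$ in each case, which it reuses verbatim in the proof of Corollary~\ref{lemma:Sakovich}. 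Both are elementary and correct.
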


\begin{proof}
Let $n$ be the maximal order of elements in $G$, and let $h$ be an~element in $G$ that has maximal order.
Then the order of every element of $G$ divides $n$. This implies that~$G\subseteq\mumu_{n}^{3}$.
Thus, there are positive integers $a$, $b$, $c$ such that $\mathrm{gcd}(a,b,c,n)=1$ and
$$
h=\big(\epsilon^a,\epsilon^b,\epsilon^c\big)
$$
for some primitive $n$th root of unity $\epsilon$.

With respect to the basis $f_{1}$, $f_{2}$, $f_{3}$ of the lattice $M_2$, the subgroup $\mathbb{W}_{2}^{\mathfrak{A}}\subset\mathrm{GL}_{3}(\mathbb{Z})$ is generated by permutation matrices and the matrices $$A=\left(\begin{array}{ccc}
-1 & 0 & 0\\
0 & -1 & 0\\
0 & 0 & 1
\end{array}\right)\quad\textrm{and}\quad B=\left(\begin{array}{ccc}
-1 & 0 & 0\\
0 & 1 & 0\\
0 & 0 & -1
\end{array}\right).$$

Applying cyclic permutations of order $3$ to $h$, we see that
$$
(\epsilon^c,\epsilon^a,\epsilon^b)\in G \ni (\epsilon^b,\epsilon^c,\epsilon^a),
$$
so that the group $G$ contains an element of the form $(\epsilon,\epsilon^\beta,\epsilon^\gamma)$ for some integers $\beta$ and~$\gamma$.
Thus, we may assume that $a=1$.
Applying $AB$ to the element $h$, we see that
$$
h^\prime=(\epsilon,\epsilon^{-b},\epsilon^{-c})\in G.
$$
It follows that $hh^\prime=(\epsilon^2,1,1)$
and its transforms $(1,\epsilon^2,1)$ and $(1,1,\epsilon^2)$ by permutations matrices in $\mathbb{W}$ are also contained in the group $G$.

Let $h_2=(1,\epsilon^2,1)$ and $h_3=(1,1,\epsilon^2)$.
If $n$ is odd then we can replace $h$ by
$$
hh_2^\beta h_3^\gamma=(\epsilon,1,1),
$$
where $\beta$ and $\gamma$ are integers such that $\beta b\equiv 1\ \mathrm{mod}\ n$ and $\gamma c\equiv 1\ \mathrm{mod}\ n$.
If $n$~is~even, we can replace $h$ by $hh_2^\beta h_3^\gamma\in G$
for $\beta=-\lfloor\frac{b}{2}\rfloor$ and $\gamma=-\lfloor\frac{c}{2}\rfloor$.
Therefore, we may assume that one of the following three cases holds:
$(b,c)=(0,0)$, $(b,c)=(1,0)$ and $(b,c)=(1,1)$.

If $(b,c)=(0,0)$, then $h=(\epsilon,1,1)$ from which it follows that $G=\mumu_n^3$.

If $(b,c)=(1,0)$, then $n$ is even and $h=(\epsilon,\epsilon,1)$.
In this case, applying permutations, we get that $(1,\epsilon,\epsilon)\in G$ and hence that $G$ contains the subgroup
$$
G^\prime=\langle (\epsilon,\epsilon,1),(1,\epsilon,\epsilon), (1,1,\epsilon^2)\rangle \cong\mumu_n^2\times\mumu_{\frac{n}{2}}.
$$
Since $G\subset \mumu_n^3$, it follows that either $G=G^\prime$ or $G\cong\mumu_n^3$.

Finally, if $(b,c)=(1,1)$, then $G$ contains the subgroup
$$
G^\prime=\langle (\epsilon,\epsilon,\epsilon) ,(1,\epsilon^2,1),(1,1,\epsilon^2)\rangle \cong \mumu_n\times\mumu_{\frac{n}{2}}^2,
$$
and hence either $G=G^\prime$ or $G\cong\mumu_n^3$, which completes the proof.
\end{proof}

\begin{corollary}
\label{corollary:Y-24}
Let $\mathbb{W}$ be a subgroup of the finite group $\mathbb{W}_1$ that contains the group $\mathbb{W}_1^{\mathfrak{A}}$,
and let $G$ be a $\mathbb{W}$-invariant finite subgroup of $\mathbb{T}_1$. Then there exists $n\in\mathbb{N}$ such that one of the following five possibilities holds:
\begin{enumerate}
\item $G\cong\mumu_n^3$;
\item $n$ is even and $G\cong\mumu_{n}^2\times\mumu_{\frac{n}{2}}$;
\item $n$ is even and $G\cong\mumu_{n}\times\mumu_{\frac{n}{2}}^2$;
\item $n$ is divisible by $4$ and $G\cong\mumu_{n}\times\mumu_{\frac{n}{2}}\times\mumu_{\frac{n}{4}}$;
\item $n$ is divisible by $4$ and $G\cong\mumu_{n}\times\mumu_{\frac{n}{4}}^2$.
\end{enumerate}
\end{corollary}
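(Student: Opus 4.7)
My plan is to reduce the corollary to Lemma~\ref{lemma:Adrien} by exploiting the $\mathbb{W}_1^{\mathfrak{A}}$-equivariant quotient morphism $q_{12}\colon\mathbb{T}_1\to\mathbb{T}_2$ introduced in Section~\ref{section:Fano-threefolds}. Recall that $q_{12}$ is dual to the index-$2$ inclusion $M_2\hookrightarrow M_1$, and its kernel is the central subgroup $\mumu_2\subset\mathbb{T}_1$ generated by the involution $(\mathbf{t}_1,\mathbf{t}_2,\mathbf{t}_3)\mapsto(-\mathbf{t}_1,-\mathbf{t}_2,-\mathbf{t}_3)$. Since the inclusion $M_2\subset M_1$ is $\mathbb{W}_1$-equivariant, the $\mathbb{W}_1^{\mathfrak{A}}$-action on $M_1$ restricts to the $\mathbb{W}_2^{\mathfrak{A}}$-action on $M_2$ under the natural identification of these copies of $\mathfrak{A}_4$. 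Consequently, for any $\mathbb{W}$-invariant finite subgroup $G\subset\mathbb{T}_1$, its image $\bar G:=q_{12}(G)\subset\mathbb{T}_2$ is a $\mathbb{W}_2^{\mathfrak{A}}$-invariant finite subgroup, so Lemma~\ref{lemma:Adrien} provides an integer $m$ and one of the three possible structures for $\bar G$.

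I would then split according to whether $G\cap\mumu_2=\{1\}$ or $G\cap\mumu_2=\mumu_2$. In the first case, the restriction of $q_{12}$ to $G$ is an isomorphism $G\xrightarrow{\sim}\bar G$, so $G$ directly inherits one of the structures (1), (2) or (3) of the corollary with $n=m$. In the second case, there is a short exact sequence of finite abelian groups
$$
1\longrightarrow\mumu_2\longrightarrow G\longrightarrow\bar G\longrightarrow 1.
$$
The crucial additional ingredient is that $G$, being a finite subgroup of the three-dimensional torus $\mathbb{T}_1$, is generated by at most three elements and thus has at most three invariant factors in its decomposition as a finite abelian group. This rank-$3$ constraint rules out the split extensions that would yield four invariant factors, and severely restricts the admissible extension classes.

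With this setup, I would carry out a finite case analysis on the type of $\bar G$ and the $2$-adic valuation of $m$ (odd, $m\equiv2\pmod 4$, $m\equiv 0\pmod 4$), enumerating extension classes in $\mathrm{Ext}^1(\mathbb{Z}/2,\bar G)$ and computing the invariant factors of $G$ in each case. The expected outcomes are: if $\bar G\cong\mumu_m^3$ then the only admissible extension gives $G\cong\mumu_{2m}\times\mumu_m^2$ (type (3) with $n=2m$); if $\bar G\cong\mumu_m^2\times\mumu_{m/2}$ then admissible extensions yield types (1), (3), or (4); if $\bar G\cong\mumu_m\times\mumu_{m/2}^2$ then admissible extensions yield types (2), (3), or (5). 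In particular, types (4) and (5) arise precisely from non-split extensions of the last two cases with $n=2m$, and the divisibility of $n$ by $4$ follows from $m$ being even.

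The main technical obstacle is this enumeration of extensions: in each of the three sub-cases for $\bar G$, one must show that a change of basis in the $\mathbb{Z}/m$-module $\bar G$ reduces every non-zero extension class to a canonical form, and then compute the resulting invariant factors while accounting for the coincidences that occur when $\gcd(2,m/2)=1$ (which is what allows split extensions to merge cyclic factors and reparametrize as type (1) or (2) with $n=m$). Once this finite bookkeeping is carried out, one verifies that every admissible $G$ falls into exactly one of the five listed isomorphism classes.
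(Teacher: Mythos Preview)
Your proposal is correct and follows essentially the same route as the paper: reduce to Lemma~\ref{lemma:Adrien} via the degree-$2$ isogeny $q_{12}\colon\mathbb{T}_1\to\mathbb{T}_2$, note that the image $\bar G$ is $\mathbb{W}_2^{\mathfrak{A}}$-invariant, and then analyze the two cases $G\cong\bar G$ and $1\to\mumu_2\to G\to\bar G\to 1$. The paper's proof is much terser, simply stating that the three possibilities for $\bar G$ ``immediately imply the result'' without spelling out the extension bookkeeping; your explicit use of the rank-$3$ constraint (at most three invariant factors, since $G\subset\mathbb{T}_1$) is precisely what makes that implication go through.
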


\begin{proof}
Let $q_{12}\colon\mathbb{T}_{1}\to\mathbb{T}_{2}$,
be the quotient map that corresponds to the inclusion $M_2\hookrightarrow M_1$ described in Section~\ref{section:Fano-threefolds}.
Then $q_{12}$ is given by
$$
\big(\tone{t}_{1},\tone{t}_{2},\tone{t}_{3}\big)\mapsto\big(\tone{t}_{1}\tone{t}_{2}, \tone{t}_{1}\tone{t}_{3}, \tone{t}_{2}\tone{t}_{3}\big),
$$
and its kernel consists of two elements $\pm (1,1,1)$. Let $\overline{G}$ be the image of $G$  by the map~$q_{12}$.
Then either $G\cong\overline{G}$ or there exists an exact sequence of groups
$$
1\longrightarrow\mumu_2\longrightarrow G\longrightarrow\overline{G}\longrightarrow 1.
$$
Since the $\mathbb{W}_2$-action on $M_2$ is induced by the restriction of the $\mathbb{W}_1$-action on $M_1$,
we see that $\overline{G}$ is normalized by the action of the subgroup $\mathbb{W}_2^{\mathfrak{A}}$. By Lemma~\ref{lemma:Adrien}, we obtain that
\begin{enumerate}
\item either $\overline{G}\cong\mumu_n^3$,
\item or $n$ is even and $\overline{G}\cong\mumu_{n}^2\times\mumu_{\frac{n}{2}}$,
\item or $n$ is even and $\overline{G}\cong\mumu_{n}\times\mumu_{\frac{n}{2}}^2$.
\end{enumerate}
This immediately implies the result.
\end{proof}

\begin{corollary}[{cf. \cite{Flannery}}]
\label{lemma:Sakovich}
Let $\mathbb{W}$ be a subgroup of the finite group $\mathbb{W}_3$ that contains $\mathbb{W}_3^{\mathfrak{A}}$,
and let $G$ be a $\mathbb{W}$-invariant finite subgroup of $\mathbb{T}_3$. Then there exists $n\in\mathbb{N}$ such that one of the following three possibilities holds:
\begin{enumerate}
\item $G\cong\mumu_n^3$;
\item $n$ is even and $G\cong\mumu_{n}^2\times\mumu_{\frac{n}{2}}$;
\item $n$ is divisible by $4$ and $G\cong\mumu_{n}^2\times\mumu_{\frac{n}{4}}$.
\end{enumerate}
\end{corollary}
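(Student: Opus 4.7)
The plan is to reduce the statement to Lemma \ref{lemma:Adrien} by lifting a $\mathbb{W}_3^{\mathfrak{A}}$-invariant finite subgroup $G\subset\mathbb{T}_3$ to a $\mathbb{W}_2^{\mathfrak{A}}$-invariant finite subgroup $\tilde{G}\subset\mathbb{T}_2$ through the equivariant quotient morphism $q_{23}\colon\mathbb{T}_2\to\mathbb{T}_3$ dual to the index-$2$ inclusion $M_3\hookrightarrow M_2$. This quotient has kernel $\mumu_2^{\mathrm{diag}}=\{(1,1,1),(-1,-1,-1)\}$, and since $M_3\subset M_2$ is preserved by the $\mathfrak{S}_4\times\mumu_2$-action, $q_{23}$ intertwines the $\mathbb{W}_2$-action on $\mathbb{T}_2$ with the $\mathbb{W}_3$-action on $\mathbb{T}_3$ under the natural identification $\mathbb{W}_2\cong\mathbb{W}_3$, with $\mathbb{W}_2^{\mathfrak{A}}$ corresponding to $\mathbb{W}_3^{\mathfrak{A}}$. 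Taking $\tilde{G}=q_{23}^{-1}(G)$ then produces a finite $\mathbb{W}_2^{\mathfrak{A}}$-invariant subgroup of $\mathbb{T}_2$ which automatically contains $\mumu_2^{\mathrm{diag}}$, and Lemma \ref{lemma:Adrien} classifies the possibilities for $\tilde{G}$ into three explicit families.

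I will then go through each of the three types of $\tilde{G}$ from Lemma \ref{lemma:Adrien}, determine the constraint on $n$ imposed by $\mumu_2^{\mathrm{diag}}\subset\tilde{G}$, and compute $G\cong\tilde{G}/\mumu_2^{\mathrm{diag}}$ via Smith normal form. If $\tilde{G}=\mumu_n^3$, the containment forces $n$ to be even; the transformation to basis $(f_1,f_2,f_1+f_2+f_3)$ of $\mathbb{Z}^3$ makes the quotient transparent and yields $G\cong\mumu_n^2\times\mumu_{n/2}$. If $\tilde{G}$ is the specific subgroup $\langle(\epsilon,\epsilon,1),(1,\epsilon,\epsilon),(1,1,\epsilon^2)\rangle\cong\mumu_n^2\times\mmu_{n/2}$ of Lemma \ref{lemma:Adrien}, writing $(-1,-1,-1)$ as a product of the generators forces $4\mid n$ (the equation $2c\equiv n/2\pmod{n}$ must be solvable), and a Smith-normal-form calculation then produces $G\cong\mumu_n^2\times\mumu_{n/4}$. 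Finally, if $\tilde{G}=\langle(\epsilon,\epsilon,\epsilon),(1,\epsilon^2,1),(1,1,\epsilon^2)\rangle\cong\mumu_n\times\mumu_{n/2}^2$ with $n$ even, then $(-1,-1,-1)=(\epsilon,\epsilon,\epsilon)^{n/2}\in\tilde{G}$ automatically, and the quotient collapses the $\mumu_n$-factor down to $\mumu_{n/2}$, giving $G\cong\mumu_{n/2}^3$.

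Renaming $n$ by $m$ in the first two cases and $n/2$ by $m$ in the third recovers exactly the three alternatives in the statement. The main technical step is the middle Smith-normal-form computation, where one must verify that the cyclic factor becomes $\mumu_{n/4}$ rather than $\mumu_{n/2}$ after modding out by $\mumu_2^{\mathrm{diag}}$; every $G\subset\mathbb{T}_3$ is recovered as $q_{23}(\tilde{G})$ for $\tilde{G}=q_{23}^{-1}(G)$, so the resulting list is both necessary and exhaustive.
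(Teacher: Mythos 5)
Your proposal is correct and follows essentially the same route as the paper: pull $G$ back along $q_{23}$ to a $\mathbb{W}_2^{\mathfrak{A}}$-invariant finite subgroup of $\mathbb{T}_2$ containing $\pm(1,1,1)$, invoke the explicit generators from the proof of Lemma~\ref{lemma:Adrien}, and descend again. The only (cosmetic) difference is that you compute the quotient by $\mumu_2^{\mathrm{diag}}$ abstractly via Smith normal form, whereas the paper writes down the images of the generators under $q_{23}$ explicitly; your case-by-case conclusions, including the forced divisibility $4\mid n$ in the middle case, agree with the paper's.
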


\begin{proof}
Let $q_{23}\colon\mathbb{T}_{2}\to\mathbb{T}_{3}$ be the quotient by
the involution $(\ttwo{t}_{1},\ttwo{t}_{2},\ttwo{t}_{3})\mapsto(-\ttwo{t}_{1},-\ttwo{t}_{2},-\ttwo{t}_{3})$,
which corresponds to the inclusion of lattices $M_3\hookrightarrow M_2$ described in Section~\ref{section:Fano-threefolds},
and let $\widehat{G}$ be the preimage of the group $G$ via $q_{23}$.
Then $\widehat{G}$ is a finite subgroup in $\mathbb{T}_2$, which is normalized by the group $\mathbb{W}_2^{\mathfrak{A}}$.
Since $\widehat{G}$ contains $\pm (1,1,1)$, we see that $|\widehat{G}|$ is even.

It follows from the proof of Lemma~\ref{lemma:Adrien} that there exist an integer $m\in 2\mathbb{N}$ and a~primitive $m$-th root of unity $\epsilon$
such that $\widehat{G}$ is one of the following subgroups:
\begin{enumerate}
\item[(1)] $\langle (\epsilon,1,1),(1,\epsilon,1),(1,1,\epsilon)\rangle\cong\mumu_m^3$,
\item[(2)] $\langle (\epsilon,\epsilon,1),(1,\epsilon,\epsilon),(1,1,\epsilon^2)\rangle\cong\mumu_m^2\times\mumu_{\frac{m}{2}}$,
\item[(3)] $\langle (\epsilon,\epsilon,\epsilon),(1,\epsilon^2,1),(1,1,\epsilon^2)\rangle\cong\mumu_m\times\mumu^2_{\frac{m}{2}}$,
\end{enumerate}
Thus, in the first case, we have
$$
G=\langle (\epsilon,1,\epsilon),(\epsilon,\epsilon^{-1},1),(1,\epsilon^{-1},\epsilon)\rangle=\langle(\epsilon,1,\epsilon),(1,\epsilon,\epsilon),(1,\epsilon^{2},1)\rangle\cong\mumu_m^2\times\mumu_{\frac{m}{2}}.
$$
Similarly, in the second case, we have
$$
G=\langle (\epsilon^2,\epsilon^{-1},\epsilon),(\epsilon,\epsilon^{-2},\epsilon),(1,\epsilon^{-2},\epsilon^2)\rangle=\langle (\epsilon,1,\epsilon^{-1}),(1,\epsilon,\epsilon),(1,1,\epsilon^{4})\rangle=\mumu_m^2\times\mumu_{\frac{m}{4}},
$$
where $n$ is divisible by $4$, because $(-1,-1,-1)\in\widehat{G}$.
Finally, in the third case, we have
$$
G=\langle (\epsilon^2,\epsilon^{-2},\epsilon^2),(\epsilon^2,\epsilon^2,1),(1,\epsilon^{-2},\epsilon^{2})\rangle=\langle (\epsilon^2,1,1),(1,\epsilon^2,1),(1,1,\epsilon^{2})\rangle=\mumu_{\frac{m}{2}}^3.
$$
This completes the proof of the corollary.
\end{proof}

\subsection{The proof of Theorem~\ref{theorem:main}(3) for finite groups}
\label{subsection:the-proof}
Let $X$ be one of the threefolds $V_6=\mathbb{P}^1\times\mathbb{P}^1\times\mathbb{P}^1$, $X_{24}$, $Y_{24}$, $V_4$ or $\mathbb{P}^3$,
let~$\mathbb{T}$~be~a~maximal torus in $\mathrm{Aut}(X)$, and let $G_X$ be its normalizer in $\mathrm{Aut}(X)$.
Using the split exact sequence of groups
$$
\xymatrix{
1\ar@{->}[r]&\mathbb{T}\ar@{->}[r]& G_{X}\ar@{->}[r]^{\nu_X}&\mathbb{W}_X\ar@{->}[r]& 1,}
$$
we consider the Weyl group $\mathbb{W}_X$ as a subgroup of the group $G_X$.

Let~$G$ be a finite subgroup of the group $G_{X}$, let $\mathbb{W}=\nu_X(G)$, and let $\Gamma=\mathbb{T}\cap G$.
Suppose that the group $\mathbb{W}$ contains the unique subgroup in $\mathbb{W}_X$ that is isomorphic to $\mathfrak{A}_4$.
Then~Lemma~\ref{lemma:Adrien} and Corollaries~\ref{corollary:Y-24} and \ref{lemma:Sakovich} imply the following:

\begin{corollary}
\label{corollary:order-rank}
The group $\Gamma$ contains a subgroup $\Gamma^\prime\cong\mumu_n^3$ such that $|\Gamma^\prime:\Gamma|\leqslant 16$.
\end{corollary}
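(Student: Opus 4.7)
The plan is to invoke one of the three preceding classification results according to which toric Fano threefold the torus $\mathbb{T}$ belongs to: Lemma \ref{lemma:Adrien} when $X=V_6$, Corollary \ref{corollary:Y-24} when $X=Y_{24}$, and Corollary \ref{lemma:Sakovich} when $X$ is one of $X_{24}$, $V_4$, $\mathbb{P}^3$. Taken together, these three results enumerate at most six possible isomorphism classes for the $\mathbb{W}$-invariant finite subgroup $\Gamma\subset\mathbb{T}$, each of the form $\Gamma\cong\mumu_{a_1}\times\mumu_{a_2}\times\mumu_{a_3}$ with every $a_i\in\{n,n/2,n/4\}$, where $n$ is the exponent of $\Gamma$.

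In each such case the natural candidate for $\Gamma^\prime$ is the $k$-torsion subgroup of $\Gamma$ for $k=\gcd(a_1,a_2,a_3)$. Since $\Gamma$ is finite abelian of the listed form, this torsion subgroup is isomorphic to $\mumu_k^3$ and has index $(a_1/k)(a_2/k)(a_3/k)$ in $\Gamma$, a product of three factors drawn from $\{1,2,4\}$. A direct inspection of the six possibilities yields the indices $1$ for $\mumu_n^3$, $4$ for $\mumu_n^2\times\mumu_{n/2}$, $2$ for $\mumu_n\times\mumu_{n/2}^2$, $8$ for $\mumu_n\times\mumu_{n/2}\times\mumu_{n/4}$, $4$ for $\mumu_n\times\mumu_{n/4}^2$, and $16$ for $\mumu_n^2\times\mumu_{n/4}$. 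The extremal value $16$ is realised only in this last case, which appears solely in Corollary \ref{lemma:Sakovich}, i.e.\ when $X$ is one of $X_{24}$, $V_4$, $\mathbb{P}^3$.

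Since everything reduces to a finite case check built on the three preceding structure results, there is no real obstacle here beyond bookkeeping. The only points worth emphasising are the correspondence between $X$ and the appropriate classification lemma, the fact that the intermediate index $8$ arises only in the $Y_{24}$ setting, and that the sharp bound $16$ comes exclusively from the isomorphism type $\mumu_n^2\times\mumu_{n/4}$ permitted by Corollary \ref{lemma:Sakovich}. Any other subgroup $\Gamma^\prime\cong\mumu_{k'}^3$ of $\Gamma$ with $k'\mid k$ would only increase the index, so choosing the $\gcd$-torsion subgroup is also optimal.
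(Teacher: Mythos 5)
Your proposal is correct and follows exactly the route the paper intends: the corollary is stated as an immediate consequence of Lemma \ref{lemma:Adrien} and Corollaries \ref{corollary:Y-24} and \ref{lemma:Sakovich}, and your case-by-case index computation via the $\gcd$-torsion subgroup is precisely the omitted bookkeeping, with the extremal index $16$ coming from the type $\mumu_n^2\times\mumu_{n/4}$ as you say. (Note the paper's statement has the index written backwards; it should read $|\Gamma:\Gamma^\prime|\leqslant 16$, which is what you prove.)
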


We have $G_{X}=\langle\mathbb{T},\mathbb{W}_X\rangle\cong\mathbb{T}\rtimes\mathbb{W}_X$ and $G=\langle \Gamma,\mathbb{W}\rangle\cong \Gamma\rtimes \mathbb{W}$.
Note that
$$
\mathrm{rk}\Big(\mathrm{Cl}(X)^G\Big)=\mathrm{rk}\Big(\mathrm{Cl}(X)^{\mathbb{W}}\Big),
$$
so that $X$ is $G$-minimal if and only if it is $\mathbb{W}$-minimal.
Now we suppose that $X$ is $\mathbb{W}$-minimal.
To prove Theorem~\ref{theorem:main}(3), we have to show that $X$ is $G$-solid if $|G|\geqslant 32\cdot 24^4$.

Suppose that $|\Gamma|\geqslant 16\cdot 24^3$. Note that this inequality follows from $|G|\geqslant 32\cdot 24^4$.
By~Corollary~\ref{corollary:order-rank}, the group $\Gamma$ contains a subgroup that is isomorphic to $\mumu_n^3$ for $n\geqslant 24$.
Let us prove that $X$ is $G$-solid.

Let $\mathbf{G}=\langle\mathbb{T}_X,\mathbb{W}\rangle\cong\mathbb{T}_X\rtimes\mathbb{W}$.
In Section~\ref{section:proof}, we proved that the threefold $X$ is $\mathbf{G}$-solid.
Moreover, this proof implies that $X$ is $G$-solid provided that the following condition is satisfied:
\begin{itemize}
\item[($\bigstar$)] For every non-empty $G$-invariant mobile linear system $\mathcal{M}$ on the threefold $X$,
all~non-canonical centers (if any) of the mobile log pair $(X,\lambda\mathcal{M})$ are $\mathbb{T}_X$-invariant,
where $\lambda$ is a positive rational number such that
$$
\lambda\mathcal{M}\sim_{\mathbb{Q}} -K_X.
$$
Moreover, if a $\mathbb{T}_X$-invariant smooth point $P$ of the threefold $X$ is a non-canonical center of  the log pair $(X,\lambda\mathcal{M})$,
then $\mathrm{mult}_{P}(\mathcal{M})>\frac{2}{\lambda}$.
\end{itemize}
In the remaining part of this section, we will prove that $\bigstar$ holds.

Let $\mathcal{M}$ be a non-empty $G$-invariant mobile  linear system on $X$,
and let $\lambda$ be a positive rational number such that $\lambda\mathcal{M}\sim_{\mathbb{Q}} -K_X$.

\begin{lemma}
\label{lemma:T-invariant-points}
Let $P$ be a smooth $\mathbb{T}$-invariant point of the toric Fano threefold $X$ such that $P$ is a non-canonical center of the log pair $(X,\lambda\mathcal{M})$.
Then $\mathrm{mult}_{P}(\mathcal{M})>\frac{2}{\lambda}$.
\end{lemma}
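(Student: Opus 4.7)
The strategy is to apply Lemma~\ref{lemma:mult-2} with the group taken to be the stabilizer $G_P\subseteq G$ of $P$: granting that $G_P$ acts on $T_P X$ as an irreducible representation, the conclusion $\mathrm{mult}_P(\mathcal{M})>\frac{2}{\lambda}$ is immediate. On $V_4$ and $X_{24}$ every $\mathbb{T}$-fixed point is singular, so the statement is vacuous there, and we may assume $X\in\{\mathbb{P}^3,V_6,Y_{24}\}$. Writing $\mathbb{W}_P:=\mathrm{Stab}_{\mathbb{W}}(P)$ and using that $\Gamma\subset\mathbb{T}$ fixes every $\mathbb{T}$-invariant point, we have $G_P=\Gamma\rtimes\mathbb{W}_P$, so it is enough to rule out proper simultaneously $\Gamma$- and $\mathbb{W}_P$-stable subspaces of $T_P X$.

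The first step is to diagonalise $T_P X$ under $\Gamma$. Since $P$ is a smooth $\mathbb{T}$-fixed point of a toric variety, the three weights $\chi_1,\chi_2,\chi_3$ of $\mathbb{T}$ on $T_P X$ form a $\mathbb{Z}$-basis of the character lattice $M$. The hypothesis $|G|\geqslant 32\cdot 24^4$ forces $|\Gamma|\geqslant 16\cdot 24^3$, so by Corollary~\ref{corollary:order-rank} the group $\Gamma$ contains a subgroup $\Gamma^\prime\cong\mumu_n^3$ with $n\geqslant 24$. Any subgroup of $\mathbb{T}$ abstractly isomorphic to $\mumu_n^3$ must coincide with the full $n$-torsion $\mathbb{T}[n]$, whose character group is $M/nM\cong(\mathbb{Z}/n)^3$. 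Since a $\mathbb{Z}$-basis of $M$ projects to a basis of $M/nM$, the restrictions $\chi_i|_{\Gamma^\prime}$ are three pairwise distinct characters of $\Gamma^\prime$, and $T_P X$ decomposes as $V_{\chi_1}\oplus V_{\chi_2}\oplus V_{\chi_3}$ into three distinct one-dimensional $\Gamma$-eigenspaces.

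It then remains to exhibit a $3$-cycle inside $\mathbb{W}_P$ that permutes $V_{\chi_1},V_{\chi_2},V_{\chi_3}$ cyclically, for any $G_P$-invariant subspace of $T_P X$ would then be a $3$-cycle-stable sum of some $V_{\chi_i}$'s, hence zero or all of $T_P X$. I would verify this case by case from the explicit presentations already fixed in the paper. For $\mathbb{P}^3$, the stabilizer of a coordinate point in $\mathfrak{S}_4=\mathbb{W}_{\mathbb{P}^3}$ is $\mathfrak{S}_3$ and meets the unique $\mathfrak{A}_4$ inside $\mathbb{W}$ in a $3$-cycle that cyclically permutes the three remaining tangent coordinates. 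For $V_6=(\mathbb{P}^1)^3$, the cyclic permutation of the three factors lies in $\mathbb{W}_2^{\mathfrak{A}}$, fixes every $\mathbb{T}$-fixed point, and cycles the three tangent directions. For $Y_{24}$, at the smooth fixed point $(0,\infty,\infty,\infty)$ local coordinates are $v_2,v_3,v_4$ (after solving $u_1=v_2v_3v_4$); the $3$-cycle on positions $\{2,3,4\}$ lies in $\mathbb{W}_1^{\mathfrak{A}}\cong\mathfrak{A}_4$, fixes the point, and cycles $v_2,v_3,v_4$, while the other smooth $\mathbb{T}$-orbit is handled symmetrically. Combining these facts gives irreducibility of $T_P X$ as a $G_P$-module, and Lemma~\ref{lemma:mult-2} then supplies the desired inequality. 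The only real subtlety is finding the right irreducibility mechanism (neither $\Gamma$ nor $\mathbb{W}_P$ alone would suffice, but their joint action does); the subsequent case analysis is routine given the explicit coordinates.
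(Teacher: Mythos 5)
Your proof is correct and follows essentially the same route as the paper's, which simply invokes Lemma~\ref{lemma:mult-2} after asserting that $T_{P}(X)$ is an irreducible representation of the stabilizer $G_P$ ``because $G_P$ contains $\Gamma$''; your write-up in fact supplies the details that the paper leaves implicit, namely the decomposition into three distinct $\Gamma'$-eigencharacters (via $\Gamma'=\mathbb{T}[n]$ and the weights forming a basis of $M$) together with a $3$-cycle in the Weyl stabilizer permuting them. One small slip worth correcting: for $V_6$ the cyclic permutation of the three factors does \emph{not} fix every $\mathbb{T}$-fixed point (it permutes, e.g., $(0,0,\infty)$, $(\infty,0,0)$, $(0,\infty,0)$), but since each $\mathfrak{A}_4$-orbit of fixed points has length $4$, the stabilizer of each fixed point still contains an order-$3$ element, which must act as a $3$-cycle on the three tangent weights, so your argument goes through unchanged.
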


\begin{proof}
Since $P$ is a $\mathbb{T}$-invariant smooth point of $X$, we have $X=V_6$, $X=Y_{24}$~or~$X=\mathbb{P}^3$.
Let $G_P$ be the stabilizer of the point $P$ in $G$.
Then the tangent space $T_{P}(X)$ is a faithful representation of the group $G_P$ by Lemma~\ref{lemma:stabilizer-faithful}.
Moreover,  this representation is irreducible,
because $G_P$ contains $\Gamma$.
Thus, the assertion follows from Lemma~\ref{lemma:mult-2}.
\end{proof}

Now we suppose that $(X,\lambda\mathcal{M})$ is not canonical and we let $Z$ be a non-canonical $G$-center. To complete the proof, we have to show that $Z$ is $\mathbb{T}$-invariant. In what follows, we denote by  $I_X$ the~Fano index of the threefold $X$.

\begin{lemma}
\label{lemma:T-invariant-curves}
If $Z$ is a curve, then $Z$ is $\mathbb{T}$-invariant.
\end{lemma}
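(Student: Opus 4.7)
The plan is to apply Corollary~\ref{corollary:toric-degree} to each $\Gamma_n$-irreducible component of $Z$ after bounding its degree with respect to a suitable very ample divisor. Suppose for contradiction that $Z$ is not $\mathbb{T}$-invariant. Since every irreducible component of $Z$ is a non-canonical center of $(X,\lambda\mathcal{M})$, Lemma~\ref{lemma:exercise} gives $m:=\mathrm{mult}_Z(\mathcal{M})>\tfrac{1}{\lambda}$ on each component. For two general members $M_1,M_2\in\mathcal{M}$, the $1$-cycle $M_1\cdot M_2-m^2Z$ is effective, and $M_1\cdot M_2\sim_{\mathbb{Q}}\lambda^{-2}(-K_X)^2$, so for every ample $\mathbb{Q}$-Cartier divisor $H$ on $X$ we obtain
\[
H\cdot Z\leqslant\frac{H\cdot M_1\cdot M_2}{m^2}=\frac{H\cdot(-K_X)^2}{\lambda^2 m^2}<H\cdot(-K_X)^2.
\]
By Corollary~\ref{corollary:order-rank}, $\Gamma$ contains a subgroup $\Gamma_n\cong\mumu_n^3$ with $n\geqslant 24$.

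For $X\in\{\mathbb{P}^3,V_4,V_6,Y_{24}\}$ we take $H$ to be the primitive very ample Cartier generator of the relevant cone: the hyperplane class on $\mathbb{P}^3$ and $V_4$, the class of $(1,1,1)$ on $V_6$, and the restriction of $(1,1,1,1)$ on $Y_{24}$. A direct computation yields $H\cdot(-K_X)^2\in\{16,4,24,24\}$, so in each case $H\cdot Z\leqslant 23$. Every $\Gamma_n$-irreducible component $Z'\subseteq Z$ then satisfies $H\cdot Z'\leqslant H\cdot Z\leqslant 23<n$, so Corollary~\ref{corollary:toric-degree} forces $Z'$ to be $\mathbb{T}$-invariant. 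Hence $Z$ itself is $\mathbb{T}$-invariant, a contradiction.

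The main obstacle is the case $X=X_{24}$, where $-K_{X_{24}}$ is only $\mathbb{Q}$-Cartier of index $2$: the smallest very ample divisor $-2K_{X_{24}}$ satisfies $(-K_{X_{24}})^2\cdot(-2K_{X_{24}})=48$, so the direct bound $H\cdot Z\leqslant 47$ is not enough against $n\geqslant 24$. To bypass this, the plan is to lift the problem via the \'etale-in-codimension-one double cover $\pi\colon V_6\to X_{24}$ of Section~\ref{subsection:Fano-Enriques}. Since $\mathcal{M}$ consists of $\mathbb{Q}$-Cartier divisors and $\pi^*K_{X_{24}}=K_{V_6}$, setting $\tilde{Z}:=\pi^{-1}(Z)$ and $\tilde{\mathcal{M}}:=\pi^*\mathcal{M}$ gives a mobile linear system on $V_6$ with $\lambda\tilde{\mathcal{M}}\sim_{\mathbb{Q}}-K_{V_6}$, and $\mathrm{mult}_{\tilde Z}(\tilde{\mathcal{M}})=m>\tfrac{1}{\lambda}$ along each irreducible component of $\tilde Z$ (since $\pi$ is \'etale at its generic points); the $V_6$ version of the display above then yields $H_0\cdot\tilde Z<24$ for $H_0=(1,1,1)$. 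The preimage $\tilde{\Gamma}:=q_{23}^{-1}(\Gamma)\subset\mathbb{T}_2$ has order $|\tilde\Gamma|=2|\Gamma|\geqslant 32\cdot 24^3$ and is $\mathbb{W}_2^{\mathfrak{A}}$-invariant, since the $\mathbb{W}_3^{\mathfrak{A}}$-action on $\mathbb{T}_3$ is induced from the $\mathbb{W}_2^{\mathfrak{A}}$-action on $\mathbb{T}_2$ through $q_{23}$. By Lemma~\ref{lemma:Adrien}, $\tilde{\Gamma}$ contains a subgroup $\mumu_{n'}^3$ with $n'\geqslant 48>23\geqslant H_0\cdot\tilde Z$, so Corollary~\ref{corollary:toric-degree} forces every $\mumu_{n'}^3$-irreducible component of $\tilde Z$ to be $\mathbb{T}_2$-invariant. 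Therefore $\tilde Z$ is $\mathbb{T}_2$-invariant, and $Z=\pi(\tilde Z)$ is $\mathbb{T}_3$-invariant, contradicting our assumption. The constant $32\cdot 24^4$ in Theorem~\ref{theorem:main}(3) is calibrated precisely so that the inequality $n'\geqslant 48$ needed here closes.
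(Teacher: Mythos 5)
Your argument is correct, and for $\mathbb{P}^3$, $V_4$, $V_6$ and $Y_{24}$ it coincides with the paper's proof: bound $H\cdot Z$ by $H\cdot(-K_X)^2/(\lambda m)^2< H\cdot(-K_X)^2\leqslant 24$ using $\mathrm{mult}_Z(\mathcal{M})>1/\lambda$ from Lemma~\ref{lemma:exercise}, then invoke Corollary~\ref{corollary:toric-degree} with the $\mumu_n^3$, $n\geqslant 24$, supplied by Corollary~\ref{corollary:order-rank}. (Minor slip: for $V_4$ one has $H\cdot(-K_{V_4})^2=I_{V_4}^2H^3=4\cdot 4=16$, not $4$; this does not affect the bound.) The only genuine divergence is your treatment of $X_{24}$. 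The obstacle you identify there is not actually present: although $-K_{X_{24}}$ is not Cartier, the hyperplane class $H=\mathcal{O}_{\mathbb{P}^{13}}(1)\vert_{X_{24}}$ is a very ample Cartier divisor with $2H\sim -2K_{X_{24}}$, hence $H\sim_{\mathbb{Q}}-K_{X_{24}}$ and $H^3=-K_{X_{24}}^3=24$, so the same one-line computation gives $H\cdot Z<24$ directly; this is exactly how the paper handles $X_{24}$ (with $I_{X_{24}}=1$ understood $\mathbb{Q}$-linearly, so that $H^3I_X^2\leqslant 24$ in all five cases). Your workaround via the quasi-\'etale double cover $\pi\colon V_6\to X_{24}$ is nevertheless sound: $\pi^*K_{X_{24}}=K_{V_6}$, multiplicities along curve components are preserved since the branch locus is finite, the preimage $q_{23}^{-1}(\Gamma)$ is $\mathbb{W}_2^{\mathfrak{A}}$-invariant of order $2|\Gamma|$, and Lemma~\ref{lemma:Adrien} does yield a $\mumu_{n'}^3$ with $n'\geqslant 48$, which is more than enough for Corollary~\ref{corollary:toric-degree} on $V_6$. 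So the detour costs you extra machinery (and the dependence on Lemma~\ref{lemma:Adrien} rather than only Corollary~\ref{corollary:order-rank}) but buys nothing here; its one virtue is that it avoids having to interpret the Fano index statement $-K_X\sim I_XH$ $\mathbb{Q}$-linearly in the non-Gorenstein case.
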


\begin{proof}
Let $H$ be an ample Cartier divisor on $X$ such that $-K_X\sim I_XH$. Then
$$
\frac{H^3I_X^2}{\lambda^2}=H\cdot M_1\cdot M_2\geqslant\big(H\cdot Z\big)\mathrm{mult}^2_{Z}\big(\mathcal{M}\big)>\frac{H\cdot Z}{\lambda^2}
$$
for two general surfaces $M_1$ and $M_2$ in $\mathcal{M}$, because $\mathrm{mult}_{Z}(\mathcal{M})>\frac{1}{\lambda}$ by Lemma~\ref{lemma:exercise}.
Then
$$
H\cdot Z<H^3I_X^2\leqslant 24.
$$
But $H$ is very ample, and the group $\Gamma$ contains a subgroup isomorphic to $\mumu_n^3$ for $n\geq 24$.
Thus, the curve $Z$ must be $\mathbb{T}$-invariant by Lemma~\ref{corollary:toric-degree}.
\end{proof}

Thus, we may assume that $Z$ is the $G$-orbit of a point in $X$.

\begin{lemma}
\label{lemma:points-in-T-invariant-curves}
The $G$-orbit $Z$ is contained in the union of $\mathbb{T}$-invariant curves.
\end{lemma}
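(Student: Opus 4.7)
The plan is to argue by contradiction: suppose some $P\in Z$ lies on no $\mathbb{T}$-invariant curve. Since each singular point of $V_6$, $V_4$, $X_{24}$, $Y_{24}$, $\mathbb{P}^3$ is $\mathbb{T}$-fixed and hence contained in a $\mathbb{T}$-invariant curve, $P$ must be a smooth point of $X$. The assumption also implies $\dim\mathbb{T}_P\leqslant 1$ (since otherwise $P$ would lie on a $\mathbb{T}$-invariant curve), and since $\Gamma'\cong\mumu_n^3$ meets any one-dimensional subtorus of $\mathbb{T}$ in a cyclic group of order dividing $n$, we obtain $|\Gamma'_P|\leqslant n$, so $|\Gamma'\cdot P|\geqslant n^2\geqslant 576$. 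Applying Pukhlikov's Lemma~\ref{lemma:Pukhlikov} at each point of the orbit $\Gamma'\cdot P$, all of which are smooth non-canonical centers by $G$-invariance of $\mathcal{M}$, yields $\mathrm{mult}_{P'}(D)>4/\lambda^2$ at every $P'\in\Gamma'\cdot P$, where $D=M_1\cdot M_2$ for two general members $M_1,M_2\in\mathcal{M}$.

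The main idea is to transmute this local multiplicity bound into a global degree bound by intersecting $D$ with a $\Gamma'$-invariant divisor of degree $n$. Decompose $D=\sum_j b_j D_j$, where each $D_j$ is the reduced sum of a $\Gamma'$-orbit of irreducible curves; if $P\in D_j$, then $\Gamma'\cdot P\subseteq D_j$ by $\Gamma'$-invariance. Fix a $\mathbb{T}$-equivariant very ample divisor $H$ on $X$ for which $H\cdot(-K_X)^2\leqslant 24$; suitable choices are $\mathcal{O}(1)$ on $\mathbb{P}^3$, $\mathcal{O}(1,1,1)$ on $V_6$, the hyperplane class on $V_4$, $-K$ on $Y_{24}$, and $-2K$ on $X_{24}$. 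Since $H^0(X,\mathcal{O}(H))$ decomposes into one-dimensional $\mathbb{T}$-weight spaces and every character of $\Gamma'$ has order dividing $n$, the $n$-th power $s^n$ of any weight section is $\Gamma'$-invariant. I would choose two weight sections $s,t$ of $\mathcal{O}(H)$ such that $s/t$ is non-constant on a fixed irreducible component $D_{j,1}$ of $D_j$; such a pair exists because the image of $D_{j,1}$ under the embedding given by $H$ is an irreducible curve, not a point. Setting $c=s(P)^n/t(P)^n$, the divisor $\widetilde{H}_j=\{s^n-c\,t^n=0\}\in|nH|$ is $\Gamma'$-invariant, contains $P$ (hence all of $\Gamma'\cdot P$), and does not contain $D_{j,1}$ since $(s/t)^n$ is a non-constant rational function on $D_{j,1}$; by $\Gamma'$-invariance of $\widetilde{H}_j$ it contains no other component of $D_j$ either.

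Then $\widetilde{H}_j\cdot D_j$ is an effective $0$-cycle of degree $nH\cdot D_j$, and at each point $P'\in\Gamma'\cdot P\subseteq D_j$ the local intersection multiplicity is at least $\mathrm{mult}_{P'}(\widetilde{H}_j)\cdot\mathrm{mult}_{P'}(D_j)\geqslant\mathrm{mult}_P(D_j)$, where the last equality uses $\Gamma'$-invariance of $D_j$. Therefore
\[nH\cdot D_j\geqslant|\Gamma'\cdot P|\cdot\mathrm{mult}_P(D_j)\geqslant n^2\,\mathrm{mult}_P(D_j),\]
so $H\cdot D_j\geqslant n\,\mathrm{mult}_P(D_j)$. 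Summing over the indices $j$ with $P\in D_j$ yields
\[H\cdot D\geqslant n\,\mathrm{mult}_P(D)>\frac{4n}{\lambda^2}\geqslant\frac{96}{\lambda^2},\]
contradicting the numerical equality $H\cdot D=H\cdot(-K_X)^2/\lambda^2\leqslant 24/\lambda^2$.

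The main obstacle will be constructing the $\Gamma'$-invariant divisor $\widetilde{H}_j$ through $\Gamma'\cdot P$ that avoids $D_j$: this is where the toric structure is essential, as it guarantees the weight decomposition of $H^0(X,\mathcal{O}(H))$ and hence a supply of $\Gamma'$-invariant elements of $|nH|$ rich enough to pass through $P$ without collapsing any given irreducible curve.
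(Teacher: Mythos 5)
Your strategy is genuinely different from the paper's: the paper first rules out non-log-canonical \emph{curves} of $(X,2\lambda\mathcal{M})$ via Corti's inequality together with the toric degree lemma, and then bounds $|Z|$ from above by $h^{0}(\mathcal{O}_X(-K_X))$ using Nadel vanishing, against the lower bound $|Z|\geqslant n^{2}$. You instead combine Pukhlikov's pointwise bound $\mathrm{mult}_{P'}(M_1\cdot M_2)>4/\lambda^{2}$ with a degree count against a $\Gamma'$-invariant auxiliary divisor in $|nH|$ (closer in spirit to the paper's proof of Lemma~\ref{lemma:points-are-T-invariant}). This route can work, but as written it has one genuine gap: the decomposition $D=\sum_j b_jD_j$ into $\Gamma'$-orbits of irreducible curves presupposes that the $1$-cycle $D=M_1\cdot M_2$ is $\Gamma'$-invariant, and it is not --- $\mathcal{M}$ is $G$-invariant as a linear system, but its individual general members are not, so the components of $M_1\cdot M_2$ need not come in full $\Gamma'$-orbits with constant multiplicities. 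With that, the $\Gamma'$-invariance of $D_j$, the inclusion $\Gamma'\cdot P\subseteq D_j$, and the identity $\mathrm{mult}_{P'}(D_j)=\mathrm{mult}_P(D_j)$ all collapse. The gap is repairable: either replace $D$ by its average $\frac{1}{|\Gamma'|}\sum_{\gamma\in\Gamma'}\gamma(D)$, an effective $\mathbb{Q}$-cycle of the same class which still has multiplicity $>4/\lambda^{2}$ at every point of $\Gamma'\cdot P$; or, better, work with the linear system $\mathcal{V}\subset|nH|$ spanned by the $n$-th powers of \emph{all} weight sections of $\mathcal{O}_X(H)$ --- every member of $\mathcal{V}$ is $\Gamma'$-invariant and $\mathcal{V}$ is base point free --- and take a general member of the hyperplane $\mathcal{V}_P\subset\mathcal{V}$ of divisors through $P$: since $H$ embeds $X$ and the $n$-th power map is finite, no curve lies in the base locus of $\mathcal{V}_P$, so one such divisor simultaneously contains $\Gamma'\cdot P$ and avoids every component of $D$ through every point of $\Gamma'\cdot P$, and the local intersection numbers can be summed directly without any decomposition of $D$.

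Two smaller points. First, your normalization for $X_{24}$ is off: $(-K_{X_{24}})^{3}=24$, so $H=-2K_{X_{24}}$ gives $H\cdot(-K_{X_{24}})^{2}=48$, not $\leqslant 24$ (take instead $\mathcal{O}_{\mathbb{P}^{13}}(1)|_{X_{24}}$, which is Cartier and very ample with $H\cdot(-K_{X_{24}})^{2}=24$); the final contradiction survives since $48<4n$ for $n\geqslant 24$, but the stated inequality is false. Second, setting $c=s(P)^{n}/t(P)^{n}$ implicitly assumes $t(P)\neq 0$; this is easy to arrange (choose the reference weight section nonvanishing at $P$, which exists by base point freeness) but should be said.
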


\begin{proof}
Suppose that $Z$ is not contained in the union of $\mathbb{T}$-invariant curves.
Let us seek for a contradiction. Observe that the $G$-orbit $Z$ is a $G$-center of non-log canonical singularities of the log pair $(X,2\lambda\mathcal{M})$. We claim that $Z$ is an isolated center of non-log canonical singularities of this log pair.
Indeed, suppose that there is a $G$-irreducible curve $C$ that is a center of non-log canonical
singularities of the log pair $(X,2\lambda\mathcal{M})$.
Let $M_1$ and $M_2$ be  general surfaces in~$\mathcal{M}$. Then
$$
M_1\cdot M_2=mC+\Omega,
$$
where $m$ is a non-negative integer, and $\Omega$ is an effective one-cycle whose support does not contain $C$.
Then $m>\frac{1}{\lambda^2}$ by Lemma~\ref{lemma:Corti-surface}.

Let $H$ be an ample Cartier divisor on $X$ such that $-K_X\sim I_XH$. Then
$$
\frac{H^3I_X^2}{\lambda^2}=H\cdot M_1\cdot M_2=mH\cdot C+H\cdot\Omega\geqslant mH\cdot C>\frac{H\cdot C}{\lambda^2},
$$
so that $H\cdot C<H^3I_X^2\leqslant 24$. Thus, the curve $C$ must be $\mathbb{T}_X$-invariant by Lemma~\ref{corollary:toric-degree}.
Since $Z$ is not contained in the union of $\mathbb{T}$-invariant curves,
we see that $Z$ is an isolated center of non-log canonical singularities of the log pair $(X,2\lambda\mathcal{M})$.

Let $\mu$ be a positive rational number such that $\mu<\lambda$,
and $Z$ is an isolated $G$-irreducible center of log canonical singularities of the log pair $(X,2\mu\mathcal{M})$.
Let $\mathcal{I}$ be the multiplier ideal sheaf of the log pair~$(X,2\mu\mathcal{M})$.
Then the ideal $\mathcal{I}$ defines a subscheme $Z'$ in $X$
whose support contains $Z$.
Using Nadel vanishing theorem (see \cite[Theorem~9.4.8]{Lazarsfeld}), we get
$$
h^1\Big(\mathcal{I}\otimes\mathcal{O}_{X}\big(-K_{X}\big)\Big)=0.
$$
Now using the exact sequence of sheaves
$$
0\longrightarrow\mathcal{I}\otimes\mathcal{O}_{X}\big(-K_{X}\big)\longrightarrow\mathcal{O}_{X}\big(-K_{X}\big)\longrightarrow\mathcal{O}_{Z'}\otimes\mathcal{O}_{X}\big(-K_{X}\big)\longrightarrow 0,
$$
we see that $|Z|\leqslant h^0(\mathcal{O}_{X}(-K_{X}))$. But on the other hand, since  $Z$ is not contained in the union of $\mathbb{T}$-invariant curves, we have $|Z|\geqslant n^2\geqslant 24^2$, a contradiction.
\end{proof}

Finally, we prove the following lemma:

\begin{lemma}
\label{lemma:points-are-T-invariant}
The $G$-orbit $Z$ is $\mathbb{T}$-invariant.
\end{lemma}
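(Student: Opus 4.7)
The plan is to argue by contradiction. Suppose that $Z$ is not $\mathbb{T}$-invariant; then there exists $P\in Z$ not fixed by $\mathbb{T}$, and by Lemma~\ref{lemma:points-in-T-invariant-curves} such a $P$ lies on a $\mathbb{T}$-invariant curve $C\cong\mathbb{P}^1$, necessarily in its open orbit $C^\circ\cong\mathbb{G}_m$. Moreover $P$ is automatically a smooth point of $X$, because every singular point of a terminal toric threefold is $\mathbb{T}$-fixed, so Lemma~\ref{lemma:exercise} and Lemma~\ref{lemma:Pukhlikov} both apply at~$P$.

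First I would establish the lower bound $|Z\cap C|\geqslant n\geqslant 24$. The $\mathbb{T}$-action on $C$ factors through the primitive character $\chi\colon\mathbb{T}\to\mathbb{G}_m$ associated to the wall of the fan corresponding to $C$; by primitivity of $\chi$, its restriction to the subgroup $\mumu_n^3\subseteq\Gamma$ provided by Corollary~\ref{corollary:order-rank} is surjective onto $\mumu_n$, and $\mumu_n$ acts freely on $C^\circ$. Hence the $\mumu_n^3$-orbit of $P$ inside $C$ consists of exactly $n$ elements, which gives the claim.

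Next I would derive a matching upper bound. Assuming first that $C\not\subseteq\mathrm{Bs}(\mathcal{M})$, for a general $M\in\mathcal{M}$ the restriction $M|_C$ is an effective $\mathbb{Q}$-divisor on $C$ of degree $(-K_X\cdot C)/\lambda$, and by Lemma~\ref{lemma:exercise} each point $P\in Z\cap C$ contributes $\mathrm{mult}_P(M)>1/\lambda$. Therefore
\[
\frac{(-K_X)\cdot C}{\lambda}=\deg(M|_C)\geqslant\sum_{P\in Z\cap C}\mathrm{mult}_P(M)>\frac{|Z\cap C|}{\lambda},
\]
which yields $|Z\cap C|<(-K_X)\cdot C$. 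A direct case-by-case inspection shows $(-K_X)\cdot C\leqslant 4$ for every $\mathbb{T}$-invariant curve on each of the five Fano threefolds (equal to $4$ on $\mathbb{P}^3$, to $2$ on $V_6$ and $V_4$, and at most $2$ on $Y_{24}$ and $X_{24}$, reading the latter off the Cartier divisor $-2K_{X_{24}}$). Combined with the first step, this is the desired contradiction.

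The hard part is the remaining case $C\subseteq\mathrm{Bs}(\mathcal{M})$, in which the restriction $M|_C$ used above is not defined. I would handle it by passing to the $G$-equivariant toric blow-up $\pi\colon\widetilde X\to X$ of the $G$-orbit $G\cdot C$ and replacing $\mathcal{M}$ by its proper transform $\widetilde{\mathcal{M}}=\pi^*\mathcal{M}-mE$, where $m=\mathrm{mult}_C(\mathcal{M})$ and $E$ is the exceptional divisor. The pulled-back log pair $(\widetilde X,\lambda\widetilde{\mathcal{M}}+(m\lambda-1)E)$ still has a non-canonical center above $P$, lying on the fiber $L=\pi^{-1}(P)\cong\mathbb{P}(\mathcal{N}_{C/X,P})$; the stabilizer $\Gamma_P\subseteq\Gamma$, which contains $\mumu_n^2$ of order $n^2$ by the first-step analysis, acts on $L$ through a primitive character coming from the new ray of the blown-up fan, so the orbit-size lower bound of the first step applies to the lifted center on $L$. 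On the other hand, using $E\cdot L=-1$ one computes $\widetilde{\mathcal{M}}\cdot L=m$, and restricting $\mathcal{M}$ to any $\mathbb{T}$-invariant surface through $C$ bounds $m\lambda$ by an absolute constant, so the same degree comparison produces a contradiction. Iterating this equivariant blow-up strictly decreases the total multiplicity of $\mathcal{M}$ along $\mathbb{T}$-invariant curves in the base locus, so the procedure terminates in finitely many steps and reduces to the case already treated.
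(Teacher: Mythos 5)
Your first two steps are sound: the orbit--size lower bound $|Z\cap C|\geqslant n$ via the primitive character through which $\mathbb{T}$ acts on the open orbit of $C$ is exactly the counting used in the paper (there phrased as $|Z|\geqslant kn$), and when $C\not\subseteq\mathrm{Bs}(\mathcal{M})$ the degree comparison $|Z\cap C|<(-K_X)\cdot C\leqslant 4$ does give a clean contradiction, arguably more simply than the paper does. But the case $C\subseteq\mathrm{Bs}(\mathcal{M})$ is not a residual technicality --- the $\mathbb{T}$-invariant curves are precisely the natural base curves of a $G$-invariant system, and the paper's proof is built around this case: it writes $M_1\cdot M_2=m\mathcal{C}+\Delta$, bounds $m$ by intersecting with $H$, applies Pukhlikov's local inequality $\mathrm{mult}_{O}(M_1\cdot M_2)>4/\lambda^2$ at each $O\in Z$, and then intersects $\Delta$ with a general member of an auxiliary linear system $\mathcal{B}\subset|lH|$ whose base locus is exactly $\mathcal{C}$ (verified from the explicit equations of the birational maps). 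This combination works uniformly whether or not $\mathcal{C}$ lies in the base locus, and it is what your proposal is missing.

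Your replacement for that case --- blowing up $G\cdot C$ and iterating --- has two genuine gaps. First, the orbit--size lower bound does not transfer to the exceptional fibre: the stabilizer $\Gamma_P\supseteq\mumu_n^2$ acts on $L\cong\mathbb{P}^1$ through a single character and therefore has two fixed points on $L$ (the intersections with the proper transforms of the two $\mathbb{T}$-invariant divisors through $C$); nothing prevents the lifted non-canonical centre from being one of these fixed points, in which case its $\Gamma_P$-orbit has size $1$ and your degree comparison on $L$ gives no contradiction. (The hypothesis that saved you on $C$ itself --- that $P$ is \emph{not} torus-fixed --- has no analogue one level up.) Second, the termination claim is unsupported: the blow-up creates new invariant curves ($L$ and the sections of $E\to C$), and you exhibit no quantity that strictly decreases, so ``iterate until done'' is not an argument. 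As written, the proof covers only the easy half of the lemma; to repair it you would either have to carry out a genuine case analysis on the exceptional fibres with a termination invariant, or adopt the paper's strategy of working with the cycle $M_1\cdot M_2$ and the surfaces in $\mathcal{B}$, which bypasses the base-locus dichotomy altogether.
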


\begin{proof}
We know from Lemma~\ref{lemma:points-in-T-invariant-curves} that $Z$ is contained in union of $\mathbb{T}$-invariant curves.
Suppose that $Z$ is not $\mathbb{T}$-invariant. Let us seek for a contradiction.

Let $H$ be an ample Cartier divisor on $X$ such that $-K_X\sim I_XH$. Recall that $H$ is very ample,
$$
I_X=\left\{\aligned
&2\ \text{if}\ X=V_6,\\
&1\ \text{if}\ X=X_{24},\\
&1\ \text{if}\ X=Y_{24},\\
&2\ \text{if}\ X=V_4,\\
&4\ \text{if}\ X=\mathbb{P}^3.\\
\endaligned
\right.
$$
and
$$
H^3=\left\{\aligned
&6\ \text{if}\ X=V_6,\\
&24\ \text{if}\ X=X_{24},\\
&24\ \text{if}\ X=Y_{24},\\
&4\ \text{if}\ X=V_4,\\
&1\ \text{if}\ X=\mathbb{P}^3.\\
\endaligned
\right.
$$

Let $\mathcal{C}$ the union of all  $\mathbb{T}$-invariant curves.
Then $\mathcal{C}$ is $G$-invariant.
Moreover, it follows from Lemmas~\ref{lemma:Y24-G-Fano} and \ref{lemma:V6-G-Fano} that
$\mathbb{W}$ acts transitively on the set of  irreducible components of $\mathcal{C}$ except the following two cases:
\begin{enumerate}
\item $X=Y_{24}$ and $\mathbb{W}=\mathbb{W}_{1}^{\mathfrak{A}}$,
\item $X=Y_{24}$ and $\mathbb{W}=\overline{\mathbb{W}}_{1}^{\mathfrak{S}}$.
\end{enumerate}
In these two cases, the curve $\mathcal{C}$ splits as a union of two $G$-irreducible curves that are swapped by the group $\mathbb{W}_X\cong\mathfrak{S}_4\times\mumu_2$.
Thus, we let $\mathcal{C}_1$ be the $G$-irreducible component of the curve $\mathcal{C}$ that contains the $G$-orbit $Z$.
Then, because $Z$ is $G$-irreducible, the other $G$-irreducible component of the curve $\mathcal{C}$ (if any) does not contain $Z$,

Let $d=H\cdot\mathcal{C}_1$, and let $k$ be the number of irreducible components of the curve $\mathcal{C}_1$.
Then $k=d$ in each possible case. Moreover, we have
$$
d=\left\{\aligned
&12\ \text{if}\ X=V_6,\\
&12\ \text{if}\ X=X_{24},\\
&24\ \text{if}\ X=Y_{24},\ \text{$\mathbb{W}\ne\mathbb{W}_{1}^{\mathfrak{A}}$ and $\mathbb{W}\ne\overline{\mathbb{W}}_{1}^{\mathfrak{S}}$},\\
&12\ \text{if}\ X=Y_{24},\ \text{and either $\mathbb{W}=\mathbb{W}_{1}^{\mathfrak{A}}$ or $\mathbb{W}=\overline{\mathbb{W}}_{1}^{\mathfrak{S}}$},\\
&12\ \text{if}\ X=V_4,\\
&6\ \text{if}\ X=\mathbb{P}^3.\\
\endaligned
\right.
$$
Recall that $G\cap\mathbb{T}$ contains a subgroup isomorphic to $\mumu_n^3$ for $n\geqslant 24$.
Thus, since by assumption $Z$ is not $\mathbb{T}$--invariant, we have  $|Z|\geqslant kn\geqslant 24k=24d$.

Let $M_1$ and $M_2$ be general surfaces in $\mathcal{M}$.
If the curve $\mathcal{C}$ is $G$-irreducible, then $\mathcal{C}=\mathcal{C}_1$ and
$$
M_1\cdot M_2=m_1\mathcal{C}_1+\Delta,
$$
where $m_1$ is a non-negative integer, and $\Delta$ is an effective one-cycle whose support does not contain the curve $\mathcal{C}$.
Similarly, if $X=Y_{24}$ and either $\mathbb{W}=\mathbb{W}_{1}^{\mathfrak{A}}$ or $\mathbb{W}=\overline{\mathbb{W}}_{1}^{\mathfrak{S}}$,
then $\mathcal{C}=\mathcal{C}_1+\mathcal{C}_2$, where both $\mathcal{C}_1$ and $\mathcal{C}_2$ are $G$-irreducible curves,
so that we have
$$
M_1\cdot M_2=m_1\mathcal{C}_1+m_2\mathcal{C}_2+\Delta,
$$
where $m_1$ and $m_2$ are non-negative integers, and $\Delta$ is an effective one-cycle whose support does not contain the curves $\mathcal{C}_1$ and $\mathcal{C}_2$. In both cases, we have
$$
\frac{I_X^2H^3}{\lambda^2}=H\cdot M_1\cdot M_2\geqslant H\cdot\Big(m_1\mathcal{C}_1+\Delta\Big)=m_1d+H\cdot\Delta\geqslant m_1d,
$$
which implies that $m_1\leqslant\frac{I_X^2H^3}{d\lambda^2}<\frac{4}{\lambda^2}$.

Let $\mathcal{B}$ be the linear subsystem in $|lH|$ consisting of surfaces that contain $\mathcal{C}$,
where
$$
l=\left\{\aligned
&2\ \text{if}\ X=V_6,\\
&2\ \text{if}\ X=X_{24},\\
&2\ \text{if}\ X=Y_{24},\\
&3\ \text{if}\ X=V_4,\\
&3\ \text{if}\ X=\mathbb{P}^3.\\
\endaligned
\right.
$$
Then $\mathcal{C}$ is the base locus of the linear system $\mathcal{B}$.
Indeed, the generators of the linear system $\mathcal{B}$ are
contained in the formulas \eqref{eq:Embed-X24-P13}, \eqref{equation:Y24-V6-P7}, \eqref{eq:Bir-V4-X24}, \eqref{equation:toric-quartic-double-solid-inverse} and \eqref{equation:Cremona}.
Looking at them, we see that the curve $\mathcal{C}$ is the base locus of the linear system $\mathcal{B}$.

Now we use Lemma~\ref{lemma:Pukhlikov} to deduce that
$$
\mathrm{mult}_{O}\Big(M_1\cdot M_2\Big)>\frac{4}{\lambda^2}
$$
for every point $O\in Z\cap\mathcal{C}$.
Thus, for every point $O\in Z\cap\mathcal{C}$, we have
$$
\mathrm{mult}_{O}\big(\Delta\big)>\frac{4}{\lambda^2}-m_1,
$$
because the curve $\mathcal{C}_1$ is smooth at $O$, and $\mathcal{C}_1$ is the only $G$-irreducible component of the curve $\mathcal{C}$ that
contains points in $Z$.
Now let $S$ be a general surface in $\mathcal{B}$. Then
$$
\frac{lI_X^2H^3}{\lambda^2}-m_1ld= S\cdot\Big(M_1\cdot M_2-m_1\mathcal{C}_1\Big)\geqslant S\cdot\Delta\geqslant\sum_{O\in Z}\mathrm{mult}_O\big(\Delta\big)>|Z|\Big(\frac{4}{\lambda^2}-m_1\Big)\geqslant dn\Big(\frac{4}{\lambda^2}-m_1\Big).
$$
This gives
$$
\frac{lI_X^2H^3}{\lambda^2}+m_1d\big(n-l\big)>\frac{4dn}{\lambda^2}.
$$
Since $l\leqslant n$ and $m_1\leqslant\frac{I_X^2H^3}{d\lambda^2}$, we obtain $I_X^2H^3>4d$, which is absurd.
The obtained contradiction completes the proof of the lemma.
\end{proof}

\appendix

\begin{landscape}

\section{Table of $G$-solid toric Fano threefolds}
\label{section:table}

With the notation of Section~\ref{section:intro}, we let $X$ be one of the toric threefolds among $V_6$, $V_{4}$, $X_{24}$, $Y_{24}$ and $\mathbb{P}^3$ and we let $G$ be a subgroup in the group $G_X$ whose image $\nu_X(G)$ in the Weyl group $\mathbb{W}_X$ of $X$  contains the subgroup $\mathfrak{A}_4$.
Then $X$ is $G$-minimal except the following two cases:
\begin{enumerate}
\item $X=V_4$, $\nu_X(G)\cong\mathfrak{S}_4$ and $G$ acts intransitively on $\mathbb{T}$-invariant surfaces,
\item $X=V_4$ and $\nu_X(G)\cong\mathfrak{A}_4$.
\end{enumerate}
Moreover, if $X$ is $G$-minimal and $|G|\geqslant 32\cdot 24^4$, then $X$ is $G$-solid by Theorem~\ref{theorem:main}.
In this case, the following table summarizes additional information on the $G$-equivariant birational geometry of the threefold $X$
obtained in the proof of Theorem~\ref{theorem:main}.

\medskip

\begin{center}
\renewcommand{\arraystretch}{3}
\begin{tabular}{|c||c|c|c|c|c|}
  \hline
$\nu_X(G)$ & $\mathfrak{S}_4\times\mumu_2$ & $\mathfrak{S}_4$ (type I) & $\mathfrak{S}_4$ (type II) & $\mathfrak{A}_4\times\mumu_2$ & $\mathfrak{A}_4$ \\
  \hline
  \hline
$V_6$ & \shortstack{$G$-birationally \\ superrigid} & \shortstack{$G$-birationally \\ superrigid} & \shortstack{$G$-birationally \\ superrigid} & \shortstack{$G$-birationally \\ superrigid} & \shortstack{$G$-birationally \\ superrigid} \\
  \hline
$V_4$ & $G$-birational to $X_{24}$ & not $G$-minimal & $G$-birational to $X_{24}$  & $G$-birational to $X_{24}$  & not $G$-minimal \\
  \hline
$X_{24}$ & $G$-birational to $V_{4}$ & $G$-birational to $\mathbb{P}^3$ & $G$-birational to $V_{4}$ & $G$-birational to $V_{4}$ & $G$-birational to $\mathbb{P}^3$\\
  \hline
$Y_{24}$ & \shortstack{$G$-birationally \\ superrigid} & \shortstack{$G$-birationally \\ superrigid} & \shortstack{$G$-birationally \\ superrigid} & \shortstack{$G$-birationally \\ superrigid} & \shortstack{$G$-birationally \\ superrigid} \\
  \hline
$\mathbb{P}^3$ & & $G$-birational to $X_{24}$ & & $G$-birational to $X_{24}$ & $G$-birational to $X_{24}$ \\
  \hline
\end{tabular}
\end{center}

\medskip

If $X\ne\mathbb{P}^3$, then $\mathbb{W}_X\cong\mathfrak{S}_4\times\mumu_2$ contains two subgroups isomorphic to $\mathfrak{S}_4$,
which we call the subgroups $\mathfrak{S}_4$ of type I~and~II, see Notation \ref{notation:subgroups} for the precise definition.
If~$\nu_{V_4}(G)$ is the subgroup $\mathfrak{S}_4$ of type II, then $G$ acts transitively on the set of irreducible $\mathbb{T}$-invariant surfaces,
so that $V_4$ is $G$-minimal in this case. In contrast, if $\nu_{V_4}(G)$ is the subgroup $\mathfrak{S}_4$ of type I, then $V_4$ is not $G$-minimal.

Similarly, if $\nu_{X_{24}}(G)$ is the subgroup $\mathfrak{S}_4$ of type II, then $G$ acts transitively on the set of singular points of the threefold $X_{24}$.
On the other hand, if $\nu_{X_{24}}(G)$ is the subgroup $\mathfrak{S}_4$ of type I, then $G$ does not acts transitively on this set.

\end{landscape}

\end{document}